\documentclass[10pt,a4paper]{amsart}
\usepackage[margin=1.1in]{geometry}
\usepackage[english]{babel}
\usepackage{amsmath,amssymb,amsthm,mathtools, amsfonts}
\usepackage{lmodern}
\usepackage[T1]{fontenc}
\usepackage{microtype}
\usepackage{enumitem}
\usepackage{tikz}
\usetikzlibrary{arrows, arrows.meta,calc,decorations.markings,patterns}
\usepackage{xcolor}
\usepackage{pgf}
\usepackage[colorlinks=true,linkcolor=blue!60!black,citecolor=blue!60!black,urlcolor=blue!60!black]{hyperref}

\newcommand{\R}{\mathbb{R}}
\newcommand{\Z}{\mathbb{Z}}
\newcommand{\N}{\mathbb{N}}

\newcommand{\A}{\mathbb{A}}
\newcommand{\Tp}{\mathbb{T}_{\pi}}
\newcommand{\Tt}{\mathbb{T}_{2\pi}}
\newcommand{\cH}{\mathcal{H}}
\newcommand{\cR}{\mathcal{R}}
\newcommand{\cS}{\mathcal{S}}
\newcommand{\cU}{\mathcal{U}}
\newcommand{\cC}{\mathcal{C}}
\newcommand{\cG}{\mathcal{G}}
\newcommand{\cV}{\mathcal{V}}

\newcommand{\cO}{\mathcal{O}}
\newcommand{\cP}{\mathcal{P}}
\newcommand{\dd}{\,\mathrm{d}}

\newcommand{\supp}{\operatorname{supp}}
\newcommand{\Fr}{\partial}
\newcommand{\Var}{\operatorname{Var}}
\newcommand{\Leb}{\mathcal{L}}
\newcommand{\dist}{\operatorname{dist}}
\newcommand{\id}{\mathrm{id}}
\newcommand{\Homeo}{\operatorname{Homeo}}
\newcommand{\Int}{\operatorname{int}}
\newcommand{\Hull}{\operatorname{conv}}
\newcommand{\bn}{\mathbf{n}}
\newcommand{\bt}{\mathbf{t}}

\DeclareMathOperator{\sgn}{sgn}

\theoremstyle{plain}
\newtheorem{theorem}{Theorem}[section]
\newtheorem*{mainthm}{Main Theorem}
\newtheorem*{mainrigthm}{Rigidity Theorem}
\newtheorem{proposition}[theorem]{Proposition}
\newtheorem{lemma}[theorem]{Lemma}
\newtheorem{corollary}[theorem]{Corollary}
\theoremstyle{definition}
\newtheorem{definition}[theorem]{Definition}
\newtheorem{remark}[theorem]{Remark}

\numberwithin{equation}{section}

\tikzset{
  heur support/.style={
    line width=.5pt,black!60
  },
  heur alternative/.style={
    line width=.5pt,black!60,dashed
  },
  heur vector/.style={
    -{Stealth[length=2mm,width=1.4mm]},
    line width=.8pt
  },
  heur small arrow/.style={
    -{Stealth[length=1.5mm,width=1.1mm]},
    line width=.55pt
  },
  heur curve/.style={
    line width=.8pt
  },
  heur label/.style={
    font=\small,inner sep=2pt
  },
heur panel/.style={
  font=\small\mdseries,
  anchor=west
},
  heur note/.style={
    font=\small,align=center,inner sep=2pt
  }
}

\title[Unbounded orbits in $C^1$-generic centrally symmetric outer billiards]{Unbounded orbits in $C^1$-generic centrally symmetric strictly convex outer billiards}
\author{Alfonso Sorrentino}
\address{Department of Mathematics, University of Rome Tor Vergata, Rome (Italy) \& School of Mathematics, Institute for Advanced Study, Princeton, NJ (USA)}
\email{sorrentino@mat.uniroma2.it, sorrentino@ias.edu}

\subjclass[2020]{37C83, 37E40, 37J51}
\keywords{outer billiards, unbounded orbits, destruction of invariant graphs, Mather's diffusing mechanism.}
\thanks{{\it AI Disclosure:} A near-final draft of this paper was proofread with the assistance of Claude (Anthropic), which also helped generating the TikZ code for all figures. The author takes full responsibility for the content of the paper.}

\begin{document}
\begin{abstract}
Let $\cH$ be the space of support functions of centrally symmetric, strictly convex, compact planar bodies whose boundary is a $C^1$ curve, endowed with the $C^1$ topology. We prove that for a residual subset of $\cH$ the outer billiard maps about the associated bodies admit unbounded orbits escaping to infinity. In particular, this residual set includes support functions arbitrarily $C^1$-close to those of disks or ellipses. This provides an affirmative answer, in the strictly convex
$C^1$ setting, to a famous  question of Moser and Neumann
concerning the existence of unbounded outer-billiard orbits. The key ingredient is a rigidity theorem: if $h\in\cH$ has a purely singular radius-of-curvature measure, then every continuous
invariant tangent graph consists entirely of periodic points.  We then show that for a generic table in $\cH$  these periodic invariant graphs are absent, allowing us to construct escaping orbits via Mather's diffusing mechanism in a Birkhoff region of instability.
\end{abstract}

\maketitle

\section{Introduction}\label{sec:intro}

The \emph{outer} (or \emph{dual}) \emph{billiard map}   about a compact strictly convex body  $K$  sends a point $x \notin K$ to its reflection through 
the point of tangency $p$ of the supporting line to $K$ through $x$, the tangent line being chosen by a fixed orientation convention (see Figure~\ref{figure:dual_billiard}). This map is an area-preserving homeomorphism from the exterior of $K$ to itself. 
With suitable adaptations, this dynamical model can be extended to polygons  ({\it i.e.}, omitting points that lie on a line through a side), where the dynamics consists of a repeated reflection about the vertices.  We refer to \cite{Tab95, Tab05} for a more comprehensive introduction; see also Section \ref{subsec:T}.

\begin{figure}[!h]
	\definecolor{uququq}{rgb}{0.25,0.25,0.25}
	\definecolor{qqqqff}{rgb}{0,0,1}
	\definecolor{xdxdff}{rgb}{0.49,0.49,1}
	\definecolor{cqcqcq}{rgb}{0.75,0.75,0.75}
	\begin{tikzpicture}[line cap=round,line join=round,>=triangle 45,x=3.0cm,y=3.0cm, scale=0.8]
		\clip(-1.4,-0.6) rectangle (1,1.4);
		\draw [rotate around={26.57:(0,0.25)}] (0,0.25) ellipse (2.76cm and 2.19cm);
		\draw (-1.16,0.56)-- (0.67,1.31);
		\begin{scriptsize}
			\draw[color=black] (-0.47,0.05) node {{\large $K$}};
			\draw[color=black] (-0.28,1.02) node {$p$};
			\fill [color=black] (-1.16,0.56) circle (1.5pt);
			\draw[color=black] (-1.2,0.65) node {$x$};
			\fill [color=black] (0.67,1.31) circle (1.5pt);
			\draw[color=black] (0.85,1.36) node {$T_K(x)$};
		\end{scriptsize}
	\end{tikzpicture}
	\caption{\small Outer billiard map about $K$}
	\label{figure:dual_billiard}
\end{figure}

The introduction of outer billiards is usually attributed to B.~H.~Neumann in 1959 \cite{Neu59}, although an earlier related construction can be traced in a work by M.~M.~Day in 1947 \cite{Day47}; we refer to \cite{Has26} for an interesting account of the earliest  works on this subject. J.~Moser popularized this dynamical system in the 1970s as a toy model for celestial mechanics: the orbit of a point around the billiard table can be seen as an idealization of the orbit of a celestial body around a planet or a star (see \cite{Mos73,Mos78}).

Since then, outer billiards have received a lot of attention and have been extensively studied; we refer to \cite{Sch22, Tab93, Tab95, Tab95b, Tab05, TD05} for  more detailed expositions.\\

Just as for classical inner billiards, where the dynamics takes place inside a domain with elastic reflections at the boundary (see for example \cite{Sch22,Tab95, Tab05}), many questions --- such as integrability, the existence of invariant curves ({\it e.g., caustics}), spectral rigidity properties, the measure of the set of periodic points, etc. --- can be posed analogously for the outer billiard map. However, unlike the classical inner case, the phase space of the outer billiard map is unbounded, being the exterior of a compact set, which naturally raises the question of whether unbounded orbits exist.\\

This problem first appeared stated in the work of P.~C.~Hammer~\cite{Ham63}. Moser brought it to a broader audience in his Hermann Weyl lectures~\cite[Section  II.4]{Mos73} and in~\cite{Mos78}, where he phrased it as the question: ``\emph{Decide whether every orbit is bounded}'', attributing it to Neumann. In fact, in~\cite{Neu59}, Neumann asked whether the map admits  invariant curves: these two formulations are closely related, since the existence of arbitrarily large invariant curves would confine all orbits and hence would imply boundedness.

\medskip

\noindent {\bf Question (Moser, Neumann):} \emph{Is there a convex table for which the outer billiard has an unbounded orbit?} 

\medskip
See \cite[Section 6.1]{Has26} for a detailed account of the state of the art on this question.

\medskip
For sufficiently smooth tables all orbits are bounded: if $\partial K$ is of class $C^6$ with positive curvature, Moser's invariant curve theorem, applied to the return map near infinity, produces invariant curves confining every orbit \cite{Mos73} (the complete proof is due to R.~Douady~\cite{Dou82}). E.~Gutkin--A.~Katok~\cite{GK95} carried out a systematic study of caustics of outer billiards in this regime, and P.~Boyland~\cite{Boy96} exploited the twist-map structure of the return map near infinity. \\

At the opposite extreme, for polygonal tables the dynamics is a piecewise isometry: orbits are bounded for lattice and quasi-rational polygons (F.~Vivaldi--A.~V.~Shaidenko~\cite{VS87}, R.~Ko\l odziej~\cite{Kol89}, E.~Gutkin--N.~Sim\'anyi~\cite{GS92}) and for trapezoids (L.~Li~\cite{Li09}), whereas R.~E.~Schwartz~\cite{Sch07,Sch09} showed that the outer billiard about the Penrose kite has unbounded orbits. D.~Dolgopyat--B.~Fayad~\cite{DF09} did  the same for the semicircle (their result can also be extended to domains obtained by nearly cutting a disk in half).\\

Between these two regimes lies the case of strictly convex tables whose boundary is of class $C^1$, but has no further regularity. 
In \cite[Question 6.9]{Sch22},  Schwartz asks:
{\it Is there a strictly convex domain or a $C^1$ domain with respect to which outer billiard has unbounded orbits?}

\medskip
The present paper provides an affirmative answer to Schwartz's question (actually in a stronger form: tables will be {\it both} strictly convex and $C^1$).\\

\noindent {\bf Main Theorem (informal statement).}
{\it The outer billiard map about a typical centrally symmetric, strictly convex compact body whose boundary is a regular $C^1$ curve admits unbounded orbits  escaping to infinity.}\\

In order to state the Main Theorem more precisely, let us introduce some notation and recall the needed definitions.

\subsection{Support function of a convex body} Given a compact convex body \(K\subset\mathbb R^2\) and choosing an origin in the interior of $K$, its {\it support function} $h_K$ is given by:
\[
h_K(\theta):=\max_{x \in K}\bigl(x \cdot (\cos\theta, \sin\theta)\bigr),
\qquad \theta\in \R/2\pi\Z.
\]
For each \(\theta\), \(h_K(\theta)\) is the signed distance from the origin to the supporting line of \(K\) with outward normal
\(
(\cos\theta,\sin\theta).
\)
The body \(K\) is recovered from its support function by
\[
K=\{x\in\mathbb R^2:\, x \cdot (\cos\theta, \sin\theta)\le h_K(\theta)
\ \text{for all }\theta\},
\]
If $K$ is strictly convex (so that $h_K$ is $C^1$), its boundary admits the normal-angle parametrization
\begin{eqnarray}
p: \R/2\pi\Z &\longrightarrow& \partial K \nonumber\\
\theta &\longmapsto& p(\theta):=h_K(\theta)\bn(\theta)+h_K'(\theta)\bt(\theta), \label{eq:paramptheta}
\end{eqnarray}
where $\bn(\theta):=(\cos\theta,\sin\theta)$ and $\bt(\theta):=(-\sin\theta,\cos\theta)$.\\

\begin{remark} \label{rem:suppfct}
{\bf (i)} There is a  duality between the strict convexity of \(K\) and the differentiability of its support function $h_K$, as discussed, for instance, in \cite[Section 1.7]{Sch14} and \cite[Section 23]{Rock70}.

\smallskip
\noindent {\bf (ii)} The support function depends on the choice of the origin.
It is easy to check that  placing the new origin at the point \(a=(a_1,a_2)\), with respect to the previous origin,
only adds a term $-a_1 \cos \theta - a_2 \sin \theta$ to the support function. Strict convexity, smoothness, and differentiability properties of the support function are unchanged by a change of origin.

In particular, if $K$ is centrally symmetric and we fix the origin to be the center of symmetry, we obtain that $h_K$ is $\pi$-periodic. 

\smallskip
\noindent {\bf (iii)} Let $h_K$ be $C^2$.
Using \eqref{eq:paramptheta}, one obtains that
\[
p'(\theta)=\bigl(h_K(\theta)+h_K''(\theta)\bigr)\bt(\theta).
\]
Thus $h_K+h_K''$ is the speed of the normal-angle
parametrization. Wherever this quantity is positive,
it equals the classical radius of curvature at $p(\theta)$ (see also \cite[Section 2.5]{Sch14}).
\end{remark}
\bigskip

\subsection{Set of centrally symmetric, strictly convex $C^1$ domains}
We denote by  $\cH$ the set of support functions of centrally symmetric, strictly convex compact bodies whose boundary is a regular $C^1$ curve. 
The set $\cH$ can be described more explicitly.\\

Hereafter,  for centrally symmetric bodies we will always assume that the origin is placed at the center of symmetry of the body (see Remark \ref{rem:suppfct} (ii)).\\

\begin{definition}
 In the light of Remark \ref{rem:suppfct} (iii),  for a $C^1$ function $h: \R/\pi\Z \to (0,+\infty)$ we define
\begin{equation}\label{eq:rho}
\rho_h \;:=\; h\dd\theta + Dh'
\end{equation}
where $D$ denotes the distributional derivative. When $h$ is the support function of a convex body, $\rho_h$ is a positive measure, which we call the \emph{radius of curvature measure} of the body. 
\end{definition}

\smallskip

We will prove  (see Section \ref{sec:cHproof}) that
\begin{equation}\label{eq:H}
\cH \;=\; \bigl\{\, h\in C^1(\R/\pi\Z)\;:\;  h>0,\quad \rho_h\ge 0,\quad \supp\rho_h=\R/\pi\Z \,\bigr\}.
\end{equation}
We endow $\cH$ with the  $C^1$ topology, {\it i.e.}, $\|h\|_{C^1}:=\|h\|_{\infty} + \|h'\|_{\infty}$ for $h\in \cH$. \\

\begin{remark}\label{rem:Hintro}
The three conditions have a precise geometric meaning (see also Section \ref{sec:cHproof} and Remark~\ref{rem:H}). 
\begin{itemize}
\item $\rho_h\ge0$ implies that $h$ is the support function of a convex body;
\item $\pi$-periodicity implies that $K_h=-K_h$;
\item $h\in C^1$ implies that $K_h$ is strictly convex, {\it i.e.}\ $\partial K_h$ contains no segment, and is equivalent to $\rho_h$ having no atoms;
\item $\supp\rho_h=\R/\pi\Z$ implies that $\partial K_h$ has no corners, {\it i.e.}, it is a regular $C^1$ curve. We stress that the last condition is not a consequence of $h\in C^1$: the support function of a strictly convex body with corners is $C^1$. 
\end{itemize}
\end{remark}

\medskip

\subsection{Main Theorem}\label{sec:mainthm}
We can now state the main theorem.

\begin{mainthm}\label{thm:main}
There exists a residual ({\it i.e.}, $G_\delta$ dense) subset $\cR\subset\cH$ with the following properties. For every $h\in\cR$:
\begin{enumerate}[label=\textup{(\roman*)}]
\item the radius of curvature measure $\rho_h$ is purely singular with respect to Lebesgue measure, {\it i.e.}, it is carried by a Lebesgue-null set;
\item the outer billiard map $T$ about the body $K_h$ with support function $h$ has no continuous invariant curve in $\R^2\setminus K_h$ which is a graph over the contact angle \textup{(}Definition~\textup{\ref{def:graph})};
\item there exist points $x\in\R^2\setminus K_h$ with $|T^n x|\to\infty$ as $n\to+\infty$. 
\end{enumerate}
In particular, there exist $h\in\cR$ arbitrarily $C^1$-close to the support functions of disks or ellipses.
\end{mainthm}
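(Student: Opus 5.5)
The proof splits into three genericity and dynamics steps, one for each item of the Main Theorem. Each step uses the abstract's rigidity theorem as a black box.

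\emph{Step 1: the singular-measure set is residual.} Let $\cS\subset\cH$ be the set of $h$ with $\rho_h$ purely singular. To see that $\cS$ is a $G_\delta$, write it as
\[
\cS=\bigcap_{k,m}U_{k,m},\qquad U_{k,m}=\bigl\{h:\ \exists \text{ open } V,\ \Leb(V)<1/k,\ \rho_h(\R/\pi\Z\setminus V)<1/m\bigr\}.
\]
Each $U_{k,m}$ should be open in $\cH$. Indeed, $h\mapsto\rho_h$ is continuous from $C^1$ into measures with the weak-$*$ topology, since $\rho_h=h\,\dd\theta+Dh'$ and $h'$ converges uniformly; hence $\rho_h(F)$ is upper semicontinuous on closed sets $F$. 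Density of $\cS$ comes from a smoothing step. Given $h\in\cH$, first approximate it in $C^1$ by an $h$ with smooth positive density. Then replace the density $g\,\dd\theta$ by a purely singular, full-support, atomless measure with the same mass and the same first Fourier modes. One way is to take a Cantor-type (Riesz-product or self-similar) measure on each small interval $I_j$ with mass $\int_{I_j}g$. The resulting $h$ changes by $O(\max|I_j|)$ in $C^1$: the primitive $h'$ differs by at most the mass of one interval, and the closing (periodicity) condition is kept by correcting the $\cos2\theta,\sin2\theta$ moments with a tiny smooth adjustment. Such $h$ still lies in $\cH$.

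\emph{Step 2: no invariant graphs.} For $h\in\cS$, the rigidity theorem says every continuous invariant graph over the contact angle consists of periodic points. So it suffices to show that, for $h$ in a further residual set, no invariant graph is made entirely of periodic points. I would stratify by period and size. Let $G_{n,R}$ be the set of $h$ that admit an invariant graph of periodic points of period $\le n$ in the closed region $\{|x|\le R\}$. Limits of such graphs, taken in the uniform topology, are again invariant graphs of the same kind, so $G_{n,R}$ is closed by compactness (Arzelà–Ascoli on graphs, using that period-bounded periodic graphs are uniformly controlled).

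Next, $\cH\setminus G_{n,R}$ is dense. A whole curve of period-$q$ points forces a functional identity on $h$: along the graph, $T^q=\id$ holds identically. This is a closed condition of infinite codimension, broken by a local perturbation of $\rho_h$ supported on a tiny arc. The perturbation can be chosen to stay singular and in $\cH$, and it moves the reflection points of only part of the orbit.

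Curves escaping to infinity are also excluded at this stage. Indeed, $\cR\cap\{$invariant graphs of bounded period$\}$ is intersected over all $n,R$, and every invariant graph lies in some $\{|x|\le R\}$ with a fixed rotation number. I expect this to be the main obstacle: making the perturbation argument uniform, and handling graphs with irrational rotation number. Those are excluded outright by rigidity, since a graph of periodic points must have rational rotation number $p/q$ with period $q$.

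\emph{Step 3: escaping orbits.} With $\cR=\cS\cap\bigcap_{n,R}(\cH\setminus G_{n,R})$, consider the exterior annulus. Its first-return structure near infinity is a monotone twist-type homeomorphism in the contact-angle / tangent-length coordinates; this is where central symmetry gives the twist. By a Birkhoff theorem, every invariant essential continuum of an area-preserving twist map is a Lipschitz graph. Hence the whole exterior $\R^2\setminus K_h$ is a single Birkhoff region of instability. Mather's diffusing mechanism, via minimizing connecting orbits in the region of instability, then gives orbits that move from the boundary of $K_h$ to arbitrarily large tangent length. Escape to infinity, as opposed to mere unboundedness, follows from the ``going to infinity'' version of Mather's result for unbounded instability regions, using orbits that shadow a sequence of Aubry–Mather sets with rotation numbers tending to the limit value at infinity.

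Finally, closeness to disks and ellipses follows from the density of $\cR$ applied at $h=1$ and at ellipse support functions.
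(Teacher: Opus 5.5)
Your three-step architecture, and the form $\cR=\cS\cap\bigcap(\text{open dense sets})$, match the paper. Step 1 is essentially the paper's lemma on $\cS$. You should drop the ``tiny smooth adjustment'' of the $\cos2\theta,\sin2\theta$ moments: it is unnecessary, since any positive atomless full-support $\rho$ on $\Tp$ yields a table via $h_\rho=\tfrac12|\sin|*\rho$, and adding a smooth density would destroy pure singularity.

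The first genuine gap is in Step 2, where both halves of the Baire argument are unsupported. Closedness of $G_{n,R}$ via Arzel\`a--Ascoli needs equicontinuity of invariant height functions, and you have no source for it. When $\rho_h$ is singular, $T$ is not even Lipschitz, so no Birkhoff-type Lipschitz bound is available, and a limit of invariant graphs could a priori be an invariant continuum that is not a graph. More seriously, the density claim is only a heuristic. Breaking one periodic graph with a local bump says nothing about whether the perturbed table carries another one, and ``infinite codimension'' is not a mechanism. Two ideas are missing.
\begin{enumerate}
\item A robust open condition tested at a single angle. The paper's $\cU_{q,m}$ asks for one $\theta_*$ with $\tilde F_h^{\,q}(\theta_*,s)\ne(\theta_*+\pi,s)$ for all $s\in[1/m,m]$. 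This set is open by continuity of $(h,\theta,s)\mapsto\tilde F_h^{\,q}(\theta,s)$ and compactness of $[1/m,m]$. Every table carrying a half-turn graph of type $q$ with heights in $[1/m,m]$ lies outside it, so no compactness of graphs is ever needed.
\item A finiteness mechanism for density. Half-turn orbits are exactly the critical points of the action $A_h$ on $\cC_q^+(h)$. A Sard-type perturbation within \emph{smooth} positively curved tables makes all such critical points in a compact region nondegenerate, hence finitely many. Then all but finitely many angles mod $\pi$ serve as $\theta_*$.
\end{enumerate}
The perturbed table need not be singular; insisting on that would block the transversality argument. Singularity is recovered only at the end, by intersecting with $\cS$ via Baire.

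Step 3 invokes smooth theory whose hypotheses fail here. $T$ is only a homeomorphism with a topological twist, acting on a noncompact annulus of infinite area. So Birkhoff's Lipschitz-graph theorem, Aubry--Mather sets and Poincar\'e recurrence are all unavailable. Three things are needed.
\begin{enumerate}
\item Compactify to a closed annulus $\A$ on which the extension $\hat T$ is $\cV$-monotone, so that Le Calvez's topological versions of the Birkhoff and Mather theorems apply. Escape to infinity then becomes convergence to the circle at infinity $C_1$, with no shadowing of Aubry--Mather sets.
\item Handle non-wandering by a dichotomy. If $\hat T$ has a wandering open set, area preservation already makes Lebesgue-a.e.\ point of it escape to infinity.
\item Control invariant graphs touching the boundary circles, which Step 2 does not exclude. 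A graph meeting $C_1$ must be $C_1$ itself: rotation number $-\tfrac12$ with bounded deviation forces $\sum_j d_j<2\pi$, contradicting $\sum_j d_j=\infty$. Graphs meeting $\partial K_h$ are not excluded, but they lie in $\{|x|\le 12M\}$ with $M:=\max_{y\in\partial K_h}|y|$. One then takes the maximal such graph $\Gamma_*$ and applies Le Calvez's Mather-type theorem on the annulus between $\Gamma_*$ and $C_1$.
\end{enumerate}
Without this, your assertion that the whole exterior is a single region of instability, with orbits leaving a neighbourhood of $\partial K_h$, is unjustified.
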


\medskip

\begin{remark}\label{rem:aftermainthm} 
{\bf (i)} The role of purely singular radius of curvature measures can be intuitively explained by saying that in the class of 
$C^1$ and strictly convex domains we are interested in those that behave (as far as their outer billiard dynamics is concerned) the most similarly to polygons. This polygon-like dynamical behaviour -- with its combination of {\it edge-like} and {\it vertex-like} points -- is, in fact, the source of a rigidity phenomenon on which the whole proof rests (see Remark \ref{rem:commentDT} and Section \ref{sec:heuristic}  for a more detailed explanation).

 Recall that for a convex polygon $P$  with edges $e_1,\dots,e_n$ (listed in counterclockwise order), if $\theta_k\in\R/2\pi\Z$ denotes the direction of the outward normal $\bn(\theta_k)$ of the edge $e_k$, then
\[
\rho_{h_P}=\sum_{k=1}^n\ell_k\,\delta_{\theta_k},
\]
where $h_P$ denotes the support function of $P$ and $\ell_k>0$
 the length of $e_k$.
In particular, $\rho_{h_P}$ is purely atomic: the edges correspond to the atoms, while the corners are the gaps in the support.
However, polygons are neither $C^1$ nor strictly convex: in order to stay  ``as close as possible'' to polygons, yet to have  $C^1$ and strictly convex domains, we need to consider purely singular $\rho_h$ with full support. This will be discussed more  thoroughly in Section \ref{sec:cHproof} (see also Remark \ref{rem:H}).
\\
{\bf (ii)} The Main Theorem does not conflict with the fact that for $C^6$ tables all orbits are bounded: smooth tables with positive curvature form a meager subset of $\cH$, and for them $\rho_h$ is absolutely continuous, so they lie outside $\cR$.\\
{\bf (iii)} Since the outer billiard maps about disks or ellipses are integrable (see, for instance, \cite{Bia24, Tab95}), Main Theorem (iii) can be interpreted as a manifestation of {\it Arnold diffusion} for $C^1$-generic perturbations of those integrable maps.\\
\end{remark}

\medskip
\subsection{Strategy of the proof} \label{sec:strategy}

Let us outline the main steps involved in the proof of the Main Theorem.

\medskip

\noindent {\bf $\bullet$ Step 1 (Rigidity of invariant graphs):} The whole proof hinges on the following rigidity result for tables with purely singular radius of curvature measure, which essentially relies on the polygonal-like dynamical behaviour  of these domains.

\medskip
\begin{mainrigthm}\label{thm:rigidity-intro}
Let $h\in\cH$ and suppose that $\rho_h$ is purely singular. Let $K_h$ be the centrally symmetric, strictly convex body with $C^1$ boundary defined by $h$ and let $T_{K_h}$ denote the outer billiard map about $K_h$. Then, every continuous $T_{ K_h}$-invariant curve in $\R^2\setminus K_h$ which is a graph over the contact angle consists of periodic points with the same period. 
\end{mainrigthm}

See Theorem~\ref{thm:rigidity} for a more detailed statement.
In Section~\ref{sec:heuristic}, we will provide a heuristic description of the mechanism that forces invariant graphs to be periodic. This is a result of the rigid and complementary behaviours of the map at edge-like and vertex-like points (see also Remark \ref{rem:commentDT}).

\medskip

\noindent {\bf $\bullet$ Step 2 (Destruction of invariant graphs):} 
For tables with purely singular radius of curvature measure,
the Rigidity Theorem reduces the elimination of invariant
graphs to the elimination of invariant graphs consisting
entirely of periodic points, which is far more tractable (and quite classical).
In fact,  periodic configurations are the critical points of an explicit action functional (Section~\ref{sec:action}), and one can show that for each period $q$ and each compact region of the phase space, 
there exists an open dense subset in $\cH$, for which the corresponding outer billiard maps do not admit periodic graphs of period $q$ in that compact region (Theorem~\ref{thm:dense}). Combining these open dense sets with the residual set
of tables having purely singular radius-of-curvature
measure yields the residual set $\cR$
(see Section~\ref{sec:baire}).

\medskip
\noindent {\bf $\bullet$ Step 3 (Diffusing orbits in a Birkhoff region of instability):}
The final step consists in showing the existence of unbounded orbits for billiard tables corresponding to elements of $\cR$. The passage from ``{\it no invariant curves}'' to ``{\it existence of orbits escaping to infinity}''  exploits Mather's mechanism for diffusing in a Birkhoff region of instability~\cite{Mat91}. Since in our setting the outer billiard map is only a homeomorphism, we use P.~Le Calvez's  topological formulation~\cite{LC23}. This  will complete the proof of the  Main Theorem. \\

\begin{remark}\label{rem:step2}
In Appendix \ref{app:Radon}  we construct examples of domains that are $C^1$, centrally symmetric, strictly convex, with purely singular radius of curvature measure, yet they admit an invariant curve consisting of periodic points of period $4$ (these tables  can be constructed arbitrarily $C^1$-close to  disks or ellipses). We remark that this  does not prevent a priori the existence of unbounded orbits for these tables. 
\end{remark}

\medskip


\subsection {Organization of the article} 
The rest of the article is organized as follows.
\begin{itemize}
\item In {\bf Section \ref{sec:prelim}} we introduce the setting and discuss some preliminary material. In particular, in {\bf Section \ref{sec:cHproof}} we prove that $\cH$ consists exactly  of support functions of centrally symmetric, strictly convex compact bodies whose boundary is a regular $C^1$ curve. In {\bf Section \ref{subsec:T}} we recall the main properties of the outer billiard map and we introduce the {\it sign-changed} map $F=-T$, that is better suited for studying the dynamics for centrally symmetric domains.

\smallskip
\item In {\bf Section \ref{sec:rigidity}} we prove the Rigidity Theorem, which represents the core of our approach (see Theorem \ref{thm:rigidity}). We will first consider the case of invariant graphs with a central symmetry (see {\bf Section \ref{sec:step1}}), providing a heuristic explanation of the main ideas involved ({\bf Section \ref{sec:heuristic}}). The general case will be treated in {\bf Section \ref{sec:step2}}, via a {\it symmetrization} argument.

\smallskip
\item In {\bf Section \ref{sec:variational}} we prove the $C^1$-generic absence of invariant graphs consisting of periodic points, which -- because of the Rigidity Theorem -- suffices to show the $C^1$-generic absence of invariant graphs. In particular, in {\bf Section \ref{sec:action}} we recall the variational principle that underlies the dynamics. The residual set $\cR$ of the Main Theorem will be constructed in {\bf Section \ref{sec:baire}}, yielding the proof of parts (i) and (ii) of the Main Theorem (see Theorem \ref{thm:no-graphs}).

\smallskip
\item In {\bf Section \ref{sec:topology}} we describe Le Calvez's  topological formulation of Mather's mechanism for constructing diffusing orbits in a Birkhoff region of instability. This will allow us to prove the existence of unbounded orbits for outer billiard maps corresponding to $h\in \cR$, thus completing the proof of the Main Theorem (iii) (see {\bf Section \ref{subsec:main-proof}}).

\smallskip 
\item In {\bf Appendix \ref{app:proofs}} we collect the proofs of the properties of the outer billiard map stated in Section \ref{subsec:T}.

\smallskip 
\item In {\bf Appendix \ref{app:Radon}} we construct the  tables mentioned in Remark \ref{rem:step2}.

\medskip

\end{itemize}

\subsection*{Acknowledgements}
The author is grateful to Sergei Tabachnikov and Ke Zhang for many valuable discussions on outer billiards, which motivated this project and were pivotal for its development.
This article was completed while the author was a Member of the Institute for Advanced Study in Princeton; he thanks the Institute and the anonymous donor, whose support made his membership in the 2026--27 academic year possible, for the opportunity to work in such a stimulating environment.
The author also acknowledges the support of the Department of Excellence grant MatMod@TOV (2023--27), awarded to the Department of Mathematics of the University of Rome Tor Vergata. He is a member of the INdAM research group GNAMPA and of the UMI group DinAmicI.

\bigskip

\section{Setting and some background material}\label{sec:prelim}

\medskip\noindent\textbf{Notation.} Throughout this article, we adopt the following notation:
\begin{itemize}
\item $\Tp:=\R/\pi\Z$ and $\Tt:=\R/2\pi\Z$. 
\item For centrally symmetric bodies, we place the origin  at the center of symmetry of the body, so their support function is $\pi$-periodic.
\item We write $\bn(\theta):=(\cos\theta,\sin\theta)$ and $\bt(\theta):=(-\sin\theta,\cos\theta)$, so that $\bt=\bn'$ and $\bn=-\bt'$. 
\item Euclidean inner product and norm are $x\cdot y$ and $|x|$.
\item  $|B|$ or $\Leb(B)$ is the Lebesgue measure of a set $B$. 
\item For a continuous function $u$ of bounded variation, $Du$ denotes its distributional derivative, which is an atomless finite signed measure.
\end{itemize}

\medskip

\subsection{The set $\cH$ and how to reconstruct a body from its radius of curvature measure}\label{sec:cHproof}

We introduced in \eqref{eq:H} the set
$$ \cH \;=\; \bigl\{\, h\in C^1(\R/\pi\Z)\;:\; h>0,\quad \rho_h\ge 0,\quad \supp\rho_h=\R/\pi\Z \,\bigr\},$$
endowed with the  $C^1$ topology, {\it i.e.}, $\|h\|_{C^1}:=\|h\|_{\infty} + \|h'\|_{\infty}$ for $h\in \cH$. \\

In the following proposition we show that  $\cH$ consists exactly  of support functions of centrally symmetric, strictly convex compact bodies whose boundary is a regular $C^1$ curve.\\

\begin{proposition}\label{prop:reconstruction}
Let $\rho$ be a finite positive atomless Borel measure on $\Tp$ with $\supp\rho=\Tp$, and set
\begin{equation}\label{eq:h-from-rho}
h_\rho(\theta)\;:=\;\frac12\int_{\Tp}|\sin(\theta-u)|\dd\rho(u),\qquad \theta\in\Tp .
\end{equation}
Then:
\begin{enumerate}[label=\textup{(\roman*)}]
\item $h_\rho\in C^1(\Tp)$, $h_\rho>0$, and $h_\rho\dd\theta+Dh_\rho'=\rho$; in particular $h_\rho\in\cH$.
\item The curve $$p(\theta):=h_\rho(\theta)\bn(\theta)+h_\rho'(\theta)\bt(\theta) \quad \theta\in\Tt$$ is a Jordan curve, which bounds a compact strictly convex body $K_{h_\rho}=-K_{h_\rho}$ whose support function is $h_\rho$, and $p(\theta)$ is the unique point of $K_{h_\rho}$ at which $x\mapsto x\cdot\bn(\theta)$ attains its maximum $h_\rho(\theta)$. The distributional derivative of $p$ is
\begin{equation}\label{eq:Dp}
Dp=\hat\rho \, \bt,
\end{equation}
where $\hat\rho$ is the ($\pi$-periodic) lift of $\rho$ to $\Tt$. \\
Moreover, parametrized by arclength, $\partial K_{h_\rho}$ is a regular $C^1$ curve with unit tangent $\bt(\theta)$ and outward normal $\bn(\theta)$ at $p(\theta)$, and its length is $|\partial K_{h_\rho}|=\int_0^{2\pi}h_\rho\dd\theta$.
\item Conversely, every $h\in\cH$ is given by \eqref{eq:h-from-rho} with $\rho=\rho_h$.
\end{enumerate}
\end{proposition}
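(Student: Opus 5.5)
The plan is to treat \eqref{eq:h-from-rho} as the convolution of $\rho$ with the kernel $k(s)=\tfrac12|\sin s|$ on $\Tp$. This kernel is a fundamental solution of $k+k''=\delta_0$ in the distributional sense on $\Tp$. Indeed, $k$ is smooth away from $0$, where $k''=-k$, and at $s=0$ its derivative $k'(s)=\tfrac12\sgn(\sin s)\cos s$ jumps by $+1$.

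\textbf{Part (i).} Differentiating under the integral gives
\[
h_\rho'(\theta)=\tfrac12\int\sgn(\sin(\theta-u))\cos(\theta-u)\dd\rho(u).
\]
This is legitimate because $k$ is Lipschitz. The integrand is discontinuous only on the set $\{u=\theta\}$, which is $\rho$-null since $\rho$ is atomless. Dominated convergence then shows that $h_\rho'$ is continuous, so $h_\rho\in C^1$. Convolving the distributional identity $k+Dk'=\delta_0$ with $\rho$ yields $h_\rho\dd\theta+Dh_\rho'=\rho$. For positivity, note that $|\sin(\theta-u)|>0$ for $u\neq\theta$ and that $\rho\neq0$, because it has full support. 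Together with $\rho\ge0$ and $\supp\rho=\Tp$, this gives $h_\rho\in\cH$.

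\textbf{Part (ii).} With $p=h\bn+h'\bt$ and $\bn'=\bt$, $\bt'=-\bn$, we get in the sense of distributions
\[
Dp=h\bt+(Dh')\bt-h'\bn+h'\bn=(h\dd\theta+Dh')\bt=\hat\rho\,\bt .
\]
This proves \eqref{eq:Dp}. In particular $p$ is continuous of bounded variation, and $p(\theta)=p(\theta_0)+\int_{(\theta_0,\theta]}\bt(u)\dd\hat\rho(u)$.

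To show that $p(\theta)$ uniquely maximizes $x\mapsto x\cdot\bn(\theta)$ on the curve, I would use
\[
\bigl(p(\varphi)-p(\theta)\bigr)\cdot\bn(\theta)=\int_\theta^\varphi\bt(u)\cdot\bn(\theta)\dd\hat\rho(u)=-\int_\theta^\varphi\sin(u-\theta)\dd\hat\rho(u).
\]
For $\varphi\in(\theta,\theta+\pi]$ this integral is $<0$ by full support, and the range $\varphi\in(\theta-\pi,\theta)$ is handled symmetrically. Hence $p(\theta)$ is the unique maximizer, and the maximum value $p(\theta)\cdot\bn(\theta)$ equals $h_\rho(\theta)$.

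Let $K$ be the convex hull of the curve $p$. Then $h_K=h_\rho$, and each supporting line meets $K$ in exactly one point, so $K$ is strictly convex. The map $\theta\mapsto p(\theta)$ is injective, since distinct $\theta$ give distinct unique maximizers by the strict inequality above. Hence $p$ is a Jordan curve, and it equals $\partial K$ because every boundary point of a convex body is exposed by some normal. Central symmetry follows from $p(\theta+\pi)=-p(\theta)$, a consequence of $h$ being $\pi$-periodic.

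For regularity, I would reparametrize by arclength $s(\theta)=\hat\rho((\theta_0,\theta])$. Since $\rho$ is atomless with full support, $s$ is a continuous strictly increasing bijection. Its inverse $\theta(s)$ is therefore continuous, and $\frac{dp}{ds}=\bt(\theta(s))$ is continuous and of unit length, so the curve is a regular $C^1$ curve. The total length is
\[
\hat\rho(\Tt)=\int_0^{2\pi}\bigl(h\dd\theta+Dh'\bigr)=\int_0^{2\pi}h\dd\theta,
\]
since $\int Dh'=0$ by periodicity.

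\textbf{Part (iii).} Given $h\in\cH$, set $\rho=\rho_h$. This measure is positive, and I expect it to be atomless precisely because $h'$ is continuous (so $Dh'$ has no atoms) and $h\dd\theta$ has none either; it has full support by definition of $\cH$. By part (i), $g=h-h_\rho$ satisfies $g+g''=0$ distributionally on $\Tp$, so $g=a\cos\theta+b\sin\theta$. Such a $g$ is $\pi$-antiperiodic, and since it must also be $\pi$-periodic, $g=0$.

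I expect the main obstacle to be part (ii): establishing the injectivity and the $C^1$ regularity of the arclength reparametrization with full rigor from the measure-theoretic data. It is the one step where atomlessness and full support are both genuinely used.
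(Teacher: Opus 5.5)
Your proposal is correct and follows essentially the same route as the paper: the kernel $\tfrac12|\sin|$ with $k+k''=\delta_0$, $Dp=\hat\rho\,\bt$, strict maximization from $\int\sin(u-\theta)\dd\hat\rho$, the convex hull, the arclength reparametrization, and $u+u''=0$ for uniqueness. The differences are minor: you differentiate under the integral directly instead of via Fubini, and you identify $\partial K$ with $p(\Tt)$ through supporting lines rather than the fact that a Jordan curve has no proper Jordan subcurve; there, note that ``every boundary point of a convex body is exposed'' is false in general (e.g.\ a stadium), and what you actually need, and already have, is that every boundary point lies on some supporting line while each supporting line meets $K$ only at $p(\theta)$.
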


\medskip

\begin{proof}
(i) Let  $k(\theta):=\tfrac12|\sin\theta|$, which  is a $\pi$-periodic Lipschitz function.
Observe that $h_{\rho}=k*\rho$, where $*$ denotes the convolution.

\smallskip
Since $h_{\rho}$ is the convolution of a continuous function with a finite measure, then it is continuous. Using the fact that $k''+k=\delta_0$ in the sense of distributions on $\Tp$, we obtain that  $h_{\rho}''+h_{\rho}=(k''+k)*\rho=\rho$ distributionally.

\smallskip
We have that $k'(\theta)=\tfrac12\sgn(\sin\theta)\cos\theta$ for $\theta\notin\pi\Z$. Define the function $$h_1(\theta):=\int k'(\theta-u)\dd\rho(u),$$ which is well defined since $\rho(\{\theta\})=0$, and it is continuous by dominated convergence.
Fubini's theorem gives $h_{\rho}(\theta)-h_{\rho}(\theta_0)=\int_{\theta_0}^{\theta}h_1(\sigma)\dd\sigma$. Thus $h_{\rho}\in C^1$ with $h_{\rho}'=h_1$. 

\smallskip
Positivity of $h_{\rho}$ follows from the fact that  $k(\theta-u)>0$ unless $u\equiv\theta$, a $\rho$-null set, and $\rho\neq 0$. 
Therefore, $\rho_{h_{\rho}}=\rho$ is positive with full support, so $h_{\rho}\in\cH$.\\

\noindent (ii) Since $Dh_{\rho}'=\rho-h_{\rho}\dd\theta$ is a finite measure, $h_{\rho}'$ is a continuous function of bounded variation, and so is $p$. Product differentiation gives, as vector-valued measures on $\Tt$,
\[
Dp=(h_{\rho}'\bn+h_{\rho}\bt)\dd\theta+\bt\,Dh_{\rho}'+h_{\rho}'\,(-\bn)\dd\theta=(h_{\rho}\dd\theta+Dh_{\rho}')\, \bt=\hat\rho \,\bt,
\]
which proves \eqref{eq:Dp}. Hence, for $\alpha<\theta<\alpha+\pi$,
\begin{equation}\label{eq:strict}
\bigl(p(\theta)-p(\alpha)\bigr)\cdot\bn(\alpha)=\int_{(\alpha,\theta]}\bt(u)\cdot\bn(\alpha)\dd\hat\rho(u)=-\int_{(\alpha,\theta]}\sin(u-\alpha)\dd\hat\rho(u)<0,
\end{equation}
because $\sin(u-\alpha)>0$ on $(\alpha,\theta]$ and $\hat\rho((\alpha,\theta])>0$ by full support. For $\alpha-\pi<\theta<\alpha$ one finds similarly 
$$(p(\theta)-p(\alpha))\cdot\bn(\alpha)=\int_{(\theta,\alpha]}\sin(u-\alpha)\dd\hat\rho(u)<0.$$ Therefore
\begin{equation}\label{eq:unique-max}
p(\theta)\cdot\bn(\alpha)<p(\alpha)\cdot\bn(\alpha)=h_\rho(\alpha)\qquad\text{for all }\theta\not\equiv\alpha \pmod{2\pi}.
\end{equation}
In particular $p$ is injective, so $\Gamma:=p(\Tt)$ is a Jordan curve. 

\medskip
Let $K_{h_\rho}:=\Hull(\Gamma)$ be the convex hull of $\Gamma$. We use the following elementary
observation: if $\phi$ is a linear functional and $M:=\max_\Gamma\phi$, then
$\max_{\Hull(\Gamma)}\phi=M$ and
$$\{x\in\Hull(\Gamma):\phi(x)=M\}=\Hull\{c\in \Gamma:\phi(c)=M\}.$$ Indeed, writing
$x\in\Hull(\Gamma)$ as a finite convex combination $x=\sum_i\lambda_ic_i$ with
$c_i\in \Gamma$ and $\lambda_i>0$, linearity gives $\phi(x)=\sum_i\lambda_i\phi(c_i)
\le M$, with equality if and only if $\phi(c_i)=M$ for every $i$. 

\smallskip
Applying this
to the linear functional $x\mapsto x\cdot\bn(\alpha)$, whose set of maximisers on $\Gamma$ is the
single point $p(\alpha)$ by \eqref{eq:unique-max}, we conclude that
$$h_{K_{h_\rho}}(\alpha)=\max_{K_{h_\rho}}x\cdot\bn(\alpha)=h_{\rho}(\alpha) \qquad \forall\;\alpha,$$ and that the
supporting line $\{x\cdot\bn(\alpha)=h_{\rho}(\alpha)\}$ meets $K_{h_\rho}$ only at $p(\alpha)$.
Thus every $p(\alpha)$ is an exposed point of $K_{h_\rho}$ and $\Gamma\subset\partial K_{h_\rho}$;
since a Jordan curve has no proper subset homeomorphic to a circle, we conclude that $\partial K_{h_\rho}=\Gamma$. \\
Every supporting line of $K_{h_\rho}$ meets $K_{h_\rho}$ in exactly one point, so $K_{h_\rho}$ is strictly convex. Central symmetry follows from $p(\theta+\pi)=-p(\theta)$.

\medskip
For the arclength parametrization, note that by \eqref{eq:Dp} the total variation measure of $p$ is $\hat\rho |\bt| =\hat\rho$, so the length of the arc $p([\alpha,\beta])$ is $\hat\rho([\alpha,\beta])$ and the total length is 
$$\hat\rho(\Tt)=2\rho(\Tp)=2\int_{\Tp}h_\rho\dd\theta=\int_0^{2\pi}h_\rho\dd\theta,$$ 
where we used $\int_{\Tp}Dh_\rho'=0$.

\smallskip
 Define the \emph{arclength function} $\ell:\R\to\R$ and the \emph{arclength parametrization} $\gamma:\R\to\partial K$ by
\begin{equation}\label{eq:arclength}
\ell(\theta):=\begin{cases}\hat\rho([0,\theta)),&\theta\ge0,\\ -\hat\rho([\theta,0)),&\theta<0,\end{cases}
\qquad\qquad \gamma:=p\circ\ell^{-1},
\end{equation}
so that $\ell(\theta)$ is the signed length of the boundary arc from $p(0)$ to $p(\theta)$, $\ell(\theta+2\pi)=\ell(\theta)+L$ with $L$ the length, and $p=\gamma\circ\ell$. Since $\hat\rho$ is atomless with full support, $\ell$ is continuous and strictly increasing, hence an increasing homeomorphism of $\R$, and the push-forward $\ell_\#\hat\rho$ is the Lebesgue measure. Then, by \eqref{eq:Dp} and the change of variables $\sigma=\ell(u)$,
\[
\gamma(\sigma_1)-\gamma(\sigma_0)=\int_{[\ell^{-1}\sigma_0,\ell^{-1}\sigma_1)}\bt(u)\dd\hat\rho(u)=\int_{\sigma_0}^{\sigma_1}\bt\bigl(\ell^{-1}(\sigma)\bigr)\dd\sigma ,
\]
so $\gamma$ is $C^1$ and $L$-periodic with $\gamma'(\sigma)=\bt(\ell^{-1}(\sigma))$. This is the regular $C^1$ parametrization. 

\bigskip
\noindent (iii) Let $h\in\cH$ and $\rho=\rho_h$. As noted in  Remark \ref{rem:Hintro}, $\rho$ is atomless (an atom of $Dh'$ would be a jump of the continuous function $h'$), positive, finite and of full support. Let $\tilde h$ be defined from $\rho$ by \eqref{eq:h-from-rho}. Then $u:=h-\tilde h\in C^1(\Tp)$ satisfies $u+u''=0$, therefore $u=A\cos\theta+B\sin\theta$; being $\pi$-periodic, this implies $u\equiv0$.
\end{proof}

\bigskip

\begin{remark}\label{rem:regularities}
The angular parametrization $\theta\mapsto p(\theta)$ is merely continuous and of bounded variation, even though $\partial K$ is a regular $C^1$ curve. If $\rho_h$ is purely singular, then $p$ is differentiable with $p'(\theta)=0$ for Lebesgue-almost every $\theta$. 
In that case the arclength $\ell(\theta)$ is a continuous strictly increasing singular function. 
The tangent angle, as a function
of arclength, is a singular continuous increasing function, and its derivative vanishes almost everywhere and all its increase is carried by a Lebesgue-null set (like a {\it Cantor's staircase} at all scales).
\smallskip
In fact, consider the
real lift $\ell:\R\to\R$ defined in \eqref{eq:arclength},
and write $\vartheta:=\ell^{-1}$.
Since $D\ell$ is purely singular, there is a Borel set
$E\subset\R$ such that
\[
\Leb(E)=0,
\qquad
D\ell(\R\setminus E)=0.
\]
For every Borel set $A\subset\R$, we have
\[
\Leb(\ell(A))=D\ell(A),
\qquad
D\vartheta(\ell(A))=\Leb(A).
\]
Indeed, these identities hold first on intervals, by the
definition of the Stieltjes measures, and then on all
Borel sets by uniqueness of locally finite measures.
Set $N:=\ell(\R\setminus E)$. The first identity gives
\[
\Leb(N)=D\ell(\R\setminus E)=0.
\]
Since $\ell$ is bijective, $\R\setminus N=\ell(E)$, and
the second identity yields
\[
D\vartheta(\R\setminus N)
=
D\vartheta(\ell(E))
=
\Leb(E)
=
0.
\]
Thus $D\vartheta$ is concentrated on the Lebesgue-null
set $N$. Since $\vartheta$ is continuous and strictly
increasing, it is a singular continuous increasing
function, with $\vartheta'(\sigma)=0$ for
Lebesgue-almost every arclength parameter $\sigma$.
The normal angle is $\vartheta(\sigma)$, and the tangent
angle is $\vartheta(\sigma)+\pi/2$, so the same conclusion
holds for the tangent angle.
\end{remark}

\medskip

\begin{remark}\label{rem:H}
Let $K=-K$ be a compact convex body with $0\in\Int K$ and support function $h_K$, which is then $\pi$-periodic, positive and Lipschitz. The following facts are classical (see \cite[Sections 1.7 and 4.2]{Sch14}): 
\begin{enumerate}[label=\textup{(\alph*)}]
\item \emph{Convexity.} $h_K+h_K''\ge0$ in the sense of distributions, and conversely every positive $\pi$-periodic $h$ with $h+h''\ge0$ is the support function of a convex body $K=-K$. The measure $\rho_{h_K}=h_K\dd\theta+Dh_K'$ is the length measure of $\partial K$ in the normal angle: $\rho_{h_K}(I)$ is the length of the boundary arc whose outward normals lie in the arc $I$. When $\partial K$ is $C^2$ with everywhere positive curvature $\kappa$, $\rho_{h_K}=\kappa^{-1}\dd\theta$.
\item \emph{Segments $\leftrightarrow$ atoms.} $h_K$ is differentiable at $\theta$ if and only if the face $$\{x\in K: x\cdot\bn(\theta)=h_K(\theta)\}$$ is a single point; in that case $h_K'(\theta)$ is the $\bt(\theta)$-coordinate of that point. A segment of $\partial K$ of length $L$ with outward normal $\bn(\theta_0)$ is a jump of $h_K'$ at $\theta_0$, {\it i.e.},  an atom of mass $L$ of $\rho_{h_K}$ at $\theta_0$. Hence
\[
h_K\in C^1\quad\Longleftrightarrow\quad\rho_{h_K}\text{ is atomless}\quad\Longleftrightarrow\quad K\text{ is strictly convex.}
\]
\item \emph{Corners $\leftrightarrow$ gaps in the support.} A boundary point $y$ is a corner (has more than one supporting line) if and only if its outward normals fill a non-degenerate arc $[\alpha,\beta]$; then $p(\theta)=y$ for $\theta\in[\alpha,\beta]$ and $\rho_{h_K}$ vanishes on $(\alpha,\beta)$. Conversely, if $\rho_{h_K}$ vanishes on an open arc, the contact point is constant there and is a corner. Hence
\[
\supp\rho_{h_K}=\Tp\quad\Longleftrightarrow\quad\partial K\text{ has no corners.}
\]
\end{enumerate}

\smallskip
We stress that the $C^1$ regularity of $\partial K$ asserted in Proposition~\ref{prop:reconstruction} (ii)  comes from (c), not from (b): it is because the arclength function $\ell(\theta)=\hat\rho([0,\theta))$ is \emph{strictly} increasing (full support) as well as continuous (no atoms) that its inverse is a homeomorphism and the unit tangent $\bt(\ell^{-1}(\sigma))$ is a continuous function of arclength. If $\rho_{h_K}$ vanished on an arc $(\alpha,\beta)$, the function $\ell$ would be constant there, $\ell^{-1}$ would jump from $\alpha$ to $\beta$, and the tangent direction would jump by $\beta-\alpha$ at the corresponding arclength, producing a corner. 
\end{remark}

\medskip

\subsection{The outer billiard map}\label{subsec:T}
In this section we describe the main properties of the outer billiard map that we  need in the following, and introduce a new map better suited to studying centrally symmetric domains.
Since most of the facts are quite well-known and the proofs are relatively elementary, to keep the presentation smooth, we collect all proofs  in Appendix~\ref{app:proofs}.

\medskip

Throughout this section $h\in\cH$, $K:=K_h$, and $p$, $\bn$, $\bt$ are as in Proposition~\ref{prop:reconstruction}; observe that $\bt(\theta)$ is the unit tangent to $\partial K$ at $p(\theta)$ pointing in the counterclockwise direction.

\medskip

The {\it outer billiard map} reflects a point in the exterior of $K$  through the point at which a supporting line through it touches $K$ (see Figure \ref{figure:dual_billiard}). 
Since $K$ is strictly convex, every supporting line touches $K$ at exactly one point; 
but a point $x\notin K$ lies on exactly two supporting lines, and we always choose the one whose contact point $p(\theta)$ satisfies $x=p(\theta)+s\,\bt(\theta)$ with $s>0$, that is,  the ray from the contact point through $x$ points in the direction in which $\partial K$ is traversed counterclockwise.
See Figure \ref{fig:outer}.\\

\begin{definition}\label{def:contact}
Let $x\in \R^2\setminus K$. We denote by $\theta_K(x)$  the {\it contact angle}, {\it i.e.}, the one singled out by the supporting line determined by the convention above, and $s_K(x)$ the distance from $x$ to the contact point ({\it tangent length}). Similarly, we define $\check\theta_K(x)$, $\check s_K(x)$ to be  the corresponding quantities for the other supporting line, on which $x$ lies behind the contact point (Figure~\ref{fig:outer}).
\end{definition}

\begin{figure}[ht]
\centering
\begin{tikzpicture}[scale=1.25,>=Stealth]
\draw[thick] (0,0) ellipse (2 and 1.2);
\node at (0.2,-0.3) {$K$};
\coordinate (p) at (1,1.039);
\coordinate (x) at (-1.079,1.759);
\coordinate (Tx) at (3.079,0.319);
\coordinate (q) at (-1.884,0.403);
\draw[gray] ($(x)!-0.35!(Tx)$) -- ($(Tx)!-0.15!(x)$);
\draw[gray,dashed] ($(q)!-0.6!(x)$) -- ($(x)!-0.25!(q)$);
\fill (p) circle (1.5pt) node[above right=-1pt,font=\small] {$p(\theta)$};
\fill (x) circle (1.5pt) node[above left=-1pt,font=\small] {$p(\theta)+s\,\bt(\theta)=:x$};
\fill (Tx) circle (1.5pt) node[below right=-1pt,font=\small] {$T(x)=p(\theta)-s\,\bt(\theta)$};
\fill (q) circle (1.2pt) node[left,font=\small] {$p(\check\theta_K(x))$};
\draw[->,thick,black!70!black] (p) -- ($(p)+0.9*(0.327,0.945)$) node[right,font=\small] {$\bn(\theta)$};
\draw[->,thick,black!70!black] (p) -- ($(p)+0.9*(-0.945,0.327)$) node[above,font=\small] {$\bt(\theta)$};
\node[font=\small,black] at ($(p)!0.5!(x)+(-0.4, 0)$) {$s$};
\node[font=\small,black] at ($(p)!0.5!(Tx)+(0.25,-0.2)$) {$s$};
\end{tikzpicture}
\caption{\small The exterior point $x$ has contact angle $\theta=\theta_K(x)$ and tangent length $s=s_K(x)$, and $T(x)$ is its reflection through the contact point $p(\theta)$. The second supporting line through $x$ (dashed) touches $K$ at $p(\check\theta_K(x))$. The angle $\theta$ is the direction of the outward normal $\bn(\theta)$ at $p(\theta)$.}
\label{fig:outer}
\end{figure}

\bigskip

The following holds:

\begin{lemma}\label{lem:chart}
The map
\[
\Psi=\Psi_h:\ \Tt\times(0,\infty)\to\R^2\setminus K,\qquad \Psi(\theta,s):=p(\theta)+s\,\bt(\theta),
\]
is a homeomorphism with inverse $x\mapsto(\theta_K(x),s_K(x))$. The contact angle $\theta_K$ is of class $C^1$ on $\R^2\setminus K$, with
\begin{equation}\label{eq:Dtheta}
D\theta_K(x)\,v=-\frac{v\cdot\bn(\theta_K(x))}{s_K(x)}\qquad v\in\R^2,
\end{equation}
and the tangent length $s_K$ is continuous. Likewise $\check\theta_K$ is $C^1$ with $D\check\theta_K(x)v=v\cdot\bn(\check\theta_K(x))/\check s_K(x)$.
\end{lemma}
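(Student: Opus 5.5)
The plan is to avoid differentiating $p$ altogether, since $p$ is only continuous and of bounded variation. Instead we encode the tangency condition through the support function $h$, which is $C^1$. Consider
\[
G(x,\theta):=x\cdot\bn(\theta)-h(\theta),\qquad (x,\theta)\in(\R^2\setminus K)\times\Tt .
\]
It is jointly $C^1$. The line $\{y: y\cdot\bn(\theta)=h(\theta)\}$ is the supporting line with outward normal $\bn(\theta)$. Moreover, $x$ lies on it exactly when $G(x,\theta)=0$, and the contact point is then $p(\theta)$ by Proposition~\ref{prop:reconstruction}(ii).

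Using $\bn'=\bt$ and $p=h\bn+h'\bt$, one computes
\[
\partial_\theta G(x,\theta)=x\cdot\bt(\theta)-h'(\theta)=(x-p(\theta))\cdot\bt(\theta).
\]
When $G(x,\theta)=0$, the vector $x-p(\theta)$ lies along $\bt(\theta)$. Hence $\partial_\theta G=s$ at the root with $x=p(\theta)+s\bt(\theta)$, and $\partial_\theta G=-\check s$ at the root with $x=p(\theta)-\check s\,\bt(\theta)$.

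\textbf{Bijectivity of $\Psi$.} $\Psi$ is continuous because $p$ and $\bt$ are. For $x\notin K$, convexity and compactness give exactly two supporting lines through $x$, with distinct contact points, and each contact point is unique by strict convexity. Along one of these lines $x$ lies ahead of its contact point in the $\bt$ direction, and along the other it lies behind. Indeed, the two sign choices cannot both be $+$ or both be $-$: this follows from the orientation of $\partial K$, by checking the signs of $\partial_\theta G$ at its two zeros. The function $\theta\mapsto G(x,\theta)$ is a $C^1$ function on the circle with exactly two zeros, both nondegenerate since $s,\check s>0$, so the derivatives at the zeros must have opposite signs. This argument gives surjectivity. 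Injectivity follows because $\Psi(\theta,s)=x$ forces $G(x,\theta)=0$ with $\partial_\theta G=s>0$, and that root is unique.

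\textbf{Regularity.} Fix $x_0$ with $\theta_0=\theta_K(x_0)$ and $s_0=s_K(x_0)>0$. The implicit function theorem yields a $C^1$ map $\vartheta$ near $x_0$ with $G(x,\vartheta(x))=0$, $\vartheta(x_0)=\theta_0$, and
\[
D\vartheta(x)v=-\frac{\partial_xG\cdot v}{\partial_\theta G}=-\frac{v\cdot\bn(\vartheta(x))}{(x-p(\vartheta(x)))\cdot\bt(\vartheta(x))}.
\]
We then need $\vartheta=\theta_K$ near $x_0$. The root $\vartheta(x)$ has $\partial_\theta G>0$ for $x$ close to $x_0$, by continuity of $\partial_\theta G$. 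Since $G(x,\cdot)$ has exactly two roots, with derivatives of opposite signs, $\vartheta(x)$ must be the $\bt$-forward root, namely $\theta_K(x)$. This gives the $C^1$ regularity and formula \eqref{eq:Dtheta}.

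Next, $s_K(x)=(x-p(\theta_K(x)))\cdot\bt(\theta_K(x))$ is a composition of continuous maps, hence continuous. Therefore $\Psi^{-1}=(\theta_K,s_K)$ is continuous, and $\Psi$ is a homeomorphism. The statement for $\check\theta_K$ follows identically, using the root with $\partial_\theta G=-\check s<0$, which flips the sign of the derivative formula.

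\textbf{Main obstacle.} The delicate point is the root-counting and sign argument that identifies the implicit-function branch with the conventionally chosen tangent. We need that $G(x,\cdot)$ has exactly two zeros on $\Tt$ for $x\notin K$. I would prove this geometrically: $G(x,\theta)>0$ exactly for normals $\bn(\theta)$ in the open arc of directions where $x$ is strictly outside the corresponding supporting half-plane. This set of directions is a single nonempty proper open arc, since $\{\theta: x\cdot\bn(\theta)>h(\theta)\}$ is the set of normals of lines separating $x$ from $K$. Its two endpoints are the two zeros, and at these endpoints $G$ changes sign in opposite senses, which settles the orientation question.
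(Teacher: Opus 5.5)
Your proposal is correct and follows essentially the paper's own route. You encode tangency via $G(x,\theta)=x\cdot\bn(\theta)-h(\theta)$, use that $G(x,\cdot)$ has exactly two simple zeros with derivatives of opposite signs (the paper's Lemma~\ref{lem:two-tangents}), and apply the implicit function theorem, identifying the branch with $\theta_K$ by the sign of $\partial_\theta G$; this identification is, if anything, slightly more direct than the paper's zero-convergence argument.

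The one step you leave implicit is that $\Psi(\theta,s)\notin K$ for $s>0$, which you need both for $\Psi$ to land in $\R^2\setminus K$ and for your injectivity argument. It follows at once from strict convexity, since the supporting line with normal $\bn(\theta)$ meets $K$ only at $p(\theta)$.
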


\medskip
\begin{remark}
We stress that the contact angle is a $C^1$ function of
the exterior point although the boundary parametrization
$\theta\mapsto p(\theta)$ need not be differentiable everywhere.
\end{remark}

\medskip

\begin{definition}\label{def:T}
The \emph{outer billiard map} about $K=K_h$ is given by
\[
T=T_h:\ \R^2\setminus K\to\R^2\setminus K,\qquad T(x):=2p(\theta_K(x))-x .
\]
In the $(\theta,s)$ coordinates,  $T(\Psi(\theta,s))=p(\theta)-s\,\bt(\theta)$.
\end{definition}

\medskip

\begin{remark}\label{rem:affinvariance}
The outer billiard map commutes with invertible orientation-preserving affine maps (it is {\it affinely equivariant}). Namely, if $A$ is
such a transformation and $K$ a convex body, then
$$ T_{A(K)} \circ A = A \circ T_K.$$
In particular, from the point of view of dual billiards, there is no difference between disks and ellipses.
\end{remark}

\medskip

We recall here some useful properties of $T$.\\

\begin{proposition}\label{prop:T-basic}
Let $h\in\cH$ and $T=T_h$ the associated outer billiard map.
\begin{enumerate}[label=\textup{(\alph*)}]
\item $T$ is a homeomorphism of $\R^2\setminus K$, with inverse $T^{-1}(x)=2p(\check\theta_K(x))-x$.
\item $T(-x)=-T(x)$ for all $x$.
\item $T$ preserves the Lebesgue measure. 
\item Writing $M:=\max_{y\in\partial K}|y|$, one has $|T(x)|\le|x|+2M$ and $|T^{-1}(x)|\le|x|+2M$; moreover $s_K(x)-M\le|x|\le s_K(x)+M$.
\end{enumerate}
\end{proposition}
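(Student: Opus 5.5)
I would prove the four items separately, working throughout in the chart $\Psi$ of Lemma~\ref{lem:chart}. The key identity is $T(\Psi(\theta,s))=p(\theta)-s\,\bt(\theta)$.

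For (a), I would first check that $T$ maps $\R^2\setminus K$ into itself. Let $y:=p(\theta)-s\bt(\theta)$ with $s>0$. It lies on the supporting line $\{x\cdot\bn(\theta)=h(\theta)\}$, and that line meets $K$ only at $p(\theta)$ by Proposition~\ref{prop:reconstruction}(ii). Hence $y\notin K$. Moreover, $y$ lies behind the contact point along this line, so by the convention of Definition~\ref{def:contact} we get $\check\theta_K(y)=\theta$ and $\check s_K(y)=s$. Consequently $2p(\check\theta_K(y))-y=p(\theta)+s\bt(\theta)=\Psi(\theta,s)$. So the map $S(y):=2p(\check\theta_K(y))-y$ is a left inverse of $T$. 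The symmetric argument shows $T\circ S=\id$: if $y=p(\check\theta)+\check s\,\bt(\check\theta)\cdot(-1)$, then $S(y)=p(\check\theta)+\check s\bt(\check\theta)$, which has contact angle $\check\theta$. Continuity of $T$ and $S$ follows from the continuity of $p$, $\theta_K$ and $\check\theta_K$ given by Lemma~\ref{lem:chart}.

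For (b), I would use $p(\theta+\pi)=-p(\theta)$ and $\bt(\theta+\pi)=-\bt(\theta)$. These give $-\Psi(\theta,s)=\Psi(\theta+\pi,s)$, and the identity $T(-x)=-T(x)$ then follows directly from the formula for $T$ in the chart.

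For (c), which I expect to be the main obstacle, the difficulty is that $p$ is not differentiable, so the naive Jacobian computation in the $(\theta,s)$ chart is unavailable. I would instead use Lemma~\ref{lem:chart}: $\theta_K$ is $C^1$, so $T(x)=2p(\theta_K(x))-x$ is differentiable wherever $p\circ\theta_K$ is. I would avoid that question entirely with a cut-and-paste argument. Fix a small sector of lines and compare the region swept by tangent segments: the set $\Psi((\alpha,\beta)\times(a,b))$ and its image $\{p(\theta)-s\bt(\theta)\}$ are the two halves of the family of tangent chords of half-lengths in $(a,b)$ over the same arcs. Their areas are both computed by the same formula. That formula is $\tfrac12\int_\alpha^\beta (b^2-a^2)\dd\theta$, obtained by approximating $K$ by smooth strictly convex bodies, or by polygons with the piecewise-isometry structure, and passing to the limit. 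The area of such a region depends only on the turning of the tangent line and not on $p$, because the term involving $Dp=\hat\rho\,\bt$ is parallel to the segment direction $\bt$ and contributes nothing to the area form.

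Concretely, I would write the area via the change of variables for the Lipschitz-in-$s$, BV-in-$\theta$ map $\Psi$. The Jacobian measure is $\det(Dp+s\bt'\dd\theta,\ \bt)=s\dd\theta$, since $Dp\parallel\bt$. This yields $\Leb(\Psi(A))=\int_A s\dd\theta\dd s$, and the same density holds for $(\theta,s)\mapsto p(\theta)-s\bt(\theta)$. Hence $T$ preserves Lebesgue measure on the rectangles, and therefore on all Borel sets. Justifying this area formula rigorously, by approximating $\rho$ by smooth positive densities so that $p_n\to p$ uniformly and the areas of the images converge, is the step needing care.

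For (d), the bounds are elementary. We have $|T(x)|=|2p-x|\le|x|+2|p|\le |x|+2M$, and likewise for $T^{-1}$. Finally, $x=p+s\bt$ gives $\bigl||x|-s\bigr|\le|p|\le M$.
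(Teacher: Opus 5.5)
Your arguments for (a), (b) and (d) are essentially the paper's.

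For (c) you take a different route. You compute $\Leb(\Psi(R))$ and $\Leb(T(\Psi(R)))$ on rectangles $R$ by approximating $h$ with smooth tables and passing to the limit. This works, but the limiting step rests on two facts you leave implicit.
\begin{enumerate}
\item The areas of the Jordan domains $\Psi_n(R)$ converge to that of $\Psi(R)$. This holds because $\Psi_n\to\Psi$ uniformly and the limit boundary $\Psi(\partial R)$ is Lebesgue-null: it consists of two segments and two continuous BV, hence rectifiable, curves. A winding-number argument then places the symmetric difference inside a shrinking neighbourhood of that boundary.
\item You then extend from rectangles to Borel sets by uniqueness of $\sigma$-finite measures agreeing on a $\pi$-system.
\end{enumerate}
Polygonal approximation would be less convenient, since polygons are not in $\cH$ and their $p$ jumps.

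The paper avoids any limit with a change of coordinates that makes your heuristic ``$Dp\parallel\bt$ contributes nothing'' exact. Instead of the tangent length $s$, it uses the $\bt$-coordinate $w=s+h'(\theta)$, that is, the chart $\Phi(\theta,w)=h(\theta)\bn(\theta)+w\,\bt(\theta)$.
\begin{itemize}
\item This chart involves only $h$, so it is $C^1$, with Jacobian $h'(\theta)-w$.
\item It is a $C^1$ diffeomorphism of each of $\{w>h'(\theta)\}$ and $\{w<h'(\theta)\}$ onto $\R^2\setminus K$.
\item In these coordinates $T$ is the fibrewise reflection $(\theta,w)\mapsto(\theta,2h'(\theta)-w)$. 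This map preserves $\dd\theta\,\dd w$ by Fubini, with no differentiability of $h'$ needed, and it preserves the density $|w-h'(\theta)|$.
\end{itemize}
The classical change-of-variables formula then finishes the proof.

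Your formula $\Leb(\Psi(A))=\int_A s\,\dd\theta\,\dd s$ is in fact an immediate corollary of this chart. Indeed $\Psi(\theta,s)=\Phi(\theta,s+h'(\theta))$, and the fibrewise translation preserves Lebesgue measure.

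The paper's route is shorter and self-contained. Yours is conceptually natural, but it pays for this with a topological limiting argument that has to be written out.
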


\medskip

Although $T$ is in general only a homeomorphism, it is differentiable almost everywhere. We can prove the following properties of $DT$.

\begin{proposition}\label{prop:DT}
Let $h\in\cH$ and let $\rho_h=r\dd\theta+\rho_h^{\rm sing}$ be the Lebesgue decomposition of the radius of curvature measure, $r\in L^1(\Tp)$, $r\ge0$.
\begin{enumerate}[label=\textup{(\alph*)}]
\item There is a Lebesgue-null Borel set $E\subset\Tt$ such that for every $x=\Psi(\theta,s)$ with $\theta\notin E$ the map $T$ is differentiable at $x$, with
\begin{equation}\label{eq:DT}
DT(x)\,v=-v-\frac{2r(\theta)}{s}\,\bigl(v\cdot\bn(\theta)\bigr)\,\bt(\theta),\qquad v\in\R^2 ,
\end{equation}
that is, $DT(x)=-\mathrm{Id}-\frac{2r(\theta)}{s}\,\bt(\theta)\otimes\bn(\theta)$ ({\it i.e.,} a point reflection composed with a shear along the supporting line, of strength proportional to the density of the absolutely continuous part of $\rho_h$ at the contact angle). The exceptional set $\Psi(E\times(0,\infty))$ has zero Lebesgue measure.
\item In particular, if $\rho_h$ is purely singular, then $T$ is differentiable at Lebesgue-almost every point of $\R^2\setminus K$ with
\[
DT(x)=-\mathrm{Id}.
\]
\end{enumerate}
\end{proposition}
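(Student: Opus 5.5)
We work in the chart $\Psi$ of Lemma~\ref{lem:chart}. Write $T(x)=2p(\theta_K(x))-x$. Since $\theta_K$ is $C^1$ on $\R^2\setminus K$, the only obstruction to differentiability of $T$ at $x=\Psi(\theta,s)$ is differentiability of $p$ at $\theta$. The chain rule then gives
\[
DT(x)v=2p'(\theta)\,D\theta_K(x)v - v .
\]
The plan is therefore to identify the set of $\theta$ where $p$ is differentiable, compute $p'(\theta)$ there, and plug in \eqref{eq:Dtheta}.

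\textbf{Step 1: differentiability of $p$.} By \eqref{eq:Dp}, $Dp=\hat\rho\,\bt$, so $p(\theta)-p(\theta_0)=\int_{(\theta_0,\theta]}\bt(u)\dd\hat\rho(u)$. Write $\hat\rho=\hat r\dd\theta+\hat\rho^{\rm sing}$, with $\hat r$ the $\pi$-periodic lift of $r$. By the Lebesgue differentiation theorem for measures, for Lebesgue-a.e.\ $\theta$:
\begin{itemize}
\item $\theta$ is a Lebesgue point of $\hat r$;
\item $\hat\rho^{\rm sing}([\theta,\theta+\epsilon])/\epsilon\to0$ as $\epsilon\to 0$ (the singular part has zero symmetric derivative a.e.).
\end{itemize}
Let $E$ be the complement of this set; it is Borel and Lebesgue-null. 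For $\theta\notin E$, continuity of $\bt$ gives
\[
\frac{p(\theta+\epsilon)-p(\theta)}{\epsilon}=\frac1\epsilon\int_{(\theta,\theta+\epsilon]}\bt(u)\dd\hat\rho(u)\to r(\theta)\,\bt(\theta),
\]
and similarly for $\epsilon<0$. Hence $p'(\theta)=r(\theta)\bt(\theta)$.

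\textbf{Step 2: the formula.} Substituting $p'(\theta)=r(\theta)\bt(\theta)$ and $D\theta_K(x)v=-v\cdot\bn(\theta)/s$ from \eqref{eq:Dtheta} into the chain rule gives
\[
DT(x)v=-v-\frac{2r(\theta)}{s}\,\bigl(v\cdot\bn(\theta)\bigr)\,\bt(\theta),
\]
which is \eqref{eq:DT}. The chain rule is legitimate here: $p$ is differentiable at $\theta=\theta_K(x)$ and $\theta_K$ is differentiable at $x$, so their composition is differentiable at $x$.

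\textbf{Step 3: the exceptional set is null.} This is the step requiring care, since $\Psi$ is only a homeomorphism and need not preserve null sets a priori. Instead I would use the coarea/Fubini structure of $\theta_K$. The level sets $\{\theta_K=\theta\}$ are the open tangent rays $\{p(\theta)+s\bt(\theta):s>0\}$. By \eqref{eq:Dtheta}, $|\nabla\theta_K(x)|=1/s_K(x)>0$. The coarea formula for the $C^1$ function $\theta_K$ gives, on the bounded set $B_R=\{|x|<R\}$,
\[
\int_{\Psi(E\times(0,\infty))\cap B_R}|\nabla\theta_K|\dd x=\int_E\mathcal H^1\bigl(\text{ray}_\theta\cap B_R\bigr)\dd\theta=0 .
\]
Since $|\nabla\theta_K|>0$ everywhere, the set $\Psi(E\times(0,\infty))\cap B_R$ has measure zero, and letting $R\to\infty$ it is null. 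Alternatively, one can apply Fubini in the coordinates $(\theta,s)$ locally where $\Psi$ is bi-Lipschitz in $s$. I expect Step 3 to be the main (mild) obstacle; the coarea route avoids needing any Jacobian of $\Psi$ in $\theta$.

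\textbf{Part (b).} If $\rho_h$ is purely singular then $r=0$ a.e. Enlarging $E$ by the null set $\{r\neq0\}$, part (a) gives $DT=-\mathrm{Id}$ off a null set.
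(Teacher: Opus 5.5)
Your proof is correct. Steps~1--2 follow the paper's argument: the paper cites the fact that a continuous BV function is differentiable a.e.\ with derivative equal to the density of the absolutely continuous part of its derivative measure (so $p'=r\,\bt$ off a null set), and then applies the chain rule with \eqref{eq:Dtheta} exactly as you do. The only difference there is that you reprove this fact by hand via Lebesgue points of $\hat r$ and the a.e.\ vanishing derivative of $\hat\rho^{\rm sing}$.

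The genuine difference is Step~3. The paper avoids the non-Lipschitz chart $\Psi$ by writing $\Psi(\theta,s)=\Phi(\theta,s+h'(\theta))$ with $\Phi(\theta,w):=h(\theta)\bn(\theta)+w\,\bt(\theta)$. This map is $C^1$ because only $h$, not $h'$, enters as a position term. The vertical shear $s\mapsto s+h'(\theta)$ preserves the fibres $\{\theta\}\times\R$, so the exceptional set is $\Phi$ applied to a subset of $E\times\R$, and a $C^1$ map sends null sets to null sets. This is elementary and reuses the chart needed anyway for area preservation.

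Your coarea argument is more intrinsic. It uses only that $\theta_K$ is $C^1$ with $|\nabla\theta_K|=1/s_K>0$, and that $\Psi(E\times(0,\infty))=\theta_K^{-1}(E)$ is the union of the level sets over a null set of levels. The cost is heavier machinery and a small localization. Since $\theta_K$ is circle-valued and $|\nabla\theta_K|$ blows up at $\partial K$, you should apply coarea on countably many small balls compactly contained in $\R^2\setminus K$, on which a real lift of $\theta_K$ is Lipschitz, rather than on $B_R$ directly.

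Do not rely on the alternative you mention in passing, Fubini in $(\theta,s)$ using that $\Psi$ is bi-Lipschitz in $s$. The map $\Psi$ is only BV in $\theta$, and regularity in $s$ alone gives no control on images of null sets. For example, $(\theta,s)\mapsto(g(\theta),s)$, with $g$ an increasing homeomorphism sending a null set onto a set of positive measure, is an isometry in $s$. The $\theta$-direction regularity is exactly what $\Phi$, or your coarea argument, supplies.
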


\bigskip

\begin{remark}\label{rem:commentDT}
Equation \eqref{eq:DT}  displays a fundamental difference between smooth and singular tables, on which most of our strategy relies.
\begin{itemize}
\item At a point $x$ with contact angle $\theta$, $DT(x)$ is the point reflection $-\mathrm{Id}$ composed with a shear along the supporting line, whose strength is  $\frac{2 r(\theta)}{s}$. 
\item For a $C^2$ table with everywhere positive curvature, $r$ is the radius of curvature, so $T$ is nowhere a point reflection (and this shear is what produces the twist condition). 
 \end{itemize}
When the radius of curvature measure is purely singular, $r=0$ almost everywhere; therefore, at almost every point, $T$ has the same first-order behaviour as the half-turn about its contact point: $x\mapsto 2p(\theta)-x$. We will get back to this polygon-like behaviour of the map in Section \ref{sec:heuristic}.
\end{remark}

\bigskip

Because of Proposition~\ref{prop:T-basic} (b), which follows from central symmetry, it is convenient to define the following map:
\begin{equation}\label{eq:F}
F:=-T,\qquad F(x)=x-2p(\theta_K(x)),\qquad F^2=T^2 .
\end{equation}
Observe that an orbit $\{x_n=T^nx_0\}_{n\in \Z}$ of $T$ corresponds to the orbit $\{y_n:=(-1)^nx_n\}_{n\in\Z}$ of the  map $F$. Central symmetry implies that $F(-x)=-F(x)$.\\

The following properties of $F$ will be needed in the following (see Figure \ref{fig:F}). \\

\begin{proposition}\label{prop:exact}
Let $x=\Psi(\theta,s)$ and write $F(x)=\Psi(\theta^+,s^+)$. Then $$d(\theta,s):=\theta^+-\theta\in(0,\pi)\pmod{2\pi}$$ and the following equations hold (we denote $d:=d(\theta,s)$):
\begin{align}
s\,\sin d&=h(\theta+d)+h(\theta)\cos d+h'(\theta)\sin d,\label{eq:exact1}\\
s^+&=s\cos d+h(\theta)\sin d-h'(\theta)\cos d-h'(\theta+d).\label{eq:exact2}
\end{align}
Moreover, \begin{enumerate}[label=\textup{(\roman*)}]
\item $\bigl|p(\theta)+p(\theta^+)\bigr|^2=(s-s^+)^2+4\,s\,s^+\sin^2(d/2)\;\le\;4M^2$;
\item $s\sin d=\bigl(p(\theta^+)-p(\theta+\pi)\bigr)\cdot\bn(\theta^+)>0$, and for every $\varepsilon\in(0,\pi)$ there is $c_\varepsilon>0$ such that $s\sin d\ge c_\varepsilon$ whenever $\pi-d\ge\varepsilon$.
\end{enumerate}

\end{proposition}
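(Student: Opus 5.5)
The plan is to write everything in the frame $(\bn(\theta^+),\bt(\theta^+))$ attached to the new contact angle and read off the coordinates of $F(x)$. Throughout we use $p(\theta+\pi)=-p(\theta)$, $h(\theta+\pi)=h(\theta)$, $h'(\theta+\pi)=h'(\theta)$, and $p(\theta)=h(\theta)\bn(\theta)+h'(\theta)\bt(\theta)$.

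\emph{Step 1 (the image point and its contact angle).} By definition,
\[
F(x)=x-2p(\theta)=s\,\bt(\theta)-p(\theta)=p(\theta+\pi)+s\,\bt(\theta+\pi)-2s\,\bt(\theta+\pi)=p(\theta+\pi)-s\,\bt(\theta+\pi).
\]
So $F(x)$ lies on the supporting line at $p(\theta+\pi)$, \emph{behind} the contact point. Hence $\check\theta_K(F(x))=\theta+\pi$, and $\theta^+=\theta_K(F(x))$ is the contact angle of the other tangent line through $F(x)$.

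\emph{Step 2 (the range of $d$).} The two tangent lines from an exterior point touch $\partial K$ at points whose normals differ by an angle in $(0,\pi)$ in absolute value. The counterclockwise convention then fixes the sign, giving $\theta^+\in(\theta+\pi-\pi,\theta+\pi)=(\theta,\theta+\pi)$. To make this rigorous, I would argue by continuity and monotonicity: the function $\alpha\mapsto(F(x)-p(\alpha))\cdot\bn(\alpha)$, via \eqref{eq:unique-max}, has exactly the two zeros $\alpha=\theta^+$ and $\alpha=\check\theta$. The orientation convention (the sign of $s$) places $\theta^+$ on the side $\theta^+-\theta\in(0,\pi)$. Checking the case of the disk and invoking continuity in $s$ is a convenient way to pin the sign.

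\emph{Step 3 (the identities).} Write $F(x)=p(\theta^+)+s^+\bt(\theta^+)$ and equate this with $s\,\bt(\theta)-p(\theta)$. Taking the $\bn(\theta^+)$ component, and using $\bt(\theta)\cdot\bn(\theta+d)=\sin d$, $\bn(\theta)\cdot\bn(\theta+d)=\cos d$, gives
\[
s\sin d-h(\theta)\cos d-h'(\theta)\sin d=h(\theta+d),
\]
which is \eqref{eq:exact1}. Taking the $\bt(\theta^+)$ component, and using $\bt(\theta)\cdot\bt(\theta+d)=\cos d$, $\bn(\theta)\cdot\bt(\theta+d)=-\sin d$, gives
\[
s\cos d+h(\theta)\sin d-h'(\theta)\cos d=h'(\theta+d)+s^+,
\]
which is \eqref{eq:exact2}.

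\emph{Step 4 (part (i)).} The two expressions for $F(x)$ give $p(\theta)+p(\theta^+)=s\,\bt(\theta)-s^+\bt(\theta^+)$. Squaring yields
\[
s^2+(s^+)^2-2ss^+\cos d=(s-s^+)^2+4ss^+\sin^2(d/2),
\]
and the left-hand side $|p(\theta)+p(\theta^+)|^2$ is at most $(2M)^2$.

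\emph{Step 5 (part (ii)).} The identity follows by dotting $F(x)=p(\theta+\pi)-s\,\bt(\theta+\pi)=p(\theta^+)+s^+\bt(\theta^+)$ with $\bn(\theta^+)$:
\[
\bigl(p(\theta^+)-p(\theta+\pi)\bigr)\cdot\bn(\theta^+)=-s\,\bt(\theta+\pi)\cdot\bn(\theta^+)=s\,\bt(\theta)\cdot\bn(\theta+d)=s\sin d.
\]
Positivity follows from \eqref{eq:unique-max}, since $\theta+\pi\neq\theta^+$.

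For the uniform bound, the quantity $g(\alpha,\beta):=(p(\alpha)-p(\beta))\cdot\bn(\alpha)$ is continuous and positive on the compact set $\{(\alpha,\beta):\ \dist(\alpha,\beta)\ge\varepsilon\}$. Here $\theta^+-(\theta+\pi)=d-\pi\le-\varepsilon$, so the minimum over this set gives $c_\varepsilon$.

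\emph{Main obstacle.} The main obstacle is the sign and orientation bookkeeping in Step 2, namely ensuring that $\theta^+$ is the forward tangent with $d\in(0,\pi)$ rather than $d\in(\pi,2\pi)$. Positivity of $s\sin d$ from Step 5 resolves it: $\sin d>0$ with $d\not\equiv0,\pi$ forces $d\in(0,\pi)$. I would therefore prove the identity in (ii) first, with $d$ treated only modulo $2\pi$, and deduce the range of $d$ from it.
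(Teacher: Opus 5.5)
Your Steps 1, 3, 4 and 5 match the paper's computations. The difference is how you obtain $d\in(0,\pi)$, and your route is correct.

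\textbf{The paper's order.} It gets the range of $d$ first, from Lemma~\ref{lem:two-tangents}. There $\theta+\pi=\check\theta_K(F(x))$ and $\theta^+=\theta_K(F(x))$, and the positivity arc of $f_{F(x)}$, which runs from $\theta^+$ to $\theta+\pi$, has length in $(0,\pi)$ because $f_y(\alpha)+f_y(\alpha+\pi)=-2h(\alpha)<0$. Only then does it use $d<\pi$ to get $\theta^+\not\equiv\theta+\pi$, and hence the positivity in (ii).

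\textbf{Your order.} You reverse this. The identity in (ii) holds with $d$ defined only modulo $2\pi$, and the strict inequality \eqref{eq:unique-max} makes it positive. Then $\sin d>0$ already forces $d\in(0,\pi)\pmod{2\pi}$; your extra condition $d\not\equiv 0,\pi$ is superfluous. This is more economical for the present statement. It uses only the strict support inequality, not the arc-length part of Lemma~\ref{lem:two-tangents}.

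\textbf{One point to make explicit.} You must take $\theta^+\not\equiv\theta+\pi$ from Step~1, not from $d<\pi$, which would be circular. It follows directly: if $\theta^+\equiv\theta+\pi$, comparing $F(x)=p(\theta+\pi)-s\,\bt(\theta+\pi)$ with $F(x)=p(\theta^+)+s^+\bt(\theta^+)$ gives $s^+=-s<0$.

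\textbf{Step 2.} With that in place, delete the Step~2 heuristic. Continuity in $s$ plus a comparison with the disk is not rigorous as stated, since it would also require deforming the table.

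\textbf{Trade-off.} The paper's route pays off later, because Lemma~\ref{lem:two-tangents} is reused elsewhere, for instance for $\check\theta_K(x)-\theta\in(0,\pi)$ in Proposition~\ref{prop:twist}. Yours is self-contained for this proposition.
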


\medskip

\begin{figure}[ht]
\centering
\begin{tikzpicture}[scale=1.15,>=Stealth]
\draw[thick] (0,0) ellipse (2 and 1.2);
\coordinate (p) at (1,1.039);
\coordinate (mp) at (-1,-1.039);
\coordinate (x) at (-1.079,1.759);
\coordinate (Tx) at (3.079,0.319);
\coordinate (Fx) at (-3.079,-0.319);
\coordinate (pp) at (-1.523,0.778);
\draw[gray] ($(x)!-0.3!(Tx)$) -- ($(Tx)!-0.15!(x)$);
\draw[gray] ($(Fx)!-0.15!(mp)$) -- ($(mp)!-0.6!(Fx)$);
\draw[gray] ($(Fx)!-0.15!(pp)$) -- ($(pp)!-0.9!(Fx)$);
\fill (p) circle (1.4pt) node[above right=-2pt,font=\small] {$p(\theta)$};
\fill (mp) circle (1.4pt) node[below right=2pt,font=\small,xshift=16pt,yshift=-3pt] {$-p(\theta)=p(\theta+\pi)$};
\fill (pp) circle (1.4pt) node[above left=-2pt,font=\small] {$p(\theta^+)$};
\fill (x) circle (1.4pt) node[above,font=\small] {$x$};
\fill (Tx) circle (1.4pt) node[right,font=\small] {$T(x)$};
\fill (Fx) circle (1.4pt) node[left,font=\small] {$F(x)=-T(x)$};
\draw[dotted] (Tx) -- (Fx);
\draw[->,thick,black!70!black] (p) -- ($(p)+0.8*(0.327,0.945)$) node[right,font=\small] {$\bn(\theta)$};
\draw[->,thick,black!70!black] (pp) -- ($(pp)+0.8*(-0.577,0.817)$) node[left,font=\small] {$\bn(\theta^+)$};
\draw[->,thick,black!70!black] (mp) -- ($(mp)+0.8*(-0.327,-0.945)$) node[left,font=\small] {$\bn(\theta+\pi)$};
\end{tikzpicture}
\caption{\small The point $F(x)$ lies on the supporting lines with normals $\bn(\theta^+)$ (tangent length $s^+>0$) and $\bn(\theta+\pi)$ (signed tangent length $-s<0$); the angular displacement is $d=\theta^+-\theta\in(0,\pi)$.}
\label{fig:F}
\end{figure}

\bigskip

\begin{definition}\label{def:lift}
The function $d:\Tt\times(0,\infty)\to(0,\pi)$ defined in Proposition \ref{prop:exact} is continuous (it follows from Lemma~\ref{lem:chart}), so we define the \emph{lift} of $F$ in the $(\theta,s)$ coordinates by
\[
\tilde F(\theta,s):=\bigl(\theta+d(\theta,s),\,s^+(\theta,s)\bigr),\qquad (\theta,s)\in\R\times(0,\infty).
\]
By Proposition~\ref{prop:T-basic} (b), $\tilde F$ commutes with $(\theta,s)\mapsto(\theta+\pi,s)$. The corresponding lift of $T$ is $\tilde T(\theta,s)=(\theta+\delta(\theta,s),s^+)$ with $\delta=d-\pi\in(-\pi,0)$.
\end{definition}

\bigskip

\section{Rigidity Theorem}\label{sec:rigidity}
In this section we prove the Rigidity Theorem stated in the introduction, which lies behind our strategy.  We will state a more precise version in Theorem \ref{thm:rigidity}.

\medskip
Let us first start by introducing the definition of invariant graph.

\medskip

\begin{definition}\label{def:graph}
An \emph{invariant graph} for the outer billiard map $T$ is a curve
\[
\Gamma=\{Z(\theta):\theta\in\Tt\},\qquad Z(\theta):=p(\theta)+s(\theta)\,\bt(\theta),
\]
where $s:\Tt\to(0,\infty)$ is continuous, such that $T(\Gamma)=\Gamma$.  
We call $s$ the {\it height function of $\Gamma$}.\\
The invariant graph is said to be \emph{central} if $s(\theta+\pi)=s(\theta)$, {\it i.e.}, $\Gamma=-\Gamma$.

\smallskip
We call  \emph{region below} $\Gamma$, the bounded component of $(\R^2\setminus K)\setminus\Gamma$, namely  $W_\Gamma:=\Psi(\{(\theta,\sigma):0<\sigma<s(\theta)\})$.
\end{definition}

\medskip

\begin{lemma}\label{lem:induced}
Let $\Gamma$ be a central invariant  graph for $T$. Then $F(\Gamma)=\Gamma$, $T(W_\Gamma)=W_\Gamma$, and there is a unique increasing homeomorphism $\tilde g:\R\to\R$ such that
\begin{equation}\label{eq:g}
F\bigl(Z(\theta)\bigr)=Z\bigl(\tilde g(\theta)\bigr),\qquad 0<\tilde g(\theta)-\theta<\pi,\qquad \tilde g(\theta+\pi)=\tilde g(\theta)+\pi\qquad(\theta\in\R),
\end{equation}
where $Z$ is regarded as a $2\pi$-periodic map on $\R$. In particular, there exist  $0<d_*\le d^*<\pi$ with $d_*\le\tilde g(\theta)-\theta\le d^*$ for all $\theta$.
\end{lemma}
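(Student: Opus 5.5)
We argue in the chart $\Psi$ of Lemma~\ref{lem:chart} throughout. \emph{First}, $F(\Gamma)=\Gamma$ follows from centrality: $\Gamma=-\Gamma$ and $T(\Gamma)=\Gamma$ give $F(\Gamma)=-T(\Gamma)=-\Gamma=\Gamma$.

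\emph{Second}, we show $T(W_\Gamma)=W_\Gamma$ by a topological argument. The set $\Gamma$ is a Jordan curve in $\R^2\setminus K$ (its image under $\Psi^{-1}$ is the graph of $s$), and it separates $\R^2\setminus K$ into $W_\Gamma$ and an unbounded component $U_\Gamma=\Psi(\{\sigma>s(\theta)\})$. Since $T$ is a homeomorphism of $\R^2\setminus K$ preserving $\Gamma$, it permutes these two components. By Proposition~\ref{prop:T-basic}(d), $T$ maps bounded sets to bounded sets, so $T(W_\Gamma)$ is bounded. Hence $T(W_\Gamma)\neq U_\Gamma$, and therefore $T(W_\Gamma)=W_\Gamma$. (Alternatively one can invoke area preservation, but boundedness suffices.)

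\emph{Third}, we construct $\tilde g$. Write $F(Z(\theta))=\Psi(\theta+d(\theta,s(\theta)),\,s^+)$. Because $F(Z(\theta))\in\Gamma$ and $\Psi$ is a bijection, the second coordinate must equal $s$ evaluated at the new angle, so $F(Z(\theta))=Z(\theta+d(\theta,s(\theta)))$. Define
\[
\tilde g(\theta):=\theta+d(\theta,s(\theta)).
\]
This is continuous on $\R$, since $d$ is continuous (Definition~\ref{def:lift}) and $s$ is continuous. It satisfies $0<\tilde g-\theta<\pi$ by Proposition~\ref{prop:exact}. The relation $\tilde g(\theta+\pi)=\tilde g(\theta)+\pi$ follows from $s(\theta+\pi)=s(\theta)$ together with the commutation of $\tilde F$ with the shift $(\theta,s)\mapsto(\theta+\pi,s)$.

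It remains to show that $\tilde g$ is an increasing homeomorphism. Since $F|_\Gamma$ is a homeomorphism of the circle $\Gamma\cong\Tt$, the map $\tilde g$ is a lift of a circle homeomorphism $g:\Tt\to\Tt$. Any lift of a circle homeomorphism is strictly monotone with $\tilde g(\theta+2\pi)=\tilde g(\theta)\pm2\pi$. The relation $\tilde g(\theta+\pi)=\tilde g(\theta)+\pi$ rules out the orientation-reversing sign, so $\tilde g$ is strictly increasing and a homeomorphism of $\R$.

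Uniqueness holds because any other continuous lift satisfying $F(Z(\theta))=Z(\tilde g(\theta))$ differs from $\tilde g$ by a constant in $2\pi\Z$, and the constraint $0<\tilde g-\theta<\pi$ pins that constant down.

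\emph{Finally}, the function $\theta\mapsto\tilde g(\theta)-\theta$ is continuous and $\pi$-periodic, and takes values in $(0,\pi)$. By compactness it attains a minimum $d_*>0$ and a maximum $d^*<\pi$.

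I expect the only delicate point to be the monotonicity and orientation argument for $\tilde g$; the remaining steps follow directly from the cited properties.
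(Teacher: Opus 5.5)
Your proof is correct and follows essentially the same route as the paper's. The only difference is cosmetic: you obtain $\tilde g(\theta+\pi)=\tilde g(\theta)+\pi$ directly from the $\pi$-equivariance of $\tilde F$ and use it to rule out the orientation-reversing case, whereas the paper rules out orientation reversal because $g$ has no fixed point, and then gets the $\pi$-shift relation from a mod-$2\pi$ congruence plus continuity.
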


\begin{proof}
$F=R\circ T$ with $R=-\id$, and $R(\Gamma)=\Gamma$ by centrality, so $F(\Gamma)=\Gamma$. 
The homeomorphism $T$ of $\R^2\setminus K$ preserves $\Gamma$, hence either fixes or permutes the two components of $(\R^2\setminus K)\setminus\Gamma$; since $T$ maps bounded sets to bounded sets (Proposition~\ref{prop:T-basic} (d)) and the other component is unbounded, then $T(W_\Gamma)=W_\Gamma$.

\smallskip
Let $g:=Z^{-1}\circ F\circ Z$, a homeomorphism of $\Tt$. By Proposition~\ref{prop:exact}, $g(\theta)-\theta\in(0,\pi)\pmod{2\pi}$ for every $\theta$; in particular $g$ has no fixed point, hence $g$ must preserve orientation (since an orientation-reversing circle homeomorphism always has a fixed point).  

\smallskip
\noindent The map $\theta\mapsto d(\theta,s(\theta))$ is continuous with values in $(0,\pi)$, so $\tilde g(\theta):=\theta+d(\theta,s(\theta))$ is a continuous lift of $g$; as a lift of an orientation preserving circle homeomorphism it is an increasing homeomorphism of $\R$, and it is the unique lift with $\tilde g(\theta)-\theta\in(0,\pi)$. Since $Z(\theta+\pi)=-Z(\theta)$ and $F(-x)=-F(x)$, we have $F(Z(\theta+\pi))=-Z(g(\theta))=Z(g(\theta)+\pi)$, so $\tilde g(\theta+\pi)-\tilde g(\theta)-\pi\in2\pi\Z$; this continuous function of $\theta$ lies in $(-\pi,\pi)$, hence vanishes. 

\smallskip
The bounds $d_*,d^*$ exist by compactness of $\Tt$.
\end{proof}

\bigskip
\bigskip

Our goal is to prove the following theorem.

\begin{theorem}[{\bf Rigidity Theorem}]\label{thm:rigidity}
Let $h\in\cH$ with $\rho_h$ purely singular, and let $\Gamma$ be an invariant graph with height function $s$. Then there is an integer $q\ge2$ such that
\begin{equation}\label{eq:halfturn-chart}
\tilde F^{\,q}\bigl(\theta,s(\theta)\bigr)=\bigl(\theta+\pi,\,s(\theta)\bigr)\qquad\text{for all }\theta\in\R ,
\end{equation}
that is, $F^q(x)=-x$ for every $x\in\Gamma$. Consequently:
\begin{enumerate}[label=\textup{(\roman*)}]
\item every point of $\Gamma$ is $T$-periodic: $T^q|_\Gamma=\id$ if $q$ is odd, and $T^q|_\Gamma=-\id$, $T^{2q}|_\Gamma=\id$ if $q$ is even;
\item the rotation number of $T|_\Gamma$, for the lift with $\delta\in(-\pi,0)$, is $-\tfrac12+\tfrac1{2q}$; in particular no invariant graph has irrational rotation number;
\item if $q$ is even, then $\Gamma$ is central.
\end{enumerate}
\end{theorem}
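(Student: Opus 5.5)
The plan is to work entirely on the circle of contact angles, through the induced homeomorphism $\tilde g$ of Lemma~\ref{lem:induced}, and to extract from the relation $F(Z(\theta))=Z(\tilde g(\theta))$ an identity between vector-valued measures. Because $\rho_h$ is purely singular, I expect that identity to split into a Lebesgue part and a singular part, and the two parts to force $\tilde g^{\,q}=\id+\pi$. I would first treat a \emph{central} invariant graph, where Lemma~\ref{lem:induced} applies directly, and then reduce the general case to the central one by symmetrization. For the latter I would compare $\Gamma$ with $-\Gamma$, which is again an invariant graph by Proposition~\ref{prop:T-basic}(b). The boundary of the $T$-invariant set $W_\Gamma\cap W_{-\Gamma}$ (or of $W_\Gamma\cup W_{-\Gamma}$) should again be a graph over the contact angle; it is central and $T$-invariant. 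Running the central argument on it and on the pieces where $\Gamma$ and $-\Gamma$ coincide should give (iii) and the general statement.

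\emph{Central case, the measure identity.} Since $\theta_K(Z(\theta))=\theta$, \eqref{eq:F} gives
\[
Z\circ\tilde g=Z-2p \qquad\text{on }\R .
\]
Both sides are continuous and of bounded variation, because $p$ is, by Proposition~\ref{prop:reconstruction}, and $Z=p+s\bt$ is $BV$ as soon as $s$ is. That $s$ is $BV$ I would get from the constraint \eqref{eq:exact1}: it is monotone in $d$ along the graph, and $d$ is bounded away from $0$ and $\pi$ by Lemma~\ref{lem:induced}. Using $Dp=\hat\rho\,\bt$, I would compute
\[
DZ=(\hat\rho+Ds)\,\bt+s\,\bn\dd\theta ,
\]
and then differentiate the identity above in the sense of measures. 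Writing $\tilde g^{-1}_\#$ for pullback along the homeomorphism, this yields
\[
\tilde g^{-1}_\#\bigl[(\hat\rho+Ds)\bt+s\bn\dd\theta\bigr]=(Ds-\hat\rho)\,\bt+s\,\bn\dd\theta .
\]

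\emph{Extracting the rigidity.} I would project this identity on $\bn(\tilde g(\theta))$ and on $\bt(\tilde g(\theta))$, and then Lebesgue-decompose every measure involved. The measure $\hat\rho$ is carried by a null set $N$, whereas the term $s\,\bn\dd\theta$ is absolutely continuous. This is the precise form of the ``edge-like versus vertex-like'' dichotomy: on $N$ the contact point moves but $\bt$ does not rotate, and off $N$ the direction rotates but the contact point stays put. Matching parts should show two things. First, $\tilde g$ maps the singular part of one measure onto the singular part of the other, so that $\tilde g$ respects the splitting into $N$ and its complement. Second, $\tilde g$ pushes $s\dd\theta$, suitably corrected by the $\bn$-component, to a measure of the same form. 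Heuristically, $DF=\mathrm{Id}$ a.e.\ by Proposition~\ref{prop:DT}(b), so $F$ acts on $\Gamma$ like a translation. I would aim to conclude that $\tilde g$ preserves a full-support, non-atomic measure built from $s\dd\theta$. Moreover, the tangent direction of $\Gamma$ at $Z(\theta)$ and at $Z(\tilde g(\theta))$ must be related by the half-turn $\bt(\theta)\mapsto\bt(\theta+\pi)$. This is the content of $F(Z(\theta))$ lying on the supporting line of normal $\bn(\theta+\pi)$, and of $\check\theta_K(F(Z(\theta)))=\theta+\pi$ by Figure~\ref{fig:F}.

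\emph{Closing and main obstacle.} Iterating that half-turn relation, I would show that the direction bookkeeping along an orbit of $\tilde g$ forces the cumulative angle to close up. The goal is $\tilde g^{\,q}(\theta)=\theta+\pi$ with $s(\tilde g^{\,q}(\theta))=s(\theta)$ for one integer $q\ge2$ and every $\theta$; this is exactly \eqref{eq:halfturn-chart}, and $q\ge2$ because $d<\pi$. Items (i) and (ii) then follow formally. We have $T=-F$ and $F^q=-\id$ on $\Gamma$, so $T^q=(-1)^qF^q=(-1)^{q+1}\id$. The rotation number of $\tilde g$ is $\pi/q$, which after $\delta=d-\pi$ becomes $-\tfrac12+\tfrac1{2q}$ in units of $2\pi$. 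The step I expect to be hardest is this closing step: an invariant measure alone allows irrational rotation, so the singular part must supply a genuinely rigid constraint. My first attempt would be to show that the Lebesgue-density part of the identity forces $s$ to be locally determined by $p$ along orbits. That would make the displacement function $\theta\mapsto\tilde g(\theta)-\theta$ invariant, which together with continuity gives a rational rotation with every orbit periodic.
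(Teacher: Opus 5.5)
The gap is in the central case, which carries all of the content. The mechanism you sketch does not work, and several of its intermediate claims are false.

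\emph{Lebesgue decompositions.} Matching Lebesgue decompositions of the two sides of your measure identity cannot separate edge-like from vertex-like behaviour. The pullback $\tilde g^{-1}_\#$ along the homeomorphism $\tilde g$ preserves neither absolute continuity nor singularity. The Radon table of Appendix~\ref{app:Radon} makes this concrete and contradicts your claim that $\tilde g$ respects the splitting into $N$ and its complement. There $F$ sends the point of $\Gamma$ with contact point $P(\theta)$ to the one with contact point $Q(\theta)=\mathcal J P(\theta)$, so $\tilde g(\theta)=\beta(\theta)+\pi/2$ on $[0,\pi/2]$, with $D\beta=c|P|^{-2}\mu$ singular. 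Hence the absolutely continuous term $s\,\bn\dd\theta$ pulls back to a singular measure. Moreover, $\tilde g$ maps the Lebesgue-null carrier $E$ of $\mu$ onto a set of full Lebesgue measure in $[\pi/2,\pi]$ on which $\rho_h$ vanishes. So $\tilde g$ \emph{exchanges} the two regimes rather than respecting them.

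\emph{The translation heuristic.} The claim ``$DF=\mathrm{Id}$ a.e., so $F$ acts on $\Gamma$ like a translation'' is empty, since $\Gamma$ is Lebesgue-null in the plane. In fact $F|_\Gamma$ is not a first-order translation: at edge-like points the separation vector is sheared, $v_{j+1}=v_j-2a_j\bt(\theta_j)$, and these slides are exactly what produce rigidity.

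\emph{Tangent directions and closing.} There is no half-turn relation between tangent directions of $\Gamma$, which need not even have a continuous tangent (Remark~\ref{rem:regularityGamma}). The fact that $F(Z(\theta))$ lies on the line through $-p(\theta)$ parallel to $\bt(\theta)$ holds for every table and every graph, so it cannot force periodicity. Finally, your closing step is false as stated: a continuous $\tilde g$-invariant displacement $\tilde g(\theta)-\theta$ is compatible with a rigid irrational rotation.

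What is missing is an argument that follows a single orbit through its index $j$. You need three ingredients.
(a) A common parameter $\tau$ in which all iterates $X_j=Z\circ\tilde g^{\,j}$ are Lipschitz; the paper uses $\Lambda=\sum_j2^{-|j|}|DX_j|$. This makes the complementarity $a_j\theta_j'=0$ a.e.\ hold simultaneously at every level of the orbit (Lemma~\ref{lem:complementary}); this is where pure singularity enters, via the area formula.
(b) The order constraint $v_j\cdot\bn(\theta_j)=-s_j\theta_j'\le0$, which you never use.
(c) The switch lemma (Lemma~\ref{lem:switch}). At a parameter with $a_0>0$, the nonzero vector $v_1=-c\,\bt(\theta_0)$ (or $v_0$, run backwards) is transported unchanged through vertex-like iterates. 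The order constraint forbids passing $\theta_0+\pi$ without a slide, and a slide requires $v_j\parallel\bt(\theta_j)$, so the next slide occurs exactly at $\theta_j=\theta_0+\pi$.

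Since $\{a_0>0\}$ meets every interval, this yields $\tilde g^{\,j}(\theta)=\theta+\pi$ on a dense set. The inequality $|(\tilde g^n\theta-\theta)-(\tilde g^n\theta'-\theta')|<\pi$ makes $j$ constant, and continuity finishes.

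Your symmetrization for the general case is on the right track, but it needs three corrections. You must use both $\bar\Gamma$ and $\underline\Gamma$, not one or the other. You need a common period, which comes from equality of rotation numbers because $\Gamma$ meets both (Lemma~\ref{lem:rotation}). The transfer back to $\Gamma$ is then pointwise, using $\Gamma\subset\bar\Gamma\cup\underline\Gamma$; the pieces where $\Gamma$ and $-\Gamma$ coincide play no special role.
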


\bigskip

\begin{remark}\label{rem:rigidity-comments}
{\bf (i)} Theorem~\ref{thm:rigidity} implies that every invariant graph consists of periodic points and satisfies a sort of {\it half-turn return} \eqref{eq:halfturn-chart} with a single $q$. In particular there are no invariant graphs with irrational rotation number, and no invariant graphs with rational rotation number carrying non-periodic points. Clearly, Theorem~\ref{thm:rigidity} implies the Rigidity Theorem stated in the Introduction (Section \ref{sec:intro}).

\noindent {\bf (ii)}
For odd $q$ the theorem does not exclude non-central invariant graphs; however, we do not know any example in which they  occur.

\noindent  {\bf (iii)} The negative sign of the rotation number depends on our definition of the outer billiard map. With $\theta$ increasing counterclockwise, the map $T$ moves points clockwise around $K$, so the rotation numbers of $T$ are negative, ranging from $0$ near $\partial K$ to $-\frac12$ near infinity. 
If $\Gamma$ is central, the rotation number of $F$ restricted to $\Gamma$ is $\frac1{2q}$, while the rotation number for $T^{-1}$  is $\frac12-\frac1{2q}$. The parity of $q$ affects only the reduced form of $\frac{1-q}{2q}$, whose denominator is the minimal period: $q$ if $q$ is odd, $2q$ if $q$ is even.
\end{remark}

\bigskip

The proof of Theorem \ref{thm:rigidity} will be in  two stages:

\medskip
\begin{itemize}
\item In Section \ref{sec:step1}, we treat the special case of a \emph{central} graph $\Gamma$, for which $F(\Gamma)=\Gamma$,  and the whole argument can be phrased in terms of the induced lift $\tilde g$ of Lemma~\ref{lem:induced}; see Theorem \ref{thm:rigiditycentral}.

\smallskip
\item In Section \ref{sec:step2}, we prove Theorem \ref{thm:rigidity},  by  a {\it symmetrization} argument and  Theorem \ref{thm:rigiditycentral}.
\end{itemize}

\medskip

\subsection{Rigidity result for central invariant graphs}\label{sec:step1}

We prove the following result.

\begin{theorem}\label{thm:rigiditycentral}
Let $h\in\cH$ with $\rho_h$ purely singular, and let $\Gamma$ be a central invariant graph with induced lift $\tilde g$. Then there is an integer $q\ge2$ such that
\begin{equation}\label{eq:halfturn}
\tilde g^{\,q}(\theta)=\theta+\pi\qquad\text{for all }\theta\in\R .
\end{equation}
Consequently $F^q|_\Gamma=-\id$ and $T^{2q}|_\Gamma=\id$.
\end{theorem}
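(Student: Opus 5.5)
The plan is to turn the a.e.\ first-order information of Proposition~\ref{prop:DT}(b) ($DF=\mathrm{Id}$ a.e., since $F=-T$) into a statement about the circle homeomorphism $\tilde g$ of Lemma~\ref{lem:induced}. Everything is then reduced to showing that some iterate $\tilde g^{\,q}$ is a translation. Centrality is used from the start: since $p(\theta+\pi)=-p(\theta)$ and $\bt(\theta+\pi)=-\bt(\theta)$, the defining relation $F(Z(\theta))=Z(\tilde g(\theta))$ becomes
\[
Z(\tilde g(\theta))=Z(\theta)-2p(\theta)=p(\theta+\pi)-s(\theta)\,\bt(\theta+\pi).
\]
So $\theta+\pi$ is the \emph{backward} contact angle $\check\theta_K$ of $Z(\tilde g(\theta))$, with $\check s_K=s(\theta)$. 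In other words, $\tilde g^{-1}(\varphi)=\check\theta_K(Z(\varphi))-\pi$, and $\tilde g$ itself is $\theta_K(Z(\theta)-2p(\theta))$ read in the lift.

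\emph{Step 1 (an invariant measure).} I would compute the area form in the chart $\Psi$. Because $Dp=\hat\rho\,\bt$ (Proposition~\ref{prop:reconstruction}) is parallel to $\partial_s\Psi=\bt$, the singular part drops out of the Jacobian, and
\[
\Psi^*(\text{Lebesgue})=s\,\dd\theta\,\dd s .
\]
The fan $\Psi(\{\theta\in I,\ 0<\sigma<s(\theta)\})$ over an arc $I$ therefore has area $\tfrac12\int_I s^2\dd\theta$, and this holds regardless of $\rho_h$. Using $T(W_\Gamma)=W_\Gamma$ and area preservation (Proposition~\ref{prop:T-basic}(c)), I would compare the image under $F$ of the fan over $I$ with the ``backward fan'' over $I+\pi$. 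That backward fan is bounded by $K$, by the two backward tangent segments, and by the arc $Z(\tilde g(I))$. Writing this region as the fan over $\tilde g(I)$ plus two triangle-like pieces with vertices on $\partial K$, I expect the boundary contributions to telescope. The target is that $\tilde g$ preserves the measure $\mu=s^2\dd\theta$, possibly up to an explicitly controlled correction supported where $\hat\rho$ lives. Since $s$ is continuous and positive, $\mu$ is a fully supported absolutely continuous measure. It would then follow that $\tilde g$ is absolutely continuous with a.e.\ derivative $s(\theta)^2/s(\tilde g\theta)^2$, and that $\tilde g$ is conjugate, by the absolutely continuous homeomorphism $\theta\mapsto\mu([0,\theta])$, to the rotation by its rotation number $\omega$.

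\emph{Step 2 (using $DF=\mathrm{Id}$ a.e.).} Since $\tilde g$ is absolutely continuous, $\tilde g$-preimages of the Lebesgue-null exceptional set $E$ of Proposition~\ref{prop:DT} are null. Hence for a.e.\ $\theta$ the whole orbit $\theta,\tilde g(\theta),\dots$ avoids $E$, and along it $p'=0$ with $DF=\mathrm{Id}$. Differentiating $Z(\tilde g\theta)=Z(\theta)-2p(\theta)$ via Lemma~\ref{lem:chart} (formula \eqref{eq:Dtheta}) at such $\theta$ gives a second expression for $\tilde g'(\theta)$ in terms of $s,\ s(\tilde g\theta)$ and $d=\tilde g(\theta)-\theta$. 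To avoid needing $s'$, I would use the $\check\theta_K$ formula for $\tilde g^{-1}$ instead. Comparing it with $\tilde g'=s^2/s(\tilde g)^2$ should force a cocycle identity along orbits. I expect this identity to say that $F$ is a translation on the tangent-line data, so that the quantity $\tilde g'(\theta)-1$ is a coboundary for $\mu$ whose integral over a half-turn vanishes.

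\emph{Step 3 (rationality and $q$).} This is where I expect the main obstacle. One has to exclude irrational $\omega$ and show that the return is exactly the half-turn. My first attempt would use the centrality relation $\tilde g(\theta+\pi)=\tilde g(\theta)+\pi$ together with the conjugacy of Step~1: in conjugated coordinates $\tilde g$ is $x\mapsto x+\omega$ and commutes with $x\mapsto x+\tfrac12$ (normalizing $\mu(\Tp)=1$). Since $F^n(x)=x-2\sum_{i<n}p(\theta_i)$, the a.e.\ translation structure from Step~2, pushed along orbits, should force $s$ to be constant along the $\tilde g$-orbit in a way that is incompatible with a dense orbit. Here the full support of a singular $\rho_h$ gives edge-like points (where $p$ moves along $\bt$) densely, and these should contradict an irrational rotation. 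Once $\omega=1/(2q)$ is rational and $\tilde g$ is absolutely-continuously conjugate to a rotation, $\tilde g^{\,q}-\pi$ is a lift commuting with that rotation and equal to the identity on the conjugated circle, which is exactly \eqref{eq:halfturn}. Here $q\ge2$ because $d^*<\pi$. The consequences $F^q|_\Gamma=-\id$ and $T^{2q}=F^{2q}=\id$ on $\Gamma$ then follow from $F^2=T^2$.
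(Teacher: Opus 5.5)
There is a genuine gap, and it sits at the foundation of your plan. Your area bookkeeping in the chart is correct: $\Psi^*\Leb=s\,\dd\theta\,\dd s$, so the fan over $I$ has area $\frac12\int_I s^2\dd\theta$. The boundary pieces, however, do not telescope. When you compare $F(\text{fan over }I)$ with the fan over $\tilde g(I)$, what is left over are the two curvilinear triangles cut out, at the images on $\Gamma$ of the endpoints of $I$, by the forward and backward tangent segments. They enter with opposite signs, and their areas need not be constant along $\Gamma$. So the area argument gives only a generating-function identity, not an invariant measure.

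Worse, the conclusion you want from Step~1 is false under the hypotheses of the theorem. Take the Radon table of Appendix~\ref{app:Radon}: $\rho_h$ is purely singular and there is a central invariant graph with $s=1/h$. There $F$ acts on contact points by $\mathcal J$, so $\tilde g(\theta)=\beta(\theta)+\pi/2$ on $[0,\pi/2]$, and $\beta$ is a singular homeomorphism, since $D\beta=c|P|^{-2}\mu$. Hence $\tilde g$ maps the null set $E$ carrying $\mu$ onto a set of full measure in $[\pi/2,\pi]$. It also pushes $s^2\dd\theta=c^{-2}\dd\theta$ to a measure carried by the null set $\beta(E^c)+\pi/2$. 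So $\tilde g$ is not absolutely continuous and preserves no absolutely continuous measure.

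Step~2 fails for the same reason. Since $\rho_Q=(\beta+\pi/2)_\#(|P|c^{-2}\dd\theta)$, for Lebesgue-a.e.\ $\theta\in[0,\pi/2]$ one has $p'(\theta)=0$, while $\tilde g(\theta)$ lies in the null set carrying $\rho_Q$, at $\rho_Q$-a.e.\ point of which $p$ is not differentiable. So almost every orbit does \emph{not} avoid the exceptional set. It alternates between vertex-like and edge-like behaviour, and that alternation is exactly what forces periodicity; Lebesgue measure in $\theta$ is the wrong reference measure for seeing it. Step~3 is, as you suspect, not yet an argument. In particular nothing in it explains why the return is exactly a half-turn, $\tilde g^{\,q}=\theta+\pi$, rather than $\tilde g^{\,b}=\theta+a\pi$ with $a\ge2$.

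The missing idea is a parameter adapted to all iterates at once. The paper reparametrizes $\Gamma$ by $\tau$ with $\dd\tau=\sum_j2^{-|j|}|DX_j|$, which makes every $X_j=F^j\circ X_0$ Lipschitz. Then, a.e.\ in $\tau$:
\begin{itemize}
\item $v_{j+1}=v_j-2a_j\bt(\theta_j)$ with $a_j\ge0$;
\item the order constraint $v_j\cdot\bn(\theta_j)=-s_j\theta_j'\le0$ holds;
\item by pure singularity and the area formula, $a_j\theta_j'=0$.
\end{itemize}
Consequently the separation vector is carried unchanged through vertex-like steps and can change only along the support line at edge-like ones. Start from a parameter with $a_0>0$ and $v_1\neq0$. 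The next edge-like step requires $\bt(\theta_j)\parallel\bt(\theta_0)$, and passing $\theta_0+\pi$ without one violates the order constraint. Hence $\theta_j=\theta_0+\pi$ for some $j\ge2$; the case $v_1=0$ is handled backwards (Lemma~\ref{lem:switch}). Such parameters are dense, because $\int_J a_0\,\dd\tau\ge|P_0(\tau_2)-P_0(\tau_1)|>0$ on every short interval $J=[\tau_1,\tau_2]$. The bounded-deviation estimate for lifts commuting with $\theta\mapsto\theta+\pi$ then makes $q$ the same on that dense set, and continuity finishes the proof.
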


\medskip

\begin{remark}\label{rem:afterthmrigcen}
For a central graph, \eqref{eq:halfturn} is the same as \eqref{eq:halfturn-chart}, because the restriction of $\tilde F$ to the graph of $s$ is $(\theta,s(\theta))\mapsto(\tilde g(\theta),s(\tilde g(\theta)))$: both lifts have angular increments in $(0,\pi)$. 
\end{remark}

\bigskip

Before discussing the technical aspects of this proof (see Section \ref{sec:rigproofrig}), let us describe informally the mechanism behind this rigidity result for central invariant graphs.

\medskip

\subsubsection{\bf Heuristic explanation of the mechanism}\label{sec:heuristic}
 The key is the relation
between the motion of the  \emph{contact point} and that of the 
\emph{contact angle}.

\medskip

Choose an increasing parametrization $X_0(\tau)$ of the central invariant graph $\Gamma$, obtained
by reparametrizing $Z$, and write
\[
 X_j(\tau):=F^j(X_0(\tau)),\qquad
 \theta_j:=\theta_K\circ X_j,\qquad P_j:=p\circ\theta_j.
\]

\smallskip

For the moment, suppose that a common parameter $\tau$ has been
chosen in which all these curves are locally Lipschitz, so that
they can be differentiated almost everywhere.
This choice will be justified in Lemma~\ref{lem:lipschitz}.

 Observe that $j$ represents the 
advance along an orbit, whereas $\tau$ varies its initial point. Thus $v_j:=X_j'(\tau)$ represents the infinitesimal separation
of neighbouring orbits at their $j$-th points; see Figure~\ref{fig:heur-objects}.

\begin{figure}[ht]
\centering
\resizebox{\linewidth}{!}{
\begin{tikzpicture}[
  x=1cm,y=1cm,
  every node/.style={heur label}
]

\path[use as bounding box]
  (-3.55,-2.0) rectangle (11.9,3.25);


\begin{scope}[shift={(-.15,.15)},scale=.94]

  \def\A{1.35}
  \def\B{.95}
  \def\S{1.75}

  \pgfmathsetmacro{\phi}{2*atan(\B/\S)}
  \pgfmathsetmacro{\px}{\A*cos(\phi)}
  \pgfmathsetmacro{\py}{\B*sin(\phi)}

  \coordinate (P)      at (\A,0);
  \coordinate (Pminus) at (-\A,0);
  \coordinate (X)      at (\A,\S);
  \coordinate (Y)      at (-\A,\S);
  \coordinate (Q)      at (\px,\py);

  \filldraw[fill=black!5,heur curve]
    (0,0) ellipse[x radius=\A,y radius=\B];

  \node at (-.15,-.61) {$K=-K$};
  \fill (0,0) circle[radius=1.1pt];
  \node[below=2pt] at (0,0) {$O$};

  \draw[heur support]
    (\A,-1.22) -- (\A,2.26);

  \draw[heur alternative]
    (-\A,-1.22) -- (-\A,2.26);

  \draw[heur support]
    ($(Q)!-.28!(Y)$) -- ($(Q)!1.18!(Y)$);

  \draw[heur vector]
    (X) -- node[above=3pt] {$-2P_j$} (Y);

  \draw[heur vector]
    (P) -- ++(.90,0)
    node[right] {$\bn(\theta_j)$};

  \draw[heur vector]
    (P) -- ++(0,.63);

  \node[right=5pt] at (\A,.53)
    {$\bt(\theta_j)$};

  \node[right=5pt] at (\A,1.14)
    {$s_j$};

  \fill (P) circle[radius=1.7pt];
  \node[below right=2pt] at (P) {$P_j$};

  \fill (Pminus) circle[radius=1.5pt];
  \node[below left=2pt] at (Pminus) {$-P_j$};

  \fill (X) circle[radius=1.7pt];
  \node[above right=2pt] at (X) {$X_j$};

  \fill (Y) circle[radius=1.7pt];
  \node[above left=2pt] at (Y)
    {$X_{j+1}=F(X_j)$};

  \fill (Q) circle[radius=1.5pt];
  \node[below left=1pt] at (Q) {$P_{j+1}$};

\end{scope}


\begin{scope}[shift={(7.2,0)}]

  \coordinate (A) at (-1.9,-.15);
  \coordinate (B) at (-1.66,.75);
  \coordinate (C) at (1.75,-.15);
  \coordinate (D) at (1.96,.53);

  \draw[heur curve]
    (-2.10,-.9)
    .. controls (-1.99,-.45) and (-1.81,.26) ..
    (-1.66,.75)
    .. controls (-1.57,1.05) and (-1.41,1.40) ..
    (-1.13,1.72);

  \draw[heur curve]
    (1.36,-.91)
    .. controls (1.63,-.50) and (1.78,-.16) ..
    (1.96,.53)
    .. controls (2.05,.88) and (2.08,1.34) ..
    (2.02,1.72);

  \node[left] at (-2.10,-.9) {$\Gamma$};
  \node[right] at (2.02,1.72) {$\Gamma$};

  \draw[heur vector]
    (A) -- node[left=4pt] {$\varepsilon v_j$} (B);

  \draw[heur vector]
    (C) -- node[right=4pt] {$\varepsilon v_{j+1}$} (D);

  \draw[heur small arrow]
    (A) to[bend right=17]
    node[below=2pt] {$F$} (C);

  \draw[heur small arrow]
    (B) to[bend left=17]
    node[above=2pt] {$F$} (D);

  \foreach \name in {A,B,C,D}{
    \fill (\name) circle[radius=1.6pt];
  }

  \node[below left=2pt] at (A)
    {$X_j(\tau)$};

  \node[above left=2pt] at (B)
    {$X_j(\tau+\varepsilon)$};

  \node[below right=2pt] at (C)
    {$X_{j+1}(\tau)$};

  \node[above right=2pt] at (D)
    {$X_{j+1}(\tau+\varepsilon)$};

\end{scope}

\node[heur panel,anchor=center] at (0,-1.8)
  {(a)};

\node[heur panel,anchor=center] at (7.25,-1.8)
  {(b)};
\end{tikzpicture}%
}

\caption{\small Contact data (a) and variations of an orbit (b), where
separation arrows are meant to be first-order illustrations (the finite displacement is \(\varepsilon v_j+o(\varepsilon)\)).
}
\label{fig:heur-objects}

\end{figure}

\smallskip

\medskip
Recall that
\begin{equation}\label{eq:heur-recursion}
 X_{j+1}=X_j-2P_j,
 \qquad X_j=P_j+s_j\bt(\theta_j),\qquad s_j=s\circ\theta_j.
\end{equation}
The contact point moves in the positive tangent direction, so write
$P_j'=a_j\bt(\theta_j)$ with $a_j\geq0$. Differentiating gives
\begin{equation}\label{eq:heur-velocity}
 v_{j+1}=v_j-2a_j\bt(\theta_j),
 \qquad v_j\cdot\bn(\theta_j)=-s_j\theta_j'\leq0.
\end{equation}

Observe that the inequality in \eqref{eq:heur-velocity} expresses order preservation: as the initial point
moves forward along $\Gamma$, the contact angles of all its iterates
are nondecreasing.

\smallskip
We will show (see Lemma~\ref{lem:complementary}) that the fact that $\rho_h$ is purely singular (which implies that
$p'(\theta)=0$ for almost every $\theta$, see Remark~\ref{rem:regularities}), yields the following relation:
\begin{equation}\label{eq:heur-complementarity}
 a_j\theta_j'=0\qquad\text{for almost every }\tau.
\end{equation}

\smallskip
In short, the contact point can ``{\it slide}'' or the normal can ``{\it turn}'', but
they cannot do both at non-zero speed. In particular:

\begin{itemize}
\item When $a_j=0$, the contact point is stationary at first order and
\eqref{eq:heur-velocity} gives $v_{j+1}=v_j$: the infinitesimal
separation is carried unchanged. This is the \emph{vertex-like}
regime, in which $F$ acts as a translation to first order along the
curve. 

\smallskip
\item When $a_j>0$, instead, \eqref{eq:heur-complementarity} gives
$\theta_j'=0$, and \eqref{eq:heur-velocity} forces
$v_j\parallel\bt(\theta_j)$, possibly with $v_j=0$.
This is the \emph{edge-like} regime: the infinitesimal separation can be modified,
but only along the supporting line. 
\end{itemize}
Figure~\ref{fig:heur-motions}
illustrates these two regimes. Both speeds may also vanish.
Unlike a polygon, our table has neither vertices nor edges; their
first-order behaviours are interwoven at arbitrarily small scales,
as explained in Remark~\ref{rem:antipodalswitch}.

\begin{figure}[ht]
\centering
\resizebox{\linewidth}{!}{
\begin{tikzpicture}[
  x=1cm,y=1cm,
  every node/.style={heur label}
]

\path[use as bounding box]
  (-3.15,-2.3) rectangle (11.45,3.0);


\begin{scope}[shift={(0,.02)}]

  \coordinate (P)  at (0,0);
  \coordinate (X)  at (125:1.8);
  \coordinate (Xe) at (145:1.8);

  \filldraw[fill=black!7,heur curve]
    (-2.0,-1.20) --
    (0,-1.20) --
    (0,0) --
    (-2.0,0) -- cycle;

  \node at (-1.05,-.63) {$K$};

  \draw[heur support]
    (125:2.45) -- (-55:.78);

  \draw[heur alternative]
    (145:2.45) -- (-35:.78);

  \draw[heur vector]
    (P) -- (35:1.29);

  \draw[heur small arrow,dashed]
    (P) -- (55:1.29);

  \draw[heur small arrow]
    (35:.82)
    arc[start angle=35,end angle=55,radius=.82];

  \node[above right] at (43:1.36)
    {$\theta_j'>0$};

  \draw[heur vector]
    (X) -- node[above left=-1pt]
    {$\varepsilon v_j$} (Xe);

  \fill (X) circle[radius=1.6pt];
  \fill (Xe) circle[radius=1.6pt];

  \node[above right=1pt] at (X) {$X_j$};
  \node[left=3pt] at (Xe) {$X_j^\varepsilon$};

  \fill (P) circle[radius=1.9pt];
  \node[below right=5pt] at (P)
    {$P_j\ \text{fixed}$};

\end{scope}

\node[heur note] at (-.30,-1.64)
  {$P_j'=0
    \quad\Longrightarrow\quad
    v_{j+1}=v_j$};


\begin{scope}[shift={(7.7,.02)}]

  \filldraw[fill=black!7,heur curve]
    (-.48,-1.20) --
    (1.97,-1.20) --
    (1.97,.24) --
    (-.48,.24) -- cycle;

  \node at (.76,-.53) {$K$};

  \draw[heur support]
    (-2.62,.24) -- (2.56,.24);

  \coordinate (P)  at (1.28,.24);
  \coordinate (Pe) at (.49,.24);
  \coordinate (X)  at (-1.05,.24);
  \coordinate (Xe) at (-1.89,.24);

  \draw[heur vector] (P) -- (Pe);
  \node[above=3pt] at (.87,.24)
    {$\varepsilon P_j'$};

  \draw[heur vector] (X) -- (Xe);
  \node[above=3pt] at (-1.50,.24)
    {$\varepsilon v_j$};

  \draw[heur vector]
    (P) -- ++(0,1.01);

  \draw[heur small arrow,dashed]
    (Pe) -- ++(0,1.01);

  \node[above] at (.885,1.29)
    {$\theta_j'=0$};

  \draw[heur small arrow]
    (-.38,1.57) -- (-1.21,1.57)
    node[midway,above] {$\bt(\theta_j)$};

  \foreach \name in {P,Pe,X,Xe}{
    \fill (\name) circle[radius=1.6pt];
  }

  \node[below=4pt] at (P)  {$P_j$};
  \node[below=4pt] at (Pe) {$P_j^\varepsilon$};
  \node[below=4pt] at (X)  {$X_j$};
  \node[below=4pt] at (Xe) {$X_j^\varepsilon$};

  \node[heur note] at (.18,-1.66)
    {$a_j>0
      \quad\Longrightarrow\quad
      v_j\parallel\bt(\theta_j)$};

\end{scope}

\node[heur panel,anchor=center] at (-1,-2.25)
  {(a) Vertex-like};

\node[heur panel,anchor=center] at (7.75,-2.25)
  {(b) Edge-like};

\end{tikzpicture}
}

\caption{\small An idealization of vertex-like and edge-like behaviours. The polygonal shapes are local models of first-order behaviour, not actual corners or edges of the table.}
\label{fig:heur-motions}

\end{figure}

\smallskip\noindent
Let us now explain why periodicity is forced.
Recall that the lifted contact angles advance at each iteration
by an amount in $(0,\pi)$, bounded away from zero on $\Gamma$.

\smallskip
Fix a parameter at which all the above identities hold and
$a_0>0$ (the edge-like case).
By \eqref{eq:heur-complementarity} and
\eqref{eq:heur-velocity}, both $v_0$ and $v_1$ are parallel
to $\bt(\theta_0)$.

\smallskip
Suppose first that $v_1\ne0$.
Applying the inequality in \eqref{eq:heur-velocity} at $j=1$,
and using $0<\theta_1-\theta_0<\pi$, gives
\[
v_1=-c\bt(\theta_0)
\qquad\text{for some }c>0.
\]
As long as $a_1,\ldots,a_{j-1}$ vanish, this vector is
transported unchanged:
\[
v_j=v_1=-c\bt(\theta_0).
\]
It cannot remain unchanged up to the first iterate whose
contact angle exceeds $\theta_0+\pi$.
Indeed, because each angular increment is smaller than $\pi$,
at that iterate we would have
$\pi<\theta_j-\theta_0<2\pi$, and hence
\[
v_j\cdot\bn(\theta_j)
=-c\sin(\theta_j-\theta_0)>0,
\]
contrary to \eqref{eq:heur-velocity}.

Therefore a new slide must occur strictly before the contact angle exceeds $\theta_0+\pi$. Denote by $j^*\geq1$ the first index with
$a_{j^*}>0$. Then
\[
0<\theta_{j^*}-\theta_0\leq\pi.
\]
At this index, the incoming vector
$v_{j^*}=v_1\ne0$ must be parallel to
$\bt(\theta_{j^*})$. Since it is also parallel to
$\bt(\theta_0)$, the two supporting directions must be parallel.
Consequently,
\[
\theta_{j^*}=\theta_0+\pi.
\]

\medskip
In short, a new slide is impossible before the opposite
supporting direction, while passing that direction without
a slide would violate the order constraint.
The next slide must therefore occur exactly at the half-turn.

\smallskip
If $v_1=0$, then the same strategy can be applied
backwards. 

\smallskip
A precise proof of all of this will be given in Lemma~\ref{lem:switch}.

\medskip
Finally, the set of parameters to which the preceding argument applies ({\it i.e.}, $a_0(\tau)>0$)
meet every interval
(see Remark~\ref{rem:antipodalswitch}).
Thus the half-turn occurs for a dense set of initial points and
we will show that the number of iterations must be the same integer $q$ throughout
this dense set. Continuity then extends the identity
$\widetilde g^{\,q}(\theta)=\theta+\pi$ to every initial angle, thus proving that the invariant graph is periodic.

\bigskip

\subsubsection{\bf Rigorous proof of the Rigidity Theorem for central graphs}\label{sec:rigproofrig}
Let us now proceed more rigorously and prove that the mechanism described above does actually hold.

\smallskip

Let us first prove some regularity properties of the graph.

\medskip

\begin{lemma}\label{lem:BV}
The function $s$ is of bounded variation on $\Tt$. Hence $Z$ is a continuous function of bounded variation and $\Gamma$ is rectifiable.
\end{lemma}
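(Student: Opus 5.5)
The plan is to read off $s$ from the invariance relation, using the explicit formula \eqref{eq:exact1} of Proposition~\ref{prop:exact}. Since $\Gamma$ is central, Lemma~\ref{lem:induced} shows that $F$ restricted to $\Gamma$ is conjugate to the lift $\tilde g$. The angular increment $d(\theta):=\tilde g(\theta)-\theta$ then lies in $[d_*,d^*]\subset(0,\pi)$, so $\sin d(\theta)\ge\min(\sin d_*,\sin d^*)=:c>0$. Applying \eqref{eq:exact1} at the point $Z(\theta)$ gives
\[
s(\theta)=\frac{h(\tilde g(\theta))+h(\theta)\cos d(\theta)+h'(\theta)\sin d(\theta)}{\sin d(\theta)}.
\]
Here $h$ is $C^1$, and $h'$ is of bounded variation, because $Dh'=\rho_h-h\dd\theta$ is a finite measure (proof of Proposition~\ref{prop:reconstruction}). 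The map $\tilde g$ is an increasing homeomorphism, so $d=\tilde g-\id$ is a difference of monotone functions and hence BV on a period. Compositions of Lipschitz functions ($\sin$, $\cos$, $h$) with BV functions are BV. Moreover $h\circ\tilde g$ is BV since $h$ is Lipschitz and $\tilde g$ is monotone. Products of bounded BV functions are BV, and so is the reciprocal of a BV function bounded below by $c>0$. It follows that $s$ is BV on $\Tt$.

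For a general (possibly non-central) invariant graph I would not have $\tilde g$ available in this form. If the lemma is intended in that generality, I would handle it with the analogous relation for $T$ restricted to $\Gamma$. Since $T(\Gamma)=\Gamma$, the map $g:=Z^{-1}\circ T\circ Z$ is a circle homeomorphism. Its increment $\delta\in(-\pi,0)$ has no fixed point, so $g$ is orientation preserving and its lift is monotone. The angular displacement is therefore again BV and bounded away from $0$ and $-\pi$ by compactness. The same formula, with $\theta^+=\theta+\pi+\delta$ (up to sign bookkeeping, since $T=-F$), then expresses $s$ as a combination of BV quantities. This extension is the step I expect to require the most care: I must check that the orientation argument of Lemma~\ref{lem:induced} goes through for $T$ with the lift $\tilde T$, and that $\sin$ of the increment stays bounded away from zero. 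I would first try simply invoking Proposition~\ref{prop:exact} for $F$ at $Z(\theta)$, since that proposition does not need centrality and $F(Z(\theta))$ has contact angle $\theta+d(\theta,s(\theta))$. Monotonicity of $\theta\mapsto\theta+d(\theta,s(\theta))$ would follow because $F(Z(\theta))=-T(Z(\theta))$ traces $-\Gamma$, which is again a graph, in order.

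Finally, $Z(\theta)=p(\theta)+s(\theta)\bt(\theta)$. Here $p$ is continuous and BV by Proposition~\ref{prop:reconstruction}(ii), $s$ is continuous by definition and BV by the above, and $\bt$ is smooth. Hence $Z$ is a continuous BV map $\Tt\to\R^2$, and its image $\Gamma$ has length at most the total variation of $Z$, so $\Gamma$ is rectifiable.
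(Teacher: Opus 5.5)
Your argument is correct and is essentially the paper's proof. You solve \eqref{eq:exact1} for $s$ with $d=\tilde g(\theta)-\theta\in[d_*,d^*]$, and use that $h'$, $h\circ\tilde g$ and $\tilde g-\id$ are BV, together with the closure of BV under Lipschitz compositions, products and reciprocals bounded away from zero; you then conclude for $Z=p+s\bt$ via \eqref{eq:Dp}, exactly as the paper does. The paragraph on non-central graphs is not needed, since the lemma is stated and used only for the central graph of Section~\ref{sec:step1}, though your $F$-based extension there would also go through.
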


\begin{proof}
Solving \eqref{eq:exact1} with $d=\tilde g(\theta)-\theta$ gives
\begin{equation}\label{eq:s-formula}
s(\theta)=h'(\theta)+\frac{h(\tilde g(\theta))+h(\theta)\cos(\tilde g(\theta)-\theta)}{\sin(\tilde g(\theta)-\theta)} .
\end{equation}
We have that:
\begin{itemize}
\item $h'\in BV$ because $Dh'=\rho_h-h\dd\theta$ is a finite measure;
\item $h\circ\tilde g\in BV$ because $h$ is Lipschitz and $\tilde g$ is monotone; 
\item $\tilde g(\theta)-\theta$ is of bounded variation, with values in $[d_*,d^*]\subset(0,\pi)$ (Lemma~\ref{lem:induced}), so $\cos(\tilde g-\theta)$ and $1/\sin(\tilde g-\theta)$ are compositions of Lipschitz functions with a $BV$ function and are $BV$. 
\end{itemize}
This proves that $s$ is BV, since it is obtained by products and sums of $BV$ functions.

\smallskip
\noindent Finally $p$ is $BV$ by \eqref{eq:Dp}, so $Z=p+s\bt$ is $BV$; a continuous $BV$ curve is rectifiable.
\end{proof}

\bigskip

For $j\in\Z$ let $X_j:=Z\circ\tilde g^{\,j}:\R\to\R^2$, a $2\pi$-periodic continuous function of bounded variation. 
The heuristic discussion in Section~\ref{sec:heuristic} assumed that the $X_j$ can be differentiated in a common parameter. In the singular case this is quite delicate. In fact,  $Z$ is only of bounded variation in the normal angle $\theta$ (it has ``staircase'' pieces where the point moves while $\theta$ hardly changes), and if one uses the arclength of $\Gamma$ instead, then $X_0$ becomes Lipschitz but $X_j=X_0\circ g^j$ need not be. Let us introduce a single parameter, 
in which \emph{all} the $X_j$ are Lipschitz simultaneously.

\bigskip
\bigskip

We write $|DX_j|$ for the  variation measure of $X_j$ on $\Tt$, {\it i.e.}, the finite positive measure with $|DX_j|([\alpha,\beta])=\Var(X_j;[\alpha,\beta])$ for $\alpha<\beta<\alpha+2\pi$, where
\[
\Var(X;[\alpha,\beta]):=\sup\Bigl\{\sum_{i=0}^{m-1}|X(t_{i+1})-X(t_i)|:\ \alpha=t_0<t_1<\dots<t_m=\beta\Bigr\}.
\]

\medskip

\begin{lemma}\label{lem:variation}
For every $j\in\Z$ and $\alpha<\beta<\alpha+2\pi$,
\[
|DX_j|([\alpha,\beta])=|DZ|\bigl([\tilde g^{\,j}\alpha,\tilde g^{\,j}\beta]\bigr)=\text{length of the arc of }\Gamma\text{ from }X_j(\alpha)\text{ to }X_j(\beta).
\]
In particular $|DX_j|(\Tt)=|\Gamma|$, the length of $\Gamma$, for every $j$, and $|DX_j|$ is atomless and has full support.
\end{lemma}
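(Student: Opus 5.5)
The plan is to reduce everything to the fact that $X_j = Z\circ\tilde g^{\,j}$ is a reparametrization of $Z$ by the increasing homeomorphism $\tilde g^{\,j}$. The total variation of a curve is invariant under increasing continuous reparametrization. Indeed, for $\alpha<\beta<\alpha+2\pi$, a partition $\alpha=t_0<\dots<t_m=\beta$ is mapped by $\tilde g^{\,j}$ to a partition of $[\tilde g^{\,j}\alpha,\tilde g^{\,j}\beta]$. Conversely, every partition of the latter interval is the image of a partition of the former, since $\tilde g^{\,j}$ is bijective and increasing on $\R$ (Lemma~\ref{lem:induced}, and its inverse for $j<0$). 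The sums $\sum|X_j(t_{i+1})-X_j(t_i)|$ and $\sum|Z(\tilde g^{\,j}t_{i+1})-Z(\tilde g^{\,j}t_i)|$ coincide, so the suprema agree. This gives $\Var(X_j;[\alpha,\beta])=\Var(Z;[\tilde g^{\,j}\alpha,\tilde g^{\,j}\beta])$.

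One point needs care: the interval $[\tilde g^{\,j}\alpha,\tilde g^{\,j}\beta]$ must have length less than $2\pi$, so that it is a genuine arc of $\Tt$ for $|DZ|$. Here I use $\tilde g(\theta+\pi)=\tilde g(\theta)+\pi$, hence also $\tilde g(\theta+2\pi)=\tilde g(\theta)+2\pi$. From $\beta<\alpha+2\pi$ and monotonicity, $\tilde g^{\,j}\beta<\tilde g^{\,j}(\alpha+2\pi)=\tilde g^{\,j}\alpha+2\pi$. It follows that $|DX_j|([\alpha,\beta])=|DZ|([\tilde g^{\,j}\alpha,\tilde g^{\,j}\beta])$.

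Next I identify $\Var(Z;I)$ with the length of the corresponding arc of $\Gamma$. Since $Z$ is continuous, injective on $\Tt$ (it is the inverse of the chart $\Psi$ restricted to a graph), and of bounded variation by Lemma~\ref{lem:BV}, its variation on $[a,b]$ is by definition the length of the simple arc $Z([a,b])$. The arc from $X_j(\alpha)=Z(\tilde g^{\,j}\alpha)$ to $X_j(\beta)=Z(\tilde g^{\,j}\beta)$ is exactly $Z([\tilde g^{\,j}\alpha,\tilde g^{\,j}\beta])$. Taking $\beta\uparrow\alpha+2\pi$ and using continuity, the total mass is $|DX_j|(\Tt)=\Var(Z;[0,2\pi])=|\Gamma|$.

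Finally I address the two qualitative properties.
\begin{itemize}
\item \emph{Atomlessness.} The variation function $t\mapsto\Var(Z;[\alpha,t])$ of a continuous BV function is continuous, so $|DZ|$ has no atoms. Pulling back by the homeomorphism $\tilde g^{\,j}$ preserves this, and the same argument applied directly to the continuous BV function $X_j$ gives it as well.
\item \emph{Full support.} If $|DX_j|$ vanished on some $[\alpha,\beta]$ with $\alpha<\beta$, then $X_j$ would be constant there. This contradicts injectivity of $Z$ together with strict monotonicity of $\tilde g^{\,j}$.
\end{itemize}

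The only step needing any care is the length bound that keeps $[\tilde g^{\,j}\alpha,\tilde g^{\,j}\beta]$ within a single period; the rest is the standard reparametrization invariance of variation.
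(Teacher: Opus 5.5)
Your proposal is correct and follows essentially the same route as the paper. The ingredients match: reparametrization invariance of total variation under the increasing homeomorphism $\tilde g^{\,j}$ (with $\tilde g^{\,j}(\alpha+2\pi)=\tilde g^{\,j}\alpha+2\pi$ keeping the image interval within one period), identification of the variation of the injective continuous curve $Z$ with arc length, continuity of $X_j$ for atomlessness, and injectivity for full support. One small wording slip: $Z(\theta)=\Psi(\theta,s(\theta))$ is the chart $\Psi$ composed with the graph map $\theta\mapsto(\theta,s(\theta))$, not the inverse of the chart, and that composition is what gives injectivity.
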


\medskip

\begin{proof}
Recall that the total variation is invariant under increasing reparametrization; namely, if $\phi:[\alpha,\beta]\to[\alpha',\beta']$ is an increasing homeomorphism, then 
$$\Var(Z\circ\phi;[\alpha,\beta])=\Var(Z;[\alpha',\beta'])$$
($\phi$ induces a bijection between the partitions of $[\alpha,\beta]$ and those of $[\alpha',\beta']$).
Apply this with
$\phi=\tilde g^{\,j}|_{[\alpha,\beta]}$, an increasing homeomorphism onto $[\tilde g^{\,j}\alpha,\tilde g^{\,j}\beta]$ (Lemma~\ref{lem:induced}): this gives the first equality.

\smallskip
For the second equality, observe that $Z$ is a continuous injective parametrization of the Jordan curve $\Gamma$ on intervals of length $<2\pi$, and the variation of an injective continuous curve is by definition the length of its image. Taking $\beta\uparrow\alpha+2\pi$ and using that 
$$\tilde g^{\,j}(\alpha+2\pi)=\tilde g^{\,j}(\alpha)+2\pi$$ gives $|DX_j|(\Tt)=|DZ|(\Tt)=|\Gamma|$, which is finite by Lemma~\ref{lem:BV}.

\smallskip
An atom of $|DX_j|$ at $\theta$ would have mass $\lim_{\epsilon\to0}\Var(X_j;[\theta-\epsilon,\theta+\epsilon])$, which is the size of the jump of $X_j$ at $\theta$, hence $0$ since $X_j$ is continuous.

\smallskip
Finally $|DX_j|$ must charge every non-degenerate arc because $X_j$ is injective on it, so its image has positive length.
\end{proof}

\medskip

Thus all the $X_j$ have the same total variation, namely the length of $\Gamma$: each of them traverses the curve exactly once per period, only with a different time law. We now choose a parameter on $\Gamma$ in which all these time laws are Lipschitz at once. 

\medskip

\begin{definition}
We introduce 
\begin{equation}\label{eq:Lambda}
\Lambda:=\sum_{j\in\Z}2^{-|j|}\,|DX_j|,
\end{equation}
which is a finite positive atomless measure of full support on $\Tt$, of total mass $3|\Gamma|=:L$.

\medskip
Let us define
\begin{eqnarray*}
\tau: [0,2\pi) &\longrightarrow& [0,L)\\
\theta &\longmapsto& \Lambda([0,\theta)),
\end{eqnarray*}
which is a homeomorphism, and it extends to an increasing homeomorphism $\tilde\tau:\R\to\R$ with $\tilde\tau(\theta+2\pi)=\tilde\tau(\theta)+L$.
\end{definition}

\medskip
We use $\tau$ as the new parameter on $\Gamma$ and write, abusing notation,
\[
\theta_0(\tau):=\tilde\tau^{-1}(\tau),\qquad \theta_j(\tau):=\tilde g^{\,j}(\theta_0(\tau)),\qquad X_j(\tau):=Z(\theta_j(\tau)),\qquad s_j(\tau):=s(\theta_j(\tau)),
\]
so that $X_j(\tau)=\Psi(\theta_j(\tau),s_j(\tau))$ and $X_{j+1}=F\circ X_j$.\\

\medskip

Let us now prove that with this choice of a new parameter, all the $X_j$ become Lipschitz continuous.

\medskip

\begin{lemma}\label{lem:lipschitz}
For every $j\in\Z$ the map $\tau\mapsto X_j(\tau)$ is Lipschitz with constant $2^{|j|}$, and $\tau\mapsto\theta_j(\tau)$ is an increasing homeomorphism of $\R$ which is locally Lipschitz.
\end{lemma}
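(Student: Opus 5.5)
The plan is to compare the variation measure of $X_j$ in the $\tau$ parameter with Lebesgue measure, using that $\Lambda$ dominates each $|DX_j|$ with weight $2^{-|j|}$.

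\emph{Lipschitz bound for $X_j$.} Fix $j$ and $\tau_1<\tau_2$ with $\tau_2-\tau_1<L$, and set $\alpha=\theta_0(\tau_1)$, $\beta=\theta_0(\tau_2)$, so that $\alpha<\beta<\alpha+2\pi$. By definition of $\tilde\tau$ (extended periodically, with the mass of $\Lambda$ on an arc read off from $\tilde\tau$), we have $\tau_2-\tau_1=\Lambda([\alpha,\beta))$; since $\Lambda$ is atomless this equals $\Lambda([\alpha,\beta])$. The function $X_j(\tau)$ is, in the new notation, $Z(\tilde g^{\,j}(\theta_0(\tau)))$, i.e.\ the old $X_j$ evaluated at $\theta_0(\tau)$. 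Hence
\[
|X_j(\tau_2)-X_j(\tau_1)|\le \Var(X_j;[\alpha,\beta])=|DX_j|([\alpha,\beta])\le 2^{|j|}\Lambda([\alpha,\beta])=2^{|j|}(\tau_2-\tau_1).
\]
For $\tau_2-\tau_1\ge L$, I would split $[\tau_1,\tau_2]$ into subintervals of length $<L$ and use the triangle inequality. This gives the global Lipschitz constant $2^{|j|}$.

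\emph{$\theta_j$ is an increasing homeomorphism.} This is immediate: $\theta_0=\tilde\tau^{-1}$ is an increasing homeomorphism of $\R$, and $\tilde g^{\,j}$ is one by Lemma~\ref{lem:induced}.

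\emph{$\theta_j$ is locally Lipschitz.} This is the main point, and I would recover $\theta_j$ from $X_j$ through the contact-angle map: $\theta_j(\tau)=\theta_K(X_j(\tau))$ modulo $2\pi$, with the lift fixed by continuity. By Lemma~\ref{lem:chart}, $\theta_K$ is $C^1$ on $\R^2\setminus K$, and \eqref{eq:Dtheta} gives $|D\theta_K(x)|\le 1/s_K(x)$. Since $\Gamma$ is compact and $s>0$ is continuous, $s\ge s_{\min}>0$ on $\Gamma$. The obstacle is that a segment joining two points of $\Gamma$ may pass near $K$, where the bound degenerates, so I would argue locally. Pick a compact neighbourhood $N$ of $\Gamma$ in $\R^2\setminus K$ on which $s_K\ge s_{\min}/2$, and let $r>0$ be such that the $r$-neighbourhood of $\Gamma$ lies in $N$. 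For $|\tau_2-\tau_1|\le r2^{-|j|}$, the segment between $X_j(\tau_1)$ and $X_j(\tau_2)$ lies in $N$, since it is within distance $r$ of $X_j(\tau_1)\in\Gamma$. The mean value inequality along this segment then gives
\[
|\theta_j(\tau_2)-\theta_j(\tau_1)|\le \frac{2}{s_{\min}}\,2^{|j|}\,|\tau_2-\tau_1|,
\]
where the lift of $\theta_K$ along the segment agrees with $\theta_j$ by continuity for short increments. Chaining over short intervals yields the local, and in fact global, Lipschitz property.
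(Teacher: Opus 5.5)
Your proof is correct and follows the paper's argument. The domination $|DX_j|\le 2^{|j|}\Lambda$ gives the Lipschitz estimate for $X_j$, and writing $\theta_j=\theta_K\circ X_j$ with $|D\theta_K|=1/s_K$ bounded near $\Gamma$ gives the Lipschitz property of $\theta_j$. Your neighbourhood-and-short-segment argument makes explicit the mean value step and the choice of lift, which the paper leaves implicit.
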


\begin{proof}
For $\theta<\theta'<\theta+2\pi$:
$$|X_j(\theta)-X_j(\theta')|\le\Var(X_j;[\theta,\theta'])=|DX_j|([\theta,\theta'])\le2^{|j|}\Lambda([\theta,\theta'])=2^{|j|}(\tilde\tau(\theta')-\tilde\tau(\theta)).$$
Since $\theta_j=\theta_K\circ X_j$ with $\theta_K$ of class $C^1$ and $|D\theta_K|=1/s_K\le1/\min s$ on $\Gamma$ (Lemma~\ref{lem:chart}), then $\theta_j$ is locally Lipschitz; it is increasing because $\tilde\tau^{-1}$ and $\tilde g$ are.
\end{proof}

\medskip
Since $X_{j+1}=F(X_j)=X_j-2p(\theta_j)$, the contact point of $X_j$ is given by:
\begin{equation}\label{eq:Pj}
P_j(\tau):=p(\theta_j(\tau))=\tfrac12\bigl(X_j(\tau)-X_{j+1}(\tau)\bigr),
\end{equation}
a Lipschitz function of $\tau$.

\bigskip
We define $N\subset\R$ to be the set of parameters at which all $X_j$, $\theta_j$, $P_j$ ($j\in\Z$) are differentiable; by Rademacher's theorem its complement is Lebesgue-null. 

\bigskip

Let us prove the following relations. On $N$, we set $v_j:=X_j'$.

\medskip
\begin{lemma}\label{lem:bookkeeping}
For $\tau\in N$ and $j\in\Z$:
\begin{enumerate}[label=\textup{(\alph*)}]
\item $P_j'(\tau)=a_j(\tau)\,\bt(\theta_j(\tau))$ for a number $a_j(\tau)\ge0$;
\item $v_{j+1}=v_j-2a_j\,\bt(\theta_j)$;
\item $\theta_j'\ge0$ and $v_j\cdot\bn(\theta_j)=-s_j\,\theta_j'$.
\end{enumerate}
\end{lemma}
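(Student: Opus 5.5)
The three identities follow by differentiating, at a parameter $\tau\in N$, the relations already available: the definition of $P_j$ as the contact point, the recursion \eqref{eq:Pj}, and the chart identity $X_j=P_j+s_j\bt(\theta_j)$. The one delicate point is (a), since $p$ itself need not be differentiable at $\theta_j(\tau)$.

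\emph{Step (a).} Use the measure identity \eqref{eq:Dp}, $Dp=\hat\rho\,\bt$, in the form
\[
p(\theta')-p(\theta)=\int_{[\theta,\theta')}\bt(u)\dd\hat\rho(u),\qquad \theta<\theta'.
\]
For $\tau<\tau'$ close, with $\theta=\theta_j(\tau)$ and $\theta'=\theta_j(\tau')\ge\theta$ (monotonicity from Lemma~\ref{lem:lipschitz}), this gives
\[
P_j(\tau')-P_j(\tau)=\hat\rho\bigl([\theta,\theta')\bigr)\,\bt(\theta)+\int_{[\theta,\theta')}\bigl(\bt(u)-\bt(\theta)\bigr)\dd\hat\rho(u).
\]
Since $\theta_j$ is continuous, $\sup_{u\in[\theta,\theta')}|\bt(u)-\bt(\theta)|\to0$ as $\tau'\downarrow\tau$. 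Also $\hat\rho([\theta,\theta'))\le|p(\theta')-p(\theta)|+o(1)\,\hat\rho([\theta,\theta'))$, so $\hat\rho([\theta,\theta'))=O(\tau'-\tau)$ because $P_j$ is Lipschitz. Dividing by $\tau'-\tau$ and letting $\tau'\downarrow\tau$, the right derivative of $P_j$ equals
\[
a_j\,\bt(\theta),\qquad a_j:=\lim_{\tau'\downarrow\tau}\frac{\hat\rho([\theta,\theta'))}{\tau'-\tau}.
\]
This limit exists because $P_j'(\tau)$ exists and $\bt(\theta)$ is a unit vector, and $a_j\ge0$ because the ratio is nonnegative. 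Since $\tau\in N$, the right derivative is $P_j'(\tau)$.

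\emph{Step (b).} This is immediate from \eqref{eq:Pj}: $X_{j+1}=X_j-2P_j$, and differentiating gives $v_{j+1}=v_j-2a_j\bt(\theta_j)$.

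\emph{Step (c).} Since $\theta_j$ is increasing, $\theta_j'\ge0$. By Lemma~\ref{lem:chart}, $\theta_j=\theta_K\circ X_j$ with $\theta_K\in C^1$, so the chain rule and \eqref{eq:Dtheta} give
\[
\theta_j'=-\frac{v_j\cdot\bn(\theta_j)}{s_K(X_j)}=-\frac{v_j\cdot\bn(\theta_j)}{s_j},
\]
where $s_K(X_j)=s_j$ by the chart. Rearranging yields $v_j\cdot\bn(\theta_j)=-s_j\theta_j'$.

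This avoids differentiating $s_j$, which need not be Lipschitz. The main obstacle is Step (a), where the $O(\tau'-\tau)$ control of $\hat\rho([\theta,\theta'))$ comes from the near-parallelism of $\bt$ on short arcs.
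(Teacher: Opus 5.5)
Your proof is correct and follows the paper's argument. In (a), your quantity $\hat\rho([\theta,\theta'))$ is exactly the paper's arclength increment $\Delta\ell=\ell_j(\tau')-\ell_j(\tau)$. Your near-parallelism estimate is the paper's use of the $C^1$ unit-speed arclength parametrization $\gamma$: a local bi-Lipschitz bound, followed by a scalar product with $\bt(\theta_j)$ to extract $a_j$. Parts (b) and (c) coincide with the paper's, and using only right difference quotients is harmless since $P_j$ is differentiable at $\tau\in N$.
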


\begin{proof}
(a) Let $\ell:\R\to\R$ and $\gamma:\R\to\partial K$ be the arclength function and the arclength parametrization \eqref{eq:arclength}, so that $p=\gamma\circ\ell$ and $P_j=\gamma\circ\ell_j$, where $\ell_j:=\ell\circ\theta_j$ (the arclength position of the contact point of $X_j$) is continuous and increasing, being a composition of two such functions.

Since $\gamma$ is $C^1$ with $|\gamma'|=1$, then there is $\eta>0$ with $$\tfrac12|a-b|\le|\gamma(a)-\gamma(b)|\le|a-b|
\qquad {\rm whenever}\quad |a-b|\le\eta.$$ Fix $\tau\in N$ and let $w=P_j'(\tau)$. For small $u\ne0$ put $\Delta\ell:=\ell_j(\tau+u)-\ell_j(\tau)$, which tends to $0$ as $u\to0$. Then $$|\Delta\ell|\le2|P_j(\tau+u)-P_j(\tau)|\le C|u|,$$ so $\Delta\ell/u$ is bounded, and
\[
\frac{P_j(\tau+u)-P_j(\tau)}{u}=\gamma'(\ell_j(\tau))\,\frac{\Delta\ell}{u}+\frac{o(|\Delta\ell|)}{u}
=\bt(\theta_j(\tau))\,\frac{\Delta\ell}{u}+o(1).
\]
Taking the scalar product with $\bt(\theta_j(\tau))$ shows that $a_j:=\lim_{u\to0}\Delta\ell/u$ exists, and then $$w=a_j\bt(\theta_j(\tau)),$$ with $a_j\ge0$ since $\ell_j$ is increasing.

\smallskip
\noindent (b) This relation simply follows from differentiating $X_{j+1}=X_j-2P_j$.

\smallskip
\noindent (c) $\theta_j$ is increasing, so $\theta_j'\ge0$. Since $\theta_j=\theta_K\circ X_j$ with $\theta_K\in C^1$, the chain rule and \eqref{eq:Dtheta} give $$\theta_j'=D\theta_K(X_j)v_j=-v_j\cdot\bn(\theta_j)/s_j.$$
\end{proof}

\bigskip
We now bring the singularity of $\rho_h$ into the discussion and prove what we described in Section \ref{sec:heuristic}: the contact point can slide or the normal can turn, but not both at non-zero speed. 

\medskip

\begin{lemma}\label{lem:complementary}
Let $\rho_h$ be purely singular.
After removing from $N$ a Lebesgue-null set, one has 
\begin{equation}\label{eq:complementary}
a_j(\tau)\,\theta_j'(\tau)=0 \qquad \forall \; j\in\Z, \quad \forall\; \tau\in N.
\end{equation}
In particular, if $\tau\in N$ and $a_j(\tau)>0$, then $\theta_j'(\tau)=0$ and $v_j(\tau)$ is parallel to $\bt(\theta_j(\tau))$.
\end{lemma}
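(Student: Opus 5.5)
The plan is to prove \eqref{eq:complementary} separately for each fixed $j$; since $\Z$ is countable, a countable union of null exceptional sets is still null. Fix $j$. By Remark~\ref{rem:regularities}, $\rho_h$ purely singular means there is a Lebesgue-null Borel set $N_h\subset\R$ (in the angle variable) such that $\hat\rho$ is carried by $N_h$. Equivalently, the arclength function $\ell$ satisfies $D\ell(\R\setminus N_h)=0$ and $\Leb(N_h)=0$. Recall from the proof of Lemma~\ref{lem:bookkeeping}(a) that $\ell_j:=\ell\circ\theta_j$ is increasing and locally Lipschitz, because $|\Delta\ell_j|\le 2|\Delta P_j|$ locally and $P_j$ is Lipschitz. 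Moreover $a_j=\ell_j'$ on $N$.

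The key step is to split the parameter line according to whether $\theta_j(\tau)$ lies in $N_h$.

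On $A:=\theta_j^{-1}(N_h)$ we want $\theta_j'=0$ almost everywhere. This follows from the area formula for the locally Lipschitz increasing function $\theta_j$:
\[
\int_A\theta_j'\,d\tau=\Leb(\theta_j(A))=\Leb(N_h)=0 .
\]
Since $\theta_j'\ge0$, this forces $\theta_j'=0$ for a.e.\ $\tau\in A$. If one prefers to avoid the area formula, the standard fact suffices: a Lipschitz monotone function has zero derivative a.e.\ on the preimage of a null set.

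On $B:=\theta_j^{-1}(\R\setminus N_h)$ we want $a_j=\ell_j'=0$ almost everywhere. The same argument applies to $\ell_j$:
\[
\int_B\ell_j'\,d\tau=\Leb(\ell_j(B))=\Leb\bigl(\ell(\R\setminus N_h)\bigr)=D\ell(\R\setminus N_h)=0 .
\]
Here we use the identity $\Leb(\ell(E))=D\ell(E)$ from Remark~\ref{rem:regularities}, together with bijectivity of $\theta_j$ and of $\ell$. Hence $a_j=0$ for a.e.\ $\tau\in B$.

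Since $A\cup B=\R$, the product $a_j\theta_j'$ vanishes a.e. We then remove the exceptional null sets for all $j$ from $N$.

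The "in particular" clause is then immediate. If $a_j(\tau)>0$ then $\theta_j'(\tau)=0$, and Lemma~\ref{lem:bookkeeping}(c) gives $v_j\cdot\bn(\theta_j)=0$, so $v_j\parallel\bt(\theta_j)$.

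The main obstacle is the measure-theoretic bookkeeping. Specifically, one must justify $\int_E f'=\Leb(f(E))$ for increasing locally Lipschitz $f$ and Borel $E$. It holds on intervals, and then extends to Borel sets by uniqueness of measures, exactly as in Remark~\ref{rem:regularities}, since $f'\,d\tau=Df$ for absolutely continuous $f$. One must also note that $\theta_j^{-1}(N_h)$ is Borel, which holds because $\theta_j$ is a homeomorphism.
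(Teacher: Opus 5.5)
Your proof is correct, but it handles the complementary regime differently from the paper.

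The paper takes $E$ to be a null set of angles outside of which $p$ is differentiable with $p'=0$. This fact comes from the a.e.\ differentiability of the BV curve $p$, via Remark~\ref{rem:regularities}. For $\theta_j(\tau)\notin E$, the paper gets $a_j(\tau)=0$ pointwise from the chain rule $P_j'(\tau)=p'(\theta_j(\tau))\,\theta_j'(\tau)=0$. Only on $\theta_j^{-1}(E)$ does it use the area formula, to get $\theta_j'=0$ a.e.

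You instead let $N_h$ be a null carrier of $\hat\rho$ and use the Stieltjes identity $\Leb(f(A))=Df(A)$ on both pieces: for $\theta_j$ on $\theta_j^{-1}(N_h)$, and for $\ell_j=\ell\circ\theta_j$ on the complement. This needs one extra observation: $\ell_j$ is locally Lipschitz with $\ell_j'=a_j$ on $N$. You correctly extract this from the local bi-Lipschitz estimate for $\gamma$ in the proof of Lemma~\ref{lem:bookkeeping}(a). In return, you never need pointwise differentiability of $p$.

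Your version also makes the structure transparent. The measures $\theta_j'\,d\tau=D\theta_j$ and $a_j\,d\tau=D\ell_j$ are the pullbacks under $\theta_j$ of the mutually singular measures $d\theta$ and $\hat\rho$. Hence they are mutually singular, and their densities have a.e.\ disjoint supports. The paper's version is slightly shorter, because its chain-rule step is pointwise.
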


\medskip
\begin{proof}
By Remark~\ref{rem:regularities}, $p$ is differentiable with $p'(\theta)=0$ for Lebesgue-almost every $\theta$ (this is a consequence of pure singularity). Let $E\subset\R$ be a Borel Lebesgue-null set containing every $\theta$ at which $p$ fails to be differentiable or $p'(\theta)\neq0$.

\smallskip
Fix $j$. If $\tau\in N$ and $\theta_j(\tau)\notin E$, then $p$ is differentiable at $\theta_j(\tau)$ with derivative $0$ and $\theta_j$ is differentiable at $\tau$. Then:
$$P_j'(\tau)=p'(\theta_j(\tau))\,\theta_j'(\tau)=0,$$
which implies, by Lemma \ref{lem:bookkeeping} (a), that  $a_j(\tau)=0$.

\smallskip
On the Borel set $B_j:=\theta_j^{-1}(E)$ we apply the area formula for the locally Lipschitz map $\theta_j:\R\to\R$ (see, for example, \cite[Section 3.3]{EG15}): for every bounded Borel set $A$
\[
\int_A|\theta_j'(\tau)|\dd\tau=\int_\R\#\bigl(A\cap\theta_j^{-1}(\{\vartheta\})\bigr)\dd\vartheta .
\]
For $A=B_j\cap[-R,R]$ the right-hand side is an integral over $\theta_j(A)\subset E$, a null set, hence zero. Since $\theta_j'\ge0$ we get $\theta_j'=0$ a.e.\ on $B_j$. Removing from $N$ the null sets where this fails, for all $j\in\Z$, gives \eqref{eq:complementary}.

The last assertion follows from Lemma~\ref{lem:bookkeeping} (c).
\end{proof}

\medskip

\begin{remark}\label{rem:antipodalswitch}
Therefore Lemma~\ref{lem:complementary} shows that, considering a suitable  parameter $\tau$, the heuristic described in Section \ref{sec:heuristic}  holds true.

Let us denote by 
$$V_j:=\{\tau\in N: a_j(\tau)>0\} \qquad {\rm and } \qquad A_j:=\{\tau\in N:\theta_j'(\tau)>0\}$$
the sets, respectively,  at which the contact point $P_j$ moves and the one at which the
contact angle advances. Lemma~\ref{lem:complementary} says precisely that $V_j\cap
A_j=\emptyset$. \\
Observe that neither set is small: since $\theta_j$ is an increasing
homeomorphism and absolutely continuous, then
$$\int_{I\cap
A_j}\theta_j'\,\dd\tau=\theta_j(\beta)-\theta_j(\alpha)>0 \qquad \mbox{on every interval $I=[\alpha,\beta]$}, \quad \alpha<\beta,$$
 and since $P_j$ is Lipschitz and injective on short intervals, we also get
$$\int_{I\cap V_j}a_j\,\dd\tau\ge|P_j(\beta)-P_j(\alpha)|>0,$$
where the last inequality holds for intervals shorter than a parameter period. Thus in every
interval of parameters the angle turns on one set of positive measure and the
contact point slides on another, disjoint, set of positive measure.
\end{remark}

\bigskip

We can now prove the {\it half-turn} phenomenon that underlies the  Rigidity theorem for central graphs.
The strategy is exactly what was described in Section \ref{sec:heuristic} and we will proceed along the same steps.

\medskip
\begin{lemma}\label{lem:switch}
Let $\tau\in N$ with $a_0(\tau)>0$. Then there is an integer $j\ge2$ such that $\theta_j(\tau)=\theta_0(\tau)+\pi$, i.e.\ $\tilde g^{\,j}(\theta_0(\tau))=\theta_0(\tau)+\pi$.
\end{lemma}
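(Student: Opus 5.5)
We follow the heuristic of Section~\ref{sec:heuristic}, now on the full-measure set $N$ where Lemmas~\ref{lem:bookkeeping} and~\ref{lem:complementary} hold for every $j\in\Z$. Fix $\tau\in N$ with $a_0(\tau)>0$. By Lemma~\ref{lem:complementary}, $\theta_0'=0$ and $v_0\parallel\bt(\theta_0)$. Lemma~\ref{lem:bookkeeping}(b) gives $v_1=v_0-2a_0\bt(\theta_0)$, so $v_1\parallel\bt(\theta_0)$ as well. We split into two cases: $v_1\neq0$, handled forward in $j$, and $v_1=0$, handled backward.

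\textbf{Case $v_1\neq0$.} Write $v_1=\lambda\bt(\theta_0)$. Lemma~\ref{lem:bookkeeping}(c) at $j=1$ gives $v_1\cdot\bn(\theta_1)=-\lambda\sin(\theta_1-\theta_0)\le0$. Since $\theta_1-\theta_0\in[d_*,d^*]\subset(0,\pi)$ by Lemma~\ref{lem:induced}, the sine is positive, hence $\lambda\ge0$; as $v_1\ne0$, in fact $\lambda=c>0$. (Sign conventions are to be rechecked against the heuristic, which wrote $v_1=-c\bt$; only the consistency of the sign with the inequality matters.)

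Let $j^*\ge1$ be the first index with $a_{j^*}(\tau)>0$. For $1\le j\le j^*$ we have $v_j=v_1$, by Lemma~\ref{lem:bookkeeping}(b). The constraint $v_j\cdot\bn(\theta_j)=-c\sin(\theta_j-\theta_0)\le0$ must hold for all these $j$. Since $\theta_j\to\infty$ with increments $<\pi$, if no slide occurred while $\theta_j-\theta_0\le\pi$, the first $j$ with $\theta_j-\theta_0>\pi$ would satisfy $\theta_j-\theta_0\in(\pi,2\pi)$. Then $\sin(\theta_j-\theta_0)<0$, which violates the constraint. Hence $j^*$ exists and satisfies $\theta_{j^*}-\theta_0\le\pi$.

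At $j^*$, Lemma~\ref{lem:complementary} forces $v_{j^*}=v_1\parallel\bt(\theta_{j^*})$. As $v_1\ne0$ is parallel to $\bt(\theta_0)$, we get $\theta_{j^*}-\theta_0\in\pi\Z$, and therefore $\theta_{j^*}=\theta_0+\pi$. This gives $j^*\ge2$, since a single increment is $<\pi$.

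\textbf{Case $v_1=0$.} Then $v_0=2a_0\bt(\theta_0)\neq0$, with positive coefficient. We run the same argument backward with $j\le0$: $v_{j}=v_0$ as long as $a_{j}=0$ for the relevant indices $j<0$. The identity $v_{j}=v_{j+1}+2a_j\bt(\theta_j)$ transports $v_0$ back unchanged until the first $k<0$ with $a_k>0$. The constraint $v_j\cdot\bn(\theta_j)=-2a_0\sin(\theta_j-\theta_0)\le0$ requires $\sin(\theta_j-\theta_0)\ge0$. For $\theta_j-\theta_0\in(-\pi,0)$ this fails unless equality holds, and that already rules out $j=-1$.

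So we need to set this up carefully. The quantity $v_{-1}$ itself is $v_0+2a_{-1}\bt(\theta_{-1})$. If $a_{-1}=0$, then $v_{-1}=v_0$ and $v_{-1}\cdot\bn(\theta_{-1})<0$ is required, but it equals $-2a_0\sin(\theta_{-1}-\theta_0)>0$, a contradiction. Hence $a_{-1}>0$, so $v_{-1}\parallel\bt(\theta_{-1})$. Then $v_0=v_{-1}-2a_{-1}\bt(\theta_{-1})$ is parallel to $\bt(\theta_{-1})$ and to $\bt(\theta_0)$, with $\theta_0-\theta_{-1}\in(0,\pi)$, which forces $v_0=0$, a contradiction.

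Thus the case $v_1=0$ is actually impossible, and the first case always applies. It yields $j=j^*\ge2$ with $\tilde g^{\,j}(\theta_0(\tau))=\theta_0(\tau)+\pi$.

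\textbf{Main obstacle.} The delicate step is the sign bookkeeping: tracking the orientation of $v$ relative to $\bt(\theta_0)$, and checking that the "exceed $\pi$" step uses the strict bound $d^*<\pi$ so that the first crossing lands in $(\pi,2\pi)$.
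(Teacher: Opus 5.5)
Your forward case is correct, but your dismissal of the case $v_1=0$ rests on a sign error, and the claim that this case is impossible is false. The correct identity is $\bt(\alpha)\cdot\bn(\beta)=\sin(\beta-\alpha)$, not $-\sin(\beta-\alpha)$.

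\textbf{Why the forward case survives.} There you use the wrong sign twice: once to fix the sign of $\lambda$ at $j=1$, and once to evaluate the constraint at $\theta_j$. The two errors cancel. In fact $\lambda=-c<0$, as in the heuristic, and $v_j\cdot\bn(\theta_j)=-c\sin(\theta_j-\theta_0)$ is right, so that case goes through.

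\textbf{Why the case $v_1=0$ does not.} Here the coefficient in $v_0=2a_0\bt(\theta_0)$ is not determined by an inequality; it is fixed and positive, so the error no longer cancels. If $a_{-1}=0$, then $v_{-1}\cdot\bn(\theta_{-1})=2a_0\sin(\theta_{-1}-\theta_0)<0$, which is exactly what Lemma~\ref{lem:bookkeeping}(c) permits. So there is no contradiction. Your (correct) parallelism argument only shows that $a_{-1}=0$ is forced. The case $v_1=0$, i.e.\ $b=2a_0$ in $v_0=b\,\bt(\theta_0)$, remains unproved.

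\textbf{What is missing: the backward mirror of your forward argument.}
\begin{enumerate}
\item Transport $v_0$ backward via $v_{-(m+1)}=v_{-m}+2a_{-(m+1)}\bt(\theta_{-(m+1)})$. As long as $a_{-1}=\dots=a_{-m}=0$, one has $v_{-m}=2a_0\bt(\theta_0)$.
\item The constraint $2a_0\sin(\theta_{-m}-\theta_0)\le0$ fails at the first $m_*$ with $\theta_{-m_*}-\theta_0<-\pi$. This value lies in $(-2\pi,-\pi)$ because each step is $<\pi$.
\item Hence there is a first $k\le m_*$ with $a_{-k}>0$, and then $\theta_{-(k-1)}-\theta_0\ge-\pi$.
\item At $-k$, Lemma~\ref{lem:complementary} gives $v_{-k}\parallel\bt(\theta_{-k})$, hence also $v_{-(k-1)}=v_{-k}-2a_{-k}\bt(\theta_{-k})\parallel\bt(\theta_{-k})$. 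Here you must use the vector $v_{-(k-1)}$, which still equals $v_0$, not $v_{-k}$, which may have changed.
\item Since $v_{-(k-1)}=v_0\neq0$ is parallel to $\bt(\theta_0)$, and $\theta_{-k}-\theta_0\in(-2\pi,0)$, you get $\theta_{-k}=\theta_0-\pi$. Also $k\ge2$, because $\theta_{-1}-\theta_0\in(-\pi,0)$.
\item Finally, $\tilde g^{\,k}(\theta_0-\pi)=\theta_0$ together with $\tilde g(\theta+\pi)=\tilde g(\theta)+\pi$ yields $\tilde g^{\,k}(\theta_0)=\theta_0+\pi$.
\end{enumerate}
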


\begin{proof}
Since all quantities are evaluated at the fixed $\tau \in N$,  we suppress it in the notation. By Lemma~\ref{lem:complementary}, $v_0=b\,\bt(\theta_0)$ for some $b\in\R$, and by Lemma~\ref{lem:bookkeeping} (b)
$$v_1=(b-2a_0)\,\bt(\theta_0).$$
Let us recall the following properties:
\begin{itemize}
\item[(*)] For every $j$, $v_j\cdot\bn(\theta_j)\le0$ (Lemma~\ref{lem:bookkeeping} (c)).
\item[(**)] If $a_j>0$ then $v_j\parallel\bt(\theta_j)$ (Lemma~\ref{lem:complementary}).
\end{itemize}
Recall also that $\theta_{j+1}-\theta_j\in[d_*,d^*]\subset(0,\pi)$ for all $j$, and that $\bt(\alpha)\cdot\bn(\beta)=\sin(\beta-\alpha)$.

\smallskip
First, $b\le2a_0$: applying (*) at $j=1$ gives
$$v_1\cdot\bn(\theta_1)=(b-2a_0)\sin(\theta_1-\theta_0)\le0,$$
and $\sin(\theta_1-\theta_0)>0$. We distinguish two cases.

\begin{itemize}
\item {\bf Case $b<2a_0$.}
Then $v_1=-c\,\bt(\theta_0)$ with $c:=2a_0-b>0$. Put $\Delta_j:=\theta_j-\theta_0\in\R$; since $\Delta_1\in(0,\pi)$ and $\Delta_{j+1}-\Delta_j=\theta_{j+1}-\theta_j\in[d_*,d^*]\subset(0,\pi)$, the sequence $\Delta_j$ increases to $+\infty$, and
\[
j_*:=\min\{j\ge1:\ \Delta_j>\pi\}
\]
is well defined, with $j_*\ge2$ and $\Delta_{j_*}\in(\pi,2\pi)$ (the previous term is $\le\pi$ and the increment is $<\pi$); in particular $\sin\Delta_{j_*}<0$.

Let $j_1$ be the first index $j\ge1$ with $a_j>0$, if it exists. By Lemma~\ref{lem:bookkeeping} (b), $v_{j+1}=v_j-2a_j\bt(\theta_j)$, so the velocity is constant along any sequence of indices where $a_j=0$:
\[
v_j=v_1=-c\,\bt(\theta_0)\qquad\text{for }1\le j\le j_1
\]
(for all $j\ge1$ if $j_1$ does not exist).

\medskip
We claim that $j_1$ exists and $j_1<j_*$. Otherwise $a_j=0$ for $1\le j<j_*$, hence $v_{j_*}=-c\,\bt(\theta_0)$, and (*) at the index $j_*$ gives
\[
0\ \ge\ v_{j_*}\cdot\bn(\theta_{j_*})=-c\,\sin(\theta_{j_*}-\theta_0)=-c\,\sin\Delta_{j_*}>0,
\]
a contradiction. 
By the claim and the minimality of $j_*$,
\[
0<\Delta_1\le\Delta_{j_1}\le\pi .
\]

Finally, at $j_1$ using  (**) we get $v_{j_1}=-c\,\bt(\theta_0)\parallel\bt(\theta_{j_1})$, {\it i.e.}\ $\theta_{j_1}\equiv\theta_0\pmod\pi$. Since $\Delta_{j_1} \in (0, \pi]$, we conclude that $\theta_{j_1}=\theta_0+\pi$, and $j_1\ge2$ since a single increment is $<\pi$.

\medskip
\item {\bf Case $b=2a_0$.}
Then $v_1=0$, so nothing can be carried forward; but $v_0=b\,\bt(\theta_0)$ with $b=2a_0>0$ by the hypothesis $a_0>0$, so $v_0\ne0$ can be carried backward, similarly to what was done in the previous case.

Put $\Delta_{-m}:=\theta_{-m}-\theta_0\in\R$ for $m\ge0$; since $\Delta_0=0$ and
$$\Delta_{-m}-\Delta_{-(m+1)}=\theta_{-m}-\theta_{-(m+1)}\in[d_*,d^*]\subset(0,\pi),$$
the sequence $\Delta_{-m}$ decreases to $-\infty$, and
\[
m_*:=\min\{m\ge1:\ \Delta_{-m}<-\pi\}
\]
is well defined, with $m_*\ge2$ and $\Delta_{-m_*}\in(-2\pi,-\pi)$ (the previous term is $\ge-\pi$ and the step is $<\pi$); in particular $\sin\Delta_{-m_*}>0$.

Let $k$ be the first index $m\ge1$ with $a_{-m}>0$, if it exists. Lemma~\ref{lem:bookkeeping} (b) read backwards gives 
$$v_{-(m+1)}=v_{-m}+2a_{-(m+1)}\bt(\theta_{-(m+1)}),$$ so the velocity is constant along any sequence of indices where $a_{-m}=0$:
\[
v_{-m}=v_0=b\,\bt(\theta_0)\qquad\text{for }0\le m\le k-1
\]
(for all $m\ge0$ if $k$ does not exist).

\medskip
As before, we claim that $k$ exists and $k\le m_*$. Otherwise $a_{-m}=0$ for $1\le m\le m_*$, hence $v_{-m_*}=b\,\bt(\theta_0)$, and (*) at the index $-m_*$ gives
\[
0\ \ge\ v_{-m_*}\cdot\bn(\theta_{-m_*})=b\,\sin(\theta_{-m_*}-\theta_0)=b\,\sin\Delta_{-m_*}>0,
\]
a contradiction. By the claim and the minimality of $m_*$,
\[
-\pi\le\Delta_{-(k-1)}\le0 .
\]

Finally, at $-k$ a contact point moves, so (**) gives $v_{-k}\parallel\bt(\theta_{-k})$, and then also
$$v_{-(k-1)}=v_{-k}-2a_{-k}\bt(\theta_{-k})\parallel\bt(\theta_{-k}).$$
Since $v_{-(k-1)}=b\,\bt(\theta_0)\ne0$, we get $\theta_{-k}\equiv\theta_0\pmod\pi$. But
$$\Delta_{-k}\in(\Delta_{-(k-1)}-\pi,\Delta_{-(k-1)})\subset(-2\pi,0),$$
hence $\theta_{-k}=\theta_0-\pi$, and $k\ge2$ since $\Delta_{-1}\in(-\pi,0)$. Thus $\tilde g^{\,k}(\theta_0-\pi)=\theta_0$, and by \eqref{eq:g}, we conclude that $\tilde g^{\,k}(\theta_0)=\theta_0+\pi$.
\end{itemize}
This concludes the proof.
\end{proof}

\bigskip

We can now prove the  Rigidity Theorem for central graphs (Theorem \ref{thm:rigiditycentral}).

\medskip

\begin{proof}[Proof of Theorem~\textup{\ref{thm:rigiditycentral}}]
Let
$$\Theta:=\{\theta\in\R:\ \tilde g^{\,j}(\theta)=\theta+\pi\text{ for some }j\ge2\}.$$

\medskip
We first show that $\Theta$ is dense in $\R$. Let $I=[\alpha,\beta]$ be an interval with $\beta-\alpha<L$. The map $\theta_0$ is a homeomorphism and $p$ is injective on intervals of length $<2\pi$, so $P_0=p\circ\theta_0$ is injective on $I$, in particular $P_0(\alpha)\ne P_0(\beta)$. As $P_0$ is Lipschitz,
\[
\int_I a_0(\tau)\dd\tau=\int_I|P_0'(\tau)|\dd\tau\ \ge\ |P_0(\beta)-P_0(\alpha)|>0 ,
\]
so $\{a_0>0\}\cap N\cap I$ has positive measure, in particular is non-empty. By Lemma~\ref{lem:switch}, $\theta_0(\tau)\in\Theta$ for every $\tau$ in this set. Since $\theta_0$ is a homeomorphism, the image of a set meeting every interval is dense; hence $\Theta$ is dense.

\medskip
For $\theta\in\Theta$ let $q(\theta)\ge2$ be an integer with $\tilde g^{\,q(\theta)}(\theta)=\theta+\pi$. If $\theta,\theta'\in\Theta$ with $q=q(\theta)$, $q'=q(\theta')$, put $n:=qq'$. Then
\begin{equation}\label{eq:dimrigiditythm}
\tilde g^{\,n}(\theta)-\theta=q'\pi \quad {\rm and} \quad \tilde g^{\,n}(\theta')-\theta'=q\pi.
\end{equation}
Observe that, since $\tilde g:\R\to\R$ is an increasing homeomorphism with $\tilde g(\theta+\pi)=\tilde g(\theta)+\pi$ (Lemma~\ref{lem:induced}), for all $\theta,\theta'\in\R$ and $n\ge1$ we have
\begin{equation}\label{eq:dimrigiditythm2}
\bigl|\,(\tilde g^n(\theta)-\theta)-(\tilde g^n(\theta')-\theta')\,\bigr|<\pi .
\end{equation}
(Choose $k\in\Z$ with $\theta+k\pi\le\theta'<\theta+(k+1)\pi$; applying the increasing map $\tilde g^n$, which commutes with the translation by $\pi$, gives $\tilde g^n(\theta)+k\pi\le \tilde g^n(\theta')<\tilde g^n(\theta)+(k+1)\pi$, which yields the claim.)

\medskip
Now \eqref{eq:dimrigiditythm} and \eqref{eq:dimrigiditythm2} give $|q-q'|\pi<\pi$, {\it i.e.}\ $q=q'$. Thus $q$ is constant on $\Theta$, and the continuous function $\tilde g^{\,q}(\theta)-\theta-\pi$ vanishes on the dense set $\Theta$, hence identically. This proves \eqref{eq:halfturn}.

\medskip
Finally $$F^q(Z(\theta))=Z(\tilde g^{\,q}\theta)=Z(\theta+\pi)=-Z(\theta)$$ 
by the assumption that $\Gamma$ is central.  Therefore, $F^q|_\Gamma=-\id$ and $T^{2q}=F^{2q}=\id$ on $\Gamma$, which concludes the proof.
\end{proof}

\bigskip

\subsection{General invariant graphs: proof of Theorem~\ref{thm:rigidity}}\label{sec:step2}

Let now $\Gamma$ be an arbitrary invariant graph, with height function $s$. Since $\Gamma$ is a priori not central, the curves $\Gamma$ and $-\Gamma$ may cross each other. The idea is to construct two central invariant graphs out of $\Gamma$ and $-\Gamma$, and apply Theorem \ref{thm:rigiditycentral}.\\

Before proving the theorem, let us recall the following simple lemma, which will play a role in the proof.\\

\medskip

\begin{lemma}\label{lem:frontier}
Let $\phi:Y\to Y$ be a homeomorphism of a topological space and $A\subset Y$ a subset with $\phi(A)=A$. Then
\begin{equation}\label{eq:frontier}
\phi(\Fr A)=\Fr A,\qquad\text{where }\Fr A:=\overline A\setminus\Int A .
\end{equation}
\end{lemma}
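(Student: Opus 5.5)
The statement is purely point-set topological, so I would argue directly from the definitions, using only two facts: a homeomorphism $\phi$ commutes with closure and with interior. Write $\Fr A=\overline A\setminus\Int A$ and compare $\phi(\overline A)$ with $\overline{\phi(A)}$, and $\phi(\Int A)$ with $\Int\phi(A)$.

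\emph{Closure.} Since $\phi$ is continuous, $\phi(\overline A)\subset\overline{\phi(A)}$. Applying the same inclusion to the continuous map $\phi^{-1}$ and the set $\phi(A)$ gives $\phi^{-1}(\overline{\phi(A)})\subset\overline A$, that is, $\overline{\phi(A)}\subset\phi(\overline A)$. Hence $\phi(\overline A)=\overline{\phi(A)}$.

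\emph{Interior.} Because $\phi$ is an open map, $\phi(\Int A)$ is an open subset of $\phi(A)$, so $\phi(\Int A)\subset\Int\phi(A)$. Running the same argument with $\phi^{-1}$ and the set $\phi(A)$ gives $\phi^{-1}(\Int\phi(A))\subset\Int A$, so equality holds. Alternatively, one can use $\Int A=Y\setminus\overline{Y\setminus A}$ together with the bijectivity of $\phi$.

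\emph{Conclusion.} Since $\phi$ is a bijection, it commutes with set differences, so
\[
\phi(\Fr A)=\phi(\overline A)\setminus\phi(\Int A)=\overline{\phi(A)}\setminus\Int\phi(A)=\overline A\setminus\Int A=\Fr A,
\]
where the third equality uses the hypothesis $\phi(A)=A$. This proves \eqref{eq:frontier}.

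There is no genuine obstacle. The only point needing care is to use injectivity of $\phi$ when passing $\phi$ through the set difference; continuity of $\phi^{-1}$ is what gives the reverse inclusions.
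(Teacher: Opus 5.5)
Your proof is correct and follows the paper's argument essentially verbatim: a homeomorphism commutes with closures and interiors, a bijection commutes with set differences, and the hypothesis $\phi(A)=A$ then gives $\phi(\Fr A)=\Fr A$. The only difference is that you spell out the two reverse inclusions via $\phi^{-1}$, which the paper leaves implicit.
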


\begin{proof}
A homeomorphism maps closures to closures and interiors to interiors, $\phi(\overline A)=\overline{\phi(A)}$ and $\phi(\Int A)=\Int\phi(A)$, because it is a bijection which, together with its inverse, maps open sets to open sets; hence
$$\phi(\Fr A)=\phi(\overline A)\setminus\phi(\Int A)=\overline{\phi(A)}\setminus\Int\phi(A)=\Fr\phi(A)=\Fr A.$$
\end{proof}

\medskip

Let us start by constructing two central invariant graphs.

\medskip

\begin{lemma}\label{lem:symmetrize}
Let $\Gamma$ be an invariant graph with height function $s$, and put
\[
\bar s(\theta):=\max\{s(\theta),s(\theta+\pi)\},\qquad \underline s(\theta):=\min\{s(\theta),s(\theta+\pi)\}.
\]
Then the graphs $\bar\Gamma$ and $\underline\Gamma$ with height functions, respectively, $\bar s$ and $\underline s$, are central invariant graphs and
\[
\Gamma\subset\bar\Gamma\cup\underline\Gamma,\qquad \Gamma\cap\bar\Gamma\ne\emptyset,\qquad \Gamma\cap\underline\Gamma\ne\emptyset .
\]
\end{lemma}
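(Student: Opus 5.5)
The plan is to realize $\underline\Gamma$ and $\bar\Gamma$ as frontiers of $T$-invariant sets built from the regions below $\Gamma$ and $-\Gamma$. By Lemma~\ref{lem:induced}'s argument (which did not use centrality for this part), $T(W_\Gamma)=W_\Gamma$: $T$ preserves $\Gamma$, permutes the two complementary components of $\Gamma$ in $\R^2\setminus K$, and cannot send the bounded one onto the unbounded one by Proposition~\ref{prop:T-basic}~(d). Since $T(-x)=-T(x)$ (Proposition~\ref{prop:T-basic}~(b)), the curve $-\Gamma$ is also $T$-invariant. It is the graph with height function $\theta\mapsto s(\theta+\pi)$, because $-Z(\theta+\pi)=p(\theta)+s(\theta+\pi)\bt(\theta)$, using $p(\theta+\pi)=-p(\theta)$ and $\bt(\theta+\pi)=-\bt(\theta)$. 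Hence $W_{-\Gamma}=-W_\Gamma$ is $T$-invariant as well.

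Next, in the chart $\Psi$ of Lemma~\ref{lem:chart}, I set $A:=W_\Gamma\cap W_{-\Gamma}=\Psi(\{0<\sigma<\underline s(\theta)\})$ and $B:=W_\Gamma\cup W_{-\Gamma}=\Psi(\{0<\sigma<\bar s(\theta)\})$. Both are $T$-invariant. Apply Lemma~\ref{lem:frontier} to the homeomorphism $T$ of $Y=\R^2\setminus K$. Since $\Psi$ is a homeomorphism and $\underline s,\bar s$ are continuous and positive, $A$ is open and its closure in $Y$ is $\Psi(\{0<\sigma\le\underline s(\theta)\})$ (the boundary $\partial K$ is not in $Y$). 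So $\Fr A=\underline\Gamma$, and likewise $\Fr B=\bar\Gamma$, and both are $T$-invariant. They are central graphs because $\underline s(\theta+\pi)=\underline s(\theta)$ and $\bar s(\theta+\pi)=\bar s(\theta)$ by construction.

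The inclusion $\Gamma\subset\bar\Gamma\cup\underline\Gamma$ is pointwise: $s(\theta)$ equals either the max or the min of $\{s(\theta),s(\theta+\pi)\}$. For the nonempty intersections, consider the continuous function $f(\theta)=s(\theta)-s(\theta+\pi)$, which satisfies $f(\theta+\pi)=-f(\theta)$. Hence $f$ vanishes somewhere, or takes both signs; by the intermediate value theorem it vanishes at some $\theta_0$. At $\theta_0$ we get $Z(\theta_0)\in\Gamma\cap\bar\Gamma\cap\underline\Gamma$.

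The only delicate point I expect is the frontier computation: one must make sure the closure and interior are taken in $Y$, not in $\R^2$, so that $\partial K$ does not enter the frontier. One also needs the interior of $A$ to be $A$ itself, which holds by openness. Both follow from $\Psi$ being a homeomorphism onto $Y$.
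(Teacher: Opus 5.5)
Your proposal is correct and is essentially the paper's proof. It shows $W_\Gamma$ and $W_{-\Gamma}$ are invariant by the bounded-component argument, identifies $W_\Gamma\cap W_{-\Gamma}$ and $W_\Gamma\cup W_{-\Gamma}$ with the regions below $\underline\Gamma$ and $\bar\Gamma$, applies Lemma~\ref{lem:frontier} to frontiers taken in $\R^2\setminus K$, and uses the intermediate value theorem for $s(\theta)-s(\theta+\pi)$ to get the nonempty intersections; the only differences are cosmetic, namely the order of the steps and your use of $W_{-\Gamma}=-W_\Gamma$ instead of rerunning the component argument for $-\Gamma$.
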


\medskip

\begin{proof}
The functions $\bar s$ and $\underline s$ are continuous, positive and $\pi$-periodic, so it is easy to see that $\bar\Gamma$ and $\underline\Gamma$ are central graphs (Definition~\ref{def:graph}). 
At every $\theta$, the value $s(\theta)$ equals either
$\bar s(\theta)$ or $\underline s(\theta)$, so
\[
\Gamma\subset\bar\Gamma\cup\underline\Gamma.
\]
To prove that $\Gamma$ intersects both graphs, define
\[
d(\theta):=s(\theta)-s(\theta+\pi).
\]
Since $s$ is continuous and $2\pi$-periodic, $d$ is continuous
and satisfies
\[
d(\theta+\pi)
=
s(\theta+\pi)-s(\theta+2\pi)
=
-d(\theta).
\]
Thus, for any $\theta_0$, either $d(\theta_0)=0$ or
$d(\theta_0)$ and $d(\theta_0+\pi)$ have opposite signs.
By the intermediate value theorem, there exists $\theta_*$
such that $d(\theta_*)=0$. In particular, both sets
$\{d\ge0\}$ and $\{d\le0\}$ are nonempty.
At this angle,
\[
s(\theta_*)=s(\theta_*+\pi)
=\bar s(\theta_*)=\underline s(\theta_*),
\]
and therefore
\[
\Psi(\theta_*,s(\theta_*))
\in\Gamma\cap\bar\Gamma\cap\underline\Gamma.
\]
This proves the two required nonempty intersections.

\smallskip It remains to prove invariance. Observe the following facts:
\begin{itemize}
\item   $-\Gamma$ is an invariant graph. In fact, by Proposition~\ref{prop:T-basic} (b), $T(-\Gamma)=-T(\Gamma)=-\Gamma$. Since $-\Psi(\theta,\sigma)=\Psi(\theta+\pi,\sigma)$,
\[
-\Gamma=\{\Psi(\theta+\pi,s(\theta)):\theta\in\Tt\}=\{\Psi(\varphi,s(\varphi+\pi)):\varphi\in\Tt\},
\]
so $-\Gamma$ is the graph with height function $\varphi\mapsto s(\varphi+\pi)$, continuous and positive.

\smallskip
\item The regions below $\Gamma$ and $-\Gamma$ are invariant. We prove it for $\Gamma$, since the other proof is identical. \\
Let $W_{\Gamma}=\Psi(\{(\theta,\sigma):0<\sigma<s(\theta)\})$ be the region below $\Gamma$. The set $(\R^2\setminus K)\setminus\Gamma$ has exactly two connected components, the bounded one $W_{\Gamma}$ and an unbounded one.
Since $T$ is a homeomorphism of $\R^2\setminus K$ with $T(\Gamma)=\Gamma$, it maps $(\R^2\setminus K)\setminus\Gamma$ onto itself and either fixes or exchanges its two components. As $T$ maps bounded sets to bounded sets (Proposition~\ref{prop:T-basic} (d)), $T(W_{\Gamma})$ is bounded, so $T(W_{\Gamma})=W_{\Gamma}$, hence $W_\Gamma$ is invariant.

\smallskip
\item The sets $W_\Gamma\cup W_{-\Gamma}$ and $W_\Gamma\cap W_{-\Gamma}$ are $T$-invariant, being a union and an intersection of $T$-invariant sets; in particular:
\begin{eqnarray*}
W_\Gamma\cup W_{-\Gamma} &=&\Psi\bigl(\{(\theta,\sigma):0<\sigma<\bar s(\theta)\}\bigr)\\
W_\Gamma\cap W_{-\Gamma}&=&\Psi\bigl(\{(\theta,\sigma):0<\sigma<\underline s(\theta)\}\bigr).
\end{eqnarray*}
In fact, $\{0<\sigma<s(\theta)\}\cup\{0<\sigma<s(\theta+\pi)\}=\{0<\sigma<\bar s(\theta)\}$ and likewise for the other. Thus they are the regions below $\bar\Gamma$ and below $\underline\Gamma$.
\end{itemize}

\smallskip

Let us prove that $\bar\Gamma$ is invariant (the proof that $\underline \Gamma$ is invariant is identical). By Lemma~\ref{lem:frontier}, applied to the homeomorphism $T$ of $Y=\R^2\setminus K$ and the invariant set $W_{\bar \Gamma}= W_\Gamma \cup W_{-\Gamma}$, the frontier of $W_{\bar \Gamma}$ in $\R^2\setminus K$ is $T$-invariant. Since $\bar s$ is continuous and positive and 
$$\Psi:\Tt\times(0,\infty)\to\R^2\setminus K$$ is a homeomorphism, this frontier is
\[
\Fr W_{\bar \Gamma} =\Psi\bigl(\Fr\{0<\sigma<\bar s(\theta)\}\bigr)=\Psi\bigl(\{(\theta, \bar s(\theta)):\theta\in\Tt\}\bigr) ,
\]
{\it i.e.}, the graph of $\bar s$.  Hence $T(\bar\Gamma)=\bar\Gamma$.
\end{proof}

\bigskip

To compare $\Gamma$ with $\bar\Gamma$ and $\underline\Gamma$ we use the rotation number of the circle homeomorphism induced by $T$ on an invariant graph. Recall from Definition~\ref{def:lift} that $\tilde T(\theta,s)=(\theta+\delta(\theta,s),s^+)$ with $\delta=d-\pi\in(-\pi,0)$.

\begin{lemma}\label{lem:rotation}
Let $\Gamma$ be an invariant graph with height function $s$ and parametrization $Z(\theta)=\Psi(\theta,s(\theta))$. Then $Z^{-1}\circ T\circ Z$ is an orientation-preserving homeomorphism of $\Tt$, and
\[
\tilde T_\Gamma(\theta):=\theta+\delta\bigl(\theta,s(\theta)\bigr),\qquad\theta\in\R ,
\]
is its unique lift with $\tilde T_\Gamma(\theta)-\theta\in(-\pi,0)$. Its rotation number
\begin{equation}\label{eq:rotnumber}
\rho(\Gamma):=\lim_{n\to\infty}\frac{\tilde T_\Gamma^{\,n}(\theta)-\theta}{2\pi n}\in\bigl[-\tfrac12,0\bigr]
\end{equation}
exists and is independent of $\theta$. Moreover:
\begin{enumerate}[label=\textup{(\alph*)}]
\item if $\Gamma_*$ is another invariant graph and $\Gamma\cap\Gamma_*\neq\emptyset$, then $\rho(\Gamma)=\rho(\Gamma_*)$;
\item if $\Gamma$ is central and satisfies \eqref{eq:halfturn} with the integer $q$, then $\rho(\Gamma)=-\dfrac12+\dfrac1{2q}$.
\end{enumerate}
\end{lemma}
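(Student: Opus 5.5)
First, mimic the argument of Lemma~\ref{lem:induced}: the map $Z^{-1}\circ T\circ Z$ is a circle homeomorphism because $Z$ is a homeomorphism of $\Tt$ onto $\Gamma$ and $T(\Gamma)=\Gamma$. By Proposition~\ref{prop:exact}, the angular increment of $T$ is $\delta=d-\pi\in(-\pi,0)$, so this homeomorphism has no fixed point. An orientation-reversing circle homeomorphism always has a fixed point, so it preserves orientation. The function $\theta\mapsto\theta+\delta(\theta,s(\theta))$ is continuous, because $\delta$ is continuous by Definition~\ref{def:lift} and $s$ is continuous. It projects to $Z^{-1}\circ T\circ Z$, so it is a lift and hence an increasing homeomorphism. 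It is the unique lift with increment in $(-\pi,0)$, since any two lifts differ by a multiple of $2\pi$. Existence and $\theta$-independence of the limit in \eqref{eq:rotnumber} is Poincar\'e's classical theorem for lifts of orientation-preserving circle homeomorphisms. The bound $\rho(\Gamma)\in[-\tfrac12,0]$ follows because every increment lies in $(-\pi,0)$, so $\tilde T_\Gamma^{\,n}(\theta)-\theta\in(-n\pi,0)$.

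For (a), the key observation is that the lift is determined pointwise by the phase-space lift $\tilde T$. Take $x\in\Gamma\cap\Gamma_*$ with contact angle $\theta$. Then $s(\theta)=s_*(\theta)=s_K(x)$, so both lifts at $\theta$ equal the first component of $\tilde T(\theta,s_K(x))$. Since $\Gamma$ and $\Gamma_*$ are both $T$-invariant, the whole orbit of $x$ lies in $\Gamma\cap\Gamma_*$. Inductively, the angle $\tilde T_\Gamma^{\,n}(\theta)$ is the lifted contact angle of $T^nx$ computed via $\tilde T$, and the same holds for $\tilde T_{\Gamma_*}^{\,n}(\theta)$. Hence $\tilde T_\Gamma^{\,n}(\theta)=\tilde T_{\Gamma_*}^{\,n}(\theta)$ for all $n$. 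Since the rotation number may be computed at any single point, the two rotation numbers coincide. The step that needs a little care is the induction: at each stage the contact angle of $T^nx$ lifted to $\R$ must agree for both graphs. This holds because both are obtained by adding the same increment $\delta(\cdot,s_K(T^{n}x))$, evaluated at the common lifted angle.

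For (b), we relate $\tilde T_\Gamma$ to $\tilde g$. Since $T=-F$ and $-\Psi(\theta,\sigma)=\Psi(\theta+\pi,\sigma)$, centrality gives $T(Z(\theta))=-Z(\tilde g(\theta))=Z(\tilde g(\theta)-\pi)$. The increment $\tilde g(\theta)-\pi-\theta$ lies in $(-\pi,0)$, so by uniqueness $\tilde T_\Gamma=\tilde g-\pi$. Because $\tilde g$ commutes with translation by $\pi$ (Lemma~\ref{lem:induced}), we get $\tilde T_\Gamma^{\,n}=\tilde g^{\,n}-n\pi$. Using \eqref{eq:halfturn}, take $n=2qk$: then $\tilde g^{\,2qk}(\theta)=\theta+2k\pi$, so $\tilde T_\Gamma^{\,2qk}(\theta)-\theta=2k\pi-2qk\pi$. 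Dividing by $2\pi\cdot 2qk$ gives $\frac{1-q}{2q}=-\frac12+\frac1{2q}$. I expect no real obstacle; the only delicate point is the consistent choice of lifts in (a) and (b), which the uniqueness statement handles.
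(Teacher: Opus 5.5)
Your proposal is correct and follows the paper's proof essentially step by step: orientation from the absence of fixed points, Poincar\'e's theorem for the rotation number, the orbit-by-orbit induction for (a), and the identities $\tilde T_\Gamma=\tilde g-\pi$ and $\tilde T_\Gamma^{\,n}=\tilde g^{\,n}-n\pi$ for (b). The only cosmetic differences are that you derive $\tilde T_\Gamma=\tilde g-\pi$ from uniqueness of the lift rather than reading it off from $\tilde g(\theta)=\theta+d(\theta,s(\theta))$ and $\delta=d-\pi$, and that you compute along $n=2qk$ instead of $n=kq$.
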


\begin{proof}
The map $\theta\mapsto Z^{-1}(T(Z(\theta)))$ is a homeomorphism of $\Tt$ and, by Proposition~\ref{prop:exact}, it moves every point by $\delta(\theta,s(\theta))\in(-\pi,0)$ (mod~$2\pi$). In particular it has no fixed point, so it preserves orientation (an orientation-reversing circle homeomorphism always has a fixed point). 
The function $\tilde T_\Gamma$ is a continuous lift of it, hence an increasing homeomorphism of $\R$, and it is the unique lift with increments in $(-\pi,0)$. 

\smallskip
The existence of the limit in \eqref{eq:rotnumber} and its independence of $\theta$ follow from Poincar\'e's theorem on rotation numbers (see for example \cite{KH95}), and $-n\pi<\tilde T_\Gamma^{\,n}(\theta)-\theta<0$ gives the range.

\medskip

\noindent (a) Let $x\in\Gamma\cap\Gamma_*$ and $\vartheta_0:=\theta_K(x)$, so that $x=Z(\vartheta_0)=Z_*(\vartheta_0)$, where $Z_*$ parametrizes $\Gamma_*$. 
\smallskip
Put $\vartheta_{n+1}:=\vartheta_n+\delta\bigl(\theta_K(T^nx),s_K(T^nx)\bigr)$. 
By induction on $n$:
$$\tilde T_\Gamma^{\,n}(\vartheta_0)=\tilde T_{\Gamma_*}^{\,n}(\vartheta_0)=\vartheta_n \quad {\rm and} \quad Z(\vartheta_n)=Z_*(\vartheta_n)=T^nx.$$
Indeed, if $Z(\vartheta_n)=T^nx$ then $s(\vartheta_n)=s_K(T^nx)$, so 
$$\tilde T_\Gamma(\vartheta_n)=\vartheta_n+\delta(\theta_K(T^nx),s_K(T^nx))=\vartheta_{n+1}$$ and
$$Z(\vartheta_{n+1})=T(Z(\vartheta_n))=T^{n+1}x,$$ and the same for $\Gamma_*$. Thus, the two lifts have the same orbit of $\vartheta_0$, hence the same rotation number.\\

\noindent (b) For a central graph, $\tilde F$ restricted to the graph of $s$ is 
$$(\theta,s(\theta))\mapsto(\tilde g(\theta),s(\tilde g(\theta))),$$ therefore 
$$\tilde T_\Gamma(\theta)=\theta+d(\theta,s(\theta))-\pi=\tilde g(\theta)-\pi.$$ Since $\tilde g$ commutes with the translation by $\pi$, $\tilde T_\Gamma^{\,n}=\tilde g^{\,n}-n\pi$, and \eqref{eq:halfturn} gives 
$$\tilde T_\Gamma^{\,kq}(\theta)=\theta+k\pi-kq\pi=\theta+k(1-q)\pi \qquad \forall\, k\ge1.$$ Hence
\[
\rho(\Gamma)=\lim_{k\to\infty}\frac{k(1-q)\pi}{2\pi kq}=\frac{1-q}{2q}=-\frac12+\frac1{2q}.
\]
\end{proof}

\medskip

We can now prove Theorem \ref{thm:rigidity}.\\

\begin{proof}[Proof of Theorem~\textup{\ref{thm:rigidity}}]
Let $\Gamma$ be an invariant graph with height function $s$, and let $\bar\Gamma$, $\underline\Gamma$ be the central invariant graphs of Lemma~\ref{lem:symmetrize}. Theorem~\ref{thm:rigiditycentral} applies to both and therefore there are integers $\bar q,\underline q\ge2$ such that \eqref{eq:halfturn} holds for the induced lift of $\bar\Gamma$ with $\bar q$ and for that of $\underline\Gamma$ with $\underline q$.  Therefore:
\begin{equation}\label{eq:halfturn-bars}
\tilde F^{\,\bar q}\bigl(\theta,\bar s(\theta)\bigr)=\bigl(\theta+\pi,\bar s(\theta)\bigr),\qquad
\tilde F^{\,\underline q}\bigl(\theta,\underline s(\theta)\bigr)=\bigl(\theta+\pi,\underline s(\theta)\bigr)\qquad\text{for all }\theta\in\R .
\end{equation}

\smallskip
By Lemma~\ref{lem:rotation} (b): 
$$\rho(\bar\Gamma)=-\frac12+\frac1{2\bar q} \quad {\rm and} \quad \rho(\underline\Gamma)=-\frac12+\frac1{2\underline q}.$$ 
By Lemma~\ref{lem:symmetrize}, $\Gamma$ meets both $\bar\Gamma$ and $\underline\Gamma$, so Lemma~\ref{lem:rotation} (a) gives $\rho(\Gamma)=\rho(\bar\Gamma)=\rho(\underline\Gamma)$, whence $\bar q=\underline q=:q$. 
\medskip
Now let $\theta\in\R$. Either $s(\theta)=\bar s(\theta)$ or $s(\theta)=\underline s(\theta)$ (Lemma~\ref{lem:symmetrize}), and in both cases \eqref{eq:halfturn-bars} yields $\tilde F^{\,q}(\theta,s(\theta))=(\theta+\pi,s(\theta))$, which  is \eqref{eq:halfturn-chart}. Applying $\Psi$ and using $\Psi(\theta+\pi,\sigma)=-\Psi(\theta,\sigma)$ it reads $F^q(x)=-x$ for every $x\in\Gamma$.

\medskip
\noindent
(i) On $\Gamma$, $T^q=(-1)^qF^q=-(-1)^q\,\id$, which is equal to $\id$ for $q$ odd, and to $-\id$ for $q$ even; so  $T^{2q}=\id$ in either case. 

\medskip
\noindent (ii) It follows from Lemma \ref{lem:rotation} that  $\rho(\Gamma)=\rho(\bar\Gamma)=-\frac12+\frac1{2q}$, a rational number (compare also with Remark \ref{rem:noirrational}).

\medskip
\noindent (iii) From $T(\Gamma)=\Gamma$ we get $F(\Gamma)=-T(\Gamma)=-\Gamma$, hence $F^n(\Gamma)=(-1)^n\Gamma$ for all $n$. Since $F^q(\Gamma)=-\Gamma$, we conclude that  $\Gamma=-\Gamma$ when $q$ is even.
\end{proof}

\medskip

\begin{remark}\label{rem:noirrational}
For invariant graphs with irrational rotation number, symmetrization might not be  necessary. 
In fact, let $\Gamma$ be such a graph and observe that  $-\Gamma$ is invariant with the same rotation number.  But, there cannot be two distinct invariant graphs with the same irrational rotation number (see, for example, \cite[Proposition 5.7.1]{Her83}). So $\Gamma=-\Gamma$, hence it is central.
However, since the classical statement in the literature is proven for diffeomorphisms, whereas here $\hat T$ is only a homeomorphism, we decided to provide a self-contained argument.
\end{remark}

\bigskip

\section{The $C^1$-generic absence of invariant graphs consisting of periodic orbits}\label{sec:variational}

The Rigidity Theorem (Theorem~\ref{thm:rigidity}) reduces the problem of excluding invariant graphs to excluding, for each $q\ge2$, an entire family of orbits $x$ with $F^q(x)=-x$ and total lifted angular advance $\pi$, defined for every initial contact angle (we call these {\it half-turn orbits}).\\

In this section we show that, for each $q$ and each compact range of tangent lengths, the existence of at least one initial angle carrying no half-turn orbit is an open and dense condition on $h\in\cH$.  

\medskip

\subsection{Half-turn configurations and their action}\label{sec:action}

The main tool will be the variational characterization of half-turn orbits as critical points of an action functional, which is a classical tool in the study of twist maps (see for example \cite{Gole}). 

\medskip

Let us start by fixing the setting.

\begin{definition}\label{def:config}
Fix $q\ge2$. The \emph{half-turn configuration space} is
\[
\cC_q:=\bigl\{\boldsymbol\theta=(\theta_0,\dots,\theta_{q-1})\in\R^q:\ \theta_0<\theta_1<\dots<\theta_{q-1}<\theta_0+\pi\bigr\},
\]
and indices are extended to all of $\Z$ by $\theta_{i+q}:=\theta_i+\pi$. 

\smallskip
For $h\in\cH$ and $\boldsymbol\theta\in\cC_q$ we set
\begin{align}
s_i^+&:=\frac{h(\theta_{i+1})+h(\theta_i)\cos(\theta_{i+1}-\theta_i)}{\sin(\theta_{i+1}-\theta_i)}+h'(\theta_i),\label{eq:splus}\\
s_i^-&:=\frac{h(\theta_{i-1})+h(\theta_i)\cos(\theta_i-\theta_{i-1})}{\sin(\theta_i-\theta_{i-1})}-h'(\theta_i),\label{eq:sminus}
\end{align}
which are $q$-periodic in $i$, since $h$ is $\pi$-periodic. 

\medskip
Let us denote by $$\cC_q^+(h):=\{\boldsymbol\theta\in\cC_q:\ s_i^\pm>0\text{ for }0\le i<q\},$$ which is an open subset of $\cC_q$. 

\medskip
For $0<\eta-\theta<\pi$ define
\begin{equation}\label{eq:L}
L_h(\theta,\eta):=\frac{2h(\theta)h(\eta)+\bigl(h(\theta)^2+h(\eta)^2\bigr)\cos(\eta-\theta)}{2\sin(\eta-\theta)},
\end{equation}
and the \emph{action}: 
\begin{equation}\label{eq:action}
A_h(\boldsymbol\theta):=\sum_{i=0}^{q-1}L_h(\theta_i,\theta_{i+1}),\qquad\boldsymbol\theta\in\cC_q .
\end{equation}
\end{definition}

\medskip

\begin{remark}
{\bf (i)} Note that $L_h(\theta+\pi,\eta+\pi)=L_h(\theta,\eta)$, so $A_h$ is invariant under $\boldsymbol\theta\mapsto\boldsymbol\theta+(\pi,\dots,\pi)$. Observe that $A_h$ is $C^1$ when $h\in C^1$ (it is as regular as $h$ is).

\smallskip
\noindent{\bf (ii)}  It is a well-known fact that for $C^2$ tables with positive curvature, the outer billiard map is an exact area-preserving twist map, whose generating function is the area of the curvilinear triangle bounded by the two tangent segments from a point to the table and by the arc of $\partial K$ between the points of tangency; see Tabachnikov~\cite{Tab95b,Tab95,Tab05} and Boyland~\cite{Boy96}. 
Since we consider the map $F=-T$, the relevant triangle for a pair $\theta<\eta<\theta+\pi$ is the one denoted by $\Delta$ in Figure~\ref{fig:kite} (b). 

\smallskip
We use instead the signed area $L_h(\theta,\eta)$ of the \emph{kite} $OAyB$ of Figure~\ref{fig:kite} (a), where $A=-h(\theta)\bn(\theta)$ and $B=h(\eta)\bn(\eta)$ are the feet of the perpendiculars from the origin to $\ell_{\theta+\pi}$ and $\ell_\eta$.
One checks that
\[
\operatorname{area}\Delta(\theta,\eta)=L_h(\theta,\eta)+\psi(\theta)-\psi(\eta)-\tfrac12\operatorname{area}(K),
\qquad \psi(\theta):=-\tfrac12\int_0^{\theta}\bigl(h(u)^2-h'(u)^2\bigr)\dd u .
\]
Since $\psi(\theta_0)-\psi(\theta_0+\pi)=\tfrac12\operatorname{area}(K)$, for a configuration $\boldsymbol\theta\in\cC_q$ we obtain
\[
\sum_{i=0}^{q-1}\operatorname{area}\Delta(\theta_i,\theta_{i+1})=A_h(\boldsymbol\theta)-\tfrac{q-1}{2}\operatorname{area}(K).
\]
Thus $A_h$ and the classical action have the same critical points.
\end{remark}

\begin{figure}[ht]
\centering
\begin{tikzpicture}[scale=1.1,>=Stealth]
\begin{scope}
\coordinate (O) at (0,0);
\coordinate (A) at (-0.867,-1.237);
\coordinate (B) at (-1.314,1.103);
\coordinate (y) at (-2.385,-0.174);
\coordinate (mp) at (-1.519,-0.781);
\coordinate (pe) at (-1.786,0.540);
\coordinate (pth) at (1.519,0.781);
\fill[black!12] (O) -- (A) -- (y) -- (B) -- cycle;
\draw[black!60!black,thick] (O) -- (A) -- (y) -- (B) -- cycle;
\draw[thick] (0,0) ellipse (2 and 1.2);
\node at (0.55,-0.35) {$K$};
\draw[gray] ($(mp)!-0.55!(y)$) -- ($(y)!-0.35!(mp)$);
\draw[gray] ($(pe)!-0.75!(y)$) -- ($(y)!-0.35!(pe)$);
\draw[black!60!black] ($(A)+0.16*(0.574,0.819)$) -- ($(A)+0.16*(0.574,0.819)+0.16*(-0.819,0.574)$) -- ($(A)+0.16*(-0.819,0.574)$);
\draw[black!60!black] ($(B)+0.16*(-0.766,0.643)$) -- ($(B)+0.16*(-0.766,0.643)+0.16*(-0.643,-0.766)$) -- ($(B)+0.16*(-0.643,-0.766)$);
\fill (O) circle (1.3pt) node[right=1pt,font=\small] {$O$};
\fill (A) circle (1.3pt) node[below right=-1pt,font=\small] {$A=-a\,\bn(\theta)$};
\fill (B) circle (1.3pt) node[above right=-1pt,font=\small] {$B=b\,\bn(\eta)$};
\fill (y) circle (1.4pt) node[left=2pt,font=\small] {$y$};
\fill (mp) circle (1.2pt) node[below=2pt,font=\small] {$-p(\theta)$};
\fill (pe) circle (1.2pt) node[above=3pt,font=\small] {$p(\eta)$};
\fill (pth) circle (1.2pt) node[above right=-1pt,font=\small] {$p(\theta)$};
\node[font=\small,black!60!black] at (-0.55,-0.5) {$a$};
\node[font=\small,black!60!black] at (-0.5,0.6) {$b$};
\node[font=\small,gray] at (-2.25,-0.9) {$\ell_{\theta+\pi}$};
\node[font=\small,gray] at (-2.55,0.75) {$\ell_\eta$};
\node[font=\small,black!60!black] at (-1.55,-0.15) {$L_h$};
\node[font=\small] at (0,-2) {(a)};
\end{scope}
\begin{scope}[xshift=6.2cm]
\coordinate (O) at (0,0);
\coordinate (y) at (-2.385,-0.174);
\coordinate (mp) at (-1.519,-0.781);
\coordinate (pe) at (-1.786,0.540);
\fill[black!15] (y) -- (pe) arc[start angle=153.28,end angle=220.59,x radius=2,y radius=1.2] -- cycle;
\draw[black!60!black,thick] (O) -- (mp) -- (y) -- (pe) -- cycle;
\draw[thick] (0,0) ellipse (2 and 1.2);
\node at (0.55,-0.35) {$K$};
\draw[gray] ($(mp)!-0.55!(y)$) -- ($(y)!-0.35!(mp)$);
\draw[gray] ($(pe)!-0.75!(y)$) -- ($(y)!-0.35!(pe)$);
\fill (O) circle (1.3pt) node[right=1pt,font=\small] {$O$};
\fill (y) circle (1.4pt) node[left=2pt,font=\small] {$y$};
\fill (mp) circle (1.2pt) node[below=2pt,font=\small] {$-p(\theta)$};
\fill (pe) circle (1.2pt) node[above=3pt,font=\small] {$p(\eta)$};
\node[font=\small] at (-2.05,-0.62) {$s^+$};
\node[font=\small] at (-2.2,0.32) {$s^-$};
\node[font=\small,black!60!black] at (-2.10,-0.14) {$\Delta$};
\node[font=\small] at (0,-2) {(b)};
\end{scope}
\end{tikzpicture}
\caption{\small (a) The supporting-line quadrilateral defining
the signed area $L_h(\theta,\eta)$, with
$a=h(\theta)$ and $b=h(\eta)$.
(b) The corresponding curvilinear triangle $\Delta$,
bounded by the two tangent segments and the 
boundary arc.}
\label{fig:kite}
\end{figure}

\bigskip

Now, we want to prove the following classical result in the study of twist maps.

\medskip

\begin{proposition}\label{prop:variational}
Let $h\in\cH$ and $\boldsymbol\theta\in\cC_q^+(h)$, and put $x_i:=\Psi(\theta_i,s_i^+)$ for $i\in\Z$ \textup{(}so $x_{i+q}=-x_i$\textup{)}. Then:
$$\nabla A_h(\boldsymbol\theta)=0 \qquad \Longleftrightarrow \qquad \tilde F(\theta_i,s_i^+)=(\theta_{i+1},s_{i+1}^+) \quad \forall\, i.$$
If this is the case:
\begin{equation}\label{eq:return}
F^q(x_0)=-x_0,\qquad \tilde F^{\,q}(\theta_0,s_0^+)=(\theta_0+\pi,s_0^+).
\end{equation}
Conversely, if $(\theta_0,s_0)\in\R\times(0,\infty)$ satisfies $\tilde F^{\,q}(\theta_0,s_0)=(\theta_0+\pi,s_0)$, then the lifted contact angles $\theta_i$ of $\tilde F^{\,i}(\theta_0,s_0)$, $0\le i<q$, form a configuration $\boldsymbol\theta\in\cC_q^+(h)$ with $\nabla A_h(\boldsymbol\theta)=0$ and $s_0^+=s_0$.
\end{proposition}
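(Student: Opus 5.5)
The plan is to compute the partial derivatives of the generating function $L_h$ and match them with the formulas \eqref{eq:splus}--\eqref{eq:sminus}, then read Proposition~\ref{prop:exact} as a statement about the two supporting lines through a point.

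\textbf{Step 1: derivatives of $L_h$.} For $0<\eta-\theta<\pi$, write $d=\eta-\theta$. A direct computation from \eqref{eq:L} should give
\[
\partial_\eta L_h(\theta,\eta)=h(\eta)\,s^-(\theta,\eta),\qquad \partial_\theta L_h(\theta,\eta)=-h(\theta)\,s^+(\theta,\eta).
\]
Here $s^+(\theta,\eta)$ is given by \eqref{eq:splus} with $(\theta_i,\theta_{i+1})=(\theta,\eta)$, and $s^-(\theta,\eta)$ by \eqref{eq:sminus} with $(\theta_{i-1},\theta_i)=(\theta,\eta)$. These are up to normalizing factors; I would fix the exact constants during the computation. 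The key geometric fact is that the point $y:=\Psi(\theta,s^+(\theta,\eta))$ lies on both supporting lines $\ell_{\theta}$ and $\ell_{\eta+\pi}$ (equivalently $-y$ on $\ell_{\theta+\pi}$ and $\ell_\eta$). With the sign convention of Figure~\ref{fig:kite}, $y$ equals $p(\eta)-s^-\bt(\eta)$. This comes from solving the $2\times2$ linear system $y\cdot\bn(\theta)=h(\theta)$, $y\cdot\bn(\eta)=\ldots$ and comparing with \eqref{eq:exact1}.

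The derivative identity itself is routine. The only subtlety is that $h$ is only $C^1$, but $L_h$ involves only $h$ and not $h'$, so it is $C^1$ and the chain rule applies. The $h'$ terms appear in the derivative exactly as in \eqref{eq:splus}--\eqref{eq:sminus}.

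\textbf{Step 2: Euler--Lagrange equals the orbit condition.} Using $\theta_{i+q}=\theta_i+\pi$ and the $\pi$-invariance of $L_h$, differentiate $A_h$:
\[
\partial_{\theta_i}A_h=\partial_2L_h(\theta_{i-1},\theta_i)+\partial_1L_h(\theta_i,\theta_{i+1}),
\]
which after Step 1 is proportional to $s_i^- - s_i^+$. Since $h>0$, criticality is equivalent to $s_i^-=s_i^+$ for all $i$.

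Now compare with Proposition~\ref{prop:exact}. For $x=\Psi(\theta_i,s)$, the image $F(x)=x-2p(\theta_i)$ lies on $\ell_{\theta_i+\pi}$ with signed tangent length $-s$ (Figure~\ref{fig:F}). Its forward contact angle $\theta^+$ is the one with $s\sin d$ given by \eqref{eq:exact1}, and $s^+$ is given by \eqref{eq:exact2}.

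Suppose $s=s_i^+$ with $d=\theta_{i+1}-\theta_i$. Then \eqref{eq:exact1} holds by the very definition \eqref{eq:splus}. Since the contact angle is unique (Lemma~\ref{lem:chart}) and $d\in(0,\pi)$, this forces $\theta^+=\theta_{i+1}$. Formula \eqref{eq:exact2} then gives $s^+$ as a function of $(\theta_i,\theta_{i+1},s_i^+)$. An algebraic simplification identifies it with $s_{i+1}^-$, i.e.\ the tangent length of $F(x)$ from $p(\theta_{i+1})$.

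Hence $\tilde F(\theta_i,s_i^+)=(\theta_{i+1},s_{i+1}^-)$ always holds on $\cC_q^+(h)$. The positivity $s^\pm_i>0$ is exactly what makes these points lie in the exterior chart. The equivalence with $\nabla A_h=0$ follows immediately.

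\textbf{Step 3: the return property and the converse.} Iterating $q$ times and using $\theta_{i+q}=\theta_i+\pi$ together with $s^+_{i+q}=s^+_i$ gives \eqref{eq:return}; also $\Psi(\theta+\pi,\sigma)=-\Psi(\theta,\sigma)$.

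For the converse, take the lifted orbit $(\theta_i,s_i)$ with $\tilde F^{q}(\theta_0,s_0)=(\theta_0+\pi,s_0)$. The increments lie in $(0,\pi)$ and sum to $\pi$ over $q$ steps, so $\boldsymbol\theta\in\cC_q$. Proposition~\ref{prop:exact} gives $s_i=s_i^+$ via \eqref{eq:exact1}, and gives $s_{i+1}=s_{i+1}^-$ by Step 2. Both are positive, so $\boldsymbol\theta\in\cC_q^+(h)$, and criticality follows.

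\textbf{Main obstacle.} The main difficulty is the trigonometric identification, in Step 2, of \eqref{eq:exact2} with $s^-_{i+1}$, together with the exact constants in Step 1. I would avoid brute force by arguing geometrically: $F(x)$ is the intersection point of $\ell_{\theta_i+\pi}$ and $\ell_{\theta_{i+1}}$. Its tangent length from $p(\theta_{i+1})$ can therefore be obtained by solving the same linear system with the roles of the two lines swapped, and this is exactly \eqref{eq:sminus}.
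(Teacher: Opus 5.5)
\textbf{Verdict: there is a genuine error, and it sits in Step~1.} The architecture is the paper's: a gradient formula reducing criticality to $s_i^+=s_i^-$, then the two-point identity $\tilde F(\theta_i,s_i^+)=(\theta_{i+1},s_{i+1}^-)$ from \eqref{eq:exact1}--\eqref{eq:exact2}, then iteration and the converse. Steps 2--3 are sound. The problem is that the identities $\partial_\theta L_h=-h(\theta)\,s^+$ and $\partial_\eta L_h=h(\eta)\,s^-$ are false, and no choice of normalizing constants repairs them.

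Take the unit disc, $h\equiv1$. Then $L_h=\cot(d/2)$ and $s^\pm=\cot(d/2)$, so $\partial_\theta L_h=\tfrac12\csc^2(d/2)$, while $-h(\theta)s^+=-\cot(d/2)$. The ratio is $-1/\sin d$, which depends on $d$, and even the sign is wrong. Differentiating \eqref{eq:L} correctly gives expressions that are quadratic in the tangent lengths:
\[
\partial_\theta L_h(\theta,\eta)=\tfrac12\bigl(s^+(\theta,\eta)^2+h(\theta)^2-h'(\theta)^2\bigr),\qquad
\partial_\eta L_h(\theta,\eta)=\tfrac12\bigl(h'(\eta)^2-h(\eta)^2-s^-(\theta,\eta)^2\bigr).
\]
The extra terms $\pm\tfrac12\bigl(h(\theta_i)^2-h'(\theta_i)^2\bigr)$ cancel only after adding the two Lagrangians that contain $\theta_i$. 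For $i=0$ this uses $\theta_q=\theta_0+\pi$ and the $\pi$-periodicity of $h$ and $h'$. What remains is $\partial_{\theta_i}A_h=\tfrac12\bigl((s_i^+)^2-(s_i^-)^2\bigr)$.

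\textbf{Consequences for the rest of your argument.} Your justification of $\nabla A_h=0\iff s_i^+=s_i^-$ (``since $h>0$'') is not the right one. The correct reason is the factorization $(s_i^+)^2-(s_i^-)^2=(s_i^+-s_i^-)(s_i^++s_i^-)$ together with $s_i^\pm>0$, which is exactly the hypothesis $\boldsymbol\theta\in\cC_q^+(h)$.

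Your `key geometric fact' in Step~1 is also misstated. The point $\Psi(\theta,s^+(\theta,\eta))=p(\theta)+s^+\bt(\theta)$ does not lie on $\ell_{\eta+\pi}$. It is its image $F(\Psi(\theta,s^+))=-p(\theta)+s^+\bt(\theta)$ that lies on $\ell_{\theta+\pi}\cap\ell_\eta$, and equals $p(\eta)+s^-\bt(\eta)$. That is what you actually use in Step~2.

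In Step~2, the inference $\theta^+=\theta_{i+1}$ needs a precise justification. Note that \eqref{eq:exact1} with $d=\theta_{i+1}-\theta_i$ says exactly $F(x)\in\ell_{\theta_{i+1}}$. An exterior point lies on exactly two supporting lines (Lemma~\ref{lem:two-tangents}), the other one being $\ell_{\theta_i+\pi}$. This is a valid substitute for the paper's direct computation, and it even gives $s_{i+1}^->0$ for free.

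With Step~1 replaced by the correct computation, your proof coincides with the paper's.
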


\bigskip

Let us  prove some auxiliary results. \\

We now introduce two-point tangent-length functions to clarify
the geometric meaning of the quantities in
\eqref{eq:splus} and \eqref{eq:sminus}.
For $0<\eta-\theta<\pi$, define
\begin{align}
s^+(\theta,\eta)
&:=
\frac{h(\eta)+h(\theta)\cos(\eta-\theta)}
     {\sin(\eta-\theta)}
+h'(\theta),
\label{eq:splus-two-point}\\
s^-(\theta,\eta)
&:=
\frac{h(\theta)+h(\eta)\cos(\eta-\theta)}
     {\sin(\eta-\theta)}
-h'(\eta).
\label{eq:sminus-two-point}
\end{align}
For a configuration $\boldsymbol\theta\in\cC_q$, these definitions give
\[
s^+(\theta_i,\theta_{i+1})=s_i^+,
\qquad
s^-(\theta_i,\theta_{i+1})=s_{i+1}^-.
\]
Equivalently, $s_i^-=s^-(\theta_{i-1},\theta_i)$.

\medskip

\begin{lemma}\label{lem:s-geometric}
Let $0<\eta-\theta<\pi$ and let $s^\pm(\theta,\eta)$ be as in  \eqref{eq:splus-two-point} and  \eqref{eq:sminus-two-point}.\\
 If $s^+(\theta,\eta)>0$, then
\[
F\bigl(\Psi(\theta,s^+(\theta,\eta))\bigr)=p(\eta)+s^-(\theta,\eta)\,\bt(\eta).
\]
Consequently, if also $s^-(\theta,\eta)>0$, then $\tilde F(\theta,s^+(\theta,\eta))=(\eta,s^-(\theta,\eta))$. \\
Conversely, if $\tilde F(\theta,s)=(\eta,\sigma)$ with $0<\eta-\theta<\pi$, then $s=s^+(\theta,\eta)$ and $\sigma=s^-(\theta,\eta)$.
\end{lemma}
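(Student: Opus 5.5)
The plan is to work directly with the geometry of supporting lines, using only the parametrization \eqref{eq:paramptheta} and the formulas of Proposition~\ref{prop:exact}. Set $x:=\Psi(\theta,s^+)$ with $s^+=s^+(\theta,\eta)>0$, so that $F(x)=x-2p(\theta)=-p(\theta)+s^+\bt(\theta)=p(\theta+\pi)-s^+\bt(\theta+\pi)$. Thus $F(x)$ lies on the supporting line $\ell_{\theta+\pi}=\{y: y\cdot\bn(\theta+\pi)=h(\theta)\}$. The first step is to show that $F(x)$ also lies on the supporting line $\ell_\eta=\{y:y\cdot\bn(\eta)=h(\eta)\}$. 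Compute
\[
F(x)\cdot\bn(\eta)=-h(\theta)\cos(\eta-\theta)-h'(\theta)\sin(\eta-\theta)+s^+\sin(\eta-\theta),
\]
using $\bn(\theta)\cdot\bn(\eta)=\cos(\eta-\theta)$ and $\bt(\theta)\cdot\bn(\eta)=\sin(\eta-\theta)$. Substituting \eqref{eq:splus-two-point} makes this equal to $h(\eta)$. Hence $F(x)-p(\eta)$ is orthogonal to $\bn(\eta)$, so $F(x)=p(\eta)+\sigma\bt(\eta)$ with $\sigma=(F(x)-p(\eta))\cdot\bt(\eta)$.

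The second step computes $\sigma$ explicitly. Expanding $(-h(\theta)\bn(\theta)-h'(\theta)\bt(\theta)+s^+\bt(\theta))\cdot\bt(\eta)-h'(\eta)$ gives $h(\theta)\sin(\eta-\theta)+(s^+-h'(\theta))\cos(\eta-\theta)-h'(\eta)$. Inserting $s^+-h'(\theta)=\bigl(h(\eta)+h(\theta)\cos(\eta-\theta)\bigr)/\sin(\eta-\theta)$ and using $\sin^2+\cos^2=1$, this reduces to $\bigl(h(\theta)+h(\eta)\cos(\eta-\theta)\bigr)/\sin(\eta-\theta)-h'(\eta)=s^-(\theta,\eta)$. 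This proves the displayed identity. It is the same algebra as \eqref{eq:exact2}, so alternatively one could cite Proposition~\ref{prop:exact} once $\eta$ has been identified as the contact angle.

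For the consequence, suppose $s^->0$. Then $F(x)=\Psi(\eta,s^-)$ with $s^->0$, and by Lemma~\ref{lem:chart} $\Psi$ is a bijection onto $\R^2\setminus K$, so $\theta_K(F(x))\equiv\eta$ and $s_K(F(x))=s^-$. The lifted increment $d(\theta,s^+)$ lies in $(0,\pi)$ and is congruent to $\eta-\theta\in(0,\pi)$ modulo $2\pi$, so it equals $\eta-\theta$. Therefore $\tilde F(\theta,s^+)=(\eta,s^-)$.

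For the converse, let $\tilde F(\theta,s)=(\eta,\sigma)$ with $0<\eta-\theta<\pi$. Then $d=\eta-\theta$ and \eqref{eq:exact1} can be solved for $s$ because $\sin d>0$, which gives exactly $s=s^+(\theta,\eta)$. Since $s>0$, the forward direction applies: $F(\Psi(\theta,s))=p(\eta)+s^-(\theta,\eta)\bt(\eta)$. But this point also equals $\Psi(\eta,\sigma)=p(\eta)+\sigma\bt(\eta)$, so $\sigma=s^-(\theta,\eta)$. The only delicate point is the lift bookkeeping, namely identifying the real increment with $\eta-\theta$ rather than with a $2\pi$-translate; this is handled by the fact that both quantities lie in $(0,\pi)$.
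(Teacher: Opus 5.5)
Your proof is correct and follows essentially the same route as the paper: you check that $F(\Psi(\theta,s^+))$ lies on the supporting line with normal $\bn(\eta)$, compute its $\bt(\eta)$-coordinate to obtain $s^-(\theta,\eta)$, and then identify the lift using Lemma~\ref{lem:chart} and the fact that both increments lie in $(0,\pi)$. The only small difference is in the converse, where you recover $\sigma$ by reusing your forward computation instead of citing \eqref{eq:exact2}; this is the same algebra.
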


\medskip

\begin{proof}
Let $d=\eta-\theta$, $s=s^+(\theta,\eta)$, $x=\Psi(\theta,s)$ and $y=F(x)=-p(\theta)+s\bt(\theta)$. Then
\[
y\cdot\bn(\eta)=-h(\theta)\cos d-h'(\theta)\sin d+s\sin d=h(\eta)
\]
by the definition of $s^+$, so $y$ lies on the supporting line with normal $\bn(\eta)$, and
\begin{eqnarray*}
y\cdot\bt(\eta)-h'(\eta)&=&h(\theta)\sin d-h'(\theta)\cos d+s\cos d-h'(\eta)\\
&=&\frac{h(\theta)+h(\eta)\cos d}{\sin d}-h'(\eta)=s^-(\theta,\eta),
\end{eqnarray*}
where we substituted $$s\cos d=\bigl(h(\eta)\cos d+h(\theta)\cos^2d\bigr)/\sin d+h'(\theta)\cos d.$$
 Thus $y=p(\eta)+s^-\bt(\eta)$.
 
 \medskip
  If $s^->0$, then in $(\theta, s)$ coordinates, we deduce  that $y$  has contact angle $\eta$, and $\eta-\theta\in(0,\pi)$ identifies the lift. 
  
  \medskip
For the converse, observe that  \eqref{eq:exact1} says $s=s^+(\theta,\eta)$, and \eqref{eq:exact2}, rewritten as above, says $\sigma=s^-(\theta,\eta)$.
\end{proof}

\medskip

Let us now determine the gradient of the action.\\

\begin{lemma}\label{lem:gradient}
For $h\in\cH$ and $\boldsymbol\theta\in\cC_q$, we have:
\begin{equation}\label{eq:gradient}
\frac{\partial A_h}{\partial\theta_i}(\boldsymbol\theta)=\frac{(s_i^+)^2-(s_i^-)^2}{2},\qquad 0\le i<q .
\end{equation}
\end{lemma}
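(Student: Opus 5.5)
The plan is a direct computation. $\theta_i$ enters $A_h$ only through $L_h(\theta_{i-1},\theta_i)$ and $L_h(\theta_i,\theta_{i+1})$. For $i=0$ the first term is $L_h(\theta_{-1},\theta_0)=L_h(\theta_{q-1}-\pi,\theta_0)=L_h(\theta_{q-1},\theta_0+\pi)$, by the $\pi$-invariance of $L_h$ noted in the Remark after Definition~\ref{def:config}. So in every case
\[
\frac{\partial A_h}{\partial\theta_i}=\partial_2L_h(\theta_{i-1},\theta_i)+\partial_1L_h(\theta_i,\theta_{i+1}),
\]
where $\partial_1,\partial_2$ denote partial derivatives in the first and second slot. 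The derivatives exist because $h\in C^1$. It therefore suffices to prove the two identities
\[
\partial_1L_h(\theta,\eta)=\tfrac12 s^+(\theta,\eta)^2,\qquad \partial_2L_h(\theta,\eta)=-\tfrac12 s^-(\theta,\eta)^2,
\]
with $s^\pm$ the two-point functions \eqref{eq:splus-two-point}--\eqref{eq:sminus-two-point}. Indeed $s^+(\theta_i,\theta_{i+1})=s_i^+$ and $s^-(\theta_{i-1},\theta_i)=s_i^-$, which gives \eqref{eq:gradient}.

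For the computation, set $a=h(\theta)$, $b=h(\eta)$, $d=\eta-\theta$, so that
\[
L_h=\frac{2ab+(a^2+b^2)\cos d}{2\sin d}.
\]
Differentiating in $\theta$ (where $\partial_\theta d=-1$ and $\partial_\theta a=h'(\theta)$) gives
\[
\partial_1L_h=\frac{h'(\theta)\,(b+a\cos d)}{\sin d}+\frac{(a^2+b^2)\sin^2 d+\bigl(2ab+(a^2+b^2)\cos d\bigr)\cos d}{2\sin^2 d}.
\]
The second fraction simplifies to $(a^2+b^2+2ab\cos d)/(2\sin^2d)=\bigl[(b+a\cos d)^2+a^2\sin^2 d\bigr]/(2\sin^2 d)$.

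Writing $u=(b+a\cos d)/\sin d$, we get
\[
\partial_1 L_h=h'(\theta)\,u+\tfrac12 u^2+\tfrac12 a^2=\tfrac12\bigl(u+h'(\theta)\bigr)^2+\tfrac12\bigl(h(\theta)^2-h'(\theta)^2\bigr)=\tfrac12 (s^+)^2+\tfrac12\bigl(h^2-h'^2\bigr)(\theta).
\]
This does not quite match the target identity: there is an extra term. The symmetric computation in $\eta$, using $\partial_\eta d=+1$ and $v=(a+b\cos d)/\sin d$, gives
\[
\partial_2L_h=-\tfrac12(s^-)^2-\tfrac12\bigl(h^2-h'^2\bigr)(\eta).
\]

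The remaining step is to check that the extra terms cancel. In $\partial A_h/\partial\theta_i$ they contribute $-\tfrac12(h^2-h'^2)(\theta_i)$ from $\partial_2L_h(\theta_{i-1},\theta_i)$ and $+\tfrac12(h^2-h'^2)(\theta_i)$ from $\partial_1L_h(\theta_i,\theta_{i+1})$. They cancel exactly.

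The case $i=0$ needs a separate check, since the second slot there is $\theta_0+\pi$. The extra term is then evaluated at $\theta_0+\pi$, and $h^2-h'^2$ is $\pi$-periodic, so the cancellation still holds. Equivalently, $L_h$ differs from the "pure" generating function by the coboundary $\psi(\theta)-\psi(\eta)$ of the Remark, up to a constant, and this coboundary telescopes in the periodic sum.

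The main obstacle is only bookkeeping: the algebraic simplification of the $\sin^{-2}$ terms and correct handling of the index shift at $i=0$ through the periodicity $\theta_{i+q}=\theta_i+\pi$.
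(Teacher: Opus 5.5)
Your computation is correct and is essentially the paper's proof. The paper likewise obtains $\partial_1L_h=\tfrac12\bigl((s^+)^2+h(\theta)^2-h'(\theta)^2\bigr)$ and $\partial_2L_h=\tfrac12\bigl(h'(\eta)^2-h(\eta)^2-(s^-)^2\bigr)$, cancels the $\pm\tfrac12(h^2-h'^2)(\theta_i)$ terms, and handles $i=0$ via $\theta_q=\theta_0+\pi$. In a write-up, drop or restate your early claim that it suffices to have $\partial_1L_h=\tfrac12(s^+)^2$ and $\partial_2L_h=-\tfrac12(s^-)^2$: as you discover yourself, those identities are false individually.
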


\medskip

\begin{proof}
To simplify the notation, let $d=\eta-\theta$, $a=h(\theta)$, $b=h(\eta)$, $a'=h'(\theta)$, $b'=h'(\eta)$. Then:
$$L_h=\frac{2ab+(a^2+b^2)\cos d}{2\sin d}.$$ 
Differentiating in $\theta$: 
\begin{eqnarray*}
\partial_1L_h&=&\frac{2a'b+2aa'\cos d+(a^2+b^2)\sin d}{2\sin d}+\frac{(2ab+(a^2+b^2)\cos d)\cos d}{2\sin^2d}\\
&=&\frac{a'(b+a\cos d)}{\sin d}+\frac{a^2+b^2+2ab\cos d}{2\sin^2d}.
\end{eqnarray*}
On the other hand, with $s^+=(b+a\cos d)/\sin d+a'$,
\[
\frac{(s^+)^2+a^2-a'^2}{2}=\frac{(b+a\cos d)^2}{2\sin^2d}+\frac{a'(b+a\cos d)}{\sin d}+\frac{a^2}{2}
=\frac{a'(b+a\cos d)}{\sin d}+\frac{b^2+2ab\cos d+a^2}{2\sin^2d}.
\]
Hence,
$$\partial_1L_h(\theta,\eta)=\tfrac12\bigl((s^+)^2 + h(\theta)^2 - h'(\theta)^2\bigr).$$ 
Similarly, we obtain: 
$$\partial_2L_h(\theta,\eta)=\tfrac12\bigl(h'(\eta)^2-h(\eta)^2-(s^-)^2\bigr)$$ 
with $s^-=(a+b\cos d)/\sin d-b'$. 

\smallskip
In the  sum \eqref{eq:action} the variable $\theta_i$ appears in $L_h(\theta_i,\theta_{i+1})$ and in $L_h(\theta_{i-1},\theta_i)$ (for $i=0$ the latter is $L_h(\theta_{q-1},\theta_q)$ with $\theta_q=\theta_0+\pi$), 
so  the terms $\pm\tfrac12(h(\theta_i)^2-h'(\theta_i)^2)$ cancel and \eqref{eq:gradient} follows.
\end{proof}

\bigskip

We can now prove the variational principle stated in Proposition \ref{prop:variational}.

\begin{proof}[Proof of Proposition \ref{prop:variational}]
By Lemma~\ref{lem:gradient} and positivity of $s_i^\pm$, we get:
$$\nabla A_h(\boldsymbol\theta)=0 \qquad \Longleftrightarrow \qquad s_i^+=s_i^- \quad \forall\, i.$$
 Lemma~\ref{lem:s-geometric} applied to $(\theta,\eta)=(\theta_i,\theta_{i+1})$ gives 
 $$\tilde F(\theta_i,s_i^+)=(\theta_{i+1},s^-(\theta_i,\theta_{i+1}))=(\theta_{i+1},s_{i+1}^-).$$ 
 So if $s_{i+1}^-=s_{i+1}^+$ for all $i$, then $\tilde F(\theta_i,s_i^+)=(\theta_{i+1},s^+_{i+1})$.
 
 \smallskip
 For the other implication, the above relation forces $s_{i+1}^+=s_{i+1}^-$. Iterating, we obtain:
 $$\tilde F^{\,q}(\theta_0,s_0^+)=(\theta_q,s_q^+)=(\theta_0+\pi,s_0^+) \quad {\rm and} \quad F^q(x_0)=\Psi(\theta_0+\pi,s_0^+)=-x_0.$$

\smallskip
For the converse, let $(\theta_i,s_i):=\tilde F^{\,i}(\theta_0,s_0)$. The increments $\theta_{i+1}-\theta_i$ lie in $(0,\pi)$ and sum to $\pi$ over $0\le i<q$, so $\boldsymbol\theta\in\cC_q$, and the $q$-periodic extension is consistent with $\theta_q=\theta_0+\pi$, $s_q=s_0$. By the converse part of Lemma~\ref{lem:s-geometric} 
$$s_i=s^+(\theta_i,\theta_{i+1})=s_i^+ \quad {\rm and} \quad s_{i+1}=s^-(\theta_i,\theta_{i+1})=s_{i+1}^-.$$ Hence $s_i^+=s_i^-=s_i>0$ for all $i$, {\it i.e.},\ $\boldsymbol\theta\in\cC_q^+(h)$ and $\nabla A_h(\boldsymbol\theta)=0$.
\end{proof}

\bigskip
\bigskip

\subsection{Continuity in the table}\label{sec:continuitytable}

Let us briefly discuss how the map $F$ depends on the table, since this will be important in order to determine the residual set $\cR$.

\medskip

\begin{proposition}\label{prop:continuity}
The map $(h,\theta,s)\mapsto\tilde F_h(\theta,s)$ is continuous on $\cH\times\R\times(0,\infty)$, where $\cH$ is endowed with the $C^1$ topology. Consequently, so are all iterates $(h,\theta,s)\mapsto\tilde F_h^{\,n}(\theta,s)$, for every fixed $n\ge1$.
\end{proposition}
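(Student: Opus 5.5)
We will characterize $\tilde F_h(\theta,s)$ implicitly through the exact relations of Proposition~\ref{prop:exact}, and then use a uniqueness/compactness argument. Recall that $\tilde F_h(\theta,s)=(\theta+d,s^+)$, where $d=d_h(\theta,s)\in(0,\pi)$ solves \eqref{eq:exact1}, that is, $\Phi_h(\theta,s,d)=0$ with
\[
\Phi_h(\theta,s,d):=s\sin d-h(\theta+d)-h(\theta)\cos d-h'(\theta)\sin d ,
\]
and $s^+$ is then given explicitly by \eqref{eq:exact2}. That formula is jointly continuous in $(h,\theta,s,d)$ for the $C^1$ topology, because evaluation $(h,\theta)\mapsto(h(\theta),h'(\theta))$ is continuous on $C^1\times\R$. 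So the whole task reduces to proving continuity of $(h,\theta,s)\mapsto d_h(\theta,s)$. Continuity of the iterates then follows by composition, since $\tilde F_h$ takes values in $\R\times(0,\infty)$.

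\textbf{Step 1 (uniqueness of $d$).} For fixed $(h,\theta,s)$ we show that $d_h(\theta,s)$ is the \emph{unique} zero of $\Phi_h(\theta,s,\cdot)$ in $(0,\pi)$. Geometrically, $\Phi_h(\theta,s,d)=y\cdot\bn(\theta+d)-h(\theta+d)$, where $y=F(x)=-p(\theta)+s\bt(\theta)$ is a fixed exterior point, as in the proof of Lemma~\ref{lem:s-geometric}. A zero therefore means that $y$ lies on the supporting line with normal $\bn(\theta+d)$. An exterior point lies on exactly two supporting lines. One of them has normal $\theta+\pi$, since $y$ lies on the tangent line at $-p(\theta)$. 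Hence there is exactly one other normal direction, and it must be $\theta+d_h(\theta,s)$. So $\Phi_h(\theta,s,\cdot)$ has exactly one zero in $(0,\pi)$.

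\textbf{Step 2 (compactness).} Take $(h_n,\theta_n,s_n)\to(h,\theta,s)$ and set $d_n:=d_{h_n}(\theta_n,s_n)$. By passing to subsequences, we may assume $d_n\to d_\infty\in[0,\pi]$. Since $\Phi$ is jointly continuous, $\Phi_h(\theta,s,d_\infty)=0$. If $d_\infty\in(0,\pi)$, Step 1 gives $d_\infty=d_h(\theta,s)$, and therefore the full sequence converges.

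\textbf{Step 3 (excluding the endpoints; the main obstacle).} It remains to rule out $d_\infty\in\{0,\pi\}$. The endpoint $d_\infty=\pi$ is excluded by Proposition~\ref{prop:exact}(ii): since $s_n\sin d_n=(p_n(\theta_n+d_n)+p_n(\theta_n))\cdot\bn(\theta_n+d_n)$, we have
\[
s_n\sin d_n\to(p(\theta+\pi)+p(\theta))\cdot\bn(\theta+\pi)=0 ,
\]
using $p_n\to p$ uniformly, which holds because $h_n\to h$ in $C^1$. This contradicts $s_n\to s>0$. The endpoint $d_\infty=0$ is excluded similarly. In that case $y_n=F_{h_n}(x_n)\to y=F_h(x)$, while the contact angles of $y_n$ converge to $\theta$. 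Since $y$ is exterior, the strict separation $y\cdot\bn(\theta)-h(\theta)=s\sin 0-\ldots$ has to be checked directly. We have $\Phi_h(\theta,s,0)=-2h(\theta)<0$, using $h>0$. Hence $\Phi$ is bounded away from $0$ near $d=0$ uniformly in $n$, and $d_\infty=0$ is impossible.

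The case $d_\infty=\pi$ can also be handled directly by the same sign argument: $\Phi_h(\theta,s,\pi)=-h(\theta+\pi)+h(\theta)=0$, so the sign argument fails there. This is why we need the uniform bound from Proposition~\ref{prop:exact}(ii), and it is the point that requires the most care.
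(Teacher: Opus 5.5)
There is a genuine gap in Step 3, in the exclusion of $d_\infty=\pi$. The rest is fine: the reduction to continuity of $d_h(\theta,s)$, the uniqueness of the zero in $(0,\pi)$ in Step 1, and the exclusion of $d_\infty=0$ (which already follows from $\Phi_h(\theta,s,d_\infty)=0$ and $\Phi_h(\theta,s,0)=-2h(\theta)<0$).

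The case $d_\infty=\pi$ is exactly where the limit could jump to the other supporting line through $y=F_h(\Psi_h(\theta,s))$, the one with normal $\bn(\theta+\pi)$. Your computation $s_n\sin d_n\to0$ does not contradict $s_n\to s>0$. If $d_n\to\pi$, then $\sin d_n\to0$, so $s_n\sin d_n\to s\sin\pi=0$ automatically.

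Appealing to the quantitative part of Proposition~\ref{prop:exact}(ii) does not repair this. That bound says $s\sin d\ge c_\varepsilon$ whenever $\pi-d\ge\varepsilon$, i.e.\ small $s$ forces $d$ near $\pi$. You need the converse, that $d$ near $\pi$ forces $s$ small. Moreover, $c_\varepsilon$ depends on the table, so it is not available uniformly along $h_n$. As written, nothing rules out $d_n\to\pi$.

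The missing ingredient is the sign of the tangent length, which is what separates the two zeros of $\Phi_h(\theta,s,\cdot)$ in $(0,\pi]$. By \eqref{eq:exact2},
$$s_n^+=s_n\cos d_n+h_n(\theta_n)\sin d_n-h_n'(\theta_n)\cos d_n-h_n'(\theta_n+d_n)>0 .$$
If $d_n\to\pi$, the right-hand side tends to $-s+h'(\theta)-h'(\theta+\pi)=-s<0$, using $\pi$-periodicity of $h'$ and uniform convergence $h_n'\to h'$. This is a contradiction. Equivalently, $\partial_d\Phi_h(\theta,s,d)$ is the right-hand side of \eqref{eq:exact2}: it equals $s^+>0$ at the zero $d_h(\theta,s)$ and $-s<0$ at the zero $d=\pi$.

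With this fix, your route is a valid alternative to the paper's. The paper instead proves continuity of $(h,y)\mapsto(\theta_h(y),s_h(y))$ on the open set $\{(h,y):y\notin K_h\}$ by a subsequence argument, then composes with $y_h(\theta,s)=-p_h(\theta)+s\bt(\theta)$. The same sign issue appears there. The limiting tangent length is $\ge0$ as a limit of positive numbers, and it cannot vanish because $y\notin K_h$. This is what selects $\theta_h(y)$ rather than $\check\theta_h(y)$, whose signed tangent length is negative.
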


\medskip

\begin{proof}
We write $\theta_h:=\theta_{K_h}$ and $s_h:=s_{K_h}$ for the contact angle and the tangent length of the table $K_h$, and $p_h=h\bn+h'\bt$ for its boundary parametrization. \\

Let us prove that $(h,\theta,s)\mapsto\tilde F_h(\theta,s)$ is continuous on $\cH\times\R\times(0,\infty)$.

\begin{itemize}
\item[(i)]  We start by proving that $(h,y)\mapsto(\theta_h(y),s_h(y))\in\Tt\times(0,\infty)$ is continuous on $$\cO:=\{(h,y)\in\cH\times\R^2:\ y\notin K_h\}.$$ 
Observe that $\cO$ is open, in fact: 
$$y\notin K_h \qquad \Longleftrightarrow \qquad m(h,y):=\max_{\theta\in\Tt}\bigl(y\cdot\bn(\theta)-h(\theta)\bigr)>0,$$ and $m$ is continuous on $\cH\times\R^2$, since $|m(h,y)-m(h',y')|\le|y-y'|+\|h-h'\|_\infty$.

\smallskip
\noindent To prove continuity, 
let $(h_n,y_n)\to(h,y)$ in $\cO$, and put $\varphi_n:=\theta_{h_n}(y_n)\in\Tt$, $\sigma_n:=s_{h_n}(y_n)>0$. By Definition~\ref{def:contact},
\begin{equation}\label{eq:cont-1}
y_n\cdot\bn(\varphi_n)=h_n(\varphi_n),\qquad \sigma_n=y_n\cdot\bt(\varphi_n)-h_n'(\varphi_n).
\end{equation}
Let $(\varphi_{n_k})$ be any subsequence converging to some $\varphi\in\Tt$. Since $h_{n_k}\to h$ and $h_{n_k}'\to h'$ uniformly and $h,h'$ are continuous,
\[
h_{n_k}(\varphi_{n_k})\to h(\varphi),\qquad h_{n_k}'(\varphi_{n_k})\to h'(\varphi).
\]
Passing to the limit in \eqref{eq:cont-1} along $n_k$ gives
\[
y\cdot\bn(\varphi)=h(\varphi),\qquad \sigma_{n_k}\to\sigma:=y\cdot\bt(\varphi)-h'(\varphi)\ge0 .
\]
The first relation says that $y$ lies on the supporting line of $K_h$ with normal $\bn(\varphi)$, and the second that $y=p_h(\varphi)+\sigma\bt(\varphi)$. If $\sigma$ were $0$, we would have $y=p_h(\varphi)\in K_h$, contradicting $(h,y)\in\cO$; hence $\sigma>0$. Then, $y=\Psi_h(\varphi,\sigma)$ with $\sigma>0$, and Lemma~\ref{lem:chart} identifies $\varphi=\theta_h(y)$ and $\sigma=s_h(y)$.
Since this holds for every converging subsequence, this proves  continuity.

\medskip
\item[(ii)] We now prove that the map $(h,\theta,s)\mapsto y_h(\theta,s):=F_h(\Psi_h(\theta,s))$ is continuous from $\cH\times\R\times(0,\infty)$ to $\R^2$, with values in $\{y:(h,y)\in\cO\}$.

\smallskip
In fact, by \eqref{eq:F}, 
$$y_h(\theta,s)=\Psi_h(\theta,s)-2p_h(\theta)=-p_h(\theta)+s\,\bt(\theta).$$ The map 
$$(h,\theta)\mapsto p_h(\theta)=h(\theta)\bn(\theta)+h'(\theta)\bt(\theta)$$ is jointly continuous on $\cH\times\R$ by the estimate used in item (i), and $(\theta,s)\mapsto s\,\bt(\theta)$ is continuous; hence $y_h(\theta,s)$ is continuous in $(h,\theta,s)$. Its values lie outside $K_h$ because $F_h$ maps $\R^2\setminus K_h$ to itself (Proposition~\ref{prop:T-basic} (a)).

\medskip 
\item[(iii)] The map $(h,\theta,s)\mapsto\bigl(\theta_h(y_h(\theta,s)),\,s_h(y_h(\theta,s))\bigr)\in\Tt\times(0,\infty)$ is continuous.

\noindent In fact, this is the composition of the continuous map $(h,\theta,s)\mapsto(h,y_h(\theta,s))\in\cO$ of item (ii) with the continuous map of item (i). By Proposition~\ref{prop:exact}, the first component is the contact angle $\theta^+$ of $F_h(\Psi_h(\theta,s))$ (mod~$2\pi$), and the second is $s^+$.

\medskip
\item[(iv)] The lift
\[
\tilde F_h(\theta,s)=\bigl(\theta+d_h(\theta,s),\ s_h(y_h(\theta,s))\bigr)
\]
is continuous on $\cH\times\R\times(0,\infty)$, with values in $\R\times(0,\infty)$.

\smallskip 
\noindent In fact, by Proposition~\ref{prop:exact}, $\theta^+-\theta\in(0,\pi)\pmod{2\pi}$, {\it i.e.},\ the continuous $\Tt$-valued function $(h,\theta,s)\mapsto\theta_h(y_h(\theta,s))-\theta$ takes its values in the open arc $(0,\pi)\subset\Tt$. The projection $\R\to\Tt$ restricts to a homeomorphism of the interval $(0,\pi)\subset\R$ onto $(0,\pi)$; composing with its inverse, the representative $d_h(\theta,s)\in(0,\pi)\subset\R$ of $\theta^+-\theta$ is a continuous function of $(h,\theta,s)$. 

\end{itemize}

\smallskip
Therefore, the map $\Phi:(h,\theta,s)\mapsto(h,\tilde F_h(\theta,s))$ is a continuous map of $\cH\times\R\times(0,\infty)$ to itself. Observe that $(h,\tilde F_h^{\,n}(\theta,s))=\Phi^n(h,\theta,s)$, so each iterate is continuous as a composition of continuous maps.
\end{proof}

\bigskip

\bigskip

\subsection{The open \& dense conditions}

We want now to show that for a given $q\geq 2$, typically an outer billiard map coming from a support function in $\cH$ does not admit an invariant curve consisting of orbits all satisfying the {\it half-turn return} condition \eqref{eq:halfturn-chart} with period $q$. This follows a classical strategy for problems of this kind, which we adapt below to our setting.

\medskip
Let us start with the following definition.\\

\begin{definition}\label{def:U}
For integers $q,m\ge2$ let $\cU_{q,m}$ be the set of $h\in\cH$ for which there is an angle $\theta_*\in\R$ such that
\begin{equation}\label{eq:U}
\tilde F_h^{\,q}(\theta_*,s)\ne(\theta_*+\pi,s)\qquad\text{for every }s\in[1/m,m].
\end{equation}
\end{definition}

\bigskip
The main goal of this section is to prove the following theorem.\\

\begin{theorem}\label{thm:dense}
For all $q,m\ge2$ the set $\cU_{q,m}$ is open and dense in $\cH$.
\end{theorem}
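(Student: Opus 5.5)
Openness and density will be proved separately.

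\emph{Openness.} Fix $h\in\cU_{q,m}$ with witness angle $\theta_*$. The map $(h,s)\mapsto\tilde F_h^{\,q}(\theta_*,s)-(\theta_*+\pi,s)$ is continuous on $\cH\times[1/m,m]$ by Proposition~\ref{prop:continuity}. It does not vanish on the compact set $\{h\}\times[1/m,m]$, so its norm is bounded below there by some $c>0$. A standard tube-lemma argument then gives a $C^1$-neighbourhood $\cV$ of $h$ on which the norm stays above $c/2$ for all $s\in[1/m,m]$, with the same $\theta_*$. Hence $\cV\subset\cU_{q,m}$.

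\emph{Density.} Let $h\in\cH\setminus\cU_{q,m}$ and $\varepsilon>0$. Then for every $\theta\in\R$ there is $s(\theta)\in[1/m,m]$ with $\tilde F_h^{\,q}(\theta,s(\theta))=(\theta+\pi,s(\theta))$. By Proposition~\ref{prop:variational}, every $\theta$ is then the first coordinate $\theta_0$ of a critical configuration $\boldsymbol\theta\in\cC_q^+(h)$ of $A_h$ with $s_0^+\in[1/m,m]$. The tangent lengths being bounded, the angular increments of these configurations stay in a compact subset of $(0,\pi)$ (Proposition~\ref{prop:exact}(ii)), so the set $\mathcal K$ of such critical configurations is compact modulo the diagonal translation. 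The goal is to perturb $h$ to $h_1$ with $\|h_1-h\|_{C^1}<\varepsilon$ so that, for some single $\theta_*$, no critical configuration of $A_{h_1}$ with $\theta_0=\theta_*$ and $s_0^+\in[1/m,m]$ exists.

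The plan is to perturb $h$ locally near $\theta_*$, and hence near $\theta_*+\pi$ by $\pi$-periodicity. Choose a bump $\phi$ supported in a small interval $J$ around $\theta_*$, with $\phi+\phi''$ small in the sense that $\rho_{h+t\phi}=\rho_h+t(\phi+\phi'')\dd\theta$ remains positive with full support. To guarantee this, I will first replace $h$ by $h+\varepsilon'\cdot 1$, which adds $\varepsilon'\dd\theta$ to $\rho$, so there is room for signed smooth perturbations of size $<\varepsilon'$.

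A configuration with $\theta_0=\theta_*$ has its other points $\theta_1,\dots,\theta_{q-1}$ at distance at least $d_*>0$ from $\theta_*$ modulo $\pi$, uniformly on the compact family. So if $J$ is small, only the term $h(\theta_0)$ and its neighbours feel the perturbation. The equations $s_i^+=s_i^-$ for $i\ne0$ are unchanged, while the orbit through $(\theta_*,s)$ is altered only in its first and last step. Concretely, I will take $\phi$ with $\phi(\theta_*)=0$ and $\phi'(\theta_*)=\kappa\neq 0$, of small $C^1$ norm. The contact point then shifts to $p_{h_1}(\theta_*)=p_h(\theta_*)+\kappa\bt(\theta_*)$, so the half-turn return condition at $\theta_*$ becomes $s_0^+(h)+\kappa=s_0^-(h)-\kappa$ in terms of the unperturbed neighbours.

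The main obstacle is to show that generic $\kappa$ destroys every solution, rather than merely moving one. The solutions at $\theta_*$ form the compact set $\mathcal K_*:=\mathcal K\cap\{\theta_0=\theta_*\}$. Because the condition is imposed only at the single angle $\theta_*$, the other $q-1$ angles are free, and $\kappa$ enters as a scalar shift $2\kappa$ of the jump $s_0^--s_0^+$ in an otherwise $\kappa$-independent system on $(\theta_1,\dots,\theta_{q-1})$. I plan to use the reduced one-dimensional function $\kappa\mapsto$ (set of solutions) and a Sard/measure argument. Since $h$ is only $C^1$, I would first $C^1$-approximate $h$ on the complement of $J$ by a function whose action is smooth in the relevant variables, for example by convolution while keeping $\rho$ full-support. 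If this fails, the fallback is a finite-dimensional family of perturbations localized near each $\theta_i$, combined with a transversality (parametric Sard) argument. In either case the aim is to find $\kappa$ arbitrarily small for which no configuration with $\theta_0=\theta_*$, $s_0^+\in[1/m,m]$ is critical, which gives $h_1\in\cU_{q,m}$ and proves density.
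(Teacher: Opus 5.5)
Your openness argument is correct and is the one in the paper. The density argument, however, breaks down exactly at the step you identify as the main obstacle, and the proposed fix does not work.

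Your reduction itself is fine. Freeze $\theta_0=\theta_*$ and take $\phi(\theta_*)=0$, $\phi'(\theta_*)=\kappa$. Then the equations $s_i^+=s_i^-$, $1\le i\le q-1$, are unchanged and cut out the set $S=\{\sigma:\ p_1\tilde F_h^{\,q}(\theta_*,\sigma)=\theta_*+\pi\}$. The perturbed table $h+\phi$ has a half-returning state at $\theta_*$ precisely when $2\kappa\in G(S)$, where $G(\sigma)$ is the $s$-component of $\tilde F_h^{\,q}(\theta_*,\sigma)$ minus $\sigma$, up to shifting the range $[1/m,m]$ by $\kappa$.

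The problem is that you have no control over $S$. It is merely the zero set of a function of $\sigma$ (smooth after smoothing $h$). Nothing prevents it from containing whole intervals on which $G$ changes sign; this happens whenever $-F^q$ maps a segment of the tangent ray $\{p(\theta_*)+\sigma\bt(\theta_*)\}$ into that ray. In that case $G(S)$ contains a neighbourhood of $0$, and every small $\kappa$ fails at this $\theta_*$. Sard's theorem does not help: it bounds the critical values of a smooth map on a manifold, not the image under $G$ of an arbitrary closed zero set. What you would need is that the $(q-1)\times(q-1)$ system in $(\theta_1,\dots,\theta_{q-1})$ be nondegenerate at every solution. That is itself a genericity statement, and a perturbation localized at $\theta_*$ cannot produce it, because it only touches the equation with $i=0$ (or those with $i=0,1,q-1$ if you also allow $\phi(\theta_*)\neq0$).

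The missing ingredient is the one you only name as a fallback, and it has to be carried out. You need a finite-parameter family that moves $h'$ independently at every point of each critical configuration, so that $(\mathbf a,\boldsymbol\theta)\mapsto\nabla A_{h_{\mathbf a}}(\boldsymbol\theta)$ is a submersion. The paper does this in the following steps:
\begin{enumerate}
\item It first passes to a smooth $h_0$ with $h_0+h_0''>0$, so that $A_{h_0}$ is smooth and small smooth perturbations stay in $\cH$.
\item For each critical configuration $c$ it takes functions $\phi_{c,i}$ with $\phi_{c,i}(c_j)=0$ and $\phi_{c,i}'(c_j)=\delta_{ij}$, which gives $\partial_{a_i}\partial_{\theta_j}A=2\delta_{ij}s_i$.
\item It covers the compact critical set by finitely many such charts and uses parametric Sard to make every critical point in a compact region $\bar N$ nondegenerate, hence finite in number.
\item Proposition~\ref{prop:continuity} and compactness guarantee that every half-returning state with $s\in[1/m,m]$ of every nearby table has its configuration in $\bar N$.
\end{enumerate}
Hence only finitely many initial angles in $[0,\pi]$ are bad, and $\theta_*$ can be chosen off them.

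Your single-angle idea could be completed with the same family. With $\theta_0=\theta_*$ frozen there are $q$ equations in $q-1$ unknowns, so transversality for a.e.\ parameter means there are no solutions at all. But that requires the same $q$-dimensional family, the smoothing step, and the compactness bookkeeping, none of which is in the proposal.
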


\medskip
One can easily prove openness.\\

\begin{proposition}\label{prop:open}
$\cU_{q,m}$ is open in $\cH$.
\end{proposition}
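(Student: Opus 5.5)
Openness follows from a compactness argument built on the joint continuity of Proposition~\ref{prop:continuity}. Fix $h\in\cU_{q,m}$ and an angle $\theta_*$ satisfying \eqref{eq:U}. We show that the same angle $\theta_*$ witnesses \eqref{eq:U} for every $h_1$ in a $C^1$-neighbourhood of $h$.

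Consider the map
\[
G:\cH\times[1/m,m]\to\R^2,\qquad G(h_1,s):=\tilde F_{h_1}^{\,q}(\theta_*,s)-(\theta_*+\pi,s).
\]
By Proposition~\ref{prop:continuity}, the iterate $(h_1,\theta,s)\mapsto\tilde F_{h_1}^{\,q}(\theta,s)$ is jointly continuous on $\cH\times\R\times(0,\infty)$. Hence $G$ is continuous.

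By hypothesis, $G(h,s)\neq0$ for every $s\in[1/m,m]$. Since $s\mapsto|G(h,s)|$ is continuous on the compact interval $[1/m,m]$, it attains a positive minimum $\varepsilon>0$.

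Suppose, for contradiction, that $h$ is not an interior point of $\cU_{q,m}$. Then there are $h_n\to h$ in $C^1$ with $h_n\notin\cU_{q,m}$. In particular, taking the angle $\theta_*$, each $h_n$ admits some $s_n\in[1/m,m]$ with $G(h_n,s_n)=0$. By compactness we may pass to a subsequence with $s_n\to s_\infty\in[1/m,m]$. Joint continuity of $G$ then gives $G(h,s_\infty)=\lim G(h_n,s_n)=0$, which contradicts $|G(h,\cdot)|\ge\varepsilon$.

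Equivalently, the set $\{(h_1,s):|G(h_1,s)|>\varepsilon/2\}$ is open and contains the compact slice $\{h\}\times[1/m,m]$. By the tube lemma, it therefore contains $\cV\times[1/m,m]$ for some neighbourhood $\cV$ of $h$. Every $h_1\in\cV$ then satisfies \eqref{eq:U} with the same $\theta_*$, so $\cV\subset\cU_{q,m}$.

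The only point requiring care is that the continuity must be \emph{joint} in the table $h_1$ and the height $s$, not merely continuity in $s$ for each fixed table. This is exactly what Proposition~\ref{prop:continuity} supplies, since it covers the $q$-th iterate on the $C^1$ topology of $\cH$. With that in hand, no further obstacle arises.
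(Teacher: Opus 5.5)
Your proposal is correct and follows essentially the same argument as the paper: a positive minimum of $s\mapsto|\tilde F_h^{\,q}(\theta_*,s)-(\theta_*+\pi,s)|$ on $[1/m,m]$, then joint continuity from Proposition~\ref{prop:continuity} plus compactness of $[1/m,m]$ to keep the same $\theta_*$ on a $C^1$-neighbourhood. The paper phrases the last step as a uniform estimate $\sup_{s}|\tilde F_{h'}^{\,q}(\theta_*,s)-\tilde F_h^{\,q}(\theta_*,s)|<\mu$, whereas you use a sequential or tube-lemma argument; these are the same compactness step.
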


\begin{proof}
Let $h\in\cU_{q,m}$ with angle $\theta_*$. The continuous function $$s\mapsto|\tilde F_h^{\,q}(\theta_*,s)-(\theta_*+\pi,s)|$$ has a positive minimum $\mu$ on the compact interval $[1/m,m]$. \\
By Proposition~\ref{prop:continuity} and compactness, there is a $C^1$-neighbourhood of $h$ on which $$\sup_{s\in[1/m,m]}|\tilde F_{h'}^{\,q}(\theta_*,s)-\tilde F_h^{\,q}(\theta_*,s)|<\mu.$$ Every $h'$ in this neighbourhood belongs to $\cU_{q,m}$ with the same $\theta_*$.
\end{proof}

\medskip

In order to prove density, let us first prove some auxiliary results.\\

Let us observe that smooth tables are dense in $\cH$.

\begin{lemma}\label{lem:smooth-dense}
The set of $h\in\cH\cap C^\infty(\Tp)$ with $h+h''>0$  is dense in $\cH$.
\end{lemma}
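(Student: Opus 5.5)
The plan is to regularize $\rho_h$ by convolution and reconstruct the body through Proposition~\ref{prop:reconstruction}. Fix $h\in\cH$, let $\rho:=\rho_h$, and let $\phi_\varepsilon$ be a standard smooth, nonnegative, even mollifier on $\Tp$ with $\int\phi_\varepsilon=1$. Set $\rho_\varepsilon:=\phi_\varepsilon*\rho$, which is a smooth positive density on $\Tp$. It is strictly positive everywhere, because $\supp\rho=\Tp$: every arc of length $\varepsilon$ carries positive $\rho$-mass, so $\phi_\varepsilon*\rho>0$ pointwise if $\phi_\varepsilon>0$ on $(-\varepsilon,\varepsilon)$. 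Define $h_\varepsilon:=h_{\rho_\varepsilon}$ by \eqref{eq:h-from-rho}. Since $h_\rho=k*\rho$, we get $h_\varepsilon=k*\phi_\varepsilon*\rho=\phi_\varepsilon*h$. This shows that $h_\varepsilon\in C^\infty$. Moreover $h_\varepsilon+h_\varepsilon''=\rho_\varepsilon>0$ by Proposition~\ref{prop:reconstruction}(i), and $h_\varepsilon\in\cH$.

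Next I would prove convergence: $h_\varepsilon=\phi_\varepsilon*h\to h$ in $C^1$. Since $h\in C^1(\Tp)$ by Proposition~\ref{prop:reconstruction}(iii), both $h$ and $h'$ are uniformly continuous on the compact circle. Then $(\phi_\varepsilon*h)'=\phi_\varepsilon*h'$, so $\|h_\varepsilon-h\|_\infty+\|h_\varepsilon'-h'\|_\infty\le\omega_h(\varepsilon)+\omega_{h'}(\varepsilon)\to0$, where $\omega$ denotes the modulus of continuity. Positivity $h_\varepsilon>0$ is automatic, either from Proposition~\ref{prop:reconstruction}(i) or from averaging a positive function.

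The only point needing some care is the identity $\phi_\varepsilon*(k*\rho)=k*(\phi_\varepsilon*\rho)$, together with the claim that $h_\varepsilon''+h_\varepsilon=\rho_\varepsilon$ holds classically. Both follow from associativity of convolution of a finite measure with integrable functions on the compact group $\Tp$ (Fubini), and from the fact that convolution commutes with distributional derivatives. I do not expect a genuine obstacle. The main thing to check is that strict positivity of $\rho_\varepsilon$ uses the full-support hypothesis, which is exactly what makes the approximants satisfy $h+h''>0$ rather than merely $\ge0$.
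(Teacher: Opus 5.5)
Your proof is correct and follows the same mollification strategy as the paper, with $h_\varepsilon=\phi_\varepsilon*h$ and $h_\varepsilon+h_\varepsilon''=\phi_\varepsilon*\rho_h$. The only difference is how you get strict positivity: the paper adds a small constant $c>0$, so that $\rho_{h_\varepsilon+c}\ge c\dd\theta$ (this does not use full support), whereas you use $\supp\rho_h=\Tp$ together with $\phi_\varepsilon>0$ on $(-\varepsilon,\varepsilon)$, which is equally valid and avoids the extra perturbation.
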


\begin{proof}
Let $h\in\cH$ and let $(\phi_\varepsilon)_\varepsilon$ be a standard family of smooth non-negative $\pi$-periodic mollifiers with $\int\phi_\varepsilon=1$ and $\supp\phi_\varepsilon\to\{0\}$. Then $h_\varepsilon:=h*\phi_\varepsilon$ is smooth, $h_\varepsilon\to h$ and $h_\varepsilon'=h'*\phi_\varepsilon\to h'$ uniformly, and $h_\varepsilon+h_\varepsilon''=\rho_h*\phi_\varepsilon\ge0$ is a smooth function. For $c>0$, $h_\varepsilon+c$ satisfies 
\[
\rho_{h_\varepsilon+c}
=
\bigl(\rho_h*\phi_\varepsilon+c\bigr)\,\dd\theta
\ge c\,\dd\theta,
\]
hence it belongs to $\cH$, and $\|h_\varepsilon+c-h\|_{C^1}\to0$ as $\varepsilon,c\to0$.
\end{proof}

\bigskip

A more subtle point is to show that, after an arbitrarily small perturbation of $h$, the zeros of $\nabla A_h$ in a given compact set are \emph{nondegenerate}, {\it i.e.}\ the Hessian of $A_h$ is invertible there. 
\medskip

\begin{lemma}\label{lem:transversality}
Let $h_0\in\cH\cap C^\infty$ with $h_0+h_0''>0$, let $\bar N\subset\cC_q^+(h_0)$ be compact, and let $\varepsilon>0$. There is $h\in\cH\cap C^\infty$ with $h+h''>0$ and $\|h-h_0\|_{C^2}<\varepsilon$ such that $\bar N\subset\cC_q^+(h)$ and every zero of $\nabla A_h$ in $\bar N$ is nondegenerate \textup{(}i.e., the Hessian of $A_h$ is invertible there\textup{)}. In particular $\nabla A_h$ has only finitely many zeros in $\bar N$.
\end{lemma}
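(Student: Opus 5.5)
This is a parametric transversality argument. I will use a finite-dimensional family of smooth perturbations of $h_0$ and apply Sard's theorem to the joint gradient map.

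\textbf{Step 1: the perturbation family.} Fix smooth $\pi$-periodic functions $\phi_1,\dots,\phi_k$ (trigonometric polynomials $\cos 2n\theta,\sin 2n\theta$ for $1\le n\le k/2$ suffice). For $c\in\R^k$ set
\[
h_c:=h_0+\sum_{l=1}^k c_l\phi_l .
\]
For $|c|$ small we have $\|h_c-h_0\|_{C^2}<\varepsilon$. Since $h_0+h_0''>0$ on the compact $\Tp$, also $h_c+h_c''>0$ and $h_c>0$, so $h_c\in\cH\cap C^\infty$. The quantities $s_i^\pm$ depend continuously on $(c,\boldsymbol\theta)$, and $\bar N$ is compact, so $\bar N\subset\cC_q^+(h_c)$ for small $c$. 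Consider the smooth map
\[
\Phi(c,\boldsymbol\theta):=\nabla A_{h_c}(\boldsymbol\theta),
\]
defined on $B_\delta(0)\times U$, where $U$ is an open neighbourhood of $\bar N$ with compact closure in $\cC_q^+(h_0)$.

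\textbf{Step 2: submersion.} I will show that $D\Phi$ is onto $\R^q$ at every zero of $\Phi$; in fact it suffices that $\partial_c\Phi$ alone is onto. Since $A_{h_c}$ is linear-quadratic in $h$, the derivative $\partial_{c_l}\nabla A_{h_c}(\boldsymbol\theta)$ is computed from the first variation of $L_h$ in the direction $\phi_l$. Lemma~\ref{lem:gradient} gives $\partial_{\theta_i}A_h=\tfrac12((s_i^+)^2-(s_i^-)^2)$, and at a critical configuration $s_i^+=s_i^-=s_i>0$. Hence
\[
\partial_{c}\,\partial_{\theta_i}A=s_i\bigl(\delta s_i^+-\delta s_i^-\bigr).
\]
From \eqref{eq:splus} and \eqref{eq:sminus},
\[
\delta s_i^+-\delta s_i^-=2\phi'(\theta_i)+(\text{terms in }\phi(\theta_{i-1}),\phi(\theta_i),\phi(\theta_{i+1})).
\]
The angles $\theta_0,\dots,\theta_{q-1}$ are distinct mod $\pi$ because $\boldsymbol\theta\in\cC_q$. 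So the values $(\phi(\theta_i),\phi'(\theta_i))_{i<q}$ can be prescribed independently by $\pi$-periodic trigonometric polynomials, as Hermite interpolation on the circle $\Tp$ shows. Taking $\phi(\theta_i)=0$ and $\phi'(\theta_i)=\xi_i/(2s_i)$ realizes any vector $\xi\in\R^q$. To make this uniform, note that compactness of $\bar N$ lets one choose a single finite $k$ so that $\partial_c\Phi$ has rank $q$ at every point of $B_\delta\times\bar U$. Rank $q$ at every point, zeros or not, is a stronger and simpler claim than rank $q$ at the zeros only, and the interpolation argument delivers it pointwise. Lower semicontinuity of rank gives an open cover, and a finite subcover then yields finitely many $\phi$'s.

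\textbf{Step 3: Sard.} Since $\Phi$ is a submersion, $\Phi^{-1}(0)$ is a smooth submanifold of dimension $k$. The parametric transversality theorem (Sard's theorem applied to the projection $\Phi^{-1}(0)\to B_\delta$) gives that for almost every $c$, $0$ is a regular value of $\boldsymbol\theta\mapsto\Phi(c,\boldsymbol\theta)$ on $U$. This means that $\operatorname{Hess}A_{h_c}$ is invertible at every zero in $\bar N$. Nondegenerate zeros are isolated, and $\bar N$ is compact, so there are finitely many of them. Choosing such a $c$ with $|c|$ small enough gives $h=h_c$.

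The main obstacle is Step 2. Two points need care there: the exact form of $\delta s_i^\pm$, and the uniformity of the surjectivity over the compact set $\bar N$, which is handled by a finite family of $\phi$'s. Interpolation on the $\pi$-circle works precisely because the $\theta_i$ are pairwise distinct mod $\pi$.
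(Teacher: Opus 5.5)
Your proposal is correct and follows essentially the same parametric-transversality argument as the paper. Perturbations that change $h'(\theta_i)$ while fixing the values $h(\theta_j)$ make $\partial_{c}\nabla A$ diagonal with entries $2s_i>0$ at critical configurations (away from zeros the entries are $s_i^{+}+s_i^{-}>0$, so your ``every point'' claim holds, although your displayed formula is valid only at zeros), and Sard's theorem applied to the projection of $\Phi^{-1}(0)$ onto the parameter ball gives nondegeneracy. The only difference is organizational: you use one global family (trigonometric polynomials of high enough degree, which solve Hermite interpolation at any $q$ angles distinct mod $\pi$), so the submersion holds on all of $B_\delta\times U$, whereas the paper adapts its perturbations to finitely many points of the zero set of $\nabla A_{h_0}$ and then needs a compactness step to keep the perturbed zeros in that neighbourhood — a step, together with the separate empty-zero-set case, that your version avoids.
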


\begin{proof}
Let $Z_0:=\{\boldsymbol\theta\in\bar N:\nabla A_{h_0}(\boldsymbol\theta)=0\}$, a compact set. 

If $Z_0=\emptyset$, we may take $h=h_0$: there are no zeros of $\nabla A_{h_0}$ in $\bar N$, so the nondegeneracy condition is vacuously satisfied, and all the other conclusions of the lemma hold. 

\smallskip
Henceforth assume $Z_0\ne\emptyset$.
For $c\in Z_0$ the angles $c_0,\dots,c_{q-1}$ are pairwise distinct modulo $\pi$, so there are smooth $\pi$-periodic functions $\phi_{c,0},\dots,\phi_{c,q-1}$ with
\[
\phi_{c,i}(c_j)=0,\qquad\phi_{c,i}'(c_j)=\delta_{ij}\qquad(0\le i,j<q).
\]
Consider, for $a\in\R^q$, the family $h_{c,a}:=h_0+\sum_i a_i\phi_{c,i}$ and $E_c(a,\boldsymbol\theta):=\nabla A_{h_{c,a}}(\boldsymbol\theta)$, a smooth map on a neighbourhood of $(0,c)$ in $\R^q\times\cC_q$. \\
By Lemma~\ref{lem:gradient}, $$\frac{\partial A_h}{\partial\theta_i}=\tfrac12((s_i^+)^2-(s_i^-)^2),$$ where $s_i^\pm$ depend on $h$ only through $h(\theta_{i-1}),h(\theta_i),h(\theta_{i+1})$ and $h'(\theta_i)$. 
 
\noindent At $(a,\boldsymbol\theta)=(0,c)$ the perturbation $\phi_{c,i}$ leaves all values $h(c_j)$ and all derivatives $h'(c_j)$, $j\ne i$, unchanged, so
\[
\frac{\partial}{\partial a_i}\Bigl(\frac{\partial A_{h_{c,a}}}{\partial\theta_j}\Bigr)\Big|_{(0,c)}=\delta_{ij}\,\tfrac12\bigl(2s_i^+\cdot1-2s_i^-\cdot(-1)\bigr)=\delta_{ij}\,(s_i^++s_i^-)=2\delta_{ij}\,s_i ,
\]
where $s_i=s_i^+=s_i^->0$ at the critical configuration $c$.
Thus $D_aE_c(0,c)$ is invertible. By continuity, there is
an open neighbourhood $W_c$ of $c$, contained in
$\cC_q^+(h_0)$, such that $D_aE_c(0,\boldsymbol\theta)$
is invertible for every $\boldsymbol\theta\in W_c$.
Choose an open neighbourhood $V_c$ of $c$ whose closure
is compact and satisfies
\[
\overline{V_c}\subset W_c.
\]

By compactness of the nonempty set $Z_0$, finitely many
of these smaller neighbourhoods,
$V_{c_1},\ldots,V_{c_K}$, cover $Z_0$.
For $\mathbf a=(a^1,\ldots,a^K)\in\R^{qK}$, define
\[
h_{\mathbf a}
:=
h_0+\sum_{k=1}^K\sum_{i=0}^{q-1}
a_i^k\,\phi_{c_k,i},
\qquad
E(\mathbf a,\boldsymbol\theta)
:=
\nabla A_{h_{\mathbf a}}(\boldsymbol\theta),
\]
and put
\[
V:=\bigcup_{k=1}^K V_{c_k}.
\]

At $\mathbf a=0$, the derivative with respect to the
$k$-th block of parameters satisfies
\[
D_{a^k}E(0,\boldsymbol\theta)
=
D_aE_{c_k}(0,\boldsymbol\theta).
\]
It is therefore invertible on $\overline{V_{c_k}}$.
Since this closure is compact,
\[
\gamma_k:=
\min_{\boldsymbol\theta\in\overline{V_{c_k}}}
\left|\det D_{a^k}E(0,\boldsymbol\theta)\right|>0.
\]
By continuity and compactness, and since there are only
finitely many indices $k$, there exists $\delta>0$ such
that
\[
\left|\det D_{a^k}E(\mathbf a,\boldsymbol\theta)\right|
\ge\frac{\gamma_k}{2}
\]
whenever $|\mathbf a|<\delta$ and
$\boldsymbol\theta\in\overline{V_{c_k}}$,
for every $1\le k\le K$.

Decreasing $\delta$ if necessary, we may also arrange that,
for every $|\mathbf a|<\delta$,
\[
\|h_{\mathbf a}-h_0\|_{C^2}<\varepsilon,
\qquad
h_{\mathbf a}>0,
\qquad
h_{\mathbf a}+h_{\mathbf a}''>0,
\qquad
\bar N\subset\cC_q^+(h_{\mathbf a}).
\]
These conditions follow from the corresponding strict
inequalities at $\mathbf a=0$ and compactness.
In particular, $h_{\mathbf a}\in\cH$.

For every $\boldsymbol\theta\in V$, at least one block
$D_{a^k}E(\mathbf a,\boldsymbol\theta)$ is invertible.
Consequently $D_{\mathbf a}E$ has rank $q$ on
$B(0,\delta)\times V$, so $E$ is a submersion there.

\smallskip
We claim that there is $\delta'\in(0,\delta]$ such that for $|\mathbf a|<\delta'$ all zeros of $E(\mathbf a,\cdot)$ in $\bar N$ lie in $V$. Otherwise there are $\mathbf a_n\to0$ and $\boldsymbol\theta_n\in\bar N\setminus V$ with $E(\mathbf a_n,\boldsymbol\theta_n)=0$; a limit point $\boldsymbol\theta\in\bar N\setminus V$ satisfies $\nabla A_{h_0}(\boldsymbol\theta)=0$ by continuity of $E$, {\it i.e.},  $\boldsymbol\theta\in Z_0\subset V$, yielding a contradiction.

\smallskip
Now $\mathcal M:=E^{-1}(0)\cap\bigl(B(0,\delta')\times V\bigr)$ is a smooth submanifold of dimension $qK$, and 
\begin{eqnarray*}
\pi:\mathcal M &\longrightarrow& B(0,\delta')\\
(\mathbf a,\boldsymbol\theta) &\longmapsto& \mathbf a
\end{eqnarray*} 
is smooth. By Sard's theorem  almost every $\mathbf a\in B(0,\delta')$ is a regular value of $\pi$. Fix such an $\mathbf a$ and let $(\mathbf a,\boldsymbol\theta)\in\mathcal M$. The tangent space 
$$T_{(\mathbf a,\boldsymbol\theta)}\mathcal M=\{(\alpha,\beta):D_{\mathbf a}E\,\alpha+D_{\boldsymbol\theta}E\,\beta=0\}$$ projects onto $\R^{qK}$, which means that for every $\alpha$ there is $\beta$ with $D_{\boldsymbol\theta}E\,\beta=-D_{\mathbf a}E\,\alpha$; since $D_{\mathbf a}E$ is onto $\R^q$, so is $D_{\boldsymbol\theta}E(\mathbf a,\boldsymbol\theta)$, which is therefore an invertible $q\times q$ matrix, coinciding with the Hessian of $A_{h_{\mathbf a}}$ at $\boldsymbol\theta$. 

Thus every zero of $\nabla A_{h_{\mathbf a}}$ in $\bar N$ (all of which lie in $V$) is nondegenerate. Nondegenerate zeros are isolated, so there are finitely many in the compact set $\bar N$. This completes the proof. 
\end{proof}

\medskip
We can now prove the main result of this section (Theorem \ref{thm:dense}).\\

\begin{proof}[Proof of Theorem \ref{thm:dense}]
We only need to prove density, since openness has been proven in Proposition~\ref{prop:open}. 

\smallskip
To prove density, let $h\in\cH$ and let $\mathcal N$ be a $C^1$-neighbourhood of $h$. By Lemma~\ref{lem:smooth-dense} we may pick $h_0\in\mathcal N\cap C^\infty$ with $h_0+h_0''>0$.

\medskip
Let us consider the set  $$Y:=\{(\theta,s)\in[0,\pi]\times[1/m,m]:\ \tilde F_{h_0}^{\,q}(\theta,s)=(\theta+\pi,s)\},$$ which is compact. 

\smallskip

If $Y=\emptyset$, then
\[
\tilde F_{h_0}^{\,q}(0,s)\ne(\pi,s)
\qquad\text{for every }s\in[1/m,m].
\]
Thus $h_0\in\cU_{q,m}$, with $\theta_*=0$, and since
$h_0\in\mathcal N$, the required density conclusion
already holds. 

\smallskip
We may therefore assume $Y\ne\emptyset$.

\medskip
To simplify the exposition, in the rest of the proof, we will call  
$(\theta,s)\in[0,\pi]\times[1/m,m]$  a  {\it half-returning state for $h$} if $\tilde F_{h}^{\,q}(\theta,s)=(\theta+\pi,s)$.

\medskip
For every $h'\in\cH$ and every half-returning state
$(\theta,s)$ for $h'$, write
\[
(\theta_i^{h'},s_i^{h'})
:=
\tilde F_{h'}^{\,i}(\theta,s),
\qquad 0\le i\le q,
\]
and define
\[
\kappa_{h'}(\theta,s)
:=
(\theta_0^{h'},\ldots,\theta_{q-1}^{h'}).
\]
By Proposition~\ref{prop:variational},
\[
\kappa_{h'}(\theta,s)\in\cC_q^+(h'),
\qquad
\nabla A_{h'}\bigl(\kappa_{h'}(\theta,s)\bigr)=0.
\]
By Proposition~\ref{prop:continuity}, the map
$(h',\theta,s)\mapsto\kappa_{h'}(\theta,s)$ is jointly
continuous on the set of triples for which $(\theta,s)$
is a half-returning state for $h'$.

In particular, every $(\theta,s)\in Y$ is a half-returning
state for $h_0$. Writing $\kappa:=\kappa_{h_0}$, we obtain
that $\kappa(Y)$ is a compact subset of the open set
$\cC_q^+(h_0)$. Choose an open set $N$ and a compact set
$\bar N$ such that
\[
\kappa(Y)\subset N\subset\bar N\subset\cC_q^+(h_0).
\]

\medskip

 We claim that there is a neighbourhood $\mathcal N_1\subset\mathcal N$ of $h_0$ such that  for all $h'\in\mathcal N_1$ every half-returning state $(\theta,s)\in[0,\pi]\times[1/m,m]$ of $h'$  has configuration $\kappa_{h'}(\theta,s)\in N$. Otherwise there would exist $h_n\to h_0$ and half-returning states $(\theta_n,s_n)$ for $h_n$ with $\kappa_{h_n}(\theta_n,s_n)\notin N$; passing to a subsequence $(\theta_n,s_n)\to(\theta,s)\in[0,\pi]\times[1/m,m]$, 
 Proposition~\ref{prop:continuity} would imply that
$(\theta,s)\in Y$ and
\[
\kappa_{h_n}(\theta_n,s_n)
\longrightarrow
\kappa_{h_0}(\theta,s)=\kappa(\theta,s).
\]
Since $N$ is open and
$\kappa_{h_n}(\theta_n,s_n)\notin N$ for every $n$,
the limit $\kappa(\theta,s)$ also lies outside $N$.
This contradicts $(\theta,s)\in Y$ and $\kappa(Y)\subset N$.

\medskip

Now, apply Lemma~\ref{lem:transversality} to $h_0$, $\bar N$ and an $\varepsilon$ so small that the resulting $h_1$ lies in $\mathcal N_1$ (recall $\|\cdot\|_{C^1}\le\|\cdot\|_{C^2}$). Every half-returning state $(\theta,s)\in[0,\pi]\times[1/m,m]$ of $h_1$ has, by what we discussed above and Proposition~\ref{prop:variational}, a configuration which is a zero of $\nabla A_{h_1}$ lying in $N\subset\bar N$. There are finitely many such zeros, and each determines its initial state $(\theta_0,s_0^+)$. Hence the set $\Theta_1$ of initial angles $\theta\in[0,\pi]$ of half-returning states of $h_1$ with $s\in[1/m,m]$ is finite. Since $\tilde F_{h_1}$ commutes with $\theta\mapsto\theta+\pi$, the half-returning initial angles in $\R$ are $\Theta_1+\pi\Z$. Any $\theta_*\in\R\setminus(\Theta_1+\pi\Z)$ satisfies \eqref{eq:U}, so $h_1\in\cU_{q,m}\cap\mathcal N$. This completes the proof.
\end{proof}

\bigskip

\subsection{The construction of the residual set $\cR$}\label{sec:baire}

In this section we want to construct the residual set $\cR\subset\cH$ of the Main Theorem.\\

The main result of this section is the following theorem.

\medskip
\begin{theorem}\label{thm:no-graphs}
There is a residual subset $\cR\subset\cH$, such that for every $h\in\cR$ the measure $\rho_h$ is purely singular and the associated outer billiard map does not admit any  invariant graph.
In particular $\cR$ meets every $C^1$-neighbourhood of  the support functions of disks or ellipses.
\end{theorem}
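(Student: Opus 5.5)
The plan is to define
\[
\cR:=\cS\cap\bigcap_{q,m\ge2}\cU_{q,m},
\]
where $\cS:=\{h\in\cH:\rho_h\text{ is purely singular}\}$, and to show that $\cS$ is residual in $\cH$. Once this is done, $\cR$ is residual as a countable intersection of residual sets. Theorem~\ref{thm:dense} provides the open dense sets $\cU_{q,m}$, and $\cH$ is a Baire space. For the Baire property, $\cH$ should be a $G_\delta$ (in fact open) subset of the closed cone $\{h\in C^1(\Tp): h>0,\ \rho_h\ge0\}$ of the Banach space $C^1(\Tp)$. The full-support condition is delicate, though. The safer route is to observe that $\cH$ is itself a $G_\delta$ in that completely metrizable cone: full support means $\rho_h(I_k)>0$ for each rational open arc $I_k$, and $\{h:\rho_h(I_k)>0\}$ is open in the $C^1$ topology. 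Openness holds because $\rho_h(I)\ge \int\phi\,d\rho_h=\int (h\phi+h\phi'')$ for a bump $\phi\le1$ supported in $I$, and the right-hand side depends continuously on $h$ in $C^0$.

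For residuality of $\cS$, I would write it as $\bigcap_{k}\cS_k$ with
\[
\cS_k:=\{h\in\cH:\ \exists\text{ open }U\subset\Tp,\ |U|<1/k,\ \rho_h(\Tp\setminus U)<1/k\}.
\]
For each $k$ I would check that $\cS_k$ is open and dense. Openness is the main obstacle, since $h\mapsto\rho_h$ is only weak-$*$ continuous under $C^1$ convergence. Indeed, $h_n'\to h'$ uniformly gives $\int\phi\,d\rho_{h_n}\to\int\phi\,d\rho_h$ for $C^1$ test functions $\phi$ after one integration by parts, and total masses converge. I would therefore replace the condition by a test-function version that is strict and uses continuous weights: require a continuous $0\le\phi\le1$ with $\{\phi>0\}\subset U$, $|U|<1/k$, and $\int(1-\phi)\,d\rho_h<1/k$. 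This condition is open by weak convergence and still implies $\rho_h(\Tp\setminus U)<1/k$. If $h\in\bigcap_k\cS_k$, then taking $N:=\bigcap_j\bigcup_{k\ge j}U_k$ gives $|N|=0$ and $\rho_h(\Tp\setminus N)=0$, so $\rho_h$ is purely singular.

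For density of $\cS_k$ I would use Lemma~\ref{lem:smooth-dense} to approximate by a smooth $h_0$ with $\rho_{h_0}=r\,d\theta$, $r>0$. I would then redistribute the mass of $r\,d\theta$ on each of many short arcs of length $\epsilon$ onto a small subinterval, keeping a tiny absolutely continuous floor $c\epsilon\,d\theta$ for full support. The redistribution preserves the mass and the first moments $\int\cos2u$ and $\int\sin2u$ on each arc up to $O(\epsilon)$ per unit mass, and the resulting error in $h'$ is at most $\int|\text{primitive difference}|$, which is $O(\epsilon)$. The new $h$ is defined by \eqref{eq:h-from-rho}. Then $\|h-h_0\|_{C^1}\to0$ because the cumulative distribution functions differ by $O(\epsilon)$ uniformly and the kernel $k'$ has bounded variation. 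The new measure is atomless with full support, and all but $O(c)$ of its mass sits on a set of measure $<1/k$.

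The conclusions of the theorem then follow. Let $h\in\cR$ and suppose $\Gamma$ is an invariant graph. Theorem~\ref{thm:rigidity} gives a $q\ge2$ with $\tilde F_h^q(\theta,s(\theta))=(\theta+\pi,s(\theta))$ for all $\theta$. By compactness, $s$ takes values in $[1/m,m]$ for some $m$. This contradicts $h\in\cU_{q,m}$ at the angle $\theta_*$, since $s(\theta_*)\in[1/m,m]$. Finally, $\cR$ is dense because it is residual in the Baire space $\cH$, so it meets every $C^1$-neighbourhood of the support functions of disks and ellipses.
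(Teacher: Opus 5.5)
Your proof is correct and follows the paper's route. It uses the same $\cR=\cS\cap\bigcap_{q,m}\cU_{q,m}$, the same description of $\cH$ as a $G_\delta$ in $C^1(\Tp)$ (hence Baire), the same decomposition $\cS=\bigcap_k\cS_k$, and the same contradiction between the Rigidity Theorem and membership in $\cU_{q,m}$ once $s$ is trapped in $[1/m,m]$.

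The remaining differences are harmless variants:
\begin{itemize}
\item For openness of $\cS_k$ you use continuous test functions, where the paper uses upper semicontinuity of $h\mapsto\rho_h(C)$ for closed $C$.
\item For density you prove each $\cS_k$ dense with absolutely continuous approximants and let Baire supply density of $\cS$. The paper instead builds purely singular approximants directly from Cantor-type measures on cells.
\item Your parenthetical claim that $\cH$ is open in the cone is false, since full support is not an open condition, but you correctly drop it in favour of the $G_\delta$ argument.
\end{itemize}
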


\medskip

\begin{remark}\label{re:proofmthmiii}
Observe that  Theorem \ref{thm:no-graphs} implies parts (i) and (ii) of the Main Theorem. 
\end{remark}

\medskip

\begin{remark}
As we shall see in  Appendix \ref{app:Radon}, there exists $h\in \cH$ with $\rho_h$ purely singular such that the corresponding outer billiard map admits an invariant graph. Therefore, $\cR$ will be a proper subset of $\cS$, where $\cS:=\{h\in\cH:\ \rho_h\perp\dd\theta\}$.
\end{remark}

\medskip

Before proving this theorem, let us prove some auxiliary results that will allow us to find a good candidate for the set $\cR$ (see Definition \ref{def:R}).

\medskip

\begin{lemma}\label{lem:Baire}
$\cH$ is a $G_\delta$ subset of the Banach space $C^1(\Tp)$. Hence $\cH$, with the relative $C^1$ topology, is completely metrizable and in particular a Baire space. Moreover, on $$\cP:=\{h\in C^1(\Tp):\rho_h\ge0\}$$ the map $h\mapsto\rho_h$ is continuous into the space of finite positive measures with the weak topology.
\end{lemma}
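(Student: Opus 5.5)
The plan is to write $\cH$ as the intersection of the closed cone $\cP$ with countably many open conditions, and to derive the weak continuity of $h\mapsto\rho_h$ by testing $\rho_h$ against smooth functions and integrating by parts.

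\emph{Closedness of $\cP$.} For $\phi\in C^\infty(\Tp)$ we have
\[
\langle\rho_h,\phi\rangle=\int_{\Tp}h\phi\dd\theta-\int_{\Tp}h'\phi'\dd\theta .
\]
This is continuous in $h$ for the $C^1$ norm, and even $|\langle\rho_h,\phi\rangle|\le\|h\|_{C^1}\|\phi\|_{C^1}\pi$. The condition $\rho_h\ge0$ is equivalent to $\langle\rho_h,\phi\rangle\ge0$ for all $\phi\ge0$ smooth, so $\cP$ is an intersection of closed half-spaces and is therefore closed in $C^1(\Tp)$. Closed sets are $G_\delta$ in a metric space.

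\emph{Positivity of $h$.} The set $\{h>0\}$ is open in $C^1$, since it is already open in sup norm on the compact set $\Tp$.

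\emph{Full support.} Fix a countable basis of open arcs $I_k$, for instance arcs with rational endpoints. For each $k$ pick a smooth bump $\phi_k\ge0$ supported in $I_k$ and positive on a subarc. Then $\supp\rho_h=\Tp$ if and only if $\rho_h(I_k)>0$ for every $k$. For $\rho_h\ge0$, the condition $\rho_h(I_k)>0$ is equivalent to $\langle\rho_h,\psi\rangle>0$ for some smooth $\psi\ge0$ supported in $I_k$. Hence
\[
U_k:=\{h:\exists\,\psi\ge0\text{ smooth},\ \supp\psi\subset I_k,\ \langle\rho_h,\psi\rangle>0\}
\]
is a union of open sets, hence open, and on $\cP$ it coincides with $\{\rho_h(I_k)>0\}$.

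\emph{Atomlessness.} This condition is automatic: $h\in C^1$ makes $h'$ continuous, so $Dh'$ has no atoms, as noted in Remark~\ref{rem:Hintro}.

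Putting these together, $\cH=\cP\cap\{h>0\}\cap\bigcap_k U_k$, which is a countable intersection of $G_\delta$ sets and hence $G_\delta$. By Alexandrov's theorem, a $G_\delta$ subset of a complete metric space is completely metrizable, and so it is a Baire space.

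\emph{Weak continuity.} Let $h_n\to h$ in $C^1$ within $\cP$. The total masses $\rho_{h_n}(\Tp)=\int h_n\dd\theta$ converge and are bounded. By the integration-by-parts identity, $\langle\rho_{h_n},\phi\rangle\to\langle\rho_h,\phi\rangle$ for every smooth $\phi$. Smooth functions are dense in $C(\Tp)$, and the masses are uniformly bounded, so a $3\varepsilon$ argument gives convergence against every continuous $\phi$, which is weak convergence.

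The only mildly delicate point is the support condition. Openness has to be phrased through test functions, as above, rather than through $\rho_h(I_k)$ directly, because $\rho\mapsto\rho(I)$ is only lower semicontinuous on open sets. Lower semicontinuity is exactly what openness requires, so either route works. I expect no real obstacle.
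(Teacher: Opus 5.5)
Your proof is correct and follows the paper's argument: $\cP$ is closed via testing against smooth $\phi\ge0$, $\{h>0\}$ is open, full support reduces to countably many conditions $\rho_h(I_k)>0$ over a base of arcs, Alexandrov's theorem gives complete metrizability, and weak continuity follows from convergence against smooth functions together with the uniformly bounded masses $\int h_n\dd\theta$. The only difference is that you get openness of the support conditions directly in $C^1(\Tp)$ from the continuous linear functionals $h\mapsto\langle\rho_h,\psi\rangle$, whereas the paper obtains relative openness in $\cP$ from the weak continuity plus the portmanteau lower semicontinuity of $\rho\mapsto\rho(U)$; this is the same fact written out by hand, and it lets you avoid the citation.
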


\medskip
\begin{proof}
First of all, the cone $\cP$ is closed; in fact,  if $h_n\to h$ in $C^1$ and $\rho_{h_n}\ge0$, then for every smooth $\phi\ge0$,  we have
$$\langle\rho_h,\phi\rangle=\int(h\phi-h'\phi')\dd\theta=\lim\int(h_n\phi-h_n'\phi')\dd\theta\ge0,$$ so $\rho_h$ is a positive distribution, hence a positive measure.\\
 On $\cP$, $\rho_h$ has total mass $\int_{\Tp}h\dd\theta$ (because $\int Dh'=0$), and $h_n\to h$ in $C^1$ implies 
 $$\langle\rho_{h_n},\phi\rangle\to\langle\rho_h,\phi\rangle$$ for smooth $\phi$. Then, approximating a continuous $\phi$ uniformly by smooth $\phi_k$ and using the uniform bound on the masses proves weak convergence, and therefore continuity.\\
  For an open set $U\subset\Tp$ the map $h\mapsto\rho_h(U)$ is then lower semicontinuous on $\cP$, and for a closed set $C$ the map $h\mapsto\rho_h(C)$ is upper semicontinuous (see, 
  for example,  \cite[Theorem~8.2.3]{Bog07}). Hence, if $\mathfrak U$ is a countable base of open arcs of $\Tp$, then
\[
\cH=\cP\ \cap\ \{h:\min h>0\}\ \cap\ \bigcap_{U\in\mathfrak U}\{h\in\cP:\rho_h(U)>0\}.
\]
Therefore, $\cH$ is the intersection of a closed set, an open set and countably many relatively open subsets of  $\cP$, hence it is a $G_\delta$ in $C^1(\Tp)$. A $G_\delta$ subset of a complete metric space is completely metrizable (Alexandrov's theorem), hence Baire.
\end{proof}

\medskip

Let us now prove that the functions $h$ with purely singular radius of curvature measure are generic in $\cH$.\\

\begin{lemma}\label{lem:S}
The set $\cS:=\{h\in\cH:\ \rho_h\perp\dd\theta\}$ corresponding to tables with purely singular radius of curvature measure is a dense $G_\delta$ subset of $\cH$.
\end{lemma}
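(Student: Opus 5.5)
We split the claim into a $G_\delta$ part and a density part, working throughout in the Baire space $\cH$ of Lemma~\ref{lem:Baire}.

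\emph{$G_\delta$ property.} We write $\cS$ as a countable intersection of relatively open sets. We use the standard characterization: a finite positive measure $\rho$ on $\Tp$ is singular if and only if for every $k\ge1$ there is a finite union $U$ of open arcs (with rational endpoints) such that $\Leb(U)<1/k$ and $\rho(\Tp\setminus U)<1/k$. Since $\Tp\setminus U$ is closed, the map $h\mapsto\rho_h(\Tp\setminus U)$ is upper semicontinuous on $\cP$ by the weak continuity in Lemma~\ref{lem:Baire}. Hence for each fixed $U$ the set $\{h:\rho_h(\Tp\setminus U)<1/k\}$ is relatively open. Set
\[
\cS_k:=\bigcup_{U:\ \Leb(U)<1/k}\{h\in\cH:\ \rho_h(\Tp\setminus U)<1/k\},
\]
where $U$ ranges over the countably many finite unions of rational arcs. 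Each $\cS_k$ is open, and $\cS=\bigcap_k\cS_k$. For the inclusion $\cS\subset\cS_k$ we approximate a null carrier set from outside by an open set of small measure, then pass to a finite subunion using inner regularity on a compact set. For the reverse inclusion, choose $U_k$ with $\Leb(U_k)<2^{-k}$ and $\rho(U_k^c)<2^{-k}$. Then $\limsup_k U_k$ is Lebesgue-null, and $\rho$ is carried by it by Borel--Cantelli.

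\emph{Density.} It suffices to show that each open set $\cS_k$ is dense; Baire's theorem then gives density of $\cS$. Because the $\cS_k$ are open, it is in fact enough to produce a single dense subset of $\cS$. Given $h\in\cH$ and $\varepsilon>0$, we first apply Lemma~\ref{lem:smooth-dense} to get a smooth $h_0$ with $\rho_{h_0}=r\dd\theta$ and $r>0$ continuous. Next we approximate $\rho_{h_0}$ by a purely singular, atomless measure of full support. Choose a fine partition of $\Tp$ into arcs $I_j$ of length $<\eta$. On each $I_j$, replace $r\dd\theta|_{I_j}$ by $\rho(I_j)\,\mu_j$, where $\mu_j$ is a singular continuous probability measure with full support in $I_j$; a Riesz product, or the law of a biased Bernoulli digit expansion transported to $I_j$, will do. The sum $\nu$ is atomless, singular, of full support, and positive, with $\nu(I_j)=\rho_{h_0}(I_j)$. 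We then set $h_\nu:=k*\nu$ as in Proposition~\ref{prop:reconstruction}, so that $h_\nu\in\cS$.

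\emph{Main obstacle: $C^1$-closeness.} We need $h_\nu-h_0=k*(\nu-\rho_{h_0})$ and its derivative $k'*(\nu-\rho_{h_0})$ to be uniformly small. The kernel $k$ is Lipschitz, so $\|k*(\nu-\rho)\|_\infty\le C\eta\,\rho(\Tp)$ because the masses agree on each arc. The kernel $k'$ has jump discontinuities at $\pi\Z$, so the derivative needs more care. For an arc $I_j$ not containing $\theta$ (mod $\pi$), $k'(\theta-\cdot)$ is Lipschitz there and contributes $O(\eta\,\rho(I_j))$. The at most two arcs containing the discontinuity contribute at most $2\sup|k'|\,(\nu+\rho)(I_j)$. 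This is small uniformly in $\theta$, since $r$ is bounded and hence $\rho(I_j)\le\|r\|_\infty\eta$, and $\nu(I_j)=\rho(I_j)$. Thus $\|h_\nu-h_0\|_{C^1}\le C'\eta$, and since $\cH$ carries the relative $C^1$ topology, $h_\nu$ lies in any prescribed neighbourhood of $h$ once $\eta$ is small. The delicate point is exactly this uniform control of the mass near the jump of $k'$, which is why we first pass to a smooth $h_0$ with bounded density.
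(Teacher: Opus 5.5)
Your proof is correct and follows the paper's route. You use the same sets $\cS_n$, open by upper semicontinuity of $h\mapsto\rho_h(\Tp\setminus U)$, the same Borel--Cantelli argument for $\cS=\bigcap_n\cS_n$, and the same cell-wise mass-preserving replacement of a smooth positive density by a singular full-support measure; the differences are cosmetic. The paper bounds $\|h_1-h_0\|_{C^1}$ by integrating by parts against the cumulative discrepancy $R_\theta(u)=\rho([\theta,u))$ instead of your cell-by-cell Lipschitz estimate with the jump cells of $k'$ treated separately, and your restriction to rational arcs and your appeal to Baire are unnecessary, since you show density of $\cS$ directly.
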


\begin{proof}
For $n\ge1$ let
\[
\cS_n:=\bigl\{h\in\cH:\ \exists\,U\subset\Tp\text{ open with }|U|<1/n\text{ and }\rho_h(\Tp\setminus U)<1/n\bigr\}.
\]

Let us first show that {$\cS_n$ is open.} In fact, for fixed $U$, the set $$\{h\in\cH:\rho_h(\Tp\setminus U)<1/n\}$$ is open by the upper semicontinuity in Lemma~\ref{lem:Baire}. It suffices to observe that $\cS_n$ is the union of these sets over $U$.

\medskip
In order to prove that $\cS$ is a $G_\delta$ set, we  will prove that {$\cS=\bigcap_n\cS_n$.}

 If $\rho_h$ is purely singular, it is supported on  a Borel null set $E$; choosing an open $U\supset E$ with $|U|<1/n$ gives $\rho_h(\Tp\setminus U)\le\rho_h(\Tp\setminus E)=0$, so $h\in\cS_n$ for all $n$.

Conversely, let $h\in\bigcap_n\cS_n$. 
Choose $U_j$ for $n=2^j$, so that 
$$\sum_j|U_j|<\infty , \qquad  {\rm and} \qquad \sum_j\rho_h(\Tp\setminus U_j)<\infty.$$ The set $E:=\limsup_jU_j=\bigcap_k\bigcup_{j\ge k}U_j$ has $|E|\le\sum_{j\ge k}|U_j|$ for every $k$, hence $|E|=0$, while 
$$\rho_h(\Tp\setminus E)=\rho_h\bigl(\bigcup_k\bigcap_{j\ge k}(\Tp\setminus U_j)\bigr)\le\sum_k\inf_{j\ge k}\rho_h(\Tp\setminus U_j)=0.$$ So $\rho_h$ is supported on a null set.

\medskip
Finally, let us prove that {$\cS$ is dense.} Let $h\in\cH$ and $\varepsilon>0$. By Lemma~\ref{lem:smooth-dense} there is a smooth $h_0\in\cH$ with $\|h_0-h\|_{C^1}<\varepsilon/2$ and $\rho_0:=\rho_{h_0}=g_0\dd\theta$ with $g_0$ smooth and positive. Fix an integer $N$ and divide $\Tp$ into $N$ consecutive arcs $I_1,\dots,I_N$ of length $\pi/N$. Let $\mu$ be a probability measure on $[0,1]$ which is atomless, purely singular and of full support. 
Let $$\rho_1:=\sum_{k=1}^N\rho_0(I_k)\,(\alpha_k)_\#\mu,$$ where $\alpha_k:[0,1]\to\overline{I_k}$ is the affine parametrization and $\#$ denotes the push-forward. 
Then $\rho_1$ is a finite positive atomless measure of full support on $\Tp$, purely singular, with $\rho_1(I_k)=\rho_0(I_k)$ for all $k$, so that for every arc $J\subset\Tp$
\begin{equation}\label{eq:cell}
|\rho_1(J)-\rho_0(J)|\le2\max_k\rho_0(I_k)\le\frac{2\pi\|g_0\|_\infty}{N}.
\end{equation}
Let $h_1$ be defined from $\rho_1$ by \eqref{eq:h-from-rho}; by Proposition~\ref{prop:reconstruction}, $h_1\in\cH$ and $\rho_{h_1}=\rho_1$ is purely singular, {\it i.e.},\ $h_1\in\cS$.

We need to estimate $h_1-h_0$ in the $C^1$ norm. 
For fixed $\theta$ and either measure $\rho\in\{\rho_0,\rho_1\}$, integrate over the arc $[\theta,\theta+\pi)$ (see Proposition \ref{prop:reconstruction}): 
\[
h(\theta)=\int_{[\theta,\theta+\pi)}\tfrac12\sin(u-\theta)\dd\rho(u)=-\int_\theta^{\theta+\pi}R_\theta(u)\,\tfrac12\cos(u-\theta)\dd u,
\]
where we integrated by parts and put $R_\theta(u):=\rho([\theta,u))$.
Similarly, differentiating under the integral, 
and integrating again by parts,
\[
h'(\theta)=-\int_{[\theta,\theta+\pi)}\tfrac12\cos(u-\theta)\dd\rho(u)=\tfrac12\rho(\Tp)-\int_\theta^{\theta+\pi}R_\theta(u)\,\tfrac12\sin(u-\theta)\dd u .
\]
Since $\rho_1(\Tp)=\rho_0(\Tp)$ and using \eqref{eq:cell}:
$$|R^{(1)}_\theta(u)-R^{(0)}_\theta(u)|\le \frac{2\pi\|g_0\|_\infty}{N} \qquad  \forall\, \theta,u. $$ 
Therefore  
$$\max\{ \|h_1-h_0\|_{\infty},\, \|h'_1-h'_0\|_{\infty} \}\le\pi^2\|g_0\|_\infty/N<\varepsilon/4 \qquad \mbox{for $N$ large}.$$ 
Thus $\|h_1-h\|_{C^1}<\varepsilon$ with $h_1\in\cS$,
which completes the proof.
\end{proof}

\bigskip

We can now define the residual set $\cR$ and prove Theorem \ref{thm:no-graphs}.
 
\begin{definition}\label{def:R}
\[
\cR:=\cS\ \cap\ \bigcap_{q\ge2,\ m\ge2}\cU_{q,m}.
\]
\end{definition}

\medskip

\begin{proof}[Proof of Theorem \ref{thm:no-graphs}]
First of all observe that $\cR$ is a residual subset of $\cH$.  In fact,  by Lemmata~\ref{lem:Baire}, \ref{lem:S} and Theorem~\ref{thm:dense}, $\cR$ is a countable intersection of open dense subsets of the Baire space $\cH$, hence a dense $G_\delta$. 
In particular,  support functions of disks or  ellipses (with centre at the origin) belong to $\cH$, hence $\cR$ meets every $C^1$ neighbourhood of these support functions.

\smallskip
Let $h\in\cR$ and suppose $\Gamma$ is an invariant graph, with height function $s$. Since $\rho_h$ is purely singular, Theorem~\ref{thm:rigidity} provides an integer $q\ge2$ such that
\[
\tilde F^{\,q}\bigl(\theta,s(\theta)\bigr)=\bigl(\theta+\pi,s(\theta)\bigr)\qquad\text{for all }\theta\in\R.
\]
Since $s$ is continuous and positive on the compact circle $\Tt$, there is an integer $m\ge2$ with $s(\theta)\in[1/m,m]$ for every $\theta$. Then, no $\theta_*\in\R$ satisfies \eqref{eq:U} for this $q,m$,  hence\ $h\notin\cU_{q,m}$, contradicting the fact that $h\in\cR$.
\end{proof}

\medskip

\section{From the absence of invariant graphs to unbounded orbits: proof of the Main Theorem}\label{sec:topology}

In this section we complete the proof of the Main Theorem. In Section~\ref{sec:baire} we produced a residual set $\cR\subset\cH$ of tables with no invariant graph (Theorem~\ref{thm:no-graphs}). 
It is known  that for a twist homeomorphism of an annulus, invariant essential curves are the only obstruction to the existence of orbits crossing from one boundary circle to the other (see Mather's variational approach~\cite{Mat91}). 

The outer billiard map $T$ is, however, only a homeomorphism at $C^1$ regularity, and the exterior of $K$ has infinite area, so the classical smooth, area-preserving formulation of Mather's theory does not apply directly. We use instead Le Calvez's recent topological formulation~\cite{LC23}, which replaces smoothness by a purely topological monotonicity (twist) condition and replaces area-preservation by a non-wandering hypothesis.

\begin{remark}
According to our definition of invariant graphs (see Definition \ref{def:graph}), we are considering graphs that do not touch the boundary $\Tt\times \{0\}$ ({\it i.e.}, the corresponding curve in the plane does not touch $\partial K$). To apply Le Calvez's theory, we should take care of them. Moreover, since we will compactify the exterior of $K$ (see Section \ref{sec:compactify}), we should also deal with the possible existence of invariant graphs that touch the boundary at infinity. See Section \ref{subsec:boundary-graphs}.
\end{remark}

\medskip

\subsection{Compactification}\label{sec:compactify}
First of all, we need to compactify the exterior of $K$ to a closed annulus $\A$, so that $T$ extends to a homeomorphism $\hat T$ of $\A$.

\medskip

\begin{definition}
Let $\A:=\Tt\times[0,1]$ with boundary circles $C_0:=\Tt\times\{0\}$ and $C_1:=\Tt\times\{1\}$, and let $\cV$ be the \emph{vertical foliation} by the segments $\{\theta\}\times[0,1]$. The map
\[
\chi:\ \Tt\times(0,\infty)\to\Int\A,\qquad \chi(\theta,s):=\Bigl(\theta,\frac{s}{1+s}\Bigr),
\]
is a homeomorphism and $\Psi\circ\chi^{-1}:\Int\A\to\R^2\setminus K$ identifies the interior of the annulus with the exterior of $K$. The circle $C_0$ corresponds to $\partial K$ and $C_1$ to the boundary circle at infinity. 
\end{definition}

\medskip
Let us introduce the extension of $T$ to $\A$.

\medskip
\begin{definition}
Let $\hat T:\A\to\A$ be defined as
\begin{equation}\label{eq:That}
\hat T:=\chi\circ\Psi^{-1}\circ T\circ\Psi\circ\chi^{-1}\ \text{ on }\Int\A,
\end{equation}
with $\hat T(\theta,0):=(\theta,0)$, and $\hat T(\theta,1):=(\theta-\pi,1)$.
\end{definition}

\medskip

In terms of Proposition~\ref{prop:exact}, for $v=s/(1+s)\in(0,1)$ we have $\hat T(\theta,v)=(\theta+\delta,v^+)$ with $\delta=d(\theta,s)-\pi\in(-\pi,0)$ and $v^+=s^+/(1+s^+)$. The following asymptotics at the boundary hold.

\medskip

\begin{lemma}\label{lem:asymptotics}
Uniformly in $\theta$:
\begin{enumerate}[label=\textup{(\alph*)}]
\item as $s\to0$: $d(\theta,s)\to\pi$ and $s^+(\theta,s)\to0$;
\item as $s\to\infty$: $d(\theta,s)\to0$ and $s^+(\theta,s)\to\infty$.
\end{enumerate}
\end{lemma}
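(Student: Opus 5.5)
The plan is to read off the asymptotics directly from the exact relations \eqref{eq:exact1}--\eqref{eq:exact2} of Proposition~\ref{prop:exact} and from the geometric identity in Proposition~\ref{prop:exact}(i). The uniform constants will come from compactness of $\Tt$ and the bounds $0<\min h\le h\le \max h$, $|h'|\le M$, where $M=\max_{\partial K}|y|$.

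For (b), set $d=d(\theta,s)\in(0,\pi)$. By \eqref{eq:exact1},
\[
s\sin d=h(\theta+d)+h(\theta)\cos d+h'(\theta)\sin d .
\]
The right-hand side is bounded by $2\max h+M$, so $\sin d\le C/s\to0$. Hence $d$ tends either to $0$ or to $\pi$. To exclude $\pi$, I use the second part of Proposition~\ref{prop:exact}(ii) in contrapositive form. If $\pi-d\ge\varepsilon$, then $s\sin d\ge c_\varepsilon$; I need the reverse information. As $d\to\pi$, the right-hand side tends to $h(\theta+\pi)-h(\theta)=0$ by $\pi$-periodicity, which is consistent and so does not decide the matter. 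Instead I argue geometrically. By Proposition~\ref{prop:exact}(i),
\[
(s-s^+)^2+4ss^+\sin^2(d/2)\le4M^2 .
\]
This forces $|s-s^+|\le2M$, so $s^+\ge s-2M\to\infty$, which is the second claim of (b). It then gives $4ss^+\sin^2(d/2)\le4M^2$, and with $s^+\ge s/2$ for large $s$ this yields $\sin^2(d/2)\le 2M^2/s^2\to0$. Therefore $d\to0$ uniformly in $\theta$. So (b) follows entirely from (i).

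For (a), again by (i), $|s-s^+|\le2M$ alone is not enough, so I work through \eqref{eq:exact1} and (ii) instead. As $s\to0$, the left-hand side $s\sin d\le s\to0$. By (ii), for each $\varepsilon$ we have $s\sin d\ge c_\varepsilon$ whenever $\pi-d\ge\varepsilon$. Hence, once $s<c_\varepsilon$, necessarily $\pi-d<\varepsilon$, which proves $d\to\pi$ uniformly.

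For $s^+\to0$, I use geometry. The point $F(x)=-p(\theta)+s\bt(\theta)$ lies within distance $s$ of $\partial K$, since $|F(x)-p(\theta+\pi)|=s$. Its tangent length $s^+$ is the distance to its other contact point $p(\theta^+)$. I will argue that $s^+\to0$ by continuity of $s_K$ up to the boundary in the sense that exterior points $y$ with $\dist(y,K)\to0$ have both tangent lengths $\to0$.

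This boundary behaviour is the main obstacle. It needs strict convexity and $C^1$ regularity, through a compactness argument. Suppose $y_n\to y_\infty\in\partial K$ with $s_K(y_n)\ge c>0$. Then the contact points $p(\theta_n)$ stay at distance $\ge c$ from $y_n$. In the limit, $y_\infty$ lies on a supporting line touching $K$ at a different point. That line, passing through the boundary point $y_\infty$, must then be a supporting line at $y_\infty$. Since the boundary is $C^1$, this line is unique, and by strict convexity it meets $K$ only at $y_\infty$, giving a contradiction. Applied to $y=F(x)$, which converges to $\partial K$ uniformly as $s\to0$, this yields $s^+\to0$ uniformly.

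Alternatively, I could plug $d\to\pi$ into \eqref{eq:exact2} and use continuity of $h'$, which gives $s^+\to h(\theta)\cdot0+h'(\theta)-h'(\theta+\pi)=0$ since $h'$ is $\pi$-periodic. This is the cleaner route and I would try it first, relying on uniform continuity of $h,h'$.
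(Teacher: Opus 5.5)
Your proof is correct. Part (b) and the claim $d\to\pi$ in (a) follow the paper's argument. The only cosmetic difference is that you read off $s^+\ge s-2M$ from Proposition~\ref{prop:exact}(i) itself, whereas the paper takes $s^+\ge s-4M$ from Proposition~\ref{prop:T-basic}(d) before using $\sin^2(d/2)\le M^2/(ss^+)$.

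For $s^+\to0$, your first route is also the paper's: tangent lengths vanish uniformly near $K$ by a compactness argument, and this is applied to a point within distance $s$ of $K$. In that argument the appeal to $C^1$ uniqueness of the supporting line at $y_\infty$ is unnecessary. The limiting line with normal $\bn(\varphi)$ already contains the two distinct points $p(\varphi)$ and $y_\infty$ of $K$, which strict convexity forbids.

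Your second route, the one you would try first, is genuinely different and shorter. Once $d\to\pi$ uniformly, \eqref{eq:exact2} together with $h'(\theta+\pi)=h'(\theta)$ gives
\[
0<s^+\le s+\|h\|_\infty\sin d+\|h'\|_\infty(1+\cos d)+\bigl|h'(\theta+d)-h'(\theta+\pi)\bigr| .
\]
The last term is bounded by the modulus of continuity of $h'$ evaluated at $\pi-d$, so $s^+\to0$ uniformly in $\theta$. This route avoids the subsequence-and-contradiction argument and gives a bound directly in terms of $s$ and $\pi-d$. However, it relies on the previously established convergence $d\to\pi$ and on the exact formula. The paper's geometric argument is independent of $d$ and proves the more general fact that $s_K(y)\to0$ uniformly as $\dist(y,K)\to0$ over all exterior points $y$.
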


\begin{proof}

\noindent (a) \emph{The claim $d\to\pi$.} By Proposition~\ref{prop:exact} (ii), if $\pi-d\ge\varepsilon$ then $s\ge s\sin d\ge c_\varepsilon$; hence $s<c_\varepsilon$ forces $\pi-d<\varepsilon$. Since $c_\varepsilon$ depends only on $\varepsilon$, this gives $d(\theta,s)\to\pi$ as $s\to0$, uniformly in $\theta$.

\smallskip
\emph{The claim $s^+\to0$.} Let us first prove that
{tangent lengths tend to $0$ uniformly as the point approaches $K$}: namely, that for every $\varepsilon>0$ there is $\delta>0$ such that $\dist(y,K)<\delta$ and $y\notin K$ imply $s_K(y)<\varepsilon$.\\
We proceed by contradiction; assume that for some $\varepsilon>0$, there are points $y_n\notin K$ with $\dist(y_n,K)\to0$ and $s_K(y_n)\ge\varepsilon$ for all $n$. Since $K$ is compact and $\dist(y_n,K)\to0$, the sequence $(y_n)_n$ is bounded; passing to a subsequence, if necessary, we can assume that $y_n\to y$ for some $y$, and deduce $\dist(y,K)=0$, which  gives $y\in K$ (as $K$ is closed); actually, $y\in\partial K$, because $K$ has a neighbourhood of each interior point contained in $K$, which the exterior points $y_n$ can never enter. Passing to a further subsequence, we can assume that $\theta_K(y_n)\to\varphi\in\Tt$. The tangent lengths $s_K(y_n)\le|y_n|+M$ are bounded, where $M=\max_{\partial K}|y|$ (Proposition~\ref{prop:T-basic} (d)), so along a yet further subsequence $s_K(y_n)\to\sigma\ge\varepsilon>0$, and
\[
y=\lim_n\Psi(\theta_K(y_n),s_K(y_n))=p(\varphi)+\sigma\,\bt(\varphi).
\]
Thus $y\ne p(\varphi)$ lies on the supporting line at $p(\varphi)$ and in $K$, contradicting strict convexity.
This proves that the tangent lengths tend to $0$ uniformly as the point approaches $K$.

\smallskip
Now let $x=\Psi(\theta,s)$. Since $p(\theta)\in K$,
\[
\dist\bigl(T(x),K\bigr)\ \le\ \bigl|T(x)-p(\theta)\bigr|\ =\ \bigl|{-s\,\bt(\theta)}\bigr|\ =\ s ,
\]
a bound independent of $\theta$. Given $\varepsilon>0$, let $\delta>0$ be the one coming from the uniform convergence of $s_K$ discussed above. Then, $s<\delta$ gives $\dist(T(x),K)<\delta$, hence 
$$s^+(\theta,s)=s_K(T(x))<\varepsilon \qquad \forall\,\theta.$$ This is exactly $s^+(\theta,s)\to0$ as $s\to0$, uniformly in $\theta$.

\medskip
\noindent (b) By Proposition~\ref{prop:T-basic} (d):
 $$s^+\ge|T(x)|-M\ge|x|-3M\ge s-4M\to\infty,$$
 where $M=\max_{\partial K}|y|$.  Then Proposition~\ref{prop:exact} (i) gives 
 $$\sin^2(d/2)\le M^2/(s\,s^+)\to0,$$ from which we deduce that $d\to0$.
\end{proof}

\medskip

Let us prove some needed properties of the map $\hat T$.

\medskip

\begin{proposition}\label{prop:That}
$\hat T$ is a homeomorphism of $\A$ which preserves each boundary circle and is isotopic to the identity. Its lift to $\tilde\A:=\R\times[0,1]$,
\[
\tilde T(\theta,v)=(\theta+\delta(\theta,v),v^+),\qquad \delta\in(-\pi,0)\text{ on }\Int\tilde\A,\quad \delta=0\text{ on }\R\times\{0\},\quad \delta=-\pi\text{ on }\R\times\{1\},
\]
is continuous and commutes with $(\theta,v)\mapsto(\theta+2\pi,v)$.
\end{proposition}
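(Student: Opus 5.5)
The plan is to prove the four assertions in order: continuity of $\hat T$ on all of $\A$, bijectivity with continuous inverse, invariance of the boundary circles together with isotopy to the identity, and finally continuity and equivariance of the lift.

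\textbf{Continuity on $\Int\A$.} On $\Int\A$, $\hat T$ is a composition of homeomorphisms. By Lemma~\ref{lem:chart}, $\Psi$ is a homeomorphism, and by Proposition~\ref{prop:T-basic}(a), $T$ is a homeomorphism of $\R^2\setminus K$. Hence $\hat T|_{\Int\A}$ is a homeomorphism of $\Int\A$. The lift $\tilde T(\theta,v)=(\theta+\delta,v^+)$, with $\delta=d-\pi$ continuous and valued in $(-\pi,0)$ (Definition~\ref{def:lift}), is continuous on $\R\times(0,1)$. It commutes with translation by $2\pi$ because $d$ and $s^+$ are $2\pi$-periodic in $\theta$.

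\textbf{Continuity at the boundary.} I extend $\delta$ by $0$ on $v=0$ and by $-\pi$ on $v=1$. Continuity of $\tilde T$ at boundary points then reduces to Lemma~\ref{lem:asymptotics}, and uniformity in $\theta$ is exactly what is needed.

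At $v\to0$, i.e.\ $s\to0$: $d\to\pi$ gives $\delta\to0$, and $s^+\to0$ gives $v^+\to0$. So $\tilde T(\theta_n,v_n)\to(\theta,0)$ whenever $(\theta_n,v_n)\to(\theta,0)$.

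At $v\to1$, i.e.\ $s\to\infty$: $d\to0$ gives $\delta\to-\pi$, and $s^+\to\infty$ gives $v^+\to1$. So the limit is $(\theta-\pi,1)$.

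On each boundary circle $\hat T$ is a rigid rotation (the identity and the rotation by $-\pi$), and hence continuous. Therefore $\hat T$ is continuous on $\A$ and preserves $C_0$ and $C_1$.

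\textbf{Bijectivity and continuity of the inverse.} $\hat T$ is a bijection of $\A$, since it is bijective on $\Int\A$ and on each boundary circle separately. A continuous bijection of the compact Hausdorff space $\A$ is a homeomorphism, so no separate analysis of $T^{-1}$ near the boundary is needed.

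\textbf{Isotopy to the identity.} I use the lift directly. Set $\tilde T_t(\theta,v):=(\theta+t\delta(\theta,v),\,(1-t)v+tv^+)$ for $t\in[0,1]$. This is continuous and $2\pi$-equivariant, and it interpolates between the identity and $\tilde T$. The obstacle is that a convex combination of homeomorphisms need not be injective, so this straight-line homotopy is not obviously an isotopy.

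A cleaner route uses the topology of homeomorphisms of the annulus. $\hat T$ preserves each boundary component and orientation: on $\Int\A$ it is conjugate to $T$, which is orientation preserving (its inverse has the same reflection form), and it fixes $C_0$ pointwise. A homeomorphism of the closed annulus that preserves orientation and each boundary component is isotopic to the identity, up to a Dehn twist. Since the lift is continuous on $\tilde\A$ and commutes with $\theta\mapsto\theta+2\pi$, the map acts trivially on $\pi_1$. The mapping class of the closed annulus, with boundary allowed to move, that preserves the boundary components and orientation is trivial. Hence $\hat T$ is isotopic to the identity.

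Orientation preservation can also be read off directly: $T$ is a local orientation-preserving homeomorphism, which is consistent with Proposition~\ref{prop:DT}, where $DT=-\mathrm{Id}$ a.e.\ in the singular case and has determinant $1$ in general.

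The main care point is the uniformity in $\theta$ of the boundary limits, which Lemma~\ref{lem:asymptotics} already supplies.
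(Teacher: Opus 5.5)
Your proposal is correct and follows essentially the same route as the paper: continuity on $\Int\A$ from Proposition~\ref{prop:T-basic}(a), continuity at $C_0$ and $C_1$ from the uniform limits of Lemma~\ref{lem:asymptotics}, and the compact--Hausdorff argument for the inverse. Isotopy to the identity then comes from the classification of annulus homeomorphisms; the paper cites Epstein after noting that $\hat T$ preserves each boundary circle and acts trivially on $H_1(\A;\Z)$, which is exactly what your $2\pi$-equivariant lift gives, so your separate orientation discussion (in particular the ``same reflection form'' remark) is unnecessary.
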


\begin{proof}
$\hat T$ is a bijection of $\A$, being a bijection of $\Int\A$, of $C_0$ and of $C_1$. It is continuous on $\Int\A$ by Proposition~\ref{prop:T-basic} (a), and at the boundary by Lemma~\ref{lem:asymptotics}; in fact, as $v\to0$, $\theta+\delta\to\theta$ and $v^+\to0$ uniformly, while as $v\to1$, $\theta+\delta\to\theta-\pi$ and $v^+\to1$ uniformly. A continuous bijection of a compact Hausdorff space is a homeomorphism. 

The lift $\tilde T$ is continuous for the same reason, and clearly commutes with translation by $(2\pi,0)$.

 Finally, $\hat T$ preserves each boundary component and its restriction to $C_1$ is a rotation, so it induces the identity on $H_1(\A;\Z)\cong\Z$; a homeomorphism of the closed annulus with these properties is isotopic to the identity (see, for example, \cite{Eps66}).
\end{proof}

\bigskip

We need to prove now that $\hat T$ satisfies a sort of {\it topological twist condition}, in order to apply Le Calvez's theory  \cite{LC23}. More specifically, we prove monotonicity along fibers.

\medskip

\begin{proposition}\label{prop:twist}
Let $p_1:\tilde\A\to\R$ denote the projection on the first component.
For every $\theta\in\R$, the function $v\mapsto p_1\tilde T(\theta,v)$ is continuous and strictly decreasing on $[0,1]$, from $\theta$ to $\theta-\pi$, and $v\mapsto p_1\tilde T^{-1}(\theta,v)$ is continuous and strictly increasing on $[0,1]$, from $\theta$ to $\theta+\pi$.
\end{proposition}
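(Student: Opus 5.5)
Fix $\theta\in\R$. Continuity of $v\mapsto p_1\tilde T(\theta,v)$ on $[0,1]$ follows at once from Proposition~\ref{prop:That}, with endpoint values $\theta$ (at $v=0$) and $\theta-\pi$ (at $v=1$). Since $v\mapsto s=v/(1-v)$ is an increasing homeomorphism $(0,1)\to(0,\infty)$ and $p_1\tilde T(\theta,v)=\theta+d(\theta,s)-\pi$, it suffices to prove that $s\mapsto d(\theta,s)$ is strictly decreasing on $(0,\infty)$. Strictness on the open interval, combined with the limits $\pi$ as $s\to0$ and $0$ as $s\to\infty$ (Lemma~\ref{lem:asymptotics}), then extends to the closed interval.

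To prove strict monotonicity I will use the relation \eqref{eq:exact1}, which expresses $s$ as a function of $d$ alone (for fixed $\theta$):
\[
s=\Sigma_\theta(d):=\frac{h(\theta+d)+h(\theta)\cos d}{\sin d}+h'(\theta),\qquad d\in(0,\pi).
\]
This is exactly $s^+(\theta,\theta+d)$ from \eqref{eq:splus-two-point}. The map $s\mapsto d(\theta,s)$ is well defined and continuous, and by the converse part of Lemma~\ref{lem:s-geometric} it satisfies $\Sigma_\theta(d(\theta,s))=s$. Hence $d(\theta,\cdot)$ is injective. A continuous injective function on an interval is strictly monotone, and the boundary limits force it to be decreasing.

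A more geometric check avoids relying on injectivity of $d$. The numerator $h(\theta+d)+h(\theta)\cos d$ equals $-\bigl(p(\theta+\pi)-p(\theta+d)\bigr)\cdot\bn(\theta+d)\cdot$ (up to sign conventions), so $\Sigma_\theta(d)\sin d$ is the quantity $(p(\theta+d)-p(\theta+\pi))\cdot\bn(\theta+d)$ from Proposition~\ref{prop:exact}(ii). Geometrically, the point $-p(\theta)+s\bt(\theta)$ moves monotonically along the supporting line $\ell_{\theta+\pi}$. The forward contact angle of a point moving along a fixed tangent line away from the tangency point turns monotonically; this follows from Lemma~\ref{lem:chart}, since $D\theta_K(x)\bt(\theta)=-\bt(\theta)\cdot\bn(\theta_K(x))/s_K(x)=-\sin(\theta_K(x)-\theta)/s_K<0$ whenever $\theta_K(x)-\theta\in(0,\pi)$. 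Concretely, $s\mapsto F(\Psi(\theta,s))=-p(\theta)+s\bt(\theta)$ has derivative $\bt(\theta)$, and $\theta_K$ is $C^1$, so
\[
\partial_s d(\theta,s)=-\frac{\sin d}{s^+}<0 .
\]
This is the cleanest route, and I will take it as the main argument.

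For $\tilde T^{-1}$ the argument is symmetric. One route is to invert the continuous strictly monotone family: $\tilde T$ maps the vertical fiber over $\theta$ homeomorphically onto a curve that is a graph over $[\theta-\pi,\theta]$, and one deduces the behaviour of $\tilde T^{-1}$ from that. The simpler route repeats the computation with $T^{-1}(x)=2p(\check\theta_K(x))-x$ and $D\check\theta_K v=v\cdot\bn(\check\theta_K)/\check s_K$ from Lemma~\ref{lem:chart}, which gives positive derivative and endpoint values $\theta$ and $\theta+\pi$. The main care is needed in bookkeeping the lift for $T^{-1}$ (showing that its displacement lies in $(0,\pi)$, which follows from $T^{-1}=-F^{-1}$ and Proposition~\ref{prop:exact}) and the boundary behaviour, for which I would invoke the analogue of Lemma~\ref{lem:asymptotics} obtained by applying it to $F^{-1}$ and using continuity of $\hat T^{-1}$ from Proposition~\ref{prop:That}.
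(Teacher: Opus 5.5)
Your main argument is the paper's. You differentiate the contact angle of $F(\Psi(\theta,s))=-p(\theta)+s\,\bt(\theta)$ via \eqref{eq:Dtheta} to get $\partial_s d=-\sin d/s^+<0$ (the paper runs the identical computation on $T(\Psi(\theta,s))=p(\theta)-s\,\bt(\theta)$), extend strictness to $[0,1]$ by continuity and the endpoint values, and treat $\tilde T^{-1}$ by differentiating $\check\theta_K(\Psi(\theta,s))$, using $\check\theta_K-\theta\in(0,\pi)$ from Lemma~\ref{lem:two-tangents}. Your side argument is also valid and not needed: by \eqref{eq:exact1}, $s$ is a function of $d$ for fixed $\theta$, so $d(\theta,\cdot)$ is injective, which gives strict monotonicity without differentiating.
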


\medskip

\begin{remark}\label{rem:toptwist}
Thus, in the terminology of twist maps, $\hat T^{-1}$ tilts vertical fibres to the right and $\hat T$ tilts them to the left: $\hat T^{-1}$ is a topological analogue of a \emph{positive twist map} in the sense of \cite{LC23}, except that no differentiability is available. 
\end{remark}

\medskip

\begin{proof}[Proof of Proposition \ref{prop:twist}]
On $(0,1)$ write $v=s/(1+s)$ and $x(s)=\Psi(\theta,s)$, so that $$p_1\tilde T(\theta,v)=\theta_K(T(x(s)))+2\pi k$$ for a fixed integer $k$, where $\theta_K(T(x(s)))$ is lifted continuously in $s$, and the lift is determined by $\delta\in(-\pi,0)$.
 \\
Since $T(x(s))=p(\theta)-s\bt(\theta)$ is $C^1$ in $s$ and $\theta_K$ is $C^1$, the chain rule and \eqref{eq:Dtheta} give
\[
\frac{\partial}{\partial s}\,\theta_K\bigl(T(x(s))\bigr)=D\theta_K\bigl(T x\bigr)\bigl(-\bt(\theta)\bigr)=\frac{\bt(\theta)\cdot\bn(\theta^+-\pi)}{s^+}=\frac{\sin(\theta^+-\pi-\theta)}{s^+}=-\frac{\sin d}{s^+}<0,
\]
where $\theta^+-\pi$ is the contact angle of $T(x)$. So $p_1\tilde T(\theta,\cdot)$ is strictly decreasing on $(0,1)$; it is continuous on $[0,1]$ with values $\theta$ at $v=0$ and $\theta-\pi$ at $v=1$, and for $v\in(0,1)$ it equals $\theta+\delta$ with $\delta\in(-\pi,0)$. Hence it is strictly decreasing on all of $[0,1]$.

\medskip
For $T^{-1}$, by Proposition~\ref{prop:T-basic} (a)
$$T^{-1}(x(s))=2p(\check\theta_K(x(s)))-x(s)$$ has contact angle $\check\theta_K(x(s))$, and by Lemma~\ref{lem:chart}
\[
\frac{\partial}{\partial s}\,\check\theta_K(x(s))=D\check\theta_K(x)\,\bt(\theta)=\frac{\bt(\theta)\cdot\bn(\check\theta_K(x))}{\check s_K(x)}=\frac{\sin(\check\theta_K(x)-\theta)}{\check s_K(x)}>0,
\]
because $\check\theta_K(x)-\theta\in(0,\pi)$ (see Lemma \ref{lem:two-tangents}). The endpoint values are $\theta$ and $\theta+\pi$ (apply Lemma~\ref{lem:asymptotics} to $T^{-1}$, or note that $\tilde T^{-1}$ is the inverse of $\tilde T$), and the same argument gives strict monotonicity on $[0,1]$.
\end{proof}

\medskip

\subsection{Le Calvez's topological approach to Mather's mechanism}\label{subsec:LC}

In this section, we recall the results of \cite{LC23} that we use.  Let us start with some definitions.

\medskip

\begin{definition}
(i) An \emph{essential simple loop} in $\A$ is a Jordan curve not contractible in $\A$; it is \emph{transverse to $\cV$} if it meets each vertical fibre exactly once, {\it i.e.}, if it is the graph $\{(\theta,\psi(\theta))\}$ of a continuous function $\psi:\Tt\to[0,1]$. 

\smallskip
\noindent (ii) A \emph{lower annulus} is a subset $U\subset\A$ containing $C_0$, disjoint from $C_1$ and homeomorphic to $\Tt\times[0,1)$; an \emph{upper annulus} is a subset containing $C_1$, disjoint from $C_0$ and homeomorphic to $\Tt\times(0,1]$. Such sets are open in $\A$ by invariance of domain. 

\smallskip
\noindent (iii) A homeomorphism $f$ of $\A$ is \emph{non-wandering} if there is no non-empty open set $W$ with $f^k(W)\cap W=\emptyset$ for all $k\ge1$.
\end{definition}

\medskip
Hereafter, we write $\Homeo_*(\A)$ for the group of homeomorphisms of $\A$ isotopic to the identity. Moreover, as above,  $p_1:\tilde\A\to\R$ will denote the projection on the first component.

\medskip
Le Calvez introduces, for $f\in\Homeo_*(\A)$ and a radial foliation $\mathcal F$, the notions of \emph{$\mathcal F$-increasing} and \emph{$\mathcal F$-decreasing} homeomorphism (\cite[Section 2.4]{LC23}); we say \emph{$\mathcal F$-monotone} for either. The definition is formulated through an abstract angle function on pairs of points of $\tilde\A$; we will not need it explicitly, only the following facts.

\begin{proposition}\label{prop:monotone}
$\hat T$ is $\cV$-monotone in the sense of \cite[Section 2.4]{LC23}. More precisely, $\hat T^{-1}$ is $\cV$-decreasing and $\hat T$ is $\cV$-increasing.
\end{proposition}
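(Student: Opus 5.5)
The proof will reduce Le Calvez's abstract notion of monotonicity to the fibrewise monotonicity already established in Proposition~\ref{prop:twist}. In \cite[Section 2.4]{LC23}, for the vertical foliation $\cV$ lifted to $\tilde\A$, a homeomorphism $f\in\Homeo_*(\A)$ with lift $\tilde f$ is $\cV$-increasing (resp.\ decreasing) precisely when the image under $\tilde f$ of every lifted vertical leaf $\{\theta\}\times[0,1]$ is a curve that crosses each vertical leaf at most once. Moreover, along it the first coordinate $p_1$ must vary monotonically in the prescribed direction: the image of the leaf must lie, relative to the leaf through its endpoints, to the right (resp.\ left) in the oriented sense of the angle function. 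For the foliation $\cV$, the angle function between two points $\tilde z,\tilde z'$ on distinct leaves is determined simply by the sign of $p_1(\tilde z')-p_1(\tilde z)$. So the plan is to first write out this translation explicitly: $f$ is $\cV$-decreasing iff $v\mapsto p_1\tilde f(\theta,v)$ is strictly decreasing for all $\theta$, and $\cV$-increasing iff it is strictly increasing, with the appropriate sign convention fixed against Le Calvez's model example of a positive twist map.

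Second, I apply this to $\hat T$. Proposition~\ref{prop:That} gives $\hat T\in\Homeo_*(\A)$, preserving each boundary circle, with the continuous lift $\tilde T$. Proposition~\ref{prop:twist} then gives that $v\mapsto p_1\tilde T(\theta,v)$ is strictly decreasing from $\theta$ to $\theta-\pi$. Hence $\tilde T$ maps each lifted leaf to an arc transverse to $\cV$, running monotonically to the left. Similarly $v\mapsto p_1\tilde T^{-1}(\theta,v)$ is strictly increasing from $\theta$ to $\theta+\pi$, so $\tilde T^{-1}$ maps leaves to arcs tilting to the right. Matching against the convention in which a positive twist map (fibres tilted to the right, as in Remark~\ref{rem:toptwist}) is $\cV$-decreasing yields that $\hat T^{-1}$ is $\cV$-decreasing. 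Since the inverse of an $\mathcal F$-decreasing map is $\mathcal F$-increasing in Le Calvez's framework (the angle function changes sign under exchanging the roles of $f$ and $f^{-1}$), $\hat T$ is $\cV$-increasing. I would also check directly that the boundary behaviour fits: $\hat T$ is the identity on $C_0$ and a rotation on $C_1$, and the strict monotonicity holds on the closed interval $[0,1]$, as required.

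The main obstacle is the first step: faithfully matching Le Calvez's definition, which is stated via an angle function on pairs of points and is insensitive to the choice of lift only up to this sign bookkeeping, with the elementary fibrewise monotonicity. I would handle it by checking the definition against pairs $(\tilde z,\tilde f(\tilde z))$ with $\tilde z=(\theta,v)$. The condition in \cite{LC23} amounts to requiring that, for $v<v'$, the points $\tilde f(\theta,v)$ and $\tilde f(\theta,v')$ are strictly ordered in $p_1$ in a fixed direction. This is exactly what strict monotonicity in Proposition~\ref{prop:twist} provides, including at the boundary points, where continuity and the endpoint values $\theta$, $\theta\mp\pi$ are used.
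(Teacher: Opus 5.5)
Your skeleton is the paper's: reduce everything to the fibrewise order relations of Proposition~\ref{prop:twist}, then get the statement for $\hat T$ from the one for $\hat T^{-1}$ via \cite[Proposition~2 (1)]{LC23}. The problem lies in the step that carries the actual content, namely identifying Le Calvez's notion with fibrewise monotonicity. You assert this identification rather than justify it, and as written it is inconsistent. Your dictionary (``$\cV$-decreasing iff $v\mapsto p_1\tilde f(\theta,v)$ is strictly decreasing'') would make $\hat T$ $\cV$-decreasing and $\hat T^{-1}$ $\cV$-increasing, the opposite of the claim. You then reverse it silently by appeal to the model case. The convention forced by \cite[Proposition~4]{LC23}, where positive twist maps are $\cV$-decreasing, is that $\cV$-decreasing corresponds to $p_1\tilde f(\theta,\cdot)$ \emph{increasing}, i.e.\ forward images of fibres tilting to the right.

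More substantively, the definition in \cite[Section 2.4]{LC23} is not a pointwise order condition on fibres. It is phrased through an angle function on pairs of points, so it does not simply ``amount to'' strict monotonicity of $p_1\tilde f(\theta,\cdot)$. Your sentence that this angle is ``determined simply by the sign of $p_1(\tilde z')-p_1(\tilde z)$'' is exactly what is not automatic. A sign can at most locate an angle-type quantity modulo $2\pi$ in a sector, and fixing the branch consistently over all relevant pairs is a separate step. The paper handles this by not re-deriving the definition at all. It observes that Le Calvez's proof of \cite[Proposition~4]{LC23} uses the $C^1$ twist hypothesis only to obtain four order relations for $\tilde z=(\theta,v)$, $\tilde z'=(\theta,v')$ with $v<v'$: namely $p_1\tilde f(\tilde z)<p_1\tilde f(\tilde z')$ and $p_1\tilde f^{-1}(\tilde z)>p_1\tilde f^{-1}(\tilde z')$, together with their symmetric versions. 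The rest of that proof is a lifting argument on a simply connected set of pairs that uses no regularity. For $f=\hat T^{-1}$ these relations are precisely Proposition~\ref{prop:twist}, including pairs with a point on $C_0$ or $C_1$. Note that the backward relation for $\tilde f^{-1}$ enters as well, not just the forward one you single out (you do have it from Proposition~\ref{prop:twist}). Replacing your ``translation'' step by this appeal to the proof of \cite[Proposition~4]{LC23} closes the gap.
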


\begin{proof}
\cite[Proposition~4]{LC23} states that a positive twist map is $\cV$-decreasing. Its proof uses the $C^1$ twist hypothesis only to establish four order relations between pairs of points on a common vertical fibre and their images, namely: if $\tilde z=(\theta,v)$ and $\tilde z'=(\theta,v')$ with $v<v'$, then 
$$p_1\tilde f(\tilde z)<p_1\tilde f(\tilde z') \qquad {\rm and} \qquad p_1\tilde f^{-1}(\tilde z)>p_1\tilde f^{-1}(\tilde z'),$$ together with the symmetric statements obtained by exchanging $\tilde z$ and $\tilde z'$. The remaining part of the proof is a lifting argument on a simply connected set of pairs and uses no regularity. \\
For $f=\hat T^{-1}$ these four relations are exactly the content of Proposition~\ref{prop:twist}, including pairs with one point on a boundary circle. Hence $\hat T^{-1}$ is $\cV$-decreasing, and by \cite[Proposition~2 (1)]{LC23} its inverse $\hat T$ is $\cV$-increasing.
\end{proof}

\medskip

Let us cite two fundamental results from \cite{LC23}. 

\medskip

\begin{theorem}[Le Calvez {\cite[Theorem~5]{LC23}}]\label{thm:LC-Birkhoff}
Let $f\in\Homeo_*(\A)$ be non-wandering and $\cV$-monotone. If $U$ is an $f$-invariant lower or upper annulus, then its frontier $\Fr U$ is an essential simple loop transverse to $\cV$.
\end{theorem}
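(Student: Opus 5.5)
The plan follows Birkhoff's classical argument for invariant regions of twist maps. The twist hypothesis is replaced by the $\cV$-monotonicity of Proposition~\ref{prop:monotone}, and area preservation by the non-wandering hypothesis. I treat a lower annulus $U$; the upper case follows by exchanging the roles of $C_0$ and $C_1$, equivalently by conjugating with $(\theta,v)\mapsto(-\theta,1-v)$.

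\textbf{Step 1: reduce to a vertical envelope.} Work in the lift $\tilde\A$ and define
\[
\psi(\theta):=\sup\bigl\{v\in[0,1]:\ \{\theta\}\times[0,v)\subset \tilde U\bigr\}.
\]
Since $U$ is open and contains $C_0$, $\psi>0$. Since $U$ is open, $\psi$ is lower semicontinuous, and $\psi<1$ because $U\cap C_1=\emptyset$. Let $U_\psi:=\{(\theta,v):v<\psi(\theta)\}\subset U$ be the \emph{star-shaped part} of $U$ with respect to $\cV$.

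The first claim is that $U_\psi$ is $f$-invariant. The image under $\tilde f$ of a vertical segment $\{\theta\}\times[0,v)\subset\tilde U$ is an arc in $\tilde U$ starting on $C_0$. Monotonicity, in the form of the order relations used in the proof of Proposition~\ref{prop:monotone}, should force this arc to cross each vertical fibre \emph{below} any point of $\Fr U$ lying on that fibre. Otherwise a point of $\Fr U$ would be trapped under an image arc, and this would contradict the order relations for $f^{-1}$. Hence $f(U_\psi)\subset U_\psi$, and applying the same argument to $f^{-1}$ gives equality.

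\textbf{Step 2: use non-wandering to get $U=U_\psi$ and continuity of $\psi$.} Consider the upper semicontinuous envelope $\psi^*:=\limsup\psi$. Set $W:=\Int\{(\theta,v):\psi(\theta)\le v<\psi^*(\theta)\}$, together with the components of $U\setminus\overline{U_\psi}$ (the ``hairs'' of $U$ not reached vertically). These sets are invariant by Step 1 and Lemma~\ref{lem:frontier}. The goal is to show that, if they are non-empty, one finds an open set $W'$ with $f^k(W')\cap W'=\emptyset$ for all $k\ge1$.

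For this, the natural tool is again $\cV$-monotonicity. A point strictly between $\psi$ and $\psi^*$ on a fibre is separated from $C_1$ by the image arcs of Step 1, and those arcs are pushed strictly sideways at every iterate. The idea is that this sideways push makes a small disc in such a gap move monotonically and never return, as in Birkhoff's original proof. Granting this, non-wandering yields $\psi=\psi^*$, so $\psi$ is continuous, and also $U=U_\psi$.

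\textbf{Step 3: conclude.} With $\psi$ continuous and $U=U_\psi$, the frontier $\Fr U$ is the graph of $\psi$. It is therefore an essential simple loop meeting each fibre exactly once, which is the required conclusion.

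The main obstacle is Step 2, namely building a genuinely wandering open set out of a discontinuity of $\psi$ or a hair of $U$ using only the purely topological monotonicity. My first attempt would follow Le Calvez's angle-function framework: show that the lifted angle between a point in the gap and the frontier accumulates a definite amount under iteration, which forbids recurrence. If this angle bookkeeping proves too delicate, I would instead cite \cite[Theorem~5]{LC23} directly, as the paper itself does.
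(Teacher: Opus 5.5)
The paper does not prove this statement at all: it imports it from \cite[Theorem~5]{LC23} and uses it as a black box. Your fallback of citing it is therefore exactly what the paper does. Your self-contained sketch, however, has genuine gaps, and they are not confined to the step you flag.

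In Step~1 nothing is actually trapped. Say $v\mapsto p_1\tilde f(\theta,v)$ is increasing (the other case is symmetric). Take $z\in U_\psi$ with the vertical segment $\gamma\subset U$ below it. Let $\sigma$ be the vertical segment below $f(z)$, and suppose $\sigma$ meets $\A\setminus U$. The fibrewise order relations only say two things. First, $f(\gamma)\subset U$ is an arc from $C_0$ to $f(z)$ along which $p_1$ increases. Second, $f^{-1}(\sigma)$ is an arc from $C_0$ to $z$ along which $p_1$ decreases, so it lies to the right of $\gamma$. The $f$-image of the curvilinear triangle bounded by $\gamma$, the arc of $C_0$ and $f^{-1}(\sigma)$ is just the region under $f(\gamma)$, cut off on the right by $\sigma$. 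The complement of $U$ (connected, containing $C_1$) can reach $\sigma$ from its right side and $f^{-1}(\sigma)$ from outside the triangle. This configuration is consistent with invariance and monotonicity, so the relations for $f^{-1}$ give no contradiction, and invariance of $U_\psi$ is not established. More generally, twist maps without recurrence (e.g.\ dissipative ones with a Birkhoff attractor that is not a graph) have invariant lower annuli whose frontier is not a graph. So non-wandering must do essential work somewhere, and in your sketch it enters only in Step~2.

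Step~2 carries the whole content of the theorem, and it is granted rather than proved; part of its set-up is also vacuous. Since $\psi$ is lower semicontinuous, $U_\psi$ is open, and
$\{\psi\le v<\psi^*\}\subset\overline{U_\psi}\setminus U_\psi=\Fr U_\psi$, which has empty interior. So your $W$ is always empty. A jump of $\psi$ (a vertical cliff in $\Fr U$) can therefore never be excluded by applying the non-wandering hypothesis to $W$; a wandering open set has to be produced elsewhere, in $U$ or in its complement near $\Fr U$.

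Moreover, a sideways push is no obstruction to recurrence. For the twist map $(\theta,v)\mapsto(\theta+\pi v,v)$, every interior point is pushed sideways, yet every point is recurrent. What is missing is a quantity that measures the motion of a point \emph{relative to the invariant frontier} and accumulates under iteration. This is what Le Calvez's angle function on pairs of points and his Brouwer-type arguments supply, and it is precisely the part you leave open.

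A minor slip: $U\cap C_1=\emptyset$ gives only $\psi\le 1$, not $\psi<1$. As it stands, your proposal establishes the statement only through the citation.
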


\begin{theorem}[Le Calvez {\cite[Theorem~6]{LC23}}]\label{thm:LC-Mather}
Let $f\in\Homeo_*(\A)$ be non-wandering and $\cV$-monotone. If $C_0$ and $C_1$ are the only $f$-invariant essential simple loops transverse to $\cV$, then there is $z\in\Int\A$ with
\[
\lim_{k\to-\infty}\dist\bigl(f^k(z),C_0\bigr)=0\qquad\text{and}\qquad\lim_{k\to+\infty}\dist\bigl(f^k(z),C_1\bigr)=0 .
\]
\end{theorem}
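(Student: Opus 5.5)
Theorem~\ref{thm:LC-Mather} is quoted from \cite{LC23}, and in the paper it is used as a black box. If I had to reprove it, I would follow Mather's original scheme and use Theorem~\ref{thm:LC-Birkhoff} as the only place where $\cV$-monotonicity and the non-wandering hypothesis enter. The argument has two stages: first a \emph{transition property}, then a \emph{nested compact sets} argument that upgrades transitions to genuine asymptotics.

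\emph{Step 1 (transition property).} Fix a lower annulus $V\supset C_0$ and an upper annulus $W\supset C_1$, both small. I would show there are $z\in V$ and $k\ge1$ with $f^k(z)\in W$, and likewise in the reverse direction. Suppose not. Let $O:=\bigcup_{k\ge0}f^k(V)$, let $O'$ be the connected component of $O$ containing $C_0$, and let $U$ be $O'$ with its "holes" filled, i.e.\ the complement of the component of $\A\setminus\overline{O'}$ containing $C_1$. Then $f(U)\subset U$, and the non-wandering hypothesis upgrades this to $f(U)=U$: the set $U\setminus f(U)$ is open and would otherwise wander. So $U$ is an $f$-invariant lower annulus disjoint from $C_1$. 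By Theorem~\ref{thm:LC-Birkhoff}, $\Fr U$ is an invariant essential simple loop transverse to $\cV$. It is not $C_0$, since $C_0\subset U$ lies in the interior, and it is not $C_1$, since $U\cap W=\emptyset$. This contradicts the hypothesis. The symmetric argument (with $f^{-1}$ and upper annuli) gives transitions from every $W$ back to every $V$.

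\emph{Step 2 (nested construction).} Choose lower annuli $V_n\downarrow C_0$ and upper annuli $W_n\downarrow C_1$. By Step 1, for each $n$ the set $\{z: f^{-a}(z)\in V_n,\ f^{b}(z)\in W_n \text{ for some } a,b\ge0\}$ is open and nonempty. I would build a decreasing sequence of compact sets $B_n\subset\Int\A$ with nonempty interior, together with times $a_n,b_n\to\infty$, such that every $z\in B_n$ has $f^{-a_n}(z)\in V_n$ and $f^{b_n}(z)\in W_n$. To pass from $B_n$ to $B_{n+1}$, I would apply Step 1 to the open set $f^{b_n}(\Int B_n)\cap W_n$ instead of to $V$, which is legitimate because the same filling argument works when we start from any open set. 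A point of $\bigcap_n B_n$ then visits $V_n$ and $W_n$ at times tending to $\mp\infty$.

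\emph{Main obstacle.} Step 2 only yields \emph{visits}, whereas the theorem asserts genuine convergence $\dist(f^k z,C_1)\to0$. To close the gap I would strengthen each transition to "enter $W_n$ and never leave $W_{n-1}$ afterwards". The tool would be Step 1 applied inside the invariant region between $C_1$ and the boundary of a collar. Concretely, I would consider the set of points of $W_{n-1}$ whose forward orbit remains in $W_{n-1}$. The $f$-invariant hull of this set is an invariant upper annulus, so by Theorem~\ref{thm:LC-Birkhoff} its frontier is an invariant transverse loop, which forces it to be $C_1$. This would show that orbits entering $W_n$ near this set can be kept in $W_{n-1}$ on an open set of initial conditions. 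Making this precise, while keeping the intersections of the nested compact sets nonempty and open at each stage, is where I expect the real difficulty to lie. It is exactly the part of Mather's argument that Le Calvez recasts in terms of $\cV$-monotone homeomorphisms, and I would follow \cite{LC23} for it.
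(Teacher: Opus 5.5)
The paper gives no proof of this statement. It is quoted from \cite{LC23} and used as a black box, so your proposal has to stand on its own. It has a genuine gap, located exactly where the content of the theorem lies.

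Your Step~1 is Birkhoff's transition property, and it needs repair. The set $U$ you define is the complement of an open component, hence closed, so it is not a lower annulus. Moreover $U\setminus f(U)$ need not be open, so the non-wandering hypothesis cannot be applied to it. The correct use of non-wandering is the following. For the open set $O=\bigcup_{k\ge0}f^k(V)$ one has $f(O)\subset O$, and the open set $O\setminus\overline{f(O)}$ wanders, hence is empty; therefore $f(\overline O)=\overline O$. The component $X$ of $\A\setminus\overline O$ containing $C_1$ is then invariant. Adding to $X$ all components of $\A\setminus X$ other than the one containing $C_0$ gives an invariant upper annulus whose closure misses $V$ (a plane-topology argument shows it is homeomorphic to $\Tt\times(0,1]$). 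Theorem~\ref{thm:LC-Birkhoff} applied to this set yields the contradiction.

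Step~2 relies on the claim that the filling argument works starting from any open set, and this is false. The hypothesis excludes only invariant \emph{essential} loops. An open set contained in an invariant disc, or in a periodic chain of discs disjoint from $C_0\cup C_1$ (an elliptic island, say), never reaches $W$. So even the passage from $B_n$ to $B_{n+1}$ is not justified as written.

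The decisive problem is the one you flag yourself, and the remedy you sketch cannot work. First, the frontier of an invariant upper annulus never meets $C_1$, since $C_1$ lies in its interior; it cannot be ``forced to be $C_1$''. Second, and more importantly, take $V_n\cap\overline{W_{n-1}}=\emptyset$. Then no nonempty open set $N$ can consist of points whose forward orbits never leave $W_{n-1}$ while their backward orbits visit $V_n$. Indeed, $O_+:=\bigcup_{k\ge0}f^k(N)\subset W_{n-1}$ satisfies $f(O_+)\subset O_+$. As above, this gives $f(\overline{O_+})=\overline{O_+}\subset\overline{W_{n-1}}$, so the whole backward orbit of $N$ would stay in $\overline{W_{n-1}}$. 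Combined with a Baire argument, the same reasoning shows that the set of $z\in\Int\A$ with $\dist(f^kz,C_1)\to0$ has empty interior.

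Hence your compact sets $B_n$ can encode only finite-time constraints. A nested scheme would then need a \emph{controlled} transition: from the relevant open sets near $C_1$, reach $W_{n+1}$ while staying in $W_n$ during the whole transit, and symmetrically near $C_0$. Step~1 gives no control over where an orbit goes between two visits; it may return close to $C_0$. Nothing in your sketch supplies this control.

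Upgrading visits to convergence is precisely the substance of \cite[Theorem~6]{LC23} beyond Theorem~\ref{thm:LC-Birkhoff}. Since you defer that step to \cite{LC23}, your proposal ends where the paper does, with a citation, and the intermediate steps do not amount to a proof.
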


\medskip

We refer the reader to \cite{LC23} for their proofs. 

\medskip

\begin{remark}
{\bf (i)} Le Calvez states both theorems for $\mathcal F$-increasing maps and notes that they hold verbatim for $\mathcal F$-decreasing ones. \\
{\bf (ii)}
Theorem~\ref{thm:LC-Birkhoff} is the topological form of Birkhoff's theorem that the frontier of an invariant region containing a boundary circle is a graph~\cite{Bir32,Her83}, while Theorem~\ref{thm:LC-Mather} is the topological form of the theorem on orbits crossing a Birkhoff region of instability by Mather (see \cite{Mat91}). 
\end{remark}

\bigskip

Both Theorem~\ref{thm:LC-Birkhoff} and Theorem~\ref{thm:LC-Mather} require the homeomorphism to be {\it non-wandering}. For an area-preserving homeomorphism of a compact annulus it is automatic by Poincar\'e's recurrence theorem. In our case, however, it does not necessarily hold, since  the invariant measure of $T$ is the Euclidean area, which is infinite on $\R^2\setminus K$, and the corresponding invariant measure of $\hat T$ is infinite near $C_1$. Nevertheless, we observe that a wandering open set is itself a source of unbounded orbits.

\medskip

\begin{lemma}\label{lem:wandering}
Suppose there is a non-empty open set $W\subset\R^2\setminus K$ with $T^n(W)\cap W=\emptyset$ for all $n\ge1$. Then $|T^nx|\to\infty$ as $n\to+\infty$ for Lebesgue-almost every $x\in W$.
\end{lemma}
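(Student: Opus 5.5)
The argument rests on area preservation (Proposition~\ref{prop:T-basic} (c)) together with a Poincar\'e-recurrence argument applied to bounded regions. Put $W_n:=T^n(W)$. Since $T^n(W)\cap W=\emptyset$ for all $n\ge1$ and $T$ is a bijection, the sets $W_n$, $n\ge0$, are pairwise disjoint: for $m<n$ we have $W_m\cap W_n=T^m(W\cap T^{n-m}W)=\emptyset$. Also $|W_n|=|W|$, because $T$ preserves Lebesgue measure. Shrinking $W$ if necessary (a subset of a wandering set is still wandering), we may assume $W$ is bounded and has positive finite measure. It then suffices to prove the claim for this bounded $W$: an arbitrary open $W$ is a countable union of bounded open subsets, and countable unions of null sets are null.

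Fix $R>0$ and let $B_R:=\{x\notin K:|x|\le R\}$, which has finite area. Consider the set $E_R$ of points $x\in W$ whose forward orbit visits $B_R$ infinitely often. The aim is to show $|E_R|=0$. Taking $R\in\N$ and a countable union then shows that for a.e.\ $x\in W$ the orbit eventually leaves every $B_R$ for good, which is exactly $|T^nx|\to\infty$.

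To estimate $|E_R|$, use disjointness. The sets $T^n(W)\cap B_R$ are pairwise disjoint subsets of $B_R$, so
\[
\sum_{n\ge0}\bigl|W\cap T^{-n}(B_R)\bigr|=\sum_{n\ge0}\bigl|T^n(W)\cap B_R\bigr|\le|B_R|<\infty .
\]
Here the first equality holds because $T^n$ is a measure-preserving bijection mapping $W\cap T^{-n}B_R$ onto $T^nW\cap B_R$. By the Borel--Cantelli lemma, almost every $x\in W$ lies in only finitely many of the sets $T^{-n}(B_R)$. In other words, $T^nx\notin B_R$ for all large $n$, which gives $|E_R|=0$.

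The one step that needs care is the measure-preservation identity $|T^n(A)|=|A|$ for Borel sets $A$, including non-invertibility issues. Since $T$ is a homeomorphism preserving Lebesgue measure, images of Borel sets are Borel and the identity holds directly. No other obstacle is expected; the argument is essentially the Borel--Cantelli form of Poincar\'e recurrence, with the wandering hypothesis supplying the disjointness.
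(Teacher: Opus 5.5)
Your proposal is correct and follows essentially the same route as the paper. You use pairwise disjointness of the $T^n(W)$ and area preservation to get $\sum_{n\ge0}|T^n(W)\cap B_R|\le|B_R|$, and your Borel--Cantelli step is the paper's remark that $\int_W\sum_n\mathbf 1_{B_R}(T^nx)\dd x<\infty$. The preliminary reduction to bounded $W$ is harmless but unnecessary, since the first Borel--Cantelli lemma does not require $|W|<\infty$.
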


\begin{proof}
The sets $T^n(W)$, $n\ge0$, are pairwise disjoint (if $T^n W\cap T^mW\ne\emptyset$ with $n>m$, then $T^{n-m}W\cap W\ne\emptyset$). For $R>0$ let $B_R$ be the disk of radius $R$; by area preservation (Proposition~\ref{prop:T-basic} (c)),
\[
\int_W\sum_{n\ge0}\mathbf 1_{B_R}(T^nx)\dd x=\sum_{n\ge0}\Leb\bigl(T^n(W)\cap B_R\bigr)\le\Leb(B_R)<\infty .
\]
Hence for a.e.\ $x\in W$ the orbit visits $B_R$ only finitely many times. Taking $R\in\N$ and a countable intersection, a.e.\ $x\in W$ visits every disk finitely often, {\it i.e.},\ $|T^nx|\to\infty$.
\end{proof}

\medskip

\begin{corollary}\label{cor:dichotomy}
Either $\hat T$ is non-wandering on $\A$, or the outer billiard map has a positive measure set of forward escaping orbits.
\end{corollary}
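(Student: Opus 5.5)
The corollary is a dichotomy, so I will assume that $\hat T$ is \emph{not} non-wandering on $\A$ and produce a positive measure set of forward escaping orbits via Lemma~\ref{lem:wandering}. By definition there is then a non-empty open set $\hat W\subset\A$ with $\hat T^k(\hat W)\cap\hat W=\emptyset$ for all $k\ge1$. The key step is to replace $\hat W$ by a wandering open set lying in $\Int\A$, so that it corresponds to an open subset of $\R^2\setminus K$.

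Since $\hat W$ is open in $\A$ and non-empty, $W':=\hat W\cap\Int\A$ is non-empty and open, because $\Int\A$ is dense in $\A$. Any subset of a wandering set is wandering, since $\hat T^k(W')\cap W'\subset\hat T^k(\hat W)\cap\hat W=\emptyset$. Next I transport $W'$ to the plane. Set $W:=\Psi\circ\chi^{-1}(W')$. This is a non-empty open subset of $\R^2\setminus K$, because $\Psi\circ\chi^{-1}:\Int\A\to\R^2\setminus K$ is a homeomorphism (Lemma~\ref{lem:chart} and the definition of $\chi$). Moreover, \eqref{eq:That} shows that this homeomorphism conjugates $\hat T|_{\Int\A}$ to $T$, and $\hat T$ preserves $\Int\A$ because it preserves each boundary circle (Proposition~\ref{prop:That}). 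Hence $T^n(W)\cap W=\Psi\chi^{-1}\bigl(\hat T^n(W')\cap W'\bigr)=\emptyset$ for every $n\ge1$.

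Lemma~\ref{lem:wandering} now applies to $W$ and gives $|T^nx|\to\infty$ for Lebesgue-almost every $x\in W$. Since $W$ is non-empty and open, it has positive Lebesgue measure, so these forward escaping orbits form a set of positive measure. This proves the dichotomy.

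There is no real obstacle here. The only point needing care is that the wandering set might a priori sit near a boundary circle, and this is handled by intersecting with the dense open set $\Int\A$.
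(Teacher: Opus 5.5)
Your proof is correct and follows the paper's argument: you intersect the wandering open set with $\Int\A$, transport it through the conjugacy $\Psi\circ\chi^{-1}$ to a wandering open subset of $\R^2\setminus K$, and apply Lemma~\ref{lem:wandering}. Unlike the paper, you also spell out why $\hat T$ preserves $\Int\A$ and why the escaping set has positive measure; the paper leaves both points implicit.
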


\begin{proof}
A non-empty open subset $W$ of $\A$ with $\hat T^n(W)\cap W=\emptyset$ for all $n\ge1$ meets $\Int\A$ (a non-empty open subset of $\A$ is not contained in $\partial\A$), and $W\cap\Int\A$ corresponds to a wandering open set in $\R^2\setminus K$. The claim follows by applying Lemma~\ref{lem:wandering}.
\end{proof}

\medskip

\subsection{Invariant graphs touching the boundary circles}\label{subsec:boundary-graphs}

A priori Theorem~\ref{thm:rigidity} says nothing about possible invariant graphs touching $C_0$ or $C_1$, and we now treat those directly.\\

Let us first recall the following classical fact and some of its implications. Hereafter in this section, we denote by $\Gamma_\psi$ a graph with height function $\psi$.

\medskip

\begin{definition}
By {\it invariant graph in}  $\A$ we mean a set 
$$
\Gamma_\psi=\{(\theta, \psi (\theta)),\; \theta\in \Tt\}
$$
with $\psi: \Tt \longrightarrow [0,1]$ a continuous function and $\hat{T}(\Gamma_\psi)=\Gamma_\psi$;
when $0 <\psi <1$, it corresponds, via $\Psi\circ \chi^{-1}$, to an invariant graph in the sense of Definition \ref{def:graph}.
\end{definition}

\medskip

\begin{lemma}\label{lem:rotationboundary}
Let $\Gamma_\psi$ be an invariant graph, $\tilde\Gamma\subset\tilde\A$ its lift, and $G(\theta):=p_1\tilde T(\theta,\psi(\theta))$, so that $G:\R\to\R$ is the lift of the homeomorphism induced by $\hat T$ on $\Gamma_\psi$ and $p_1\tilde T^n(\theta,\psi(\theta))=G^n(\theta)$. Then $G$ is an increasing homeomorphism with $G(\theta+2\pi)=G(\theta)+2\pi$, and the limit
\[
\rho(\Gamma_\psi):=\lim_{n\to\infty}\frac{G^n(\theta)-\theta}{2\pi n}
\]
exists and is independent of $\theta$ \textup{(}the \emph{rotation number} of $\Gamma_\psi$ for the lift $\tilde T$\textup{)}, and
\begin{equation}\label{eq:bounded-deviation}
\bigl|G^n(\theta)-\theta-2\pi n\,\rho(\Gamma_\psi)\bigr|<2\pi\qquad\text{for all }\theta\in\R,\ n\ge1 .
\end{equation}
Moreover:
\begin{enumerate}[label=\textup{(\roman*)}]
\item if $\Gamma_\psi$ meets $C_0$, then $\rho(\Gamma_\psi)=0$;
\item if $\Gamma_\psi$ meets $C_1$, then $\rho(\Gamma_\psi)=-\frac 12$;
\item consequently no invariant graph meets both $C_0$ and $C_1$.
\end{enumerate}
\end{lemma}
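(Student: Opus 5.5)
The plan is to transfer the structure of Lemma~\ref{lem:rotation} to the compactified setting. I would then read off the rotation number from the boundary behaviour of $\tilde T$ in Proposition~\ref{prop:That}.

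\emph{Step 1: $G$ is a lift of a circle homeomorphism.} Since $\Gamma_\psi$ is the graph of a continuous $\psi$, the projection $(\theta,\psi(\theta))\mapsto\theta$ is a homeomorphism $\Gamma_\psi\to\Tt$. Conjugating $\hat T|_{\Gamma_\psi}$ by it gives a homeomorphism $g$ of $\Tt$. The function $G(\theta)=p_1\tilde T(\theta,\psi(\theta))$ is continuous by Proposition~\ref{prop:That} and projects to $g$. Moreover $G(\theta)-\theta=\delta(\theta,\psi(\theta))\in[-\pi,0]$, and $G(\theta+2\pi)=G(\theta)+2\pi$ because $\tilde T$ commutes with the deck translation.

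To see that $G$ is increasing, I would argue as in Lemma~\ref{lem:induced}. Since $G$ lifts the homeomorphism $g$, it is injective modulo $2\pi$ on each period. A continuous map with $G(\theta+2\pi)=G(\theta)+2\pi$ that lifts a homeomorphism is monotone, and the degree-one relation forces it to be increasing.

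Next, the identity $p_1\tilde T^n(\theta,\psi(\theta))=G^n(\theta)$ follows by induction. The lifted curve $\tilde\Gamma$ is $\tilde T$-invariant, since $\tilde T(\tilde\Gamma)$ is a lift of $\Gamma_\psi$ that stays within bounded horizontal distance and is connected. Hence $\tilde T(\theta,\psi(\theta))=(G(\theta),\psi(G(\theta)))$.

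\emph{Step 2: rotation number and the deviation bound.} Existence of the limit and its independence from $\theta$ is Poincar\'e's theorem, as in Lemma~\ref{lem:rotation}. For \eqref{eq:bounded-deviation} I would use the standard fact that for an increasing degree-one lift $G$, the functions $\phi_n(\theta)=G^n(\theta)-\theta$ satisfy $\max\phi_n-\min\phi_n<2\pi$; this is the same bracketing argument as \eqref{eq:dimrigiditythm2}, now with period $2\pi$. Subadditivity and superadditivity then give $\min\phi_n\le 2\pi n\rho\le\max\phi_n$. Combining these, every $\phi_n(\theta)$ lies within $2\pi$ of $2\pi n\rho$. The step that needs care is the strict inequality, which comes from the strict oscillation bound.

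\emph{Step 3: boundary circles.} If $(\theta_0,\psi(\theta_0))\in C_0$, then $\tilde T$ fixes $(\theta_0,0)$ by definition of $\hat T$ on $C_0$. Hence $G^n(\theta_0)=\theta_0$ for all $n$, and so $\rho=0$. If the graph meets $C_1$ at $\theta_0$, then $\tilde T(\theta_0,1)=(\theta_0-\pi,1)$, which gives $G^n(\theta_0)=\theta_0-n\pi$ and $\rho=-\tfrac12$. In both cases I use that $C_0$ and $C_1$ are invariant, so the orbit stays on the boundary circle. Item (iii) follows at once, since $0\ne-\tfrac12$.

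The main obstacle is verifying that $\tilde\Gamma$ is $\tilde T$-invariant. For this it suffices that $G$ is well defined as the horizontal coordinate, i.e.\ that $\tilde T(\theta,\psi(\theta))$ lies on $\tilde\Gamma$. Equivalently, the graph property forces the image point's height to be $\psi$ of its angle. This holds because $\hat T(\Gamma_\psi)=\Gamma_\psi$ and the vertical projection identifies $\Gamma_\psi$ with $\Tt$.
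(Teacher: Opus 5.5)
Your proposal is correct and is exactly the classical circle-homeomorphism argument that the paper only invokes by reference. You fill in the details correctly: injectivity and hence monotonicity of the degree-one lift $G$, the oscillation bound $\max\phi_n-\min\phi_n<2\pi$ combined with $\min\phi_n\le 2\pi n\rho\le\max\phi_n$, and the boundary values $\delta=0$ on $C_0$, $\delta=-\pi$ on $C_1$. The one unnecessary detour is your ``bounded horizontal distance'' justification of $\tilde T(\tilde\Gamma)\subset\tilde\Gamma$: since $\tilde\Gamma$ is the full preimage of $\Gamma_\psi$ in $\tilde\A$, this inclusion follows immediately from $\hat T(\Gamma_\psi)=\Gamma_\psi$, as your final paragraph in fact observes.
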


\smallskip

This follows from the classical  theory of homeomorphisms of the circle (see for example \cite[Section 11.1]{KH95}).

\medskip

\begin{remark}\label{rem:AM}
The rotation number alone does not suffice for excluding the possibility of invariant graphs touching the boundaries.
In fact, for a $C^2$ exact area-preserving twist map, this would be possible via Aubry--Mather theory~\cite{AL83,Ban88, Gole, Mat82}, in particular it would follow from Mather's graph theorem: since all points on $C_0$ and $C_1$ are action-minimizing,  there cannot be any other invariant graph with that rotation number. 
This variational theory, however, is formulated for generating functions of class $C^2$, or at least $C^1$ with a strictly monotone twist, whereas in our case the twist is only topological and the map is a homeomorphism (and the compactification is degenerate at $C_1$, where the derivative of the action  tends to infinity). Therefore, we prove the needed facts directly from the rotation-number estimates and the exact billiard equations..
\end{remark}

\medskip

Let us start with the boundary at infinity.

\medskip

\begin{lemma}\label{lem:divergence}
There is a constant $c_K>0$ such that for every $x\in\R^2\setminus K$, with $x_j=T^jx=\Psi(\varphi_j,s_j)$ and $d_j:=d(\varphi_j,s_j)\in(0,\pi)$ the increment of Proposition~\textup{\ref{prop:exact}} at $x_j$, then
\[
d_j\ \ge\ \frac{c_K}{1+s_j}\qquad\text{and}\qquad s_j\le|x|+(2j+1)M \qquad(j\ge0),
\]
where $M:=\max_{y\in\partial K}|y|$. In particular $\sum_{j\ge0}d_j=\infty$.
\end{lemma}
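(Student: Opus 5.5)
The statement has three parts, and I would take them in the order: the bound on $s_j$, then the lower bound on $d_j$, then the divergence of the series.

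\emph{Bound on $s_j$.} This follows from Proposition~\ref{prop:T-basic}~(d). We have $s_K(y)\le|y|+M$ and, by induction, $|T^jx|\le|x|+2jM$. Together these give $s_j\le|x|+(2j+1)M$.

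\emph{Lower bound on $d_j$.} It suffices to show $d(\theta,s)\ge c_K/(1+s)$ for every $(\theta,s)$. I would use Proposition~\ref{prop:exact}~(ii) with a fixed $\varepsilon=\pi/2$.
\begin{itemize}
\item If $d\ge\pi/2$, then $d\ge\pi/2\ge(\pi/2)/(1+s)$ and there is nothing to prove.
\item If $d<\pi/2$, then $\pi-d\ge\pi/2$, so $s\sin d\ge c_{\pi/2}$.
\end{itemize}
In the second case, $\sin d\le d$ gives $s\,d\ge c_{\pi/2}$, hence $d\ge c_{\pi/2}/s\ge c_{\pi/2}/(1+s)$. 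So one can take
\[
c_K:=\min\{\pi/2,\;c_{\pi/2}\}.
\]

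The constant $c_\varepsilon$ should come from the identity $s\sin d=(p(\theta^+)-p(\theta+\pi))\cdot\bn(\theta^+)$, which is the width of $K$ in direction $\bn(\theta^+)$ minus the contribution of $p(\theta+\pi)$. Since the angular separation between $\theta+\pi$ and $\theta^+$ is at least $\varepsilon$, strict convexity and compactness give a uniform positive lower bound. This is already asserted in Proposition~\ref{prop:exact}, so I simply invoke it.

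\emph{Divergence.} Combining the two bounds,
\[
d_j\ \ge\ \frac{c_K}{1+|x|+(2j+1)M}.
\]
The right-hand side is comparable to $1/j$ for large $j$, so $\sum_j d_j$ diverges by comparison with the harmonic series.

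The only delicate point is making sure $c_\varepsilon$ is uniform in $\theta$ and $s$. Proposition~\ref{prop:exact}~(ii) already provides this, so I expect no real obstacle; the rest is bookkeeping.
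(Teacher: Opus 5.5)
Your proof is correct and has the same structure as the paper's: split according to whether $d$ is small, use $s\,d\ge s\sin d\ge$ a positive constant in the small-angle case, then combine with Proposition~\ref{prop:T-basic}~(d) and compare with the harmonic series. The only difference is where the small-angle constant comes from: the paper derives $s\sin d\ge h_{\min}$ directly from \eqref{eq:exact1} for $d\le d_0$ with $\tan d_0\le h_{\min}/\|h'\|_\infty$, which gives the explicit $c_K=\min(h_{\min},d_0)$, whereas you invoke the compactness constant $c_{\pi/2}$ of Proposition~\ref{prop:exact}~(ii), which is shorter but not explicit.
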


\medskip

\begin{proof}
Let $h_{\min}=\min h$ and choose $d_0\in(0,\pi/2)$ with $\tan d_0\le h_{\min}/\|h'\|_\infty$ (or any $d_0\in(0,\pi/2)$ if $h'\equiv0$). For $0<d\le d_0$ the right-hand side of \eqref{eq:exact1} satisfies
\[
h(\theta+d)+h(\theta)\cos d+h'(\theta)\sin d\ \ge\ h_{\min}+h_{\min}\cos d_0-\|h'\|_\infty\sin d_0\ \ge\ h_{\min}.
\]
Hence if $d_j\le d_0$ then 
$$s_jd_j\ge s_j\sin d_j\ge h_{\min}  \quad \Longrightarrow \quad d_j\ge \frac{h_{\min}}{s_j} \ge \frac{h_{\min}}{1+s_j}.$$
If $d_j>d_0$ then $d_j>d_0/(1+s_j)$. Take $c_K:=\min(h_{\min},d_0)$. The bound on $s_j$ follows from Proposition~\ref{prop:T-basic} (d). Clearly, 
$$\sum_j \frac{c_K}{1+|x|+ (2j+1)M}= +\infty.$$
\end{proof}

\medskip

Let us now prove that there cannot be invariant graphs touching the boundary at infinity.

\medskip
\begin{proposition}\label{prop:infinity}
If an invariant graph $\Gamma_\psi$ meets $C_1$, then $\Gamma_\psi=C_1$.
\end{proposition}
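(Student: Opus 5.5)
The plan is to compare the rotation number of $\Gamma_\psi$, which is forced to be $-\tfrac12$, with the divergence of the angular increments along interior orbits given by Lemma~\ref{lem:divergence}. Assume $\Gamma_\psi$ meets $C_1$ and, arguing by contradiction, that $\Gamma_\psi\neq C_1$. Then there is $\theta_0$ with $\psi(\theta_0)<1$.

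\emph{Step 1: the bad point lies in the interior and its orbit stays there.} By Lemma~\ref{lem:rotationboundary}~(iii), $\Gamma_\psi$ cannot also meet $C_0$, so $\psi>0$ everywhere. Hence $z_0:=(\theta_0,\psi(\theta_0))\in\Int\A$. Since $\hat T$ is a bijection preserving $C_1$ and $\Int\A$ (Proposition~\ref{prop:That}) and preserving $\Gamma_\psi$, every iterate $\hat T^j(z_0)$ lies in $\Gamma_\psi\cap\Int\A$. Transport $z_0$ to $x:=\Psi\circ\chi^{-1}(z_0)\in\R^2\setminus K$ and write $T^jx=\Psi(\varphi_j,s_j)$ with increments $d_j:=d(\varphi_j,s_j)\in(0,\pi)$.

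\emph{Step 2: express the lifted orbit through the increments.} On $\Int\tilde\A$ the lift acts by $\tilde T(\theta,v)=(\theta+\delta,v^+)$ with $\delta=d-\pi$. With $G$ as in Lemma~\ref{lem:rotationboundary}, this gives
\[
G^n(\theta_0)-\theta_0=\sum_{j=0}^{n-1}(d_j-\pi)=-n\pi+\sum_{j=0}^{n-1}d_j .
\]
One should check that the lifted angle coordinates along the $\tilde T$-orbit agree with the lifted contact angles $\varphi_j$ used in Lemma~\ref{lem:divergence}. This holds by construction, since both are determined by the increment $\delta\in(-\pi,0)$.

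\emph{Step 3: contradiction.} By Lemma~\ref{lem:rotationboundary}~(ii), $\rho(\Gamma_\psi)=-\tfrac12$. The bounded-deviation estimate \eqref{eq:bounded-deviation} then yields
\[
\Bigl|\sum_{j=0}^{n-1}d_j\Bigr|<2\pi\qquad\text{for all } n\ge1 .
\]
On the other hand, Lemma~\ref{lem:divergence} gives $\sum_{j\ge0}d_j=\infty$. This is a contradiction, so $\psi\equiv1$ and $\Gamma_\psi=C_1$.

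The only delicate point is the bookkeeping in Step 2. We must ensure that the orbit never reaches $C_1$, which holds by invariance of $C_1$ and injectivity of $\hat T$. We must also ensure it never reaches $C_0$, which holds because $\psi>0$. Under these two conditions every increment of $G$ along this orbit is genuinely $d_j-\pi$, with $d_j$ the billiard increment controlled by Lemma~\ref{lem:divergence}.
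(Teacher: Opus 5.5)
Your proposal is correct and is essentially the paper's own proof. You assume $\Gamma_\psi\neq C_1$, use Lemma~\ref{lem:rotationboundary}~(iii) to find a point with $0<\psi(\theta)<1$, and write $G^n(\theta)-\theta=-n\pi+\sum_{j<n}d_j$; then $\rho(\Gamma_\psi)=-\tfrac12$ and \eqref{eq:bounded-deviation} bound $\sum_{j<n}d_j$ by $2\pi$, contradicting Lemma~\ref{lem:divergence}. Your extra bookkeeping is exactly what the paper leaves implicit: the orbit stays in $\Int\A$ because $\hat T$ preserves both boundary circles, and the lifted angles advance by $\delta_j=d_j-\pi$.
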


\begin{proof}
By Lemma~\ref{lem:rotationboundary} (ii), $\rho(\Gamma_\psi)=-\frac 12$. 
Suppose that $\Gamma_\psi\ne C_1$.
Since $\Gamma_\psi$ meets $C_1$,
Lemma~\ref{lem:rotationboundary} (iii) implies that
it is disjoint from $C_0$.
We can therefore choose $\theta$ such that
$0<\psi(\theta)<1$.
Set
\[
z:=(\theta,\psi(\theta)),\qquad
x:=\Psi\circ\chi^{-1}(z)\in\R^2\setminus K.
\]
Along the orbit of $x$, write
\[
d_j:=d\bigl(\theta_K(T^jx),s_K(T^jx)\bigr),
\qquad
\delta_j:=d_j-\pi\in(-\pi,0).
\]
The lifted displacement along the corresponding orbit
on $\Gamma_\psi$ is therefore
\begin{equation}\label{eq:finite-displacement}
G^n(\theta)-\theta=\sum_{j<n}\delta_j=-n\pi+\sum_{j<n}d_j.
\end{equation}
Then, using \eqref{eq:bounded-deviation} and \eqref{eq:finite-displacement}, we deduce that
\[
\sum_{j<n}d_j=\bigl|G^n(\theta)-\theta-2\pi n\rho(\Gamma_\psi)\bigr|<2\pi\qquad\text{for all }n,
\]
contradicting $\sum_jd_j=\infty$ (Lemma~\ref{lem:divergence}). Therefore,  $\psi\equiv1$.
\end{proof}

\medskip

Now we discuss the possibility of graphs touching $C_0$. However, being interested in escaping orbits, these objects do not really represent an obstruction. So, we do not really show that they cannot exist (that might be the case, see Remark \ref{rem:AM}), but we only show that they must be uniformly bounded (and, therefore, away from $C_1$).

\medskip

\begin{proposition}\label{prop:inner}
If an invariant graph $\Gamma_\psi$ meets $C_0$, then every point of $\Gamma_\psi\cap\Int\A$ corresponds to a point $x \in \R^2\setminus K$ with $|x|\le12M$, where $M:=\max_{y\in\partial K}|y|$. Consequently,
\[
\psi\le c_K':=\frac{13M}{1+13M}<1 .
\]
Consequently $\Gamma_\psi\cap C_1=\emptyset$ and $\Gamma_\psi\setminus C_0\subset\Int\A$, {\it i.e.}, an invariant graph touching  $C_0$ lies entirely in the compact region $\Tt\times[0,c_K']$, uniformly away from $C_1$.
\end{proposition}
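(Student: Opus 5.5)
The plan is to turn the rotation-number information of Lemma~\ref{lem:rotationboundary} into a bound on the angular increments along orbits in $\Gamma_\psi$. An increment close to $\pi$ then gives a geometric bound on tangent lengths via Proposition~\ref{prop:exact}~(i).

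\textbf{Step 1: few small increments along each orbit.} Since $\Gamma_\psi$ meets $C_0$, Lemma~\ref{lem:rotationboundary}~(i) gives $\rho(\Gamma_\psi)=0$. Fix a point $z=(\theta,\psi(\theta))\in\Gamma_\psi\cap\Int\A$ and the corresponding $x=\Psi\circ\chi^{-1}(z)$. By invariance, all iterates $x_j=T^jx=\Psi(\varphi_j,s_j)$ lie on $\Gamma_\psi\cap\Int\A$, so the increments $d_j\in(0,\pi)$ are defined. As in \eqref{eq:finite-displacement}, $G^n(\theta)-\theta=-\sum_{j<n}(\pi-d_j)$. The bounded deviation estimate \eqref{eq:bounded-deviation} with $\rho=0$ then gives
\[
\sum_{j<n}(\pi-d_j)<2\pi\qquad\text{for all }n\ge1.
\]
Each $j$ with $d_j<\pi/2$ contributes more than $\pi/2$ to this sum. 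Hence at most three indices have $d_j<\pi/2$, and some $j\in\{0,1,2,3\}$ satisfies $d_j\ge\pi/2$.

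\textbf{Step 2: a large increment bounds the tangent length.} For that index $j$, apply Proposition~\ref{prop:exact}~(i) at $x_j$ (stated for $F$, but $F$ and $T$ share $s$, $s^+$ and $d$). Write $s=s_j$ and $s^+=s_{j+1}$. Since $\sin^2(d_j/2)\ge 1/2$,
\[
(s-s^+)^2\le4M^2,\qquad 2\,s\,s^+\le 4M^2 .
\]
So $\min(s,s^+)\le\sqrt2M$ and $|s-s^+|\le2M$, which gives $s_j\le(\sqrt2+2)M<4M$. Proposition~\ref{prop:T-basic}~(d) then yields $|x_j|\le s_j+M<5M$.

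\textbf{Step 3: pull back and convert.} Use the bound $|T^{-1}y|\le|y|+2M$ from Proposition~\ref{prop:T-basic}~(d) at most three times:
\[
|x|=|T^{-j}x_j|\le 5M+6M\le 12M .
\]
Then $s_K(x)\le|x|+M\le13M$. Since $s\mapsto s/(1+s)$ is increasing, $\psi(\theta)\le 13M/(1+13M)=c_K'$ at every $\theta$ where $\psi>0$, and trivially where $\psi=0$. The remaining assertions follow at once: $\Gamma_\psi\cap C_1=\emptyset$, and every point of $\Gamma_\psi$ off $C_0$ lies in $\Int\A$.

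The main obstacle is Step 1: making sure the rotation-number bound really controls the individual increments along a single orbit. This needs the exact lift identity \eqref{eq:finite-displacement} with $\delta_j=d_j-\pi$, together with the fact that the whole orbit of $x$ stays in $\Int\A$ so that the $d_j$ are defined. The geometric estimates in Steps 2 and 3 are routine.
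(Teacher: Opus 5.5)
Your argument is correct and uses the same ingredients as the paper: rotation number $0$ with the bounded-deviation estimate, Proposition~\ref{prop:exact}~(i), and the $2M$ displacement bound of Proposition~\ref{prop:T-basic}~(d). The paper argues by contradiction, assuming $|x|>12M$ to force $d_0,d_1,d_2<\pi/3$, whereas you go directly from a large increment $d_j\ge\pi/2$ with $j\le 3$ to a small tangent length and then pull back. One small point: your Step~3 only covers $0<\psi(\theta)<1$, so to conclude $\psi\le c_K'$ everywhere you must also rule out $\psi=1$. This follows from Lemma~\ref{lem:rotationboundary}~(iii), or from continuity of $\psi$, since $\psi$ vanishes somewhere.
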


\medskip

\begin{proof}
By Lemma~\ref{lem:rotationboundary} (i), $\rho(\Gamma_\psi)=0$, so by \eqref{eq:bounded-deviation} and \eqref{eq:finite-displacement} every finite point $x$ ({\it i.e.}, point not on $C_1$) of $\Gamma_\psi$ satisfies 
\begin{equation}\label{eq:boundlemmarot}
|\sum_{j<n}\delta_j|<2\pi \qquad \forall\,n\ge1.
\end{equation}

Let us assume by contradiction that $|x|>12M$.
Let $x_j=T^jx$, $s_j=s_K(x_j)$. By Proposition~\ref{prop:T-basic} (d)
$$|x_j|\ge|x|-2Mj>6M  \qquad {\rm for}\;0\le j\le3,$$ so $s_j\ge|x_j|-M>5M$ for $0\le j\le3$. \\
Applying Proposition~\ref{prop:exact} (i) at $x_j$ for $j=0,1,2$ (the tangent length of $F(x_j)=-x_{j+1}$ is $s_{j+1}$), we get:
\[
4M^2\ \ge\ 4s_js_{j+1}\sin^2(d_j/2)\ >\ 100M^2\sin^2(d_j/2),
\]
so 
$$\sin(d_j/2)<1/5<\tfrac12  \qquad \Longrightarrow \quad d_j<\pi/3 \quad {\rm for}\;j=0,1,2.$$ 
Therefore 
$$\sum_{j<3}\delta_j=\sum_{j<3}(d_j-\pi)<\pi-3\pi=-2\pi,$$ contradicting \eqref{eq:boundlemmarot}. 

Hence, $|x|\le12M$ and consequently:
$$ s\le|x|+M\le13M \quad{\rm and} \quad \psi=\frac{s}{1+s}\le \frac{13M}{1+13M}.$$

\end{proof}

\bigskip

\subsection{Proof of the Main Theorem}\label{subsec:main-proof}

We can now prove the Main Theorem stated in Section \ref{sec:mainthm}.

\medskip

Recall that items (i) and (ii) follow from Theorem \ref{thm:no-graphs} (see Remark \ref{re:proofmthmiii}). Therefore, to complete the proof of the Main Theorem, we only need to prove (iii).

\medskip

\begin{proof}[Proof of the Main Theorem (iii)]
If $\hat T$ is not non-wandering,
Corollary~\ref{cor:dichotomy} already provides forward
escaping orbits. 
Let $h\in\cR$ and assume that $\hat T$ is non-wandering on $\A$.  We show that there is then an orbit asymptotic to $C_1$.

\medskip

Since we have not excluded a priori the existence of invariant graphs touching $C_0$, we construct a new (possibly smaller) annulus $A_*$ to which we can apply Le Calvez's result (Theorem~\ref{thm:LC-Mather}). 

\medskip

\begin{itemize}
\item {\bf Step 1:} 
Let $\cG$ be the set of invariant graphs $\Gamma_\psi\ne C_1$. If $\Gamma_\psi\in\cG$ met $C_1$ it would coincide with $C_1$ by Proposition~\ref{prop:infinity}; if it met neither $C_0$ nor $C_1$ it would be an invariant graph, excluded by Theorem~\ref{thm:no-graphs}. Hence every $\Gamma_\psi\in\cG$ meets $C_0$, has rotation number $0$ (Lemma~\ref{lem:rotationboundary}), and by Proposition~\ref{prop:inner} satisfies $$\psi\le c:=\frac{13M}{1+13M}<1,$$
where $M:=\max_{y\in\partial K}|y|$. 

\medskip
\item {\bf Step 2:} 
For $\Gamma_\psi\in\cG$, let
\[
D_\psi:=\{(\theta,v)\in\A:v\le\psi(\theta)\},
\qquad
V_\psi:=\{(\theta,v)\in\A:v>\psi(\theta)\}.
\]

\smallskip
We claim that $D_\psi$ is $\hat T$-invariant.
Since $\psi\le c<1$, the set $V_\psi$ contains $C_1$.
It is connected: every point of $V_\psi$ can be joined to
$C_1$ by a vertical segment contained in $V_\psi$, and $C_1$
is connected. Moreover, the continuous function
$(\theta,v)\mapsto v-\psi(\theta)$ never vanishes on
$\A\setminus\Gamma_\psi$, so its sign is constant on each
connected component of this complement. Thus $V_\psi$ is
precisely the connected component of
$\A\setminus\Gamma_\psi$ containing $C_1$.

Because $\hat T$ is a homeomorphism preserving both
$\Gamma_\psi$ and $C_1$, it maps this component onto itself:
\[
\hat T(V_\psi)=V_\psi.
\]
Since $D_\psi=\A\setminus V_\psi$, bijectivity of $\hat T$
then gives
\[
\hat T(D_\psi)
=
\A\setminus\hat T(V_\psi)
=
\A\setminus V_\psi
=
D_\psi.
\]

 Let
\[
D:=\overline{\bigcup_{\Gamma_\psi\in\cG}D_\psi}.
\]
Then $D$ is closed, $\hat T$-invariant (as $\hat T$ is a homeomorphism), contains $C_0$, and it is contained in $\{v\le c\}$.\\  
Define
\[
a(\theta):=\max\{v:(\theta,v)\in D\}\in[0,c],
\]
where this maximum exists because $D\cap(\{\theta\}\times[0,1])$ is compact and non-empty. Observe that $a$ is upper semicontinuous, because $D$ is closed. Moreover 
$$D=\{(\theta,v):v\le a(\theta)\},$$ since if $(\theta,v)\in D$ and $v'<v$ then $(\theta,v')\in D$ (this holds for each $D_\psi$, for the union, and passes to the closure). \\
Consequently
\[
U:=\A\setminus D=\{(\theta,v):v>a(\theta)\}
\]
is open, $\hat T$-invariant, contains $C_1$ and  it is disjoint from $C_0$.

\medskip
\item {\bf Step 3:} 
Fix $n_0$ with $c+1/n_0<1$. For $n\ge n_0$ let us define
\[
a_n(\theta):=\sup_{\theta'\in\Tt}\bigl[a(\theta')-n\,\dist(\theta,\theta')\bigr]+\tfrac1n .
\]
Each $a_n$ is $n$-Lipschitz (being a supremum of $n$-Lipschitz functions), and pointwise
$$1>a_n\ge a+1/n>a \qquad {\rm and} \qquad a_{n+1}<a_n.$$ 
Moreover $a_n\downarrow a$. In fact,  given $\theta$ and $\varepsilon>0$, upper semicontinuity gives $\eta>0$ with $a(\theta')<a(\theta)+\varepsilon$ for $\dist(\theta,\theta')<\eta$, so 
$$\sup_{\theta'}[a(\theta')-n\dist(\theta,\theta')]\le\max\{a(\theta)+\varepsilon,\,c-n\eta\}\to a(\theta)+\varepsilon.$$
Therefore, it follows that 
$$U=\bigcup_{n\ge n_0}\{v>a_n(\theta)\}$$ and that $\{v\ge a_n\}\subset\{v>a_{n+1}\}$.

\smallskip

For $n\ge n_0$, set
\[
b_n:=\frac{1}{n-n_0+2}.
\]
Then $b_{n_0}=1/2$, the sequence $(b_n)_n$ is strictly
decreasing, and $b_n\to0$.

For each $\theta\in\Tt$, define
\[
H_\theta:(a(\theta),1]\longrightarrow(0,1]
\]
by increasing affine interpolation as follows:
\begin{itemize}
\item map $[a_{n_0}(\theta),1]$ affinely onto $[1/2,1]$;
\item for every $n\ge n_0$, map
$[a_{n+1}(\theta),a_n(\theta)]$ affinely onto
$[b_{n+1},b_n]$.
\end{itemize}
The prescriptions agree at every common endpoint:
\[
H_\theta(1)=1,
\qquad
H_\theta(a_n(\theta))=b_n
\quad(n\ge n_0).
\]
Since $a_n(\theta)\downarrow a(\theta)$ and $b_n\downarrow0$,
$H_\theta$ is an increasing homeomorphism from
$(a(\theta),1]$ onto $(0,1]$.

Define now
\[
\Theta:U\longrightarrow\Tt\times(0,1],
\qquad
\Theta(\theta,v):=(\theta,H_\theta(v)).
\]
The breakpoints $a_n(\theta)$ depend continuously on
$\theta$, and the affine formulas agree at adjacent
endpoints. Near every point of $U$, only finitely many
interpolation pieces occur, so $\Theta$ is continuous.
The same argument applies to its inverse, using the
breakpoints $b_n$. Thus $\Theta$ is a homeomorphism.
It fixes $C_1$ pointwise, and hence $U$ is an upper
annulus in the sense of Section~\ref{subsec:LC}.

\medskip

\item{\bf Step 4:} 
 By Propositions~\ref{prop:That} and~\ref{prop:monotone}, $\hat T\in\Homeo_*(\A)$ is $\cV$-monotone, and it is non-wandering by assumption. Theorem~\ref{thm:LC-Birkhoff} applied to $U$ shows that $\Fr U$ is the graph of a continuous function $\psi_*$.\\
  For every $\theta$ the point $(\theta,a(\theta))$ lies in $D=\A\setminus U$ and is a limit of points $(\theta,a(\theta)+\varepsilon)\in U$, so it belongs to $\Fr U$. Since $\Fr U$ meets each fibre exactly once, $\psi_*=a$. Hence $a$ is continuous, and $\Gamma_*:=\Gamma_a=\Fr U$ is an invariant graph (the frontier of an invariant set, by \eqref{eq:frontier}) with $a\le c<1$, so $\Gamma_*\in\cG$. \\
$\Gamma_*$ is the maximal invariant graph for $\cG$ in the sense that if $\Gamma_\psi\in\cG$ then $D_\psi\subset D$, {\it i.e.},\ $\psi\le a$.

\end{itemize}

\bigskip

Let 
$$A_*:=\{(\theta,v):a(\theta)\le v\le1\}=\overline U,$$
which is a $\hat T$-invariant closed set, and let 
\begin{eqnarray}
\Theta_*:A_* &\longrightarrow& \A \nonumber \\
(\theta,v) &\longmapsto& \left( \theta,  \frac{v-a(\theta)}{1-a(\theta)}\right),
\end{eqnarray}
which is a homeomorphism preserving each vertical fibre and is increasing on it, mapping $\Gamma_*$ onto $C_0$ and $C_1$ onto $C_1$.

\medskip

Define 
\begin{equation}
f:=\Theta_*\circ\hat T\circ\Theta_*^{-1}\in\Homeo(\A).
\end{equation}

Observe that $f$ satisfies  the hypotheses of Theorem~\ref{thm:LC-Mather}. In fact:
 
 \medskip

\begin{itemize}
\item $f$ preserves $C_0$ and $C_1$ and acts on $C_1$ as a rotation. 

\smallskip
\item $f$ is isotopic to the identity. Indeed, $f$ preserves
each boundary component, and since $\Theta_*$ fixes $C_1$
pointwise, its restriction to $C_1$ is
\[
f(\theta,1)=(\theta-\pi,1).
\]
This rotation has degree one. Since the inclusion
$C_1\hookrightarrow\A$ induces an isomorphism on first
homology, $f$ induces the identity on
$H_1(\A;\Z)\cong\Z$. The annulus isotopy criterion used
in the proof of Proposition~\ref{prop:That}
(see \cite{Eps66}) therefore shows that $f$ is isotopic
to the identity. Hence $f\in\Homeo_*(\A)$.\smallskip
\item $f$ is $\cV$-monotone: the four order relations of Proposition~\ref{prop:twist} concern pairs of points on a common vertical fibre and the fibres of their images, and are preserved by the fibre-preserving, fibrewise increasing homeomorphism $\Theta_*$; so the argument of Proposition~\ref{prop:monotone} applies verbatim to $f$.

\smallskip
\item $f$ is non-wandering: let $W\subset\A$ be non-empty and open. Then $\Theta_*^{-1}(W)$ is a non-empty relatively open subset of $A_*=\overline U$, hence meets the open set $U$, and $W':=\Theta_*^{-1}(W)\cap U$ is a non-empty open subset of $\A$. Since $\hat T$ is non-wandering, $\hat T^n(W')\cap W'\ne\emptyset$ for some $n\ge1$, and then $f^n(W)\cap W\ne\emptyset$.

\smallskip
\item $C_0$ and $C_1$ are the only $f$-invariant essential loops transverse to $\cV$: such a loop corresponds to an invariant graph $\Gamma_\psi\subset A_*$, {\it i.e.},\ with $\psi\ge a$. Either $\Gamma_\psi=C_1$, or $\Gamma_\psi\in\cG$ and then $\psi\le a$ by Step~4, so $\psi=a$ and $\Gamma_\psi=\Gamma_*$.
\end{itemize}

\bigskip

Therefore, Theorem~\ref{thm:LC-Mather} provides $z\in\Int\A$ with $$\dist(f^k(z),C_1)\to0 \quad {\rm as}\quad k\to+\infty.$$
 Let 
 $$(\theta,v)=\Theta_*^{-1}(z)\in U\cap\Int\A \quad {\rm and} \quad x:=\Psi\chi^{-1}(\theta,v)\in\R^2\setminus K.$$
  Writing $\hat T^k(\theta,v)=(\theta_k,v_k)$ we have $v_k\to1$, which implies 
  $$s_K(T^kx)= \frac{v_k}{1-v_k}\to\infty.$$ Therefore 
  $$|T^kx|\ge s_K(T^kx)-M\to\infty$$ by Proposition~\ref{prop:T-basic} (d).  This concludes the proof.
\end{proof}

\medskip

\begin{remark}
The proof gives slightly more: in the non-wandering case, the escaping orbit is bi-asymptotic, its backward iterates accumulating on the invariant graph $\Gamma_*$ (which could possibly be $\partial K$ itself), and Theorem~\ref{thm:LC-Mather} also provides an orbit coming from infinity and accumulating on $\Gamma_*$ in forward time. 
\end{remark}

\bigskip

\appendix

\section{Proofs of some properties of the outer billiard map}\label{app:proofs}

In this Appendix we collect the proofs of the results stated in Section \ref{subsec:T}.

\medskip

We recall that $h\in\cH$, $K=K_h$, and $p$, $\bn$, $\bt$ are as in Proposition~\ref{prop:reconstruction}. Moreover, $\bt(\theta)$ is the unit tangent to $\partial K$ at $p(\theta)$ pointing in the counterclockwise direction.

\medskip

Let us first state and prove an elementary fact about the supporting lines through an exterior point.

\medskip

\begin{lemma}\label{lem:two-tangents}
Let  $x\in\R^2\setminus K$. Define 
$$f_x(\theta):=x\cdot\bn(\theta)-h(\theta),$$
which is a $C^1$ function on $\Tt$ with
\[
f_x'(\theta)=x\cdot\bt(\theta)-h'(\theta)=\bigl(x-p(\theta)\bigr)\cdot\bt(\theta).
\]
Then $f_x$ has exactly two zeros $\theta_1\neq\theta_2$ in $\Tt$, both simple, with $f_x'(\theta_1)>0>f_x'(\theta_2)$. The set $\{f_x>0\}$ is the open arc running counterclockwise from $\theta_1$ to $\theta_2$, and its length lies in $(0,\pi)$.
\end{lemma}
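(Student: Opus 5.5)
We use the strict inequality \eqref{eq:unique-max}, $p(\theta)\cdot\bn(\alpha)<h(\alpha)$ for $\theta\not\equiv\alpha$, and the geometry of supporting lines. The plan is to compute $f_x'$, pin down the shape of $f_x$ near its zeros, count the zeros using strict convexity, and finally bound the positive arc.

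\textbf{Derivative.} Since $h\in C^1$, $f_x$ is $C^1$ with $f_x'(\theta)=x\cdot\bt(\theta)-h'(\theta)$. Using $p(\theta)\cdot\bt(\theta)=h'(\theta)$, this equals $(x-p(\theta))\cdot\bt(\theta)$.

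\textbf{Positivity somewhere.} Because $x\notin K$ and $K=\{y: y\cdot\bn\le h\}$, we have $\max f_x>0$. Since $0\in\Int K$, $\min f_x<0$: at $\theta$ with $\bn(\theta)=-x/|x|$ we get $f_x=-|x|-h<0$. Hence $f_x$ has zeros.

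\textbf{Zeros are simple.} Let $f_x(\theta_0)=0$. Then $x$ lies on the supporting line $\{y\cdot\bn(\theta_0)=h(\theta_0)\}$. That line meets $K$ only at $p(\theta_0)$, and $x\ne p(\theta_0)$, so $x-p(\theta_0)=\sigma\bt(\theta_0)$ with $\sigma\ne0$. Therefore $f_x'(\theta_0)=\sigma\neq0$.

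\textbf{Exactly two zeros.} A $C^1$ function on the circle with only simple zeros has finitely many of them, and their number is even. The signs of $f'$ alternate between $+$ and $-$ along the circle, so we need only rule out four or more zeros. Write the zeros as $x=p(\theta_i)+\sigma_i\bt(\theta_i)$. A zero with $\sigma>0$ is exactly the representation in Lemma~\ref{lem:chart}: $\Psi$ is a bijection (taken as given from Lemma~\ref{lem:chart}; alternatively argued directly via rays in the cone of tangents), so $(\theta,\sigma)$ with $\sigma>0$ is unique. Hence there is exactly one zero with $f_x'>0$.

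For zeros with $\sigma<0$, note that $x=p(\theta)-|\sigma|\bt(\theta)=p(\theta+\pi)+\ldots$ fails to transfer directly. Instead we use a direct argument, the main obstacle: $\{f_x>0\}$ is the set of normals $\bn(\theta)$ whose supporting line separates $x$ from $K$. By convexity, this set is $\{\theta:\bn(\theta)\cdot(x-y)>0\ \text{for some... }\}$; more cleanly, $\{f_x>0\}=\{\theta: \bn(\theta)\in$ the open dual cone$\}$. The set $\{\theta:\max_{y\in K}(y-x)\cdot\bn(\theta)<0\}$ is the interior of the polar cone of $K-x$. The cone $K-x$ is a closed convex cone not containing a line and with nonempty interior, so its polar has interior which is an open arc of length $<\pi$. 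Connectedness of this arc gives exactly one positive arc, hence exactly two zeros. Its endpoints are $\theta_1$ (where $f'>0$) and $\theta_2$ (where $f'<0$), in counterclockwise order.

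\textbf{Length in $(0,\pi)$.} The length is positive because the arc is open and nonempty. It is less than $\pi$ because $K-x$ has nonempty interior: if the arc contained two antipodal normals $\pm\bn$ in its closure, then $K-x$ would lie in the line $\bn^\perp$, which is impossible.

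I expect the connectedness/cone step to be the only delicate point; the rest is routine.
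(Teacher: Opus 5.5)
Your proof is correct, but it gets the zero count by a different route than the paper. The paper cites the classical fact that a point outside a compact convex body with nonempty interior lies on exactly two supporting lines, so $f_x$ has exactly two zeros. It then proves simplicity exactly as you do, reads off which arc is $\{f_x>0\}$ from the signs of $f_x'$, and bounds the length using $f_x(\theta)+f_x(\theta+\pi)=-2h(\theta)<0$.

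You instead prove the count directly. Simple zeros on the circle are finitely many and even in number, with alternating signs of $f_x'$. Moreover, since the maximum over compact $K$ is attained, $\{f_x>0\}=\{\theta:\bn(\theta)\cdot(y-x)<0\ \forall y\in K\}$. This set is the radial projection onto the unit circle of the convex cone $\{w: w\cdot(y-x)<0\ \forall y\in K\}$, which does not contain $0$, so it is connected. Hence there is a single positive component and exactly two zeros. Your length bound (a closed arc containing $\pm\bn$ would force $K-x\subset\bn^\perp$) is the same idea as the paper's antipodal identity. You phrase it through positive width rather than through $\pi$-periodicity and $h>0$, so it does not need central symmetry. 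The paper's version is shorter because it outsources the count; yours is self-contained.

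Two things to clean up. First, drop the appeal to Lemma~\ref{lem:chart}. In the paper's logical order that lemma, and already Definition~\ref{def:contact}, is derived from the present statement, so invoking injectivity of $\Psi$ here is circular. It is also unnecessary, since the connectedness argument alone closes the count. Second, the cone paragraph is garbled as written. $K-x$ is a compact convex set, not a cone: you mean the cone it generates, or simply the convex set above. The unfinished ``for some\dots'' should read ``for all $y\in K$''. With these fixes the argument is complete.
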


\begin{proof}
The zeros of $f_x$ are the normal angles of the supporting lines of $K$ through $x$. A point outside a compact convex body with non-empty interior lies on exactly two supporting lines 
(see, for example, \cite[Section 1.3]{Sch14}), so $f_x$ has exactly two zeros. 

\smallskip
If $f_x(\theta_0)=0$ then $x$ lies on the supporting line $\{y\cdot\bn(\theta_0)=h(\theta_0)\}$, which meets $K$ only at $p(\theta_0)$. Hence $$x=p(\theta_0)+\sigma\bt(\theta_0) \quad {\rm with} \quad \sigma=(x-p(\theta_0))\cdot\bt(\theta_0)=f_x'(\theta_0),$$ and $\sigma\neq0$ because $x\notin K$. 

\smallskip
Thus both zeros are simple, $f_x$ changes sign at each of them, and the derivatives at the two zeros have opposite signs. The function $f_x$ takes positive values (by separation, some $\bn$ satisfies $x\cdot\bn>h(\bn)$) and negative values (at $\bn=-x/|x|$, $f_x=-|x|-h<0$). Hence $\{f_x>0\}$ is one of the two open arcs determined by the zeros, namely the one beginning at the zero where $f_x$ increases through $0$. 

\smallskip
Finally, choose real representatives of the two zeros such
that
\[
\theta_1<\theta_2<\theta_1+2\pi
\]
and the positivity arc $\{f_x>0\}$ is represented by
$(\theta_1,\theta_2)$.
Since $h$ is $\pi$-periodic and
$\bn(\theta+\pi)=-\bn(\theta)$, we have
\[
f_x(\theta)+f_x(\theta+\pi)=-2h(\theta)<0.
\]
Therefore the positivity arc contains no pair of antipodal
angles, which implies $\theta_2-\theta_1\le\pi$.
If equality held, its endpoints would be antipodal zeros,
and hence
\[
0=f_x(\theta_1)+f_x(\theta_2)
 =f_x(\theta_1)+f_x(\theta_1+\pi)
 =-2h(\theta_1)<0,
\]
a contradiction. Consequently
\[
0<\theta_2-\theta_1<\pi,
\]
as claimed.
\end{proof}

\medskip

\subsection{Proof of Lemma \ref{lem:chart}}

\begin{proof}
For $(\theta,s)$ with $s>0$, the point $y=\Psi(\theta,s)$ satisfies 
$$f_y(\theta)=0 \quad {\rm and} \quad f_y'(\theta)=s>0.$$
Since $y$ lies on the supporting line with normal $\bn(\theta)$ and $y\neq p(\theta)$ (as $s > 0$), strict convexity gives $y \not\in K$; by Lemma ~\ref{lem:two-tangents}
the zero of $f_y$ with positive derivative is $\theta_K(y)$, so $\theta_K(y) = \theta$ and  $s_K(y)=s$.

\smallskip
Conversely $\Psi(\theta_K(x),s_K(x))=x$ by Definition~\ref{def:contact}. So $\Psi$ is a continuous bijection with inverse $(\theta_K,s_K)$.

\smallskip
It remains to show that $\theta_K$ is $C^1$, since $s_K(x)=x\cdot\bt(\theta_K(x))-h'(\theta_K(x))$ is then continuous.

Consider 
$$G(x,\theta):=x\cdot\bn(\theta)-h(\theta),$$ a $C^1$ function on $\R^2\times\R$ with $\partial_\theta G(x,\theta_K(x))=s_K(x)>0$. By the implicit function theorem there are a neighbourhood $W$ of $x$ and a $C^1$ function $\vartheta:W\to\R$ such that 
$$\vartheta(x)=\theta_K(x) \quad {\rm and} \quad G(x',\vartheta(x'))=0 \quad \forall\, x'\in W,$$ 
$\vartheta(x')$ being the unique zero of $f_{x'}$ near $\theta_K(x)$. Moreover, by continuity we can assume (possibly reducing $W$)  that $\partial_\theta G(x',\vartheta(x'))>0$ on $W$.

\smallskip
 We claim that $\vartheta=\theta_K$ on $W$ (possibly shrinking $W$). Indeed, as $x'\to x$ the two zeros of $f_{x'}$ converge to the two zeros of $f_x$ (the zeros vary in a compact set, and any limit of zeros of $f_{x'}$ is a zero of $f_x$ by uniform convergence of $f_{x'}\to f_x$; both zeros of $f_x$ are limits since $f_x$ changes sign at each of them).  The zero of $f_{x'}$ near $\theta_K(x)$ has positive derivative, so it is $\theta_K(x')$, and it equals $\vartheta(x')$ by uniqueness. 
 
 \smallskip
 Differentiating $G(x,\theta_K(x))=0$ gives $$v\cdot\bn(\theta_K)+s_K\,D\theta_K(x)v=0,$$ which is \eqref{eq:Dtheta}. The statement for $\check\theta_K$ is proven in the same way, using $$\partial_\theta G(x,\check\theta_K(x))=-\check s_K(x)<0.$$
\end{proof}

\medskip

\subsection{Proof of Proposition~\ref{prop:T-basic}}

\begin{proof}
(a) Let $x=\Psi(\theta,s)$ and $y=T(x)=p(\theta)-s\bt(\theta)$. Then $f_y(\theta)=0$ and 
$$f_y'(\theta)=(y-p(\theta))\cdot\bt(\theta)=-s<0,$$ so $\theta=\check\theta_K(y)$ and $\check s_K(y)=s$. 

\smallskip
Define $U(y):=2p(\check\theta_K(y))-y$. Then $$U(T(x))=2p(\theta)-y=x.$$ 

\smallskip
Conversely, for any $y\notin K$ 
$$U(y)=p(\check\theta_K(y))+\check s_K(y)\bt(\check\theta_K(y))$$ has contact angle $\check\theta_K(y)$ by Lemma~\ref{lem:chart}, so $$T(U(y))=2p(\check\theta_K(y))-U(y)=y.$$ Both $T$ and $U$ are continuous by Lemma~\ref{lem:chart}.

\bigskip
\noindent (b) If $x=\Psi(\theta,s)$ then 
$$-x=-p(\theta)-s\bt(\theta)=p(\theta+\pi)+s\bt(\theta+\pi)=\Psi(\theta+\pi,s),$$ so 
$$T(-x)=2p(\theta+\pi)+x=-2p(\theta)+x=-T(x).$$

\bigskip
\noindent (c) Consider the $C^1$ map 
\begin{eqnarray}
\Phi:\Tt\times\R &\longrightarrow& \R^2 \nonumber\\
(\theta,w) &\longmapsto& h(\theta)\bn(\theta)+w\,\bt(\theta),\label{eq:itemcchart}
\end{eqnarray}
 whose Jacobian determinant is
\[
\det\bigl[\partial_\theta\Phi,\partial_w\Phi\bigr]=\det\bigl[(h'-w)\bn+h\bt,\ \bt\bigr]=h'(\theta)-w .
\]
On $\Omega_+:=\{w>h'(\theta)\}$ we have $$\Phi(\theta,w)=\Psi(\theta,w-h'(\theta)),$$  so $\Phi|_{\Omega_+}$ is a homeomorphism onto $\R^2\setminus K$ with non-vanishing Jacobian, hence a $C^1$ diffeomorphism. \\
On $\Omega_-:=\{w<h'(\theta)\}$ we have
\[
\Phi(\theta,w)
=
T\bigl(\Psi(\theta,h'(\theta)-w)\bigr).
\]
Since $T$ and $\Psi$ are homeomorphisms, $\Phi|_{\Omega_-}$
is a homeomorphism onto $\R^2\setminus K$.
Its Jacobian does not vanish, so it is a $C^1$ diffeomorphism.

\smallskip
The reflection $$\sigma(\theta,w):=(\theta,2h'(\theta)-w)$$ exchanges $\Omega_\pm$, preserves $\dd\theta\dd w$ and the function $|w-h'(\theta)|$, and satisfies $$T\circ\Phi=\Phi\circ\sigma \quad {\rm on}\; \Omega_+,$$ because there $$\Phi(\sigma(\theta,w))=2p(\theta)-\Phi(\theta,w) \quad {\rm and} \quad \theta_K(\Phi(\theta,w))=\theta.$$  
For a Borel set $B\subset\R^2\setminus K$, with $\tilde B:=\Phi^{-1}(B)\cap\Omega_+$, the change of variables formula gives
\[
\Leb(T(B))=\int_{\sigma(\tilde B)}|h'(\theta)-w|\dd\theta\dd w=\int_{\tilde B}|h'(\theta)-(2h'(\theta)-w)|\dd\theta\dd w=\Leb(B).
\]

\bigskip
\noindent (d) $|T(x)|=|2p-x|\le 2M+|x|$, and the same for $T^{-1}$. Since $x=p(\theta)+s\bt(\theta)$ with $|p|\le M$, we get $s-M\le|x|\le s+M$.
\end{proof}

\bigskip

\subsection{Proof of Proposition \ref{prop:DT}}

\begin{proof}
(a) The function $p$ is continuous of bounded variation with $Dp=\bt\,\hat\rho_h$ by \eqref{eq:Dp}, so it is differentiable at Lebesgue-almost every $\theta$ with $p'(\theta)=r(\theta)\bt(\theta)$, which is the density of the absolutely continuous part of $Dp$~\cite[Section 3.5]{Fol99}. \\
Let $E$ be a Borel null set containing the exceptional angles. For $x=\Psi(\theta,s)$ with $\theta=\theta_K(x)\notin E$, the map $\theta_K$ is differentiable at $x$ (Lemma~\ref{lem:chart}) and $p$ is differentiable at $\theta_K(x)$, so by the chain rule $p\circ\theta_K$ is differentiable at $x$ with
\[
D(p\circ\theta_K)(x)\,v=p'(\theta)\,D\theta_K(x)v=r(\theta)\,\bt(\theta)\Bigl(-\frac{v\cdot\bn(\theta)}{s}\Bigr),
\]
and \eqref{eq:DT} follows from $T=2\,p\circ\theta_K-\mathrm{Id}$. \\ 
Using the $C^1$ chart $\Phi$ defined in \eqref{eq:itemcchart}, the exceptional set is given by $$\Phi(\{(\theta,w):\theta\in E,\ w>h'(\theta)\}),$$ the image of a Lebesgue-null set under a $C^1$ map, hence Lebesgue-null.

\medskip
\noindent (b) If $\rho_h$ is purely singular then $r=0$ a.e., and \eqref{eq:DT} reduces to $DT(x)=-\mathrm{Id}$.
\end{proof}

\medskip

\subsection{Proof of Proposition \ref{prop:exact}}

\begin{proof}
Let $$y=F(x)=-p(\theta)+s\bt(\theta)=p(\theta+\pi)-s\bt(\theta+\pi).$$ 
Then $f_y(\theta+\pi)=0$ with $f_y'(\theta+\pi)=-s<0$, so $\theta+\pi=\check\theta_K(y)$, while $\theta^+=\theta_K(y)$. By Lemma~\ref{lem:two-tangents} the positive arc of $f_y$ runs from $\theta^+$ to $\theta+\pi$ and has length in $(0,\pi)$, {\it i.e.},  $(\theta+\pi)-\theta^+\in(0,\pi)\pmod{2\pi}$, which is the claim. \\
For the equations \eqref{eq:exact1} and \eqref{eq:exact2}, take the scalar product of
\begin{equation}\label{eq:vector}
p(\theta^+)+s^+\bt(\theta^+)=-p(\theta)+s\,\bt(\theta)
\end{equation}
with $\bn(\theta^+)$ and with $\bt(\theta^+)$; the first gives 
$$h(\theta^+)=-h(\theta)\cos d-h'(\theta)\sin d+s\sin d,$$
and the second 
$$h'(\theta^+)+s^+=h(\theta)\sin d-h'(\theta)\cos d+s\cos d,$$
where we used that 
$$
\bn(\theta)\cdot\bn(\theta^+)=\cos d, \quad \bt(\theta)\cdot\bn(\theta^+)=\sin d, \quad \bn(\theta)\cdot\bt(\theta^+)=-\sin d, \quad \bt(\theta)\cdot\bt(\theta^+)=\cos d.
$$
 Finally $T(x)=-y=\Psi(\theta^+-\pi,s^+)$ by the computation in Proposition~\ref{prop:T-basic} (b).\\
 
 \smallskip
Let us now prove items (i) and (ii).

\smallskip
\noindent (i) From \eqref{eq:vector}
$$p(\theta)+p(\theta^+)=s\bt(\theta)-s^+\bt(\theta^+);$$
 squaring gives $$s^2+(s^+)^2-2ss^+\cos d=(s-s^+)^2+2ss^+(1-\cos d).$$ The bound follows from $|p|\le M$.

\smallskip
\noindent (ii) Since 
$$h(\theta)\cos d+h'(\theta)\sin d=p(\theta)\cdot\bn(\theta^+) \qquad {\rm and} \qquad p(\theta)=-p(\theta+\pi),$$
 equation \eqref{eq:exact1} reads 
 $$s\sin d=h(\theta^+)-p(\theta+\pi)\cdot\bn(\theta^+)=(p(\theta^+)-p(\theta+\pi))\cdot\bn(\theta^+),$$ which is positive by \eqref{eq:unique-max} because $\theta+\pi\not\equiv\theta^+$ (since $d<\pi$). The function $(\alpha,\beta)\mapsto(p(\alpha)-p(\beta))\cdot\bn(\alpha)$ is continuous and positive on the compact set $\{\dist_{\Tt}(\alpha,\beta)\ge\varepsilon\}$, whence $c_\varepsilon$.

\end{proof}

\bigskip

\section{A singular Radon table with a four-periodic invariant curve}\label{app:Radon}

In this appendix we describe how to construct examples of tables that are $C^1$, centrally symmetric, strictly convex, with purely singular radius of curvature measure  and admit a central invariant curve consisting of periodic points of period $4$ (Proposition \ref{prop:Radon-invariant}). Moreover,  these tables can be chosen arbitrarily $C^1$-close to disks or ellipses (see Remark \ref{rem:closedisk}).

\medskip
\begin{remark}
This construction is inspired by the  construction of {\it Radon} curves in \cite{BMT16}. 
Let us briefly recall  the notion of Radon curves.

Let $\gamma$ be a $C^1$ closed convex curve in the plane, symmetric with respect to the origin. Let
$x,y \in \gamma.$ One says that $x$ is {\it Birkhoff orthogonal} to $y$ if $y$ is parallel to the tangent line
to $\gamma$ at $x$.  This relation is not necessarily symmetric; if it is symmetric, then $\gamma$ is called
a Radon curve. Radon curves form an infinite-dimensional family, with ellipses providing simple examples.

It was observed in \cite{BBT22} that a $C^1$ centrally symmetric strictly convex curve is a {Radon curve} if and only if its outer billiard carries an invariant curve of $4$-periodic orbits.

\end{remark}

\medskip

 Choose a positive, atomless, purely singular measure $\mu$
on $[0,\pi/2]$ with full support such that
\begin{equation}\label{eq:mu-norm}
\int_0^{\pi/2}\sin\theta\,\dd\mu(\theta)
=
\int_0^{\pi/2}\cos\theta\,\dd\mu(\theta)
=1.
\end{equation}
Such a measure exists: take a countable positive combination
of Cantor measures transported onto a basis of subintervals
of $[0,\pi/2]$, symmetrize it under
$\theta\mapsto\pi/2-\theta$, and rescale; see Figure \ref{fig:radon} (b). 

\medskip
Define for $\theta\in[0,\frac{\pi}{2}]$:
\begin{equation}\label{eq:PQ}
P(\theta):=(1,0)+\int_{[0,\theta)}\bt(u)\dd\mu(u),\qquad
c(\theta):=P(\theta)\cdot\bn(\theta),\qquad
Q(\theta):=\frac{\bt(\theta)}{c(\theta)}.
\end{equation}
Then $P(0)=(1,0)$ and, by \eqref{eq:mu-norm}, $P(\frac{\pi}{2})=(0,1)$. Observe that the first coordinate of $P$ decreases from $1$ to $0$ and the second increases from $0$ to $1$, so $P$ lies in the closed first quadrant and $P\ne0$.

\smallskip

Since $DP=\bt\,\mu$, the arc $P$ is an arc of a strictly convex curve with outward normal $\bn(\theta)$ at $P(\theta)$ and radius measure $\mu$ (Proposition~\ref{prop:reconstruction}); in particular $c$ is its support function, $c>0$ (both $P$ and $\bn$ lie in the first quadrant, and $c(0)=c(\frac{\pi}{2})=1$), and
\begin{equation}\label{eq:cQ}
c'=P\cdot\bt,\qquad P=c\,\bn+c'\,\bt,\qquad Q'=-\frac{P}{c^2},\qquad \det(P,Q)=\det(\bn,\bt)=1,
\end{equation}
where the first identity follows from $Dc=(\bt\cdot\bn)\mu+(P\cdot\bt)\dd\theta$, the second from the fact that $c$ is the support function of $P$,   the third from $\bt'=-\bn$ and the second, and the fourth is a direct computation.

\smallskip
Thus $Q$ is a regular $C^1$ arc, parametrized by $\theta$, from $Q(0)=(0,1)$ to $Q(\frac{\pi}{2})=(-1,0)$, with tangent direction $-P/|P|$ at $Q(\theta)$; and the tangent direction of $P$ at $P(\theta)$ is $\bt(\theta)$, a positive multiple of $Q(\theta)$.

\medskip

Observe that only $P$  is chosen freely, while $Q$ is determined from $P$ by \eqref{eq:PQ}.

\medskip

\begin{figure}[ht]
\centering
\begin{tikzpicture}[scale=1.7,>=Stealth]
\begin{scope}
\draw[gray!40] (-1.6,0) -- (1.6,0);
\draw[gray!40] (0,-1.6) -- (0,1.6);
\draw[thick] (1,0) .. controls (1,0.55) and (0.55,1) .. (0,1);
\draw[thick,dashed] (0,1) .. controls (-0.3,1.0) and (-1,0.7) .. (-1,0);
\draw[thick] (-1,0) .. controls (-1,-0.55) and (-0.55,-1) .. (0,-1);
\draw[thick,dashed] (0,-1) .. controls (0.3,-1.0) and (1,-0.7) .. (1,0);
\fill (1,0) circle (0.9pt) node[below right=-1pt,font=\small] {$(1,0)$};
\fill (0,1) circle (0.9pt) node[above right=-1pt,font=\small] {$(0,1)$};
\fill (-1,0) circle (0.9pt) node[below left=-1pt,font=\small] {$(-1,0)$};
\fill (0,-1) circle (0.9pt) node[below left=-1pt,font=\small] {$(0,-1)$};
\fill (0.706,0.706) circle (1pt);
\node[font=\small] at (0.95,0.8) {$P(\theta)$};
\draw[->,thick] (0.706,0.706) -- (0.47,0.95);
\node[font=\small] at (0.66,1.0) {$\bt(\theta)$};
\fill (-0.61,0.76) circle (1pt);
\node[font=\small] at (-0.88,0.92) {$Q(\theta)$};
\node[font=\small] at (0.5,0.5) {$P$};
\node[font=\small] at (-0.5,0.5) {$Q$};
\node[font=\small] at (-0.5,-0.5) {$-P$};
\node[font=\small] at (0.5,-0.5) {$-Q$};
\node[font=\small] at (0,-1.5) {(a)};
\end{scope}
\begin{scope}[xshift=3cm,yshift=0.2cm]
\draw[thick,-{Stealth[length=5pt]}] (0,0) -- (2.4,0) node[below right,font=\small] {$\theta$};
\draw (0,0.05) -- (0,-0.05) node[below,font=\small] {$0$};
\draw (2,0.05) -- (2,-0.05) node[below,font=\small] {$\pi/2$};
\foreach \x in {0.5,1.5} { \draw[very thick] (\x-0.35,0.25) -- (\x+0.35,0.25); }
\foreach \x in {0.15,0.85,1.15,1.85} { \draw[thick] (\x-0.1,0.45) -- (\x+0.1,0.45); }
\foreach \x in {0.05,0.25,0.75,0.95,1.05,1.25,1.75,1.95} { \draw (\x,0.6) -- (\x,0.68); }
\node[font=\small,align=center] at (1,-0.7) {$\mu$: atomless, purely singular,\\[-1pt] of full support on $[0,\frac{\pi}{2}]$};
\node[font=\small] at (1,-1.2) {(b)};
\end{scope}
\end{tikzpicture}
\caption{\small (a) A schematic representation of the table $K$  bounded by the four arcs, $P$ (solid), $Q$ (dashed), $-P$, $-Q$. 
(b) A schematic illustration of Cantor-type components.
The full-support measure $\mu$ is obtained by a countable
mixture of such components placed in a basis of subintervals
of $[0,\pi/2]$.}
\label{fig:radon}
\end{figure}
\medskip

Let $\beta(\theta):=\arg P(\theta)\in[0,\frac{\pi}{2}]$. Since $\det(P,DP)=\det(P,\bt)\mu=(P\cdot\bn)\mu$, then
\begin{equation}\label{eq:beta}
D\beta=\frac{c}{|P|^2}\,\mu ,
\end{equation}
so $\beta$ is an increasing homeomorphism of $[0,\frac{\pi}{2}]$ onto itself. 

\bigskip
Consider the closed curve obtained by concatenating $P$, $Q$, $-P$, $-Q$, as in Figure~\ref{fig:radon} (a).  Let us observe some  properties of the concatenated curve:
\begin{itemize}
\item At the four junctions the tangent directions match: at $(0,1)$ both $P$ and $Q$ have tangent direction $(-1,0)$, at $(-1,0)$ both $Q$ and $-P$ have direction $(0,-1)$, and so on.
\item  The tangent angle increases from $\pi/2$ to $\pi$ along $P$ (it equals $\theta+\pi/2$), from $\pi$ to $3\pi/2$ along $Q$ (it equals $\beta(\theta)+\pi$), and then continues through $2\pi$ along $-P$, $-Q$; it is strictly increasing on each arc, by full support of $\mu$ and by \eqref{eq:beta}.  A closed $C^1$ curve whose tangent angle is strictly increasing with total variation $2\pi$ is the boundary of a strictly convex body.
\end{itemize}

\medskip

Hence, the four arcs bound a centrally symmetric strictly convex body $K$ with regular $C^1$ boundary. Its support function $h$ equals $c$ on $[0,\frac{\pi}{2}]$ and is $\pi$-periodic.

\medskip

\begin{proposition} \label{prop:B2}
$K$ is a centrally symmetric strictly convex body with regular $C^1$ boundary. 
The radius of curvature measure $\rho_h$ of $K$ is atomless, purely singular and of full support: $h\in\cH$ and $h\in\cS$ (as defined in Lemma \ref{lem:S}).
\end{proposition}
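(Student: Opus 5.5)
The plan is to compute $\rho_h$ arc by arc in the normal-angle variable and then assemble the pieces using central symmetry. Convexity, central symmetry and regularity of the boundary have already been argued before the statement (tangent angle strictly increasing, total turning $2\pi$, matching tangents at the junctions, and $K=-K$ by construction). So the remaining content is the description of $\rho_h$. Membership $h\in\cH$ will then follow from Proposition~\ref{prop:reconstruction} (iii) and Remark~\ref{rem:H}, once $\rho_h$ is shown to be positive, atomless and of full support.

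\textbf{Normal angles in $[0,\pi/2]$.} On this range the boundary point with outward normal $\bn(\theta)$ is $P(\theta)$, and $Dp=DP=\bt\,\mu$. Comparing with \eqref{eq:Dp} gives $\rho_h=\mu$ on $[0,\pi/2]$, which by hypothesis is atomless, purely singular and of full support there.

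\textbf{Normal angles in $[\pi/2,\pi]$.} This range corresponds to the arc $Q$. The tangent direction at $Q(\theta)$ is $-P(\theta)/|P(\theta)|$, whose angle is $\beta(\theta)+\pi$, so the outward normal angle is $\phi=\beta(\theta)+\pi/2$. Since $\beta$ is an increasing homeomorphism of $[0,\pi/2]$, the contact point for normal angle $\phi$ is $Q(\beta^{-1}(\phi-\pi/2))$. The arclength measure of $Q$ in the variable $\theta$ is $|Q'|\,\dd\theta=|P|c^{-2}\dd\theta$, by \eqref{eq:cQ}. Hence $\rho_h$ restricted to $[\pi/2,\pi]$ is the push-forward of $|P|c^{-2}\dd\theta$ under $\theta\mapsto\beta(\theta)+\pi/2$. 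This measure is atomless, because $\dd\theta$ is atomless and $\beta$ is injective. It has full support, because $|P|/c^2>0$ is continuous and $\beta$ is a homeomorphism.

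\textbf{Singularity of the second piece.} This is the main step. By \eqref{eq:beta}, $D\beta=(c/|P|^2)\mu$ with density bounded above and below, so $D\beta$ is mutually absolutely continuous with $\mu$ and hence singular. Take a Lebesgue-null Borel set $E$ carrying $\mu$. Then $\Leb(\beta(E^c))=D\beta(E^c)=0$. Exactly as in Remark~\ref{rem:regularities}, the push-forward $\beta_\#\Leb$ is concentrated on $\beta(E^c)$, which is Lebesgue-null. Since the density $|P|c^{-2}$ is bounded, $\beta_\#(|P|c^{-2}\dd\theta)\ll\beta_\#\Leb$, so this measure is singular too. The identity used here is $\Leb(\beta(A))=D\beta(A)$ for Borel $A$, which holds because $\beta$ is continuous and strictly increasing; I would justify it on intervals and extend by uniqueness of measures, as done in that remark.

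\textbf{Assembling.} By $\pi$-periodicity (central symmetry), $\rho_h$ on $\Tp$ is the sum of the two pieces. The junction angles $0$ and $\pi/2$ carry no mass, since both pieces are atomless. The sum is therefore atomless, purely singular, of full support and positive. Hence $h\in\cH$ and $h\in\cS$.
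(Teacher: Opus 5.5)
Your proof is correct and follows the paper's argument essentially step for step. The paper also takes $\rho_h=\mu$ on the $P$-arc and $\rho_Q=(\beta+\pi/2)_{\#}\bigl(|P|c^{-2}\dd\theta\bigr)$ on the $Q$-arc, derives singularity from $|\beta(E^c)|=D\beta(E^c)=\int_{E^c}c|P|^{-2}\dd\mu=0$ for a Lebesgue-null set $E$ carrying $\mu$, and gets atomlessness and full support from $\beta$ being a homeomorphism with positive density. One cosmetic point: $h\in\cH$ is immediate from the definition of $\cH$ once the first part is established, whereas part (iii) of Proposition~\ref{prop:reconstruction} is the converse direction and is not the right citation.
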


\medskip

\begin{proof}
Central symmetry, strict convexity and regularity of the boundary were proven above.

Let us prove the second part.
On $P$ the radius measure is $\mu$, purely singular by construction. On $Q$ the arclength element is $$|Q'|\dd\theta=|P|c^{-2}\dd\theta$$ and the normal angle at $Q(\theta)$ is $\beta(\theta)+\pi/2$; hence the radius of curvature measure of the arc $Q$, {\it i.e.}\ its arclength expressed in the normal angle, is
\[
\rho_Q=(\beta+\pi/2)_{\#}\bigl(|P|\,c^{-2}\dd\theta\bigr),
\]
where $\#$ denotes the push-forward.\\
Let $E\subset[0,\frac{\pi}{2}]$ be a Lebesgue-null Borel set carrying $\mu$. For the increasing homeomorphism $\beta$ one has $|\beta(A)|=D\beta(A)$ for every Borel set $A$ (both sides are measures which agree on intervals). By \eqref{eq:beta} 
$$|\beta(E^c)|=D\beta(E^c)=\int_{E^c}c|P|^{-2}\dd\mu=0.$$ The absolutely continuous measure $|P|c^{-2}\dd\theta$ is supported on the Lebesgue-full set $E^c$, so its push-forward $\rho_Q$ is supported on the null set $\beta(E^c)+\pi/2$, hence it is purely singular. \\
It is atomless and of full support because $\beta$ is a homeomorphism and the density $|P|c^{-2}$ is positive. 
\\ 
Summarizing, the radius of curvature measure $\rho_h$ of $K$, which on $\Tp$ is $\mu$ on $[0,\frac{\pi}{2})$ and $\rho_Q$ on $[\frac{\pi}{2},\pi)$, is atomless, purely singular and of full support, therefore $h\in\cH$ and $h\in\cS$.
\end{proof}

\bigskip
Now, we want to prove the existence of an invariant curve consisting of periodic points of period $4$.

\medskip

Define $\mathcal J:\partial K\to\partial K$ by
\[
\mathcal J(P(\theta))=Q(\theta),\qquad \mathcal J(Q(\theta))=-P(\theta),\qquad \mathcal J(-x)=-\mathcal J(x).
\]
Then $\mathcal J^2=-\id$, $\det(x,\mathcal Jx)=1$ by \eqref{eq:cQ}, and $\mathcal Jx$ is a positive multiple of the positively oriented tangent vector at $x$: at $P(\theta)$ the tangent is $\bt(\theta)$ and $Q=\bt/c$, while at $Q(\theta)$ the tangent direction is $-P/|P|$. 

\smallskip
Writing $x=p(\varphi)$ with $\varphi$ the normal angle at $x$, these two facts give $\mathcal Jx=|\mathcal Jx|\,\bt(\varphi)$ with $h(\varphi)|\mathcal Jx|=\det(x,\mathcal Jx)=1$, that is
\begin{equation}\label{eq:Jone}
\mathcal J p(\varphi)=\frac{\bt(\varphi)}{h(\varphi)} .
\end{equation}
Define
\[
\Gamma:=\{x+\mathcal Jx:\ x\in\partial K\} = \Psi\bigl(\operatorname{graph}\tfrac1h\bigr)
\]
where the second equality follows from  \eqref{eq:Jone}. $\Gamma$ is therefore the graph of a $C^1$ height function $s=1/h$.
By Lemma~\ref{lem:chart}, the contact point of $x+\mathcal Jx$ is $x$.

\medskip

\begin{proposition}\label{prop:Radon-invariant}
$\Gamma$ is a central invariant  graph and 
\[
T^2\big|_\Gamma=-\id ,\qquad T^4\big|_\Gamma=\id.
\]
\end{proposition}

\medskip
\begin{proof}
Write $Z(x):=x+\mathcal Jx$ for $x\in\partial K$, so $\Gamma=Z(\partial K)$. Observe that $Z$ is injective, since $x$ is recovered from $Z(x)$ as its contact point. Since $\mathcal J(-y)=-\mathcal Jy$ for every $y$, then
\[
Z(-x)=-x+\mathcal J(-x)=-x-\mathcal Jx=-Z(x).
\]
Therefore $$\Gamma=Z(\partial K)=Z(-\partial K)=-Z(\partial K)=-\Gamma,$$
{\it i.e.,} $\Gamma$ is central.\\

Reflection through the contact point of $Z(x)$, which is $x$, gives
\[
T(Z(x))=T(x+\mathcal Jx)=x-\mathcal Jx .
\]
On the other hand, using $\mathcal J^2=-\id$ and $\mathcal J(-y)=-\mathcal Jy$ with $y=\mathcal Jx$, we get:
\begin{eqnarray*}
 Z(-\mathcal Jx)&=&-\mathcal Jx+\mathcal J(-\mathcal Jx)=-\mathcal Jx-\mathcal J^2x
=x-\mathcal Jx .
\end{eqnarray*}
Define $\varphi := -\mathcal J: \partial K \to \partial K$. Hence $T(Z(x))=Z(\varphi(x))$ for every $x\in\partial K$, {\it i.e.},  $T\circ Z=Z\circ\varphi$.

Observe that
$$\varphi^2(x)=-\mathcal J(-\mathcal Jx)=\mathcal J(\mathcal Jx)=\mathcal J^2x=-x,$$ so $\varphi^2=-\id$ on $\partial K$, and $\varphi^4=(-\id)^2=\id$. In particular $\varphi$ is a bijection of $\partial K$ onto itself, with inverse $\varphi^{-1}=\varphi^3$, since $\varphi^4=\id$.

\smallskip
From $T\circ Z=Z\circ\varphi$ we get
$$T^n\circ Z=Z\circ\varphi^n \qquad \forall\; n\ge 1.$$ 
In particular, using that $Z$ is a bijection of $\partial K$ onto $\Gamma$:
$$T^2(Z(x))=Z(\varphi^2(x))=Z(-x)=-Z(x) \quad \forall \;x \qquad \Longrightarrow \qquad T^2\big|_\Gamma=-\id.$$

Similarly:
$$T^4(Z(x))=Z(\varphi^4(x))=Z(x)\quad \forall \;x \qquad \Longrightarrow \qquad T^4\big|_\Gamma=\id.$$

\smallskip
 Finally
 $$T(\Gamma)=T(Z(\partial K))=Z(\varphi(\partial K))=Z(\partial K)=\Gamma,$$
  since $\varphi$ is a bijection of $\partial K$ onto itself. This proves that $\Gamma$ is invariant.
\end{proof}

\medskip

A few remarks are in order.

\begin{remark}\label{rem:regularityGamma}
Although $\Gamma$ is a $C^1$ graph in tangent coordinates, it is not a regular $C^1$ curve in the plane, and it is not convex. On its first quarter, parametrized by $\theta\mapsto P(\theta)+Q(\theta)$,
\begin{equation}\label{eq:RadonDGamma}
D(P+Q)=\bt\,\mu-\frac{P}{c^2}\dd\theta .
\end{equation}
The two vector measures are mutually singular and prescribe two different unit tangent directions, $\bt(\theta)$ on a set carrying $\mu$ and $-P(\theta)/|P(\theta)|$ on a set of full Lebesgue measure, both dense in every interval; they are never parallel, since $\det(\bt,-P)=\det(P,\bt)=c>0$, and in fact the angle between them is bounded below because $c/|P|$ is continuous and positive. For a rectifiable curve the unit tangent exists arclength-a.e.\ and coincides with the polar direction of the derivative measure, so a continuous unit tangent would have to agree with both directions on dense sets, which is not the case.

\smallskip
 Nor can $\Gamma$ be convex, since the tangent angle of a convex curve is monotone in arclength, hence has one-sided limits everywhere and at most countably many discontinuities, and at any continuity point the two dense directions in \eqref{eq:RadonDGamma} would have to coincide.
 
 \smallskip
 This example illustrates that an invariant graph can have a \(C^1\) height function in tangent coordinates while failing to be a regular \(C^1\) curve, or a convex curve, in the plane.
 \end{remark}

\medskip
\begin{remark}\label{rem:closedisk}
The above construction can be adapted to obtain curves that are arbitrarily $C^1$-close, for example, to the unit disk (and therefore to any disk or ellipse, in the light of Remark \ref{rem:affinvariance}). We outline how to modify it  below.

Fix an atomless, purely singular probability measure $\nu$ on
$[0,1]$ with full support, symmetric under the reflection
$u\mapsto1-u$. Such a measure can be obtained by symmetrizing
an atomless, purely singular, full-support probability measure
on $[0,1]$.

For each $m\ge1$, put $\delta_m:=\frac{\pi}{2m}$ and divide $[0,\frac{\pi}{2}]$ into
the cells
\[
I_{k,m}:=[k\delta_m,(k+1)\delta_m],
\qquad 0\le k\le m-1.
\]
Let
\[
\alpha_{k,m}(u):=k\delta_m+\delta_m u,
\qquad u\in[0,1],
\]
and define
\[
\tilde\mu_m
:=
\delta_m\sum_{k=0}^{m-1}(\alpha_{k,m})_\#\nu.
\]
Thus every cell carries an affine copy of the same measure
$\nu$, multiplied by $\delta_m$. The measure $\tilde\mu_m$
is atomless, purely singular, and has full support on $[0,\frac{\pi}{2}]$.

Let $R(\theta):=\frac{\pi}{2}-\theta$ and $r(u):=1-u$. Since
\[
R\circ\alpha_{k,m}
=
\alpha_{m-1-k,m}\circ r
\qquad\text{and}\qquad
r_\#\nu=\nu,
\]
we have $R_\#\tilde\mu_m=\tilde\mu_m$.
Hence $\tilde\mu_m$ is symmetric under $\theta\mapsto \frac{\pi}{2}-\theta$.
Moreover, $\tilde\mu_m$ assigns every cell the same total mass
as Lebesgue measure:
\[
\tilde\mu_m(I_{k,m})=\delta_m=|I_{k,m}|.
\]
The common endpoints have zero mass. Since an interval
$J\subset[0,\frac{\pi}{2}]$ has at most two partially covered cells, it follows
that
\[
\bigl|\tilde\mu_m(J)-|J|\bigr|\le2\delta_m.
\]

By symmetry, the sine and cosine moments coincide.
Writing
\[
\lambda_m
:=
\int_0^{\frac{\pi}{2}}\sin\theta\,\dd\tilde\mu_m(\theta)
=
\int_0^{\frac{\pi}{2}}\cos\theta\,\dd\tilde\mu_m(\theta),
\]
the preceding discrepancy estimate gives
\[
\lambda_m\longrightarrow
\int_0^{\frac{\pi}{2}}\sin\theta\,\dd\theta=1.
\]
Therefore
\[
\mu_m:=\lambda_m^{-1}\tilde\mu_m
\]
satisfies \eqref{eq:mu-norm}, with normalization factor tending
to $1$.

\smallskip
Observe that for the Lebesgue measure the construction \eqref{eq:PQ} returns the unit circle: 
$$P=\bn, \quad c=1, \quad Q=\bt, \quad \beta=\theta.$$ 

Integration by parts as in the proof of Lemma~\ref{lem:S} gives $P_m\to\bn$ uniformly, hence $c_m=P_m\cdot\bn\to1$ and $c_m'=P_m\cdot\bt\to0$ uniformly, $Q_m\to\bt$ uniformly, and $\beta_m\to\theta$ uniformly together with the inverses (uniform limits of increasing homeomorphisms of $[0,\frac{\pi}{2}]$ converging to the identity). 

On the dual arc the support function and its derivative at the normal angle $\varphi=\beta_m(\theta)+\pi/2$ are $Q_m(\theta)\cdot\bn(\varphi)$ and $Q_m(\theta)\cdot\bt(\varphi)$; as functions of $\varphi$ they converge uniformly, respectively, to 
$$\bt(\varphi-\pi/2)\cdot\bn(\varphi)=1 \qquad {\rm and} \qquad \bt(\varphi-\pi/2)\cdot\bt(\varphi)=0.$$ 

Hence the support functions $h_m$ of the resulting tables satisfy $\|h_m-1\|_{C^1(\Tp)}\to0$.
\end{remark}

\end{document}